\newcommand{\vsimeq}{\mathrel{\rotatebox[origin=c]{-90}{$\simeq$}}}
\definecolor{color/link}{HTML}{4169e1}
\definecolor{color/out}{HTML}{C3CDD6}
\newif\ifHideFoot
\newcommand{\Yiran}[1]{}
\newcommand{\marg}[1]{\normalsize{{
			\color{red}\footnote{{\color{blue}#1}}}{\marginpar[\vskip
			-.25cm{\color{red}\hfill$\Rightarrow$\tiny\thefootnote}]{\vskip
				-.2cm{\color{red}$\Leftarrow$\tiny\thefootnote}}}}}
\newcommand{\Yiran}[1]{\marg{(Yiran) #1}}
\lstdefinestyle{Mathematica}{
    language        =   Mathematica, 
    basicstyle      =   \ttfamily,
    numberstyle     =   \ttfamily,
    keywordstyle    =   \color{blue},
    keywordstyle    =   [2] \color{teal},
    stringstyle     =   \color{magenta},
    commentstyle    =   \color{red}\ttfamily,
    breaklines      =   true,   
    columns         =   fixed,  
    basewidth       =   0.5em,
}
\numberwithin{equation}{section}
\numberwithin{subsection}{section}
\setlist[enumerate]{label=\rm{(\roman*)},leftmargin=\parindent,itemindent=\parindent,labelsep=5pt}
\newlist{TFAE}{enumerate}{1}
\setlist[TFAE]{label=\rm{(\alph*)},labelindent=2\parindent}
\newlist{enumeratea}{enumerate}{2}
\setlist[enumeratea]{label=\rm{(\emph{\alph*})}}
\newlist{enumerate1}{enumerate}{2}
\setlist[enumerate1]{label=(\emph{\arabic*})}
\newtheorem*{namedtheorem}{\theoremname}
\newcommand{\theoremname}{testing}
\newtheorem{theorem}{Theorem}[section]
\newtheorem{proposition}[theorem]{Proposition}
\newtheorem{corollary}[theorem]{Corollary}
\newtheorem{lemma}[theorem]{Lemma}
\newtheorem{IH}{\indent Induction Hypothesis}
\theoremstyle{definition}
\newtheorem{definition}[theorem]{Definition}
\newtheorem{definition/proposition}[theorem]{Definition/Proposition}
\newtheorem{notation}[theorem]{Notation}
\newtheorem{example}[theorem]{Example}
\newtheorem{remark}[theorem]{Remark}
\newtheorem*{remark*}{Remark}
\theoremstyle{remark}
\newtheorem*{claim}{Claim}
\newtheoremstyle{italicstyle}   
  {\topsep}                     
  {\topsep}                     
  {\itshape}                    
  {}                            
  {\itshape}                    
  {.}                           
  { }                           
  {\thmname{#1}\thmnumber{ #2}} 
\theoremstyle{italicstyle}
\theoremstyle{plain}
\newtheorem*{rep@theorem}{\rep@title}
\newcommand{\newreptheorem}[2]{%
	\newenvironment{rep#1}[1]{%
		\def\rep@title{#2 \ref{##1}}%
		\begin{rep@theorem}}%
		{\end{rep@theorem}}}
\newcommand\cA{\mathcal{A}}
\newcommand\cC{\mathcal{C}}
\newcommand\cD{\mathcal{D}}
\newcommand\cF{\mathcal{F}}
\newcommand\cG{\mathcal{G}}
\newcommand\cH{\mathcal{H}}
\newcommand\cL{\mathcal{L}}
\newcommand\cN{\mathcal{N}}
\newcommand\cO{\mathcal{O}}
\newcommand\cP{\mathcal{P}}
\newcommand\cT{\mathcal{T}}
\newcommand\cX{\mathcal{X}}
\newcommand\cZ{\mathcal{Z}}
\newcommand\CC{\mathbb{C}}
\newcommand\PP{\mathbb{P}}
\newcommand\QQ{\mathbb{Q}}
\newcommand\RR{\mathbb{R}}
\newcommand\ZZ{\mathbb{Z}}
\newcommand\rR{\mathrm{R}}
\newcommand\rmb{\mathrm{b}}
\newcommand\fS{\mathfrak{S}}
\newcommand\va{a(E)}
\newcommand\vb{b(E)}
\newcommand\vc{c(E)}
\newcommand\vd{d(E)}
\newcommand\I{\sqrt{-1}}
\DeclareMathOperator{\alb}{alb}
\DeclareMathOperator{\Alb}{Alb}
\DeclareMathOperator{\CH}{CH}
\DeclareMathOperator{\D}{D}
\DeclareMathOperator{\id}{\mathds{1}}
\DeclareMathOperator{\Pic}{Pic}
\DeclareMathOperator{\Hom}{Hom}
\DeclareMathOperator{\SL}{SL}
\DeclareMathOperator{\spec}{Spec}
\DeclareMathOperator{\ch}{ch}
\DeclareMathOperator{\td}{td}
\DeclareMathOperator{\Aut}{Aut}
\DeclareMathOperator{\Stab}{Stab}
\DeclareMathOperator{\Coh}{Coh}
\DeclareMathOperator{\dCat}{D}
\DeclareMathOperator{\Hilb}{Hilb}
\DeclareMathOperator{\GL}{GL}
\DeclareMathOperator{\rk}{rk}
\DeclareMathOperator{\coho}{\rm H}
\DeclareMathOperator{\cone}{\mathrm{Cone}}
\DeclareMathOperator{\cHom}{\hspace{-0.1em}\mathcal{H}\hspace{-0.05em}\mathit{om}}
\DeclareMathOperator{\RcHom}{\rR\cHom}
\DeclareMathOperator{\supp}{supp}
\DeclareMathOperator{\Supp}{Supp}
\DeclareMathOperator{\K}{K}
\newcommand{\can}{\mathrm{can}}
\newcommand{\sub}{\mathrm{sub}}
\newcommand{\quot}{\mathrm{quot}}
\newcommand{\nat}{\mathrm{nat}}
\newcommand{\num}{\mathrm{num}}
\newcommand{\act}{\mathrm{act}}
\newcommand{\diag}{\mathrm{diag}}
\newcommand{\defeq}{\vcentcolon=}
\newcommand{\eqdef}{=\vcentcolon}
\newcommand*{\da@rightarrow}{\mathchar"0\hexnumber@\symAMSa 4B }
\newcommand*{\da@leftarrow}{\mathchar"0\hexnumber@\symAMSa 4C }
\newcommand*{\xdashrightarrow}[2][]{%
  \mathrel{%
    \mathpalette{\da@xarrow{#1}{#2}{}\da@rightarrow{\,}{}}{}%
  }%
}
\newcommand{\xdashleftarrow}[2][]{%
  \mathrel{%
    \mathpalette{\da@xarrow{#1}{#2}\da@leftarrow{}{}{\,}}{}%
  }%
}
\newcommand*{\da@xarrow}[7]{%
  \sbox0{$\ifx#7\scriptstyle\scriptscriptstyle\else\scriptstyle\fi#5#1#6\m@th$}%
  \sbox2{$\ifx#7\scriptstyle\scriptscriptstyle\else\scriptstyle\fi#5#2#6\m@th$}%
  \sbox4{$#7\dabar@\m@th$}%
  \dimen@=\wd0 %
  \ifdim\wd2 >\dimen@
    \dimen@=\wd2 %
  \fi
  \count@=2 %
  \def\da@bars{\dabar@\dabar@}%
  \@whiledim\count@\wd4<\dimen@\do{%
    \advance\count@\@ne
    \expandafter\def\expandafter\da@bars\expandafter{%
      \da@bars
      \dabar@ 
    }%
  }%
  \mathrel{#3}%
  \mathrel{%
    \mathop{\da@bars}\limits
    \ifx\\#1\\%
    \else
      _{\copy0}%
    \fi
    \ifx\\#2\\%
    \else
      ^{\copy2}%
    \fi
  }%
  \mathrel{#4}%
}
\newcommand\xdhrightarrow[2][]{%
  \mathrel{\ooalign{$\xrightarrow[#1\mkern4mu]{#2\mkern4mu}$\cr%
  \hidewidth$\rightarrow\mkern4mu$}}
}
\title[Stability conditions on some higher-dimensional varieties]{Bridgeland stability conditions on some higher-dimensional \\ Calabi--Yau manifolds and generalized Kummer varieties}
\author{Yiran Cheng}
\begin{document}
\begin{abstract}
We construct Bridgeland stability conditions on the the following hyper-K\"ahler or strict Calabi--Yau manifolds:\\
- Generalized Kummer varieties associated to an abelian surface that is isogenous to a product of elliptic curves.\\
- Universal covers of Hilbert schemes of some Enriques surfaces; this provides examples of stability conditions on strict Calabi--Yau manifolds in each even dimension.\\
- Albanese fibers of some finite \'etale covering of Hilbert schemes of some bielliptic surfaces; this provides examples of stability conditions on strict Calabi-Yau manifolds in each odd dimension.\\
- Cynk--Hulek Calabi--Yau manifolds with an automorphism of order $2$ or $3$.

\end{abstract}
\maketitle

\setcounter{tocdepth}{1} 
\tableofcontents

\section{Introduction}
The concept of stability conditions on $\CC$-linear triangulated categories was introduced by Bridgeland in his seminal work \cite{Bri07}, establishing a rigorous mathematical framework inspired by Douglas's pioneering research on D-branes and $\Pi$-stability \cite{Douglas:2002}.
Heuristically, a stability condition may be viewed as a noncommutative analogue of an ample line bundle in algebraic geometry, in that it allows for the formulation of moduli spaces of semistable objects.
This theory has led to many applications, including a description of autoequivalence group of K3 surfaces \cite{Bri08,Bayer-Bridgeland:2017}, answers to Brill--Noether questions \cite{Bay18,Feyzbakhsh:2020,Feyzbakhsh:2024,Cheng-Li-Wu:2025},
structure theorems for Donaldson--Thomas invariants \cite{Toda:2010,Feyzbakhsh-Thomas:2023a,Feyzbakhsh-Thomas:2023,Feyzbakhsh-Thomas:2024}, and advances in our understanding of hyper-K\"{a}hler geometry \cite{Bayer-Macri:2014a,Bayer-Macri:2014,BLM+21,PPZ22}, etc.

However, the construction of stability conditions is challenging in general: although significant progress has been made in the cases of curves and surfaces \cite{Bri08,Arcara-Bertram:2013}, the existence of stability conditions on higher-dimensional varieties is often considered as one of the biggest open problems in the study of derived categories in recent years.
Traditionally, the construction of stability conditions relies on proving certain generalized Bogomolov--Gieseker type inequality \cite{Bayer-Macri-Toda:2014,Bernardara-Macri-Schmidt+:2017}, which is still mysterious in general.
Even in the $3$-dimensional case, this is only known for some special varieties, like Fano threefolds \cite{Li:2019,Bernardara-Macri-Schmidt+:2017}, abelian threefolds and some quotients \cite{Maciocia-Piyaratne:2015,Maciocia-Piyaratne:2016,BMS16}, and quintic threefolds \cite{Li:2019a}.
On the other hand, one can also induce new stability conditions on varieties from existing ones. In this direction, there is some study on finite quotients \cite{Pol07,MMS09,PPZ23,Del23} and product with a curve \cite{Liu21}.

This paper follows the second path and investigates more techniques to induce stability conditions. Namely, we study inducing stability conditions via group quotient, finite cover, product, and more importantly, via restriction.
Combining these techniques, we construct stability conditions on some higher-dimensional varieties from product of curves.

\subsection{Main applications}

As applications, we construct stability conditions on the derived category of the following higher-dimensional varieties which are not known to carry stability conditions before.

\subsubsection*{Generalized Kummer varieties \cite{Beauville:1983}}
In the forthcoming paper \cite{LMPSZ}, the authors constructed stability conditions on some hyper-K\"{a}hler manifolds of $\mathrm{K3}^{[n]}$-type. Based on \cite{LMPSZ}, we further develop the techniques by studying restriction problems for stability condtions, and construct stability conditions on the following family of hyper-K\"{a}hler manifolds of generalized Kummer type.
\begin{theorem}[\textbf{Generalized Kummer varieties}]\label{thm:Kummer}
    Let $S$ be an abelian surface which is isogenous to a product of elliptic curves, i.e., $S \simeq (C_1 \times C_2)/G$, where $C_i$ ($i=1,2$) are elliptic curves and $G$ is a finite group acting freely on $C_1 \times C_2$. 
    Consider the generalized Kummer variety $K_n(S)$ associated to the abelian surface $S$, i.e.,
    \begin{equation*}
    K_n(S) = \alpha_n^{-1}(0),
    \end{equation*}
    where $\alpha_n$ is the Albanese map for the Hilbert scheme $S^{[n]}$. Then, there exist stability conditions on the derived category of $K_n(S)$.
\end{theorem}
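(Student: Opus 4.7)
The plan is to cascade stability conditions through the four operations singled out in the introduction --- product, finite quotient, Hilbert scheme, and restriction --- starting with Bridgeland's stability conditions on the two elliptic curves $C_1, C_2$ and ending on the Albanese fiber $K_n(S)$.

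First I would propagate stability conditions from the curves up to $S^{[n]}$. Each elliptic curve $C_i$ carries the classical stability conditions of \cite{Bri07}, and iterating the product construction \cite{Liu21} yields a stability condition on $D^b((C_1\times C_2)^n)$. Choosing the stability on each factor to be invariant under the translation action of $G$ makes the product stability $G^n$-equivariant, and the obvious symmetry makes it $S_n$-equivariant as well. Since $G$ acts freely on $C_1\times C_2$ with quotient $S$, one has $S^n=(C_1\times C_2)^n/G^n$, and the Bridgeland--King--Reid/Haiman equivalence gives
\begin{equation*}
D^b(S^{[n]})\;\simeq\;D^b_{S_n}(S^n)\;\simeq\;D^b_{G^n\rtimes S_n}\!\bigl((C_1\times C_2)^n\bigr).
\end{equation*}
The finite-quotient formalism of \cite{MMS09,PPZ23}, as refined in \cite{LMPSZ} for the $\mathrm{K3}^{[n]}$ setting, then descends our equivariant stability condition to a Bridgeland stability condition on $D^b(S^{[n]})$.

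Next I would restrict to the Albanese fiber. The key geometric input is that the addition morphism
\begin{equation*}
\nu\colon S\times K_n(S)\longrightarrow S^{[n]},\qquad (s,\xi)\longmapsto \xi+s,
\end{equation*}
is a finite étale Galois cover with group $S[n]$ acting by $\tau\cdot(s,\xi)=(s-\tau,\,\xi+\tau)$, so that $D^b(S^{[n]})\simeq D^b_{S[n]}(S\times K_n(S))$. Pulling the stability condition of the previous step back along $\nu$ produces an $S[n]$-equivariant stability condition on $S\times K_n(S)$. The new restriction technique of the paper is then applied to decompose this pullback: exploiting that every Bridgeland stability condition on the abelian surface $S$ is translation-invariant, and that the $S[n]$-action on the first factor is by translation, one isolates a stability condition on $D^b(K_n(S))$ as the ``other factor''. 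Compatibility with the $S[n]$-equivariance and verification of the Bridgeland axioms on $K_n(S)$ are inherited from the construction on the product of curves.

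The main obstacle is the restriction step. In general there is no procedure for extracting a stability condition on one factor of a product from an equivariant stability condition on the whole product --- the central difficulty being that a heart on a product does not in general split as a Künneth product of hearts, and the support property must be reverified from scratch on the candidate heart on $K_n(S)$. Success relies on the specific provenance of our stability condition as a product of curve stabilities descended through equivariant equivalences, which provides enough control to transport positivity and boundedness estimates through the cover $\nu$ and down to the Albanese fiber. Packaging this compatibility into a general ``restriction of stability conditions along an étale cover with abelian factor'' statement is the technical heart of the argument, and is the principal new ingredient beyond \cite{LMPSZ}.
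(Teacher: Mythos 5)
The first half of your argument --- producing stability conditions on $\dCat(S^{[n]})$ from $(G^n\rtimes \fS_n)$-invariant stability conditions on $(C_1\times C_2)^n$ via Liu's product construction, the Bridgeland--King--Reid/Haiman equivalences, and the equivariant descent of \cite{Pol07,MMS09} --- is exactly the route taken in the paper. The geometric input you identify for the second half is also the same: the finite \'etale cover $K_n(S)\times S\to S^{[n]}$, $(\xi,a)\mapsto \xi+a$, with deck group $S[n]$, is precisely the fiber square the paper uses to show that $\alpha_n$ is isotrivial near $K_n(S)$.

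The gap is in the restriction step itself. You correctly observe that there is no general procedure for extracting a stability condition on one factor of a product, but the mechanism you then propose --- pull back along $\nu$ to get an $S[n]$-equivariant stability condition on $S\times K_n(S)$ and ``isolate'' the $K_n(S)$ factor using translation-invariance on $S$ --- is not an argument. (As a secondary point, lifting along $\nu$ already requires checking invariance of the stability condition on $S^{[n]}$ under tensoring by the $S[n]$-torsion line bundles, as in Proposition~\ref{inducing stability conditions abelian}; you do not verify this.) Even granting the lift, you have only traded the fiber of $\alpha_n$ for the fiber of the trivial projection $S\times K_n(S)\to S$, and the essential difficulty is unchanged: one must show that for every $F\in\dCat(K_n(S))$ the HN factors of $i_*F$ lie in the essential image of $i_*$, together with the maps between them. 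This is exactly the content of the paper's Theorem~\ref{thm:main}, whose proof occupies Sections~\ref{sec:product} and~\ref{sec:restrict}: first restrict to the subcategory of objects set-theoretically supported on the fiber (Proposition~\ref{filtration reduction}, quoted from \cite{LMPSZ}); then use the formal-completion equivalence $\dCat_{X_0}(X)\simeq\dCat_{X_0}(X_0\times C^n)$, the compatibility of Liu's stability conditions with pushforward of stable objects along $X\to X\times C$ for $C$ an elliptic curve (Theorem~\ref{pushforward of stable obj}), the $\GL_{n+1}$-invariance of noetherian bounded $t$-structures (Theorem~\ref{invariant action}), and a Krull--Schmidt argument (Lemma~\ref{supp by vanishing ideal}) to upgrade set-theoretic support of the HN factors to scheme-theoretic support. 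None of these ingredients appears in your proposal, so the step you yourself flag as ``the technical heart of the argument'' remains unproved.
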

In the work
\cite{LMPSZ}, the authors show that the stability conditions can always deform to very general fibers in a family.
Combining this with Theorem~\ref{thm:Kummer}, we have the following consequence.
\begin{corollary}
    There exist stability conditions on the derived category of a very general projective hyper-K\"{a}hler manifold of generalized Kummer type.
\end{corollary}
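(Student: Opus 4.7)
The plan is to combine Theorem~\ref{thm:Kummer} with the deformation theorem of \cite{LMPSZ}, which propagates stability conditions from a single fiber of a smooth projective family of HK manifolds to a very general fiber in the same connected family. So it suffices to exhibit, on each connected component of the moduli space of polarized HK manifolds of generalized Kummer type, at least one point carrying a stability condition supplied by Theorem~\ref{thm:Kummer}.

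First, I would take $S_0 = E_1 \times E_2$ to be a literal product of two elliptic curves (so that the group $G$ in the hypothesis of Theorem~\ref{thm:Kummer} is trivial). Theorem~\ref{thm:Kummer} then supplies a stability condition on the generalized Kummer variety $K_n(S_0)$. By letting $(E_1, E_2)$ and the polarization on $S_0$ vary, the resulting $K_n(S_0)$'s trace out a ``Kummer locus'' $\cL$ inside the moduli space $\cM$ of polarized hyper-K\"ahler manifolds of generalized Kummer type, and every point of $\cL$ carries a stability condition.

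Next, I would verify that every connected component of $\cM$ meets $\cL$. Granted this, in each connected component there is a smooth projective family (the universal family) with a distinguished fiber in $\cL$ carrying a stability condition; applying the deformation theorem of \cite{LMPSZ} then propagates this stability condition to a very general fiber, which is exactly the statement of the corollary.

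The main obstacle is the surjectivity step above, namely showing that the Kummer locus $\cL$ meets every connected component of $\cM$. Since the components of $\cM$ are indexed by monodromy-orbits of primitive polarization classes in the Beauville--Bogomolov lattice of Kummer type, this reduces to checking that every such orbit is represented by some polarization on a $K_n(E_1 \times E_2)$. This is a tractable lattice-theoretic computation using the standard description of polarizations on $K_n(S)$ in terms of polarizations on $S$ together with the exceptional divisor, and should follow from the surjectivity of the period map for Kummer-type HK manifolds.
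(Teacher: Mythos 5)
Your overall strategy is exactly the paper's: the paper proves this corollary in one sentence, by combining Theorem~\ref{thm:Kummer} with the result of \cite{LMPSZ} that stability conditions deform to very general fibers of a family. So at the level of the main idea there is nothing to compare --- you and the paper do the same thing, and your specialization to $S_0 = E_1 \times E_2$ with $G$ trivial is a legitimate instance of Theorem~\ref{thm:Kummer}.

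The one substantive point is your third step. You correctly observe that if ``very general projective hyper-K\"ahler manifold of generalized Kummer type'' is read as a statement about the very general point of \emph{every} connected component of the moduli space of polarized such manifolds, then one must check that the locus of $K_n(S)$'s (with their natural polarizations) meets every such component, i.e.\ that every monodromy orbit of polarization types is realized on some $K_n(E_1\times E_2)$. You assert this ``should follow from'' a lattice computation and surjectivity of the period map, but you do not carry it out, and it is not obviously true as stated: the polarizations one can realize on an actual generalized Kummer variety are constrained by the N\'eron--Severi lattice of $K_n(S)$, and a priori not every abstract polarization type need arise this way. The paper sidesteps this entirely by delegating the precise formulation of ``very general'' to the deformation theorem of \cite{LMPSZ}; your proof is only complete modulo either closing that surjectivity step or adopting the weaker reading in which ``very general'' refers to the components of the moduli space that do contain generalized Kummer varieties. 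Until one of those is done, this step remains a genuine gap in your write-up, though not a wrong turn.
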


\subsubsection*{Calabi--Yau manifolds via Hilbert schemes of points \cite{OS11}}

The following two theorems together provide examples of strict Calabi--Yau manifolds in arbitrary dimension whose derived categories admit stability conditions.

\begin{theorem}[\textbf{Even dimensional Calabi--Yau manifolds arising from Enriques surfaces}]\label{thm:CY even}
    Let $C_i~(i=1,2)$ be two elliptic curves.
    Consider the actions
    \begin{equation*}
        \begin{aligned}
                -\id \colon C_1 \times C_2 &\to C_1 \times C_2 \\
                (x,y) &\mapsto (-x,-y) \\
                \text{and} \quad
                \tau \colon C_1 \times C_2 &\to C_1 \times C_2 \\
                (x,y) &\mapsto (-x+t_1,y+t_2)
        \end{aligned}
    \end{equation*}
    where $t_i$ is any fixed 2-torsion point on $C_i$.
    We denote the resolution of $(C_1 \times C_1) / \langle -\id \rangle$ by $S'$, which is well-known as a Kummer (K3) surface,
    and the resolution of $(C_1 \times C_2) / \langle -\id ,\tau \rangle$ by $S$.
    One easily checks that $\tau$ descends to $(C_1 \times C_1) / \langle -\id \rangle$ and lifts to a free involution on $S'$ which we will still denote by $\tau$. Therefore, $S = S' / \langle \tau \rangle$ is an Enriques surface.
    Now consider the Hilbert scheme of $n$ points for $n \geq 2$. According to \cite[Theorem 3.1]{OS11}, the universal cover $f \colon \widetilde{S^{[n]}} \to {S^{[n]}}$ is $2:1$ and $\widetilde{S^{[n]}}$ is a strict Calabi--Yau manifold.
    Then, there exist stability conditions on the derived category of $\widetilde{S^{[n]}}$.
\end{theorem}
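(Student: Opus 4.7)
The plan is to propagate stability conditions through the chain
\[
C_1 \times C_2 \;\leadsto\; S' \;\leadsto\; (S')^{[n]} \;\leadsto\; \widetilde{S^{[n]}},
\]
where $S'$ denotes the Kummer K3 surface resolving $(C_1\times C_2)/\langle -\id\rangle$, by combining the input of \cite{LMPSZ} with the group-quotient, finite-cover, and restriction techniques developed earlier in the paper.

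First I would establish stability conditions on $(S')^{[n]}$. Since $S'$ is a Kummer K3 built from the product of two elliptic curves, stability conditions on $S'$ can be produced from the slope stability on each $C_i$ via the product and quotient techniques of the paper. Applying \cite{LMPSZ} to $S'$ then yields stability conditions on the hyper-K\"ahler manifold $(S')^{[n]}$. I would carry out this step so that the resulting stability condition on $(S')^{[n]}$ is invariant under the induced involution $\tau^{[n]}$; this should follow from starting with $\tau$-symmetric input data at the level of $S'$ and, if necessary, averaging over the involution.

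Next I would descend to $\widetilde{S^{[n]}}$ using the Enriques involution $\tau$ on $S'$. The universal cover $\widetilde{S^{[n]}} \to S^{[n]}$ described in \cite{OS11} corresponds to the étale double cover structure governed by $\pi_1(S^{[n]}) = \ZZ/2$, and it can be presented in terms of the $\tau^{[n]}$-action on $(S')^{[n]}$ (concretely, through a $\tau^{[n]}$-stable subvariety together with a finite étale quotient or cover). I would then invoke the restriction and cover induction lemmas of the paper to transfer the $\tau^{[n]}$-invariant stability condition on $(S')^{[n]}$ to one on $\widetilde{S^{[n]}}$.

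The main obstacle I anticipate is the restriction step: while restriction of stability conditions is precisely the new technique developed in the paper, applying it to the specific embedding (or cover) relating $\widetilde{S^{[n]}}$ to $(S')^{[n]}$ will require verifying its technical hypotheses (positivity, transversality, and compatibility of hearts) in this particular geometric setting. A secondary difficulty is to make the identification of $\widetilde{S^{[n]}}$ with a natural $\tau^{[n]}$-equivariant variety fully precise, so that the induction machinery applies cleanly.
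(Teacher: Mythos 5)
There is a genuine gap, and it sits exactly where you flag a ``secondary difficulty.'' The universal cover $\widetilde{S^{[n]}}\to S^{[n]}$ is the canonical double cover of the Hilbert scheme of the \emph{Enriques} surface $S$; it is a strict Calabi--Yau manifold, in particular not hyper-K\"ahler, so it cannot be $(S')^{[n]}$, nor an \'etale cover or quotient of it, nor a fiber of any natural fibration on it. Moreover $\tau^{[n]}$ does not act freely on $(S')^{[n]}$ (subschemes of the form $\{x,\tau x\}$ are fixed for $n=2$, and similarly in general), so $(S')^{[n]}/\langle\tau^{[n]}\rangle$ is singular and is not $S^{[n]}$. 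There is therefore no ``$\tau^{[n]}$-stable subvariety together with a finite \'etale quotient or cover'' presenting $\widetilde{S^{[n]}}$ inside $(S')^{[n]}$, and the restriction technique (which in this paper means restricting to fibers of maps whose target has finite Albanese) plays no role in this theorem at all. Making this identification precise is not a technicality to be checked later; it is the step that does not exist, and the route through $(S')^{[n]}$ and \cite{LMPSZ} does not reach $\widetilde{S^{[n]}}$.

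The paper takes a different path: it identifies $\dCat(S^{[n]})$ with the equivariant category $\dCat\bigl((C_1\times C_2)^n\bigr)^{(\ZZ/2\times\ZZ/2)^n\rtimes\fS_n}$ via Lemma~\ref{successive invariants}, Lemma~\ref{exterior tensor product} and Theorem~\ref{BKR Haiman}, produces invariant stability conditions on the product of elliptic curves by Proposition~\ref{special stability via curve product} and Theorem~\ref{pre-restriction to end}, and descends them to $S^{[n]}$ by Theorem~\ref{inducing stability conditions}. The key remaining step --- which your proposal does not identify --- is the \emph{lift} from $S^{[n]}$ to its double cover $\widetilde{S^{[n]}}$: by Proposition~\ref{inducing stability conditions abelian} this requires the stability condition on $S^{[n]}$ to be invariant under $-\otimes\omega_{S^{[n]}}$ (the dual $\ZZ/2$-action of Example~\ref{serre invariance}). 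The paper verifies invariance of the central charge by a Chern character computation showing $\ch(\omega_{S^{[n]}})=1$, and then upgrades numerical invariance to invariance of the full stability condition using the injectivity of the map $(\phi_\bullet(\cO_z),\cZ)$ in Theorem~\ref{pre-restriction to end}, which is available precisely because the stability condition was built on (an equivariant category of) a product of elliptic curves. Your suggested ``averaging over the involution'' has no meaning on the stability manifold, and if the stability condition on the $2n$-fold were produced via \cite{LMPSZ} rather than through the product of curves, that injectivity argument would not be available to you.
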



\begin{theorem}[\textbf{Odd dimensional Calabi--Yau manifolds arising from bielliptic surfaces}] \label{thm:CY odd}
    Let $S$ be a bielliptic surface, i.e., $S \simeq (C_1 \times C_2)/G$ where $C_i~(i=1,2)$ are elliptic curves and $G$ is a finite group acting by translation on $C_1$ and by automorphism (of elliptic curves) on $C_2$. 
    Now consider the Hilbert scheme of $n$ points for $n \geq 2$. 
    According to \cite[Theorem~3.5]{OS11},
    we can find a finite \'{e}tale covering $f \colon \widetilde{S^{[n]}} \to S^{[n]}$ such that $\widetilde{S^{[n]}}\simeq X \times C_2$ and $X$ is a strict Calabi--Yau manifold.
    Then, there exist stability conditions on the derived category of $X$.
\end{theorem}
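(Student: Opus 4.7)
My plan is to prove Theorem~\ref{thm:CY odd} in parallel with Theorem~\ref{thm:Kummer}: first construct a Bridgeland stability condition on the finite \'etale cover $\widetilde{S^{[n]}}\simeq X\times C_2$, and then extract one on $X$ by the \emph{restriction} technique emphasized in the introduction, applied along the Albanese map.

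For the first part, I would transport a stability condition through three intermediate stages. Starting from the abelian surface $C_1\times C_2$, I would obtain a stability on the Hilbert scheme $(C_1\times C_2)^{[n]}$ via the product-of-curves construction of \cite{Liu21} combined with the methods of \cite{LMPSZ}. I would then apply the group quotient technique, using the free $G$-action on $C_1\times C_2$, to descend to a stability condition on $S^{[n]}$. This descent is not entirely automatic because even when $G$ acts freely on $C_1\times C_2$ its induced action on the Hilbert scheme typically has fixed points (configurations of points that some $g\in G$ permutes as sets), so the descent must be formulated via an equivariant heart. Finally, pulling back along the finite \'etale cover $f\colon\widetilde{S^{[n]}}\to S^{[n]}$ via the finite cover technique produces a stability condition on $\widetilde{S^{[n]}}\simeq X\times C_2$.

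For the second part, I would invoke the restriction technique to pass from $X\times C_2$ down to $X$. Since $X$ is strict Calabi--Yau we have $H^1(X)=0$, so the Albanese variety of $X\times C_2$ is identified with $C_2$ and the Albanese map is the second projection; the fiber over any $p\in C_2$ is $X\times\{p\}\cong X$. Restricting the stability condition on $\widetilde{S^{[n]}}$ to such a fiber yields the desired stability condition on $X$. This mirrors the proof of Theorem~\ref{thm:Kummer}, where stability on a generalized Kummer variety is obtained as the restriction to the Albanese fiber of an ambient Hilbert scheme.

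I expect the restriction step to be the main obstacle. Restricting a Bridgeland stability from an ambient variety to a subvariety is notoriously delicate: one must verify that derived restriction carries the ambient heart into a heart on the fiber, and one must establish a Bogomolov--Gieseker type inequality linking ambient and restricted Chern characters. The geometry is favorable here---the fiber $X\times\{p\}$ has trivial normal bundle and the projection is smooth---but one still has to check that the stability produced at the end of the first part is of the specific shape (e.g.\ a tilt associated with a product polarization) that the restriction theorem of the paper actually accommodates. A secondary difficulty is the $G$-descent in the first part, where compatibility of the equivariant heart with the subsequent \'etale pullback and Albanese-fiber restriction must be controlled throughout.
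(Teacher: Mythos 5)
Your overall architecture --- build a stability condition on the cover $\widetilde{S^{[n]}}\simeq X\times C_2$ and then restrict to the Albanese fiber $X$ --- is exactly the paper's, and the second half of your plan is essentially right: the paper does this via Theorem~\ref{thm:main}, and no Bogomolov--Gieseker input is needed there, since the support property is handled by passing to the coimage lattice as in Lemma~\ref{formal restriction}. The genuine gap is in your first half. You propose to put a stability condition on $(C_1\times C_2)^{[n]}$ and then ``descend by the free $G$-action'' to $S^{[n]}$. But $S^{[n]}$ is \emph{not} the quotient of $(C_1\times C_2)^{[n]}$ by the induced diagonal $G$-action, and $\dCat(S^{[n]})$ is not the corresponding equivariant category: by Haiman/BKR one has $\dCat\bigl((C_1\times C_2)^{[n]}\bigr)^{G_{\mathrm{diag}}}\simeq\dCat\bigl((C_1\times C_2)^n\bigr)^{\fS_n\times G_{\mathrm{diag}}}$, whereas $\dCat(S^{[n]})\simeq\dCat(S^n)^{\fS_n}\simeq\dCat\bigl((C_1\times C_2)^n\bigr)^{G^n\rtimes\fS_n}$ requires the full wreath product $G^n\rtimes\fS_n$ (of order $|G|^n\, n!$, not $|G|\, n!$). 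The paper therefore never passes through $(C_1\times C_2)^{[n]}$: it produces a $(G^n\rtimes\fS_n)$-invariant stability condition directly on the $2n$-fold product of elliptic curves $(C_1\times C_2)^n$ (Proposition~\ref{special stability via curve product} together with the invariance guaranteed by Theorem~\ref{pre-restriction to end}) and descends it in one step by Theorem~\ref{inducing stability conditions}. Your diagnosis of the difficulty as ``fixed points of the $G$-action on the Hilbert scheme'' misses that the target of your proposed descent is the wrong variety altogether.

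A second, smaller omission: lifting from $S^{[n]}$ to the cover $\widetilde{S^{[n]}}$ is not automatic. By Proposition~\ref{inducing stability conditions abelian}, one must check that the stability condition on $S^{[n]}$ is invariant under the dual action of the deck group, i.e.\ under tensoring with the torsion line bundles $L$ satisfying $f^*L\simeq\cO_{\widetilde{S^{[n]}}}$. The paper verifies this by showing $\ch(L)=1$, tracing this through the Fourier--Mukai kernel of \eqref{eq:diagram equivalence} to conclude that the central charge is unchanged, and then invoking the injectivity statement of Theorem~\ref{pre-restriction to end} to upgrade invariance of central charges to invariance of the stability conditions themselves. Your proposal treats this pullback as a formality and skips the verification entirely.
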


\subsubsection*{Cynk--Hulek Calabi--Yau manifolds \cite{Cynk-Hulek:2007}}
Let $C_1, \dots , C_n$ be elliptic curves (respectively, elliptic curves with order $3$ automorphisms), and 
\begin{equation*}
    G_n \defeq \{ (a_1, \dots, a_n) \mid \sum a_i =0 \} \subset (\ZZ/2)^n \qquad \text{(respectively, $\subset (\ZZ/3)^n$)}
\end{equation*}
act on $C_1 \times \dots \times C_n$ via factor-wise involution (respectively, via the order $3$ automorphism).
A Cynk--Hulek Calabi--Yau manifold arising from $\ZZ/2$-actions (respectively, $\ZZ/3$-actions) is constructed as a crepant resolution of $(C_1 \times \dots \times C_n) /{G_n}$; see Section~\ref{section:CH} (respectively, Section~\ref{section:CH Z/3}) for more details.

The following theorem provides another two families of strict Calabi--Yau manifolds in arbitrary dimension whose derived categories admit stability conditions.
\begin{theorem}[\textbf{Cynk--Hulek Calabi--Yau manifolds}]\label{thm:CH}    
    Let $X_n$ be the $n$-dimensional Cynk--Hulek Calabi--Yau manifold arising from $\ZZ/2$-actions or $\ZZ/3$-actions on elliptic curves. Then there exist stability conditions on the derived category of $X_n$.
\end{theorem}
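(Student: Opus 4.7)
The plan is to build the stability condition in three stages, combining the product construction, the finite group quotient, and the derived McKay correspondence. I would start on each elliptic curve $C_i$, where Bridgeland stability conditions $\sigma_i$ are classical \cite{Bri07}. Applying the product technique (in the spirit of \cite{Liu21}, and in the refined form developed earlier in the paper), I would assemble an induced stability condition $\sigma = \sigma_1 \boxtimes \dots \boxtimes \sigma_n$ on the product $Y_n \defeq C_1 \times \dots \times C_n$. Choosing the $\sigma_i$ symmetrically, I would arrange that $\sigma$ is invariant under the $G_n$-action; a key observation here is that $-\id$ (respectively, an order-$3$ automorphism of an elliptic curve with $j=0$) acts trivially on the numerical Grothendieck group of $C_i$, so invariance of the central charge is automatic and only the invariance of the heart needs to be verified.

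Next, I would invoke the finite group quotient technique (following \cite{Pol07,MMS09,PPZ23,Del23}, and its refinement in this paper) to transport the $G_n$-invariant stability condition $\sigma$ on $D^b(Y_n)$ to a stability condition $\sigma^{G_n}$ on the equivariant derived category $D^b_{G_n}(Y_n)$. The final step is to apply the derived McKay correspondence of Bridgeland--King--Reid, or a suitable higher-dimensional extension thereof, to produce an equivalence
\[
D^b(X_n) \;\simeq\; D^b_{G_n}(Y_n),
\]
through which $\sigma^{G_n}$ pulls back to a stability condition on $D^b(X_n)$.

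The main obstacle, I expect, lies in this last step. The Bridgeland--King--Reid theorem as originally stated applies in dimensions $\leq 3$, whereas $X_n$ may be of arbitrary dimension. To push through the argument one must identify the Cynk--Hulek resolution, constructed by iterated blowups of the fixed loci, with a $G_n$-Hilbert scheme (or at least show it is derived equivalent to $D^b_{G_n}(Y_n)$) so that a higher-dimensional generalization applies; because $G_n$ acts by Calabi--Yau automorphisms and the crepant resolution exists, such a generalization should indeed be available, but this should be checked carefully case by case. The $\ZZ/3$ case presents an additional subtlety, since one is restricted to elliptic curves with complex multiplication by $\ZZ[\zeta_3]$, and one must verify that the order-$3$ action actually preserves the heart (not merely the central charge) of a suitable Bridgeland stability condition on each factor; this may require choosing the $\sigma_i$ from a specific slice of Bridgeland's stability manifold that is fixed by the induced $\ZZ/3$-action.
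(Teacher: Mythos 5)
Your first three stages (assembling a stability condition on $C_1\times\dots\times C_n$ by the iterated product construction, arranging $G_n$-invariance, and descending to the equivariant category via Theorem~\ref{inducing stability conditions}) coincide with the paper's argument. One small difference: the paper does not verify invariance of the heart directly, but deduces invariance of the whole stability condition from invariance of the central charge and of the phase of skyscraper sheaves, using the injectivity of $\sigma\mapsto(\phi_\sigma(\cO_z),Z)$ on products of positive-genus curves (Theorem~\ref{pre-restriction to end}). The genuine gap is in your last step. You defer the equivalence $\dCat(X_n)\simeq\dCat(C_1\times\dots\times C_n)^{G_n}$ to ``a suitable higher-dimensional extension'' of Bridgeland--King--Reid, on the grounds that $G_n$ acts by Calabi--Yau automorphisms and a crepant resolution exists. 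That justification is precisely the open Bondal--Orlov conjecture, not a theorem, and the paper's closing remark on the $\ZZ/4$ case shows the heuristic actually fails for closely related actions: the fiber-product dimension hypothesis $\dim\bigl(Y\times_{X/G}Y\bigr)\le n+1$ of Theorem~\ref{BKR} is violated there. So ``should be checked carefully case by case'' is standing in for the hardest part of the proof.

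What the paper does instead, and what is missing from your proposal, is the following. The Cynk--Hulek manifold is realized iteratively as $X_n=\Hilb^{G_n^+}_{\can}(X_{n-1}\times C_n)$ for a single cyclic factor $G_n^+\simeq\ZZ/2$ (resp.\ $\ZZ/3$), and Lemma~\ref{successive invariants} (namely $\cC^G\simeq(\cC^H)^{G/H}$) is used to peel off one cyclic quotient at a time. Theorem~\ref{BKR} is valid in arbitrary dimension provided the fiber-product bound holds, and the bulk of Propositions~\ref{equivalence Z/2} and~\ref{equivalence Z/3} is an induction establishing that bound at every stage: one tracks that the fixed locus of the ``odd'' action on $X_k$ is a divisor (in the $\ZZ/3$ case, a divisor together with a codimension-two piece), classifies the invariant clusters over each type of fixed point (Corollary~\ref{Z/3-cluster}), and bounds the nontrivial fiber dimensions of $\tau_{k+1}$ accordingly. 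Without this inductive control of the fixed loci your argument does not close; with it, no conjectural extension of BKR is needed.
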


\begin{remark}
    The case of $\ZZ/2$-action was previously obtained by \cite{PS24} and our argument is similar to theirs. The case of $\ZZ/3$-action is technically more involved.
\end{remark}



\subsection{Restricting stability conditions}
The main novelty of the paper is a technique allowing us to \emph{restrict} stability conditions to closed subvarieties in certain cases. 
Recall that a stability condition $\sigma \in \Stab(X)$ is called geometric if for every point $x\in X$ the skyscraper sheaf $\cO_x$ is $\sigma$-stable, and all skyscraper sheaves are of the same phase.

\begin{theorem} [Theorem~\ref{thm:main}]
    Let $f \colon X \to Y$ be a morphism between smooth projective varieties such that the Albanese morphism of $Y$ is finite. 
    Let $X_0$ be a fiber of $f$ such that $f$ is isotrivial near the fiber $X_0$, and let $i\colon X_0 \to X$ be the closed immersion.
    Then, there is a natural restriction 
    \begin{equation*}
        \Stab_{\Lambda}(X) \to \Stab_{\Lambda_0}(X_0), \qquad (\cP ,Z) \mapsto (\cP_0, Z_0),
    \end{equation*}
    where
    \begin{equation*}
        \cP_0(\phi) = (i_*)^{-1} \cP(\phi) 
        \quad \text{and} \quad  Z_0 =Z \circ [i_*] .
    \end{equation*}
    Here, if $(\cP,Z)$ satisfies the support property with respect to $\K(X) \xdhrightarrow{v} \Lambda$,    
    then $(\cP_0,Z_0)$ satisfies the support property with respect to
    $\K( X_0 ) \xdhrightarrow{} \Lambda_0$, which is defined as the coimage component of the composition $\K(X_0) \xrightarrow{[i_*]} \K(X) \xdhrightarrow{v} \Lambda$.
    In particular, the restriction respects geometric stability conditions.
\end{theorem}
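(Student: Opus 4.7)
The plan is to verify directly the axioms of a Bridgeland stability condition for $(\cP_0,Z_0)$: that $\cP_0$ forms a slicing, that $Z_0$ is a compatible central charge, and that the Harder--Narasimhan and support properties hold. The algebraic axioms reduce to calculations with the Koszul formula for the regular closed immersion $i\colon X_0\to X$, while the existence of HN filtrations is the main step and is where the hypotheses on $f$ and $Y$ enter.

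For the algebraic verifications, $\cP_0(\phi+1)=\cP_0(\phi)[1]$ and extension-closedness of each $\cP_0(\phi)$ are immediate from the corresponding properties of $\cP$ and exactness of $i_*$. For orthogonality, take $E\in\cP_0(\phi_1)$ and $F\in\cP_0(\phi_2)$ with $\phi_1>\phi_2$; then $\Hom_X(i_*E,i_*F)=0$, and by the adjunction $Li^*\dashv i_*$ together with the Koszul decomposition
\begin{equation*}
Li^* i_* E\simeq \bigoplus_{j\geq 0}\bigl(\wedge^j N_{X_0/X}^\vee\otimes E\bigr)[j]
\end{equation*}
this vanishing translates to
\begin{equation*}
0=\bigoplus_{j\geq 0}\Ext^{-j}_{X_0}\bigl(\wedge^j N_{X_0/X}^\vee\otimes E,\,F\bigr),
\end{equation*}
whose $j=0$ summand gives the desired $\Hom_{X_0}(E,F)=0$. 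The phase compatibility for $Z_0=Z\circ[i_*]$ on each $\cP_0(\phi)$ is immediate from the definition.

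The core step is the HN property. Given $E\in D^b(X_0)$, I would form the HN filtration of $i_*E$ in $(\cP,Z)$ with factors $F_1,\dots,F_n$ of strictly decreasing phases. Each $F_j$ is set-theoretically supported on $X_0$, being a subquotient in the heart of an object so supported. The goal is to show $F_j\cong i_* G_j$ for some $G_j\in D^b(X_0)$; the filtration then descends along $i_*$ using further Koszul/adjunction computations of the relevant Ext groups, producing the HN filtration of $E$ in $(\cP_0,Z_0)$. This descent is precisely where the two hypotheses enter: the isotriviality of $f$ near $X_0$ supplies an \'etale-local product decomposition $X\simeq X_0\times U$ in which objects in the heart concentrated scheme-theoretically on $X_0\times\{y_0\}$ are non-thickened and hence descend from $X_0$; the finite-Albanese assumption on $Y$ then provides the global rigidity needed to propagate this local descent, ruling out infinitesimal deformations of the $F_j$ along Albanese directions.

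Once HN filtrations are in hand, the support property on $\Lambda_0$ is formal: any $\sigma_0$-semistable $E$ pushes forward to a $\sigma$-semistable $i_*E$ of the same phase, so the quadratic form on $\Lambda$ controlling $(\cP,Z)$ restricts through $[i_*]$ to one on $\Lambda_0$ controlling $(\cP_0,Z_0)$. Preservation of geometricity follows at once, since $i_*\cO_x=\cO_x$ identifies skyscrapers on $X_0$ with the corresponding stable skyscrapers on $X$. I expect the HN descent step to be the main obstacle and to account for the bulk of the proof, since it is precisely there that the combination of the isotriviality and finite-Albanese hypotheses must be exploited.
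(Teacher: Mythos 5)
Your outline identifies the right pressure point (HN descent), but two of its load-bearing claims are asserted rather than proved, and in both cases the actual argument is substantial. First, you write that each HN factor $F_j$ of $i_*E$ is set-theoretically supported on $X_0$ ``being a subquotient in the heart of an object so supported.'' This is false for a general heart: subquotients in a tilted heart of an object supported on a closed subset need not be supported there (unlike for coherent sheaves). This is exactly where the finite-Albanese hypothesis on $Y$ enters in the paper, via Proposition~\ref{filtration reduction} (quoted from \cite{LMPSZ}): one composes $f$ with $\alb_Y$ to realize $X_0$ as a fiber over a point of an abelian variety, and the abelian-variety structure is what forces HN factors to stay supported on the fiber. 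Your proposal instead assigns the Albanese hypothesis the role of ``global rigidity ruling out infinitesimal deformations along Albanese directions,'' which misplaces it; the deformation/thickening issue is handled by entirely different means.

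Second, the step from set-theoretic support to $F_j\simeq i_*G_j$ (scheme-theoretic support, i.e.\ lying in the essential image of $i_*$) is the hardest part of the proof and your proposal only gestures at it: ``objects in the heart concentrated scheme-theoretically on $X_0\times\{y_0\}$ are non-thickened and hence descend'' is circular, since showing the HN factors are scheme-theoretically concentrated is precisely what must be proved. The paper's route is: reduce to $X=X_0\times C^n$ via isotriviality and Orlov's formal-completion theorem (Lemma~\ref{trivialization}); embed into $X\times C$ for an auxiliary elliptic curve and use that pushforward along a fiber inclusion preserves HN filtrations for Liu's stability conditions (Corollary~\ref{pushforward of filtration}); pass to $\dCat_{X_0}(X_0\times\PP^{n+1})$ and invoke triviality of the $\GL_{n+1}$-action on the $t$-structure (Theorem~\ref{invariant action}, resting on the open heart property); and finally convert this invariance into the vanishing of the action maps $\rho(t_k)$ and conclude $E_k\simeq i_*p_*E_k$ by Krull--Schmidt (Lemmas~\ref{supp by invariance}, \ref{supp by extendability}, \ref{supp by vanishing ideal}). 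Descent of the connecting morphisms then uses the split triangle for $i^*i_*$ (Lemma~\ref{supp of morphisms}), close to the Koszul computation you sketch. Without some substitute for the invariance-plus-Krull--Schmidt mechanism, the proposal does not close the main gap.
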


The proof of this theorem proceeds in the following two steps.
Throughout this paper, we use the notation $\dCat(X)$ for the derived category of $X$, and $\dCat_Y(X)$ for its full subcategory whose objects are (set-theoretically) supported on a closed subset $Y$ of $X$; see Notation~\ref{notation} for more details.

\textbf{Step 1} As observed in \cite{LMPSZ}, for any stability condition $\sigma$ on $\dCat(X)$ and any object $F$ in $\dCat_{X_0}(X)$, all the Harder–Narasimha factors of $F$ with respect to $\sigma$ lie in $\dCat_{X_0}(X)$ as well.
In other words, we can restrict a stability condition from $\dCat(X)$ to $\dCat_{X_0}(X)$.
We summarize the related results from \cite{LMPSZ} in Section~\ref{section:restrict set} without proof. 

\textbf{Step 2} (Proposition~\ref{restriction})
Furthermore, if the object $F$ lies in the essential image of $\dCat(X_0)$, then the Harder--Narasimhan (HN) filtration of $F$ with respect to $\sigma$ also comes from $\dCat(X_0)$.
In other words, we can actually restrict the stability condition $\sigma$ further to $\dCat(X_0)$.

\bigskip

We also get the following by-product which addresses the stability of pushforward objects.
\begin{corollary}[Corollary~\ref{pushforward of stable obj g>1}]
    Let $F \in \dCat(X)$ be $\sigma$-stable, where $\sigma \in \Stab(X)$, and let $i \colon X \to X \times C$ be the inclusion map of any fiber of the natural projection $q \colon X \times C \to C$, where $C$ is any smooth projective curve of genus at least one. 
    Then for any $\sigma^{s,t,\beta} \in \Stab(X \times C)$ constructed by Liu \cite{Liu21} (see Theorem~\ref{stabilities on product}), $i_*F$ is $\sigma^{s,t,\beta}$-stable.
\end{corollary}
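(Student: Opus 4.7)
The plan is to apply the main restriction theorem (Theorem~\ref{thm:main}) to the second projection $q \colon X \times C \to C$. The hypotheses are readily verified: since $g(C) \geq 1$, the Albanese morphism of $C$ is either an isomorphism (when $g(C) = 1$) or a closed immersion via an Abel--Jacobi map, and hence finite; and $q$ is literally a trivial fibration, so in particular isotrivial near each fiber. Feeding $\sigma^{s,t,\beta}$ through the restriction map therefore produces a stability condition $\sigma'$ on $\dCat(X)$, whose central charge is $Z^{s,t,\beta}\circ [i_*]$ and whose slicing is the one induced by $i_*$.

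The key technical step is to identify $\sigma'$ with $\sigma$ up to the natural $\widetilde{\GL}^+_2(\RR)$-action (which preserves the set of stable objects). This requires unpacking Liu's construction from Theorem~\ref{stabilities on product}: his central charge $Z^{s,t,\beta}$ on $X \times C$ is assembled from $Z_\sigma$ together with a curve datum on $C$, so the composition $Z^{s,t,\beta}\circ[i_*]$ is forced to be a nonzero complex scalar multiple of $Z_\sigma$; a parallel check on the hearts shows that the heart of $\sigma^{s,t,\beta}$, intersected with the essential image of $i_*$, recovers (a shift of) the heart of $\sigma$. This bookkeeping is the main, though essentially routine, obstacle in the argument, and it is the only place where Liu's specific formulas enter.

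Granting the identification $\sigma' \sim \sigma$, the rest follows formally from Steps~1 and~2 in the outline preceding the corollary. Suppose $i_*F$ admits a destabilizing short exact sequence $0 \to E \to i_*F \to Q \to 0$ in the heart of $\sigma^{s,t,\beta}$. By Step~1, both $E$ and $Q$ automatically lie in $\dCat_X(X \times C)$ since $i_*F$ does; by Step~2 (Proposition~\ref{restriction}), the entire HN (and, for the semistable case, Jordan--H\"older) filtration of $i_*F$ is of the form $i_*(-)$ for sub-objects in the heart of $\sigma'$ on $\dCat(X)$. Applying $(i_*)^{-1}$ and translating phases through the identification $\sigma' \sim \sigma$ then produces a destabilizer of $F$ with respect to $\sigma$, contradicting the assumed stability of $F$. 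The same mechanism, applied to a would-be proper sub-object of $i_*F$ of the same phase, upgrades semistability to stability and completes the proof.
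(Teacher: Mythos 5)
Your overall route is the paper's: since $g(C)\geq 1$ the Albanese map of $C$ is finite and $q$ is (iso)trivial, so Theorem~\ref{thm:main} restricts $\sigma^{s,t,\beta}$ along $i_*$ to a stability condition $\sigma'$ on $\dCat(X)$, and one then identifies $\sigma'$ with $\sigma$. Your identification step is fine, and in fact simpler than you suggest: by Example~\ref{extending is aligned} one has $Z^{s,t,\beta}\circ[i_*]=Z_\sigma$ on the nose, and $i_*$ maps the heart of $\sigma$ into $\cT^{t,\beta}\subset\cA_C^{t,\beta}$ (with the converse forced by \eqref{eq:abcd}, since $c$ vanishes on $C$-torsion objects), so $\sigma'=\sigma$ exactly and no $\widetilde{\GL}{}^+_2(\RR)$-ambiguity arises. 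This is precisely the content of the corollary immediately preceding the statement ($\sigma^{s,t,\beta}|_X=\sigma$).

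The genuine gap is in your last paragraph. You take an \emph{arbitrary} destabilizing short exact sequence $0\to E\to i_*F\to Q\to 0$ and assert that ``by Step~1'' $E$ and $Q$ lie in $\dCat_X(X\times C)$. Proposition~\ref{filtration reduction} only controls the \emph{Harder--Narasimhan factors} of an object supported on the fiber, not arbitrary subobjects in the heart; likewise Proposition~\ref{restriction} only produces the HN filtration from $\dCat(X_0)$. This suffices to rule out non-semistability of $i_*F$ (take $E$ to be the first HN factor), but it does not cover the stability upgrade: a proper nonzero subobject of $i_*F$ of the \emph{same} phase is not an HN factor, so Steps~1--2 as cited say nothing about it, and ``the same mechanism'' does not apply. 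The paper closes this either by the direct argument of Proposition~\ref{destabilizing reduction} (any $C$-torsion sub of $i_*F$ in the tilted heart is forced to be a genuine subobject in $\cA_C$ and pushes down to a subobject of $F$ in $\cA$ with the same central charge), or, once $\sigma=\sigma^{s,t,\beta}|_X$ is known, by the compatibility formalism: $\sigma$ and $\sigma^{s,t,\beta}$ are compatible in the sense of Definition~\ref{compatibility} and numerically compatible, so Lemma~\ref{aligned compatibility} yields that $i_*$ respects stable (not merely semistable) objects. You should route the final step through one of these rather than through Proposition~\ref{filtration reduction}.
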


\subsection{Organization of the paper}
In Section~\ref{sec:pre} we review basics on stability conditions and results on inducing stability conditions via group actions.
In Section~\ref{sec:product} we review the construction of constant $t$-structures on product spaces by Abramovich and Polishchuk \cite{AP06}, as well as the construction of stability conditions on product with a curve by Liu \cite{Liu21}.
In Section~\ref{sec:restrict} we develop techniques to restrict a stability condition to a good closed subvariety.
In Section~\ref{sec:example} we combine all the techniques and give many new examples of stability conditions on higher-dimensional varieties.

\subsection{Acknowledgements} 
I would like to thank Lie Fu for suggesting this interesting project and for his continuous guidance.
I am profoundly grateful to Chunyi Li and Xiaolei Zhao for many insightful discussions and for generously sharing unpublished ideas.
I would also like to thank Soheyla Feyzbakhsh, Yucheng Liu, Emanuele Macr\`{i}, Alexander Perry, Paolo Stellari, Richard Thomas, and Ruxuan Zhang for helpful discussions.

\section{Preliminaries}\label{sec:pre}

\subsection{$t$-structures and stability conditions}
We briefly review the definition of $t$-structures and stability conditions \cite{Bri07,KS08}. 
Let $\cD$ be a triangulated category.
\begin{definition}
    A \emph{$t$-structure} on $\cD$ is a pair of full subcategories $(\cD^{\leq 0},\cD^{\geq 0})$ satisfying the following conditions:
    \begin{enumerate}
        \item $\cD^{\leq 0}[1] \subset \cD^{\leq 0}$ and $\cD^{\geq 0}[-1] \subset \cD^{\geq 0}$;
        \item $\Hom(T,F)=0$, for every $T\in \cD^{\leq 0}$ and $F \in \cD^{\geq 1}$;
        \item For any object $E\in \cD$, there exists a distinguished triangle
        \begin{equation}\label{eq:t}
            T_E \to E \to F_E \xdashrightarrow{+1}
        \end{equation}
        with $T_E\in \cD^{\leq 0}$ and $F_E \in \cD^{\geq 1}$.
    \end{enumerate}
\end{definition}
Here we used the notation $\cD^{\leq n} \defeq \cD^{\leq 0}[-n]$ and $\cD^{\geq n}\defeq \cD^{\geq 0}[-n]$.
We will also write $\cD^{[a,b]}=\cD^{\leq b} \cap \cD^{\geq a}$, for all $a,b \in \ZZ \cup \{\pm \infty \}$ with $a \leq b$.

\begin{remark}
    This definition is a direct analogue of the notion of a \emph{torsion pair} $(\mathcal{T}, \mathcal{F})$ 
    for an abelian category: condition (ii) corresponds to $\Hom(\mathcal{T}, \mathcal{F}) = 0$, 
    and condition (iii) corresponds to the existence for every object $E$ of a short exact sequence 
    $0 \to T_E \to E \to F_E \to 0$ with $T_E \in \mathcal{T}$ and $F_E \in \mathcal{F}$.
    The additional condition (i) is specific to the triangulated setting.
\end{remark}

The inclusion $\cD^{\leq n} \subset \cD$ has a right adjoint \emph{truncation functor} $\tau^{\leq n} \colon \cD \to \cD^{\leq n}, E \mapsto \tau^{\leq n} E \defeq T_E $ (as in \eqref{eq:t}). 
Similarly, $\tau^{\geq n} \colon \cD \to \cD^{\geq n}$ defines a left adjoint to the inclusion $\cD^{\geq n} \subset \cD$. Thus, \eqref{eq:t} can be written as a distinguished triangle
\begin{equation*}
    \tau^{\leq 0} E \to E \to \tau^{\geq 1} E \xdashrightarrow{+1}.
\end{equation*}

\begin{definition}
    The \emph{heart} of a $t$-structure $(\cD^{\leq 0},\cD^{\geq 0})$ is defined as 
    \begin{equation*}
        \cA \defeq \cD^{\leq 0} \cap \cD^{\geq 0} = \cD^{[0,0]}.
    \end{equation*}
\end{definition}
The heart $\cA$ is an abelian category and short exact sequences in $\cA$ are precisely the distinguished triangles in $\cD$ with all terms in $\cA$, cf. \cite{Beilinson-Bernstein-Deligne:1982}. 
Moreover, one has \emph{cohomology functors}
\begin{equation*}
    \cH^i  \colon \cD \to \cA , \qquad E \mapsto (\tau^{\geq i} \tau^{\leq i}E)[i].
\end{equation*}

\begin{definition}
    Let $\cC$ and $\cD$ be a pair of triangulated categories equipped with $t$-structures.
    A triangulated functor $\Phi \colon \cC \to \cD$ is called \emph{left} (resp. \emph{right}) \emph{$t$-exact} if $\Phi(\cC^{\geq 0}) \subset \cD^{\geq 0}$ (resp. $\Phi(\cC^{\leq 0}) \subset \cD^{\leq 0}$). A \emph{$t$-exact functor} is a functor which is both left and right $t$-exact. 
\end{definition}

\begin{remark}\label{conservatively t-exact}
    Recall that a triangulated functor $\Phi \colon \cC \to \cD$ between triangulated categories is called \emph{conservative}\footnote{The terminology follows \cite{BLM+21}.} if $\Phi(E) \simeq 0$ implies $E \simeq 0$. If $\Phi$ is both conservative and $t$-exact, then $\Phi(E) \in \cD^{[a,b]}$ if and only if $E\in \cC^{[a,b]}$.
\end{remark}

\begin{definition}
    A $t$-structure $(\cD^{\leq 0},\cD^{\geq 0})$ is called \emph{bounded}\footnote{Our terminology follows \cite{Bri07,BLM+21}. It is called nondegenerate in \cite{AP06,Pol07}, and bounded and nondegenerate in \cite{Beilinson-Bernstein-Deligne:1982}.} if $\cD=\bigcup_{n,m\in \ZZ}\cD^{\leq n}\cap \cD^{\geq m}$.
\end{definition}
\begin{remark}[{\cite{LC07}, \cite[Remark 1.15]{Huy14}}]\label{Krull--Schmidt}
    Not every triangulated category $\cD$ admits a bounded $t$-structure. In fact, the existence of a bounded t-structure on $\cD$ implies that $\cD$ is idempotent closed (or, Karoubian). 
    As a consequence, let $R$ be a commutative noetherian ring which is complete and local, and let $\cA$ be an Ext-finite abelian category over $R$. Then the bounded derived category $\dCat^\rmb (\cA)$ is a Krull--Schmidt category\footnote{Recall that an additive category is called Krull-Schmidt category if every object decomposes into a finite direct sum of objects having local endomorphism rings. This implies that up to permutation, any object has a unique way to write as a direct sum of indecomposable objects.}. In particular, for any smooth projective variety $X$ (over a field), its derived category $\dCat(X)$ is Krull--Schmidt. 
\end{remark}

A bounded $t$-structure is uniquely determined by its heart.
\begin{lemma}[{\cite[Lemma 3.1]{Bri08}}]
    If $\cA \subset \cD$ is a full additive subcategory of a triangulated category $\cD$, then $\cA$ is the heart of a bounded $t$-structure on $\cD$ if and only if
    \begin{enumerate}
        \item For any $k_1>k_2$ and $E_i \in \cA[k_i]$, we have $\Hom_\cD(E_1,E_2)=0$;
        \item Every nonzero object $E \in \cD$ fits into a diagram of a finite collection of distinguished triangles
        \begin{equation*}
            \begin{tikzcd}[column sep=tiny]
            0=E_0 \arrow[rr] &                        & E_1 \arrow[rr] \arrow[ld] &                        & E_2 \arrow[rr] \arrow[ld] &  & \cdots \arrow[rr] &  & E_{n-1} \arrow[rr] &                        & E_n=E \arrow[ld] \\
                             & A_1 \arrow[lu, dashed, "+1"] &                           & A_2 \arrow[lu, dashed, "+1"] &                           &  &                   &  &                    & A_n \arrow[lu, dashed, "+1"] &                 
            \end{tikzcd}
        \end{equation*}
        with  $A_i \in \cA[k_i]$ for some sequence $k_1>k_2>\dots >k_n$ of integers.
    \end{enumerate}
\end{lemma}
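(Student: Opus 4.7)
The plan is to prove both directions separately, with the content concentrated in the backward implication.

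For the forward direction, assume $\cA$ is the heart of a bounded $t$-structure $(\cD^{\leq 0}, \cD^{\geq 0})$. Condition (i) is immediate from $t$-structure orthogonality: since $\cA = \cD^{[0,0]}$, any $E_i \in \cA[k_i]$ lies in $\cD^{[-k_i,-k_i]}$, and the hypothesis $k_1 > k_2$ gives $-k_1 + 1 \leq -k_2$, so $E_2 \in \cD^{\geq -k_1+1}$ while $E_1 \in \cD^{\leq -k_1}$, forcing $\Hom(E_1, E_2) = 0$. For condition (ii), boundedness places every nonzero $E$ in some $\cD^{[a,b]}$. The truncation triangle
\[
\tau^{\leq a} E \to E \to \tau^{\geq a+1} E \xdashrightarrow{+1}
\]
isolates the first piece $A_1 := \tau^{\leq a}E \in \cD^{[a,a]} = \cA[-a]$ with $k_1 = -a$, and iterating this truncation procedure on $\tau^{\geq a+1} E \in \cD^{[a+1,b]}$ (skipping indices where $\cH^j(E)$ vanishes) produces the desired Postnikov diagram with strictly decreasing shifts $k_1 > k_2 > \cdots > k_n$.

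For the backward direction, I would define
\begin{align*}
\cD^{\leq 0} &:= \{ E \in \cD : E \cong 0 \text{ or } E \text{ admits a filtration as in (ii) with all } k_i \geq 0 \}, \\
\cD^{\geq 0} &:= \{ E \in \cD : E \cong 0 \text{ or } E \text{ admits a filtration as in (ii) with all } k_i \leq 0 \}.
\end{align*}
Shift-stability $\cD^{\leq 0}[1] \subset \cD^{\leq 0}$ and $\cD^{\geq 0}[-1] \subset \cD^{\geq 0}$ is immediate, since shifting translates every $k_i$ uniformly, and boundedness is tautological because any filtration of $E$ with $k_1 > \cdots > k_n$ witnesses $E \in \cD^{[-k_1,-k_n]}$. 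The Hom-vanishing $\Hom(T, F) = 0$ for $T \in \cD^{\leq 0}$ and $F \in \cD^{\geq 1}$ follows from a two-fold induction on the filtration lengths of $T$ and $F$: the long exact Hom-sequences attached to their Postnikov triangles reduce the problem to the pointwise vanishing $\Hom(A_i, A'_j) = 0$ with $A_i \in \cA[k_i]$, $k_i \geq 0$, and $A'_j \in \cA[k'_j]$, $k'_j \leq -1$, which is precisely condition (i).

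For the existence of the truncation triangle of an arbitrary $E$, I would take any filtration of $E$, locate the unique index $j$ with $k_j \geq 0 > k_{j+1}$ (with the edge cases $j=0$ and $j=n$ handled trivially), and apply the octahedral axiom iteratively to the composition $E_j \to E_{j+1} \to \cdots \to E_n = E$ to produce a distinguished triangle $E_j \to E \to C \xdashrightarrow{+1}$ in which $C$ acquires a Postnikov filtration with pieces $A_{j+1}, \ldots, A_n$ and all shifts $\leq -1$. Thus $E_j \in \cD^{\leq 0}$ and $C \in \cD^{\geq 1}$, verifying the truncation axiom. The identification of the heart $\cD^{\leq 0} \cap \cD^{\geq 0}$ with $\cA$ is then forced: any filtration with every $k_i$ simultaneously $\geq 0$ and $\leq 0$ must have $k_i = 0$ throughout, and strict monotonicity collapses it to a single piece in $\cA$; conversely, $A \in \cA$ has a trivial length-one filtration.

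The main obstacle I anticipate is the bookkeeping around the iterated octahedral-axiom argument used to split a Postnikov filtration at position $j$: the mechanism is classical but it requires careful tracking of indices to confirm that the produced cone $C$ inherits a coherent Postnikov filtration by $A_{j+1},\ldots,A_n$ and that the connecting triangles of the original tower glue consistently. No new conceptual ingredient beyond (i) and (ii) enters the proof.
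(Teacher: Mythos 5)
The paper does not prove this lemma; it is quoted verbatim from Bridgeland (\cite[Lemma 3.1]{Bri08}), so there is no in-paper argument to compare against. Your proof is the standard one from that source and is correct in all essentials: the forward direction via truncation triangles and semiorthogonality, and the backward direction by declaring $\cD^{\leq 0}$ (resp.\ $\cD^{\geq 0}$) to be the objects filtered by $\cA[k]$ with $k\geq 0$ (resp.\ $k\leq 0$), checking Hom-vanishing by double induction on filtration lengths, and producing truncations by splitting a Postnikov tower at the sign change of the $k_i$ via iterated octahedra. The only step you gloss over is the identification $\cD^{\leq 0}\cap\cD^{\geq 0}=\cA$: an object of the intersection a priori carries \emph{two} filtrations, one with all $k_i\geq 0$ and one with all $k_i\leq 0$, and to conclude that all $k_i=0$ you need the (standard) uniqueness of the filtration in (ii), which follows from condition (i) by the usual Harder--Narasimhan-type argument; this should be stated explicitly but is not a genuine obstruction.
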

This formulation sheds light on the following definition.

\begin{definition}
    A \emph{slicing} $\cP$ on a triangulated category $\cD$ consists of full additive subcategories $\cP(\phi) \subset \cD$ for each $\phi \in \RR$, satisfying the following conditions:
    \begin{enumerate}
        \item For any $\phi\in \RR$, $\cP(\phi+1)=\cP(\phi)[1]$;
        \item If $\phi_1> \phi_2$ and $E_i \in \cP(\phi_i)$, then $\Hom_\cD(E_1,E_2)=0$;
        \item Every nonzero object $E\in \cD$ has a Harder--Narasimhan (HN) filtration, i.e., there exists a finite collection of distinguished triangles
        \begin{equation*}
            \begin{tikzcd}[column sep=tiny]
            0=E_0 \arrow[rr] &                        & E_1 \arrow[rr] \arrow[ld] &                        & E_2 \arrow[rr] \arrow[ld] &  & \cdots \arrow[rr] &  & E_{n-1} \arrow[rr] &                        & E_n=E \arrow[ld] \\
                             & A_1 \arrow[lu, dashed, "+1"] &                           & A_2 \arrow[lu, dashed, "+1"] &                           &  &                   &  &                    & A_n \arrow[lu, dashed, "+1"] &                 
            \end{tikzcd}
        \end{equation*}
        with  $A_i \in \cP(\phi_i)$ for some sequence $\phi_1>\phi_2>\dots >\phi_n$ of real numbers.
    \end{enumerate}
\end{definition}
We write $\phi^+(E) \defeq \phi_1$ and $\phi^-(E) \defeq \phi_m$; moreover, for an interval $I \subset \RR$, we define the following full subcategory
\begin{equation*}
    \cP(I)\defeq \{ E \mid \phi^+(E),\phi^-(E) \in I \}= \langle \cP(\phi) \rangle_{\phi \in I} \subset \cD.
\end{equation*}
We also write $\cP(\leq a) \defeq \cP((\infty,a])$ or $\cP(>b)\defeq \cP((b,\infty))$.
\begin{definition}
    A \emph{pre-stability condition} on a triangulated category $\cD$ is a pair $\sigma=(\cP,Z)$, where $\cP$ is a slicing of $\cD$ and $Z \colon \K(\cD)\to \CC$ is a group homomorphism from the Grothendieck group of $\cD$ to complex numbers, that satisfy the following condition:
        \begin{center}
            for any nonzero $E \in \cP(\phi)$, we have $Z([E]) \in \RR_{>0} \cdot e^{i\pi \phi}$.
        \end{center}
    The nonzero objects of $\cP(\phi)$ are called \emph{$\sigma$-semistable} of phase $\phi$, among which the simple objects are called \emph{$\sigma$-stable} of phase $\phi$. We call $Z$ the \emph{central charge} and will often abuse notation to write $Z(E)$ for $Z([E])$. 
\end{definition}

\begin{definition}
    A pre-stability condition $\sigma=(\cP,Z)$ on $\cD$ satisfies the \emph{support property} (with respect to a surjective group homomorphism $v:\K(\cD) \xdhrightarrow{} \Lambda$ where $\Lambda$ is a finite rank abelian group) if there exists a constant $C>0$ such that for all $\sigma$-semistable objects $0 \neq E \in \cD$ we have
    \begin{equation*}
        \|v(E)\| \leq C |Z(E)|
    \end{equation*}
    where $\| - \|$ is any norm on $\Lambda \otimes \RR$.
    A pre-stability condition that satisfies the support property is called a \emph{stability condition}. If moreover $v$ factors through the numerical Grothendieck group $\K_{\num}(\cD) \defeq \K(\cD) / \ker \chi$, we call it a \emph{numerical stability condition}.
\end{definition}

The set of stability conditions with respect to $v\colon \K(\cD) \to \Lambda$ will be denoted by $\Stab_\Lambda(\cD)$ if $v$ is clear from context. Recall that by an observation of Bridgeland, this set is naturally equipped with the structure of a complex manifold, such that the forgetful map
\begin{align*}
    \cZ \colon \Stab_\Lambda(\cD) &\to \Hom(\Lambda,\CC) , \\
    \sigma=(\cP,Z) &\mapsto Z
\end{align*}
is a local biholomorphism, cf. \cite[Theorem 7.1]{Bri07} and \cite[Proposition A.5]{BMS16}. 

\begin{remark}\label{projection to slicings}
    There is an equivalent way to define a (pre-)stability condition as a pair $\sigma=(\cA,Z)$, where $\cA\subset \cD$ is the heart of a bounded $t$-structure, satisfying analogous conditions, cf. \cite[Section~2]{Bri07}. It is easy to check the equivalence by the assignment $\cA=\cP((0,1])$. One advantage of using slicing is that it is easier to see the assignment (basically the projection to the other factor)
    \begin{align*}
        \Stab_\Lambda(\cD) &\to \left\{ \text{(bounded) $t$-structures $\left((\cD_\phi^{\leq 0},\cD_\phi^{\geq 0})\right)_{\phi \in \RR}$ parametrized by $\phi \in \RR$} \right\} , \\
        \sigma=(\cP,Z) &\mapsto \Big( \big(\cP(>\phi), \cP(\leq \phi+1) \big) \Big)_{\phi \in \RR}
    \end{align*}
    determines $\sigma$ up to a scalar in $\RR_{>0}$.
\end{remark}

In the geometric setting, we adopt the following notation.

\begin{notation} \label{notation}
    For any noetherian scheme $X$, we write $\dCat(X) \defeq \dCat^\rmb(\Coh(X))$ for the bounded derived category of coherent sheaves on $X$.
    It is naturally equipped with the structure of a triangulated category \cite[IV.2]{Gelfand-Manin:1996}
    and we will write $\Stab(X)\defeq \Stab(\dCat(X))$.
    Moreover, if $Y$ is a closed subset of $X$, we write $\dCat_Y(X) \subset \dCat(X)$ consisting of all complexes whose cohomology are supported on $Y$,
    or equivalently,
    the bounded derived category $\dCat^\rmb(\Coh_Y(X))$ of coherent sheaves supported on $Y$,
    \cite[Lemma~2.1]{Orl11}.
    We will write $\Stab(Y)_X \defeq \Stab \left( \dCat_Y(X) \right)$.
\end{notation}

In this setting we have the following definition.
\begin{definition}
    A stability condition $\sigma \in \Stab(X)$ is called \emph{geometric} if for every point $x\in X$, the skyscraper sheaf $\cO_x$ is $\sigma$-stable, and all skyscraper sheaves are of the same phase.
\end{definition}
\begin{remark}\label{geometric}
    By \cite[Proposition~2.9]{FLZ22}, for a numerical stability condition $\sigma$, if all skyscraper sheaves are $\sigma$-stable, then they must have the same phase; in other words, $\sigma$ is geometric.
\end{remark}

\subsection{Equivariant categories}
We briefly review the basics on group action on categories and the associated equivariant categories.
Throughout we assume $G$ is a finite group.
Unlike a group action on sets, if we consider a $G$-action on some category $\cC$, it is not only the data of $G \to \Aut(\cC)$, because $\Aut(\cC)$ contains more data of a group -- there are also 2-morphisms between autoequivalences. Following \cite{Beckmann-Oberdieck:2023}, we have following definition.
\begin{definition}
    Let $G$ be a finite group and $\cC$ be a category.
    An action of $G$ on $\cC$ consist of the following data
    \begin{itemize}
        \item For every $g\in G$, a functor ($1$-morphism) $\phi_g \colon \cC \to \cC$, such that $\phi_1=\id_\cC$.
        \item For every pair $f,g\in G$, a natural equivalence ($2$-morphism) $\phi_{g,f}\colon \phi_g \circ \phi_f  \Rightarrow \phi_{gf}$, such that the two natural ways to compose $\phi_h \circ \phi_g \circ \phi_f \Rightarrow \phi_{hgf}$ coincide.
    \end{itemize}
\end{definition}

Once we have a group action, we can define the equivariant category.

\begin{definition}
Suppose $G$ is a finite group acting on an additive $\CC$-linear category $\cC$. The \emph{equivariant category} $\cC^G$ is defined as follows:
\begin{itemize}
    \item Objects are pairs $(E,\lambda)$ where $E$ is an object of $\cC$ and $\lambda$ is a collection of isomorphisms $\lambda_g \colon E \xrightarrow{\sim} \phi_g(E)$ for $g\in G$ such that the diagram
    \begin{equation*}
    \begin{tikzcd}
    E \arrow[r, "\lambda_g"] \arrow[rrr, "\lambda_{gh}", bend right] & \phi_g(E) \arrow[r, "\phi_g\lambda_h"] & \phi_g\phi_h(E) \arrow[r, "{\phi_{g,h}^E}"] & \phi_{gh}(E)
    \end{tikzcd}
    \end{equation*}
    commutes for all $g,h \in G$. We will call $\lambda$ a \emph{linearization}.
    \item Morphisms from $(E, \lambda)$ to $(E',\lambda')$ are morphisms from $E$ to $E'$ which commutes with linearizations, i.e. such that
    \begin{equation*}
        \begin{tikzcd}
        E \arrow[r, "f"] \arrow[d, "\lambda_g"'] & E' \arrow[d, "\lambda_g'"] \\
        \phi_g(E) \arrow[r, "\phi_g(f)"]              & \phi_g (E')                 
        \end{tikzcd}
    \end{equation*}
    commutes for all $g \in G$.
\end{itemize}
\end{definition}

\begin{remark}
    There is a more fancy way to define this in the language of stable $\infty$-categories, see, for example, \cite{BP23,PPZ23}. We will stay in the classical setting in this paper.
\end{remark}

We states the following lemma which says invariants can be computed successively along normal subgroups and their quotients.
\begin{lemma}\label{successive invariants}
        Let
    \begin{equation*}
        1 \to H \to G \to G/H \to 1
    \end{equation*}
    be a short exact sequence of finite groups, and suppose $\cC$ is an additive $\CC$-linear category admitting a $G$-action.
    Then under the natural $H$-action, the equivariant category $\cC^H$ admits a natural $G/H$-action and we have a canonical equivalence
    \begin{equation*}
        \cC^G \simeq (\cC^H)^{G/H}.
    \end{equation*}
\end{lemma}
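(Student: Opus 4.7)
The plan is to pick a set-theoretic section $s\colon G/H \to G$ of the quotient map with $s(\bar 1)=1$, use it to exhibit a $G/H$-action on $\cC^H$ in the first half, and then build an explicit quasi-inverse pair between $\cC^G$ and $(\cC^H)^{G/H}$ in the second half. Throughout, normality of $H$ is used to ensure that conjugation $g^{-1}(-)g$ preserves $H$, and the coherence $2$-morphisms $\phi_{g,f}$ of the $G$-action on $\cC$ supply all the isomorphisms we need to glue things together.

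For the first half, I would define, for each $\bar g \in G/H$, a functor $\psi_{\bar g}\colon \cC^H \to \cC^H$ whose underlying functor on $\cC$ is $\phi_{s(\bar g)}$. Given $(E,\lambda) \in \cC^H$, the new $H$-linearization $\tilde\lambda$ on $\phi_{s(\bar g)}(E)$ is defined, for $h \in H$, as the composite
\[
\phi_{s(\bar g)}(E) \xrightarrow{\;\phi_{s(\bar g)}(\lambda_{s(\bar g)^{-1}hs(\bar g)})\;} \phi_{s(\bar g)}\phi_{s(\bar g)^{-1}hs(\bar g)}(E) \xrightarrow{\;\phi_{s(\bar g),\,s(\bar g)^{-1}hs(\bar g)}\;} \phi_{hs(\bar g)}(E) \xrightarrow{\;\phi_{h,\,s(\bar g)}^{-1}\;} \phi_h\phi_{s(\bar g)}(E),
\]
which makes sense because normality gives $s(\bar g)^{-1} h s(\bar g) \in H$. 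The $2$-morphisms $\psi_{\bar g,\bar f}\colon \psi_{\bar g}\circ\psi_{\bar f} \Rightarrow \psi_{\bar g \bar f}$ are then assembled by combining the appropriate $\phi_{g,f}$'s with the linearization at the ``defect'' $s(\bar g)s(\bar f)s(\bar g\bar f)^{-1}\in H$. I would check that the cocycle condition for $\tilde\lambda$ and the coherence for $\psi_{\bar g,\bar f}$ both reduce to the cocycle condition for $\lambda$ together with the coherence of $\phi_{g,f}$; independence of the section $s$ (and hence naturality) follows by comparing two sections pointwise through their $H$-valued difference.

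For the equivalence, I would define the restriction functor $R\colon \cC^G \to (\cC^H)^{G/H}$ sending $(E,\lambda)$ to the pair $\bigl((E,(\lambda_h)_{h\in H}),\,(\lambda_{s(\bar g)})_{\bar g\in G/H}\bigr)$, where the inner pair is the evident $H$-equivariant object and the outer $G/H$-linearization reuses the components $\lambda_{s(\bar g)}$ of $\lambda$. In the other direction, every $g\in G$ can be written uniquely as $g = h\cdot s(\bar g)$ with $h\in H$, and for an object $((E,\mu),\nu)$ of $(\cC^H)^{G/H}$ I would set
\[
\lambda_g \;:=\; \phi_{h,\,s(\bar g)}^{-1}\circ \phi_h(\nu_{\bar g})\circ \mu_h,
\]
giving an inverse functor $I\colon (\cC^H)^{G/H} \to \cC^G$. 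The verification that $R$ and $I$ are mutually quasi-inverse is then a direct check that restriction followed by combination, and vice versa, recovers the original linearization up to the canonical 2-isomorphisms built into the definitions.

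The main obstacle is the $2$-categorical bookkeeping: because the $G$-action on $\cC$ is only a pseudo-action (composition holds up to the natural isomorphisms $\phi_{g,f}$) and the section $s$ is generally not a group homomorphism, the cocycle condition for $\lambda_g := \phi_{h,s(\bar g)}^{-1}\circ \phi_h(\nu_{\bar g})\circ \mu_h$ requires juggling the $H$-cocycle for $\mu$, the $G/H$-cocycle for $\nu$, and several instances of the pentagon-style coherence for the $\phi_{g,f}$'s, with the ``defect'' $s(\bar g)s(\bar f)s(\bar g \bar f)^{-1}$ absorbed via $\mu$. Once this diagram chase is carried out carefully in one direction, all remaining verifications (including independence of $s$) are formal consequences.
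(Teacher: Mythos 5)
Your proposal is correct in substance. The paper does not actually prove this lemma --- it simply cites \cite[Proposition~3.3]{Beckmann-Oberdieck:2023} --- so there is no in-paper argument to compare against; what you have written is essentially the standard proof that the cited reference carries out: choose a set-theoretic section $s$ of $G\to G/H$, use normality of $H$ to conjugate linearizations and get the residual $G/H$-action on $\cC^H$, and then construct the restriction/reassembly pair. One small slip: in your formula for the reassembled linearization you write $\lambda_g := \phi_{h,\,s(\bar g)}^{-1}\circ \phi_h(\nu_{\bar g})\circ \mu_h$, but $\phi_{h,s(\bar g)}^{-1}$ points from $\phi_{hs(\bar g)}(E)$ to $\phi_h\phi_{s(\bar g)}(E)$, so the composite does not typecheck; you want $\phi_{h,\,s(\bar g)}$ itself (no inverse), landing in $\phi_{hs(\bar g)}(E)=\phi_g(E)$. (Your earlier use of $\phi_{h,\,s(\bar g)}^{-1}$ in the definition of $\tilde\lambda$ is in the correct direction.) The remaining work you defer --- the cocycle check for $\lambda_g$ absorbing the defect $s(\bar g)s(\bar f)s(\bar g\bar f)^{-1}$ via $\mu$, and independence of the section --- is indeed routine diagram chasing, so the outline is sound.
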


\begin{proof}
    See, for example, \cite[Proposition~3.3]{Beckmann-Oberdieck:2023}.
\end{proof}

From now on we consider the geometric setting, i.e. take $\cC=\dCat(X)$ and study $\dCat(X)^G$.
If the action is induced by an action of $G$ on $X$, we also have
\begin{equation*}
    \dCat([X/G]) \simeq \dCat(X)^G,
\end{equation*}
where $[X/G]$ is the quotient stack.

\begin{remark}
    In this case, it is naturally to ask whether $\dCat(X)^G \simeq \dCat^\rmb( \Coh(X)^G)$.
    This is true in good situation, for example finite group action in our case. See, for example, \cite[Theorem~3]{Yun:2006}.
\end{remark}
If $G$ acts freely on $X$, then $X/G$ is smooth as well.
In this case, the equivariant category $\dCat([X/G])$ is equivalent to $\dCat(X/G)$.
More precisely, let $\pi \colon X \to X/G$ be the quotient map. One has natural equivalences
\begin{equation}\label{eq:equivalence naive}
    \begin{tikzcd}
        \dCat(X/G) \arrow[rr, "{(\pi^*, \lambda_\nat)}", bend left] &  & \dCat([X/G]) \arrow[ll, "\pi_*(-)^G", bend left] ,
    \end{tikzcd}
\end{equation}
where $(\pi^*, \lambda_\nat)$ is induced by pullback along $X \xrightarrow{\pi} X/G$ together with the natural linearization given by pullback induced by group action of $G$,
and 
$\pi_*(-)^G$ is induced by taking the $G$-invariant part (identifying sections via $\lambda$) of the pushforward.

For non-free actions, the quotient $X/G$ may have singularities. Nevertheless, under certain condition, the following celebrated theorem of Bridgeland--King--Reid suggests that one may still establish an equivalence between $\dCat([X/G])$ and the derived category of $Y$, where $Y$ is a crepant resolution of $X/G$.
\begin{theorem}[{\cite[Theorem~1.1]{BKR01}}]\label{BKR}
    Let $X$ be a quasi-projective variety of dimension $n$ and 
    let $G\subset \Aut(X)$ be a finite group of automorphisms of $X$ such that $\omega_X$ is locally trivial as a $G$-sheaf.
    Let $Y = \Hilb^G_\can (X) \subset \Hilb^G (X)$ be the irreducible component of the Hilbert scheme of $G$-clusters on $X$ containing the free orbits.
    Write $\cZ$ for the universal closed subscheme $Z \subset Y \times X$, and $p$ and $q$ for its projections to Y and X.
    There is a commutative diagram of schemes
    \begin{equation}\label{eq:diagram BKR}
\begin{tikzcd}
                      & \cZ \arrow[ld, "p"'] \arrow[rd, "q"] &                     \\
Y \arrow[rd, "\tau"'] &                                      & X \arrow[ld, "\pi"] \\
                      & X/G                                  &                    
\end{tikzcd}
    \end{equation}
    in which $q$ and $\tau$ are birational, $p$ and $\pi$ are finite, and $p$ is flat.
    We may view $X/G$ and $Y$ are equipped with the trivial $G$-action, hence all the morphisms are $G$-equivariant.
    Define the functor
    \begin{equation}\label{eq:Bondal--Orlov}
        \Phi=q_* \circ p^* \colon \dCat(Y) \to \dCat([X/G]) .
    \end{equation}
    Suppose that the fiber product
    \begin{equation*}
        Y \times_{X/G} Y = \left\{ (y_1,y_2) \in Y \times Y \mid \tau(y_1) = \tau(y_2)  \right\} \subset Y \times Y
    \end{equation*}
    has dimension $\leq n+1$.
    Then $\tau$ is a crepant resolution and $\Phi$ is an equivalence of categories.
\end{theorem}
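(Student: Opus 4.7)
The plan is to view $\Phi = q_* p^*$ as a $G$-equivariant Fourier--Mukai transform with kernel $\cO_\cZ$, and then apply a Bondal--Orlov-style criterion: it is enough to check on the class of equivariant skyscraper sheaves $\cO_y$ for closed points $y \in Y$ that $\Hom^i(\Phi\cO_y, \Phi\cO_{y'}) = 0$ unless $y = y'$ and $0 \leq i \leq n$, that $\Hom(\Phi\cO_y, \Phi\cO_y) = \CC$, and that $\Phi\cO_y \otimes \omega_{X/G} \simeq \Phi\cO_y$ (the latter using the hypothesis that $\omega_X$ is locally trivial as a $G$-sheaf, so that $\omega_{X/G}$ makes sense as an invertible $G$-sheaf). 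Concretely, $\Phi\cO_y$ is the structure sheaf of the length-$|G|$ cluster $Z_y = q(p^{-1}(y)) \subset X$, equipped with its natural $G$-linearization.

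First I would construct a left adjoint $\Psi \colon \dCat([X/G]) \to \dCat(Y)$ to $\Phi$, using that $p$ is finite flat and $q$ is proper, and analyze the unit $\eta \colon \id \to \Psi\Phi$ on a skyscraper $\cO_y$. By flat base change, $\Psi\Phi\cO_y$ is computed by a Fourier--Mukai transform whose kernel is supported on the equivariant fiber product $\cZ \times_{[X/G]} \cZ$, so its cohomology is set-theoretically supported on $\{y\} \times_{X/G} Y \subset Y \times_{X/G} Y$. Here the dimension hypothesis $\dim(Y \times_{X/G} Y) \leq n+1$ enters: combined with a Koszul resolution of the diagonal in $Y \times Y$ and the intersection theorem of Peskine--Szpiro/Roberts, it forces the off-diagonal components of the fiber product to have codimension large enough that the corresponding $\Ext^i$ groups vanish outside $\{i=0\}$ for $y \neq y'$ and outside $[0,n]$ for $y = y'$. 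This makes $\eta_{\cO_y}$ an isomorphism, and a standard density argument then extends fully faithfulness from skyscrapers to all of $\dCat(Y)$.

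To promote fully faithfulness to an equivalence I would check that the essential image of $\Phi$ is stable under the Serre functor of $\dCat([X/G])$; the resulting Serre-functor comparison $\Phi \circ (-\otimes \omega_Y) \simeq (-\otimes \omega_{[X/G]}) \circ \Phi$ then forces an isomorphism $\omega_Y \simeq \tau^* \omega_{X/G}$, which is precisely the statement that $\tau$ is a crepant resolution. Thus both conclusions of the theorem fall out of the same Serre-functor bookkeeping. The hard part, and the single nontrivial input, is the dimension-to-vanishing step in the middle paragraph: translating $\dim(Y \times_{X/G} Y) \leq n+1$ into the required cohomological vanishing is exactly where intersection theory must be invoked, and it is where the bulk of the technical work in \cite{BKR01} is concentrated.
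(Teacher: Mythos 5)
This theorem is not proved in the paper at all: it is imported verbatim from \cite{BKR01} (the paper only cites it), so the only meaningful comparison is with the original Bridgeland--King--Reid argument. Your sketch assembles the right ingredients of that argument (the kernel $\cO_\cZ$, the adjoint $\Psi$, the observation that the kernel of $\Psi\Phi$ is supported on $Y\times_{X/G}Y$, the intersection theorem of Peskine--Szpiro/Roberts, and the Serre-functor bookkeeping for essential surjectivity and crepancy), but it contains a genuine logical gap: you treat $Y$ as smooth from the outset. The Bondal--Orlov criterion you invoke, the bound $0\le i\le n$ on $\Hom^i(\Phi\cO_y,\Phi\cO_y)$, the condition $\Hom(\Phi\cO_y,\Phi\cO_y)=\CC$, Serre duality on $Y$, and the identification of $\omega_Y$ as a line bundle all presuppose nonsingularity of $Y=\Hilb^G_\can(X)$ --- which is \emph{a priori} just an irreducible component of a Hilbert scheme and is not known to be smooth. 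Establishing that $Y$ is nonsingular is an essential part of the conclusion (``$\tau$ is a crepant resolution'' is meaningless otherwise), and it is exactly what the intersection-theorem step in \cite{BKR01} delivers: their criterion (Theorem~2.3 of \emph{loc.\ cit.}) deliberately imposes \emph{no} condition on the diagonal pairs $y_1=y_2$ --- in particular one never verifies $\Hom(\cO_{Z_y},\cO_{Z_y})^G=\CC$ directly, which for a non-reduced cluster is not accessible --- and instead deduces simultaneously that the kernel of $\Psi\Phi$ is a line bundle on the diagonal, that $Y$ is smooth, and that $\Phi$ is fully faithful.

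Concretely, the step ``this makes $\eta_{\cO_y}$ an isomorphism, and a standard density argument then extends fully faithfulness from skyscrapers to all of $\dCat(Y)$'' cannot be run as stated: for singular $Y$ the skyscrapers are not a spanning class in the strong sense needed, the complexes $\Psi\Phi(\cO_y)$ need not have bounded amplitude uniformly, and the off-diagonal vanishing you want does not follow from the dimension bound alone without the full force of the intersection theorem applied to the kernel as an object of $\dCat(Y\times Y)$ (BKR's Section~5). If you restructure the argument so that smoothness of $Y$ is an \emph{output} of the intersection-theorem analysis of $\cQ=\ker(\Psi\Phi)$ rather than an input, and drop the diagonal conditions from your criterion, the remaining steps (disjoint-support vanishing for $\tau(y_1)\neq\tau(y_2)$, the use of $\dim(Y\times_{X/G}Y)\le n+1$ for the fibers of $\tau$, the triviality of $\omega_X$ as a $G$-sheaf to get $\Phi\cO_y\otimes\omega_{[X/G]}\simeq\Phi\cO_y$, and the Serre-functor argument for surjectivity and crepancy) do line up with the actual proof.
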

It is conjectured that the equivalence \eqref{eq:Bondal--Orlov} holds for any crepant resolution with Gorenstein singularities, cf. \cite{BO02}.

\begin{example}\label{BKR free}
    One can do the reality check: Suppose that the finite group action is free. Then $\omega_X$ is automatically a $G$-sheaf and $\tau$ is an isomorphism. Therefore, $\Phi$ is an equivalence of categories where the equivalence is given by
    \begin{align*}
        \dCat(Y) &\to \dCat([X/G]),  \\
        E & \mapsto (\pi^*E, \lambda_\nat),
    \end{align*}
    which coincides with the description in \eqref{eq:equivalence naive}.
\end{example}

A useful situation is the following case.

\begin{corollary}[{\cite[Corollary~1.3]{BKR01}}]
    Suppose $X$ is a holomorphic symplectic variety and $G$ is a finite group acting faithfully on $X$ by symplectic automorphisms. Assume that $Y$ is a crepant resolution of $X/G$. Then $\Phi$ is an equivalence of categories.
\end{corollary}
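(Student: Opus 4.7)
The plan is to derive this corollary from Theorem~\ref{BKR} by verifying its two hypotheses in the symplectic setting. Take $Y = \Hilb^G_\can(X)$ so that the functor $\Phi$ and the diagram \eqref{eq:diagram BKR} of Theorem~\ref{BKR} are defined.

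\textbf{Hypothesis 1: $\omega_X$ is locally trivial as a $G$-sheaf.} Let $\sigma \in H^0(X, \Omega_X^2)$ denote the symplectic form and write $\dim X = 2m$. Then $\sigma^{\wedge m}$ is a nowhere-vanishing global section of $\omega_X$, and since $G$ acts symplectically we have $g^*\sigma = \sigma$ for every $g \in G$. Consequently the induced trivialization $\cO_X \xrightarrow{\sim} \omega_X$ is $G$-equivariant, so $\omega_X$ is locally (in fact globally) $G$-isomorphic to $\cO_X$.

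\textbf{Hypothesis 2: $\dim(Y \times_{X/G} Y) \leq \dim X + 1$.} The crucial observation is that $X/G$ has \emph{symplectic singularities} in the sense of Beauville: the $G$-invariant form $\sigma$ descends to the smooth locus of $X/G$ and extends regularly across any resolution. A theorem of Kaledin then asserts that every crepant resolution $\tau \colon Y \to X/G$ of a variety with symplectic singularities is \emph{semi-small}. Semi-smallness of $\tau$ is equivalent to $\dim(Y \times_{X/G} Y) \leq \dim Y = \dim X$, which is strictly stronger than what Theorem~\ref{BKR} requires.

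With both hypotheses in hand, Theorem~\ref{BKR} immediately gives that $\Phi \colon \dCat(Y) \to \dCat([X/G])$ is an equivalence. To state the result for an arbitrary crepant resolution $Y'$ of $X/G$, one additionally invokes the fact that any two projective crepant resolutions of a variety with symplectic singularities are connected by a sequence of Mukai flops, each inducing a derived equivalence by Kawamata's theorem for $K$-equivalent birational varieties; composing gives $\dCat(Y') \simeq \dCat(Y) \simeq \dCat([X/G])$. The main technical ingredient I expect to be nontrivial is Kaledin's semi-smallness theorem; once it is in hand the rest is formal packaging of symplectic geometry with Theorem~\ref{BKR}.
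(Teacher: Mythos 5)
The paper offers no proof of this corollary: it is quoted verbatim from \cite{BKR01} (Corollary~1.3 there), so the comparison is between your argument and the original one in that reference. Your verification of the two hypotheses of Theorem~\ref{BKR} is precisely the argument of \cite{BKR01}: the $G$-invariant section $\sigma^{\wedge m}$ trivializes $\omega_X$ equivariantly, and semismallness of crepant resolutions of symplectic (quotient) singularities --- which applies because $X/G$ has symplectic singularities in Beauville's sense --- yields $\dim(Y\times_{X/G}Y)\le \dim X$, comfortably within the bound $\dim X+1$ required by Theorem~\ref{BKR}. That part of your proposal is correct and is essentially the source's proof.

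Your final paragraph, however, is both unnecessary and not justified as written. The functor $\Phi$ of Theorem~\ref{BKR} is only defined for $Y=\Hilb^G_\can(X)$, so the corollary can only be read as asserting that \emph{this} $Y$, assumed to be a crepant resolution, gives an equivalence; no passage to an arbitrary crepant resolution $Y'$ is called for. More seriously, the step you invoke to make that passage --- ``Kawamata's theorem for $K$-equivalent birational varieties'' --- is the DK conjecture, which is open in general. Derived invariance is a theorem for honest Mukai flops (Kawamata, Namikawa), but crepant resolutions of higher-dimensional symplectic singularities are related by \emph{stratified} Mukai flops, for which derived equivalence is not known in full generality; the derived equivalence of all crepant resolutions of a symplectic quotient is a much deeper result (Bezrukavnikov--Kaledin) obtained by entirely different methods. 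Deleting that paragraph leaves a complete and correct proof.
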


A special yet highly non-trivial case is the following result.
\begin{theorem}[{\cite[Theorem~5.1]{Hai01}}]\label{BKR Haiman}
    Let $S$ be a smooth surface, with the symmetric group $\mathfrak{S}_n$ acting on $S^n$ by permutation.
    Then $\Hilb^{\fS_n}_\can(S^n) \simeq S^{[n]}$ is indeed a crepant resolution of $S^n / \fS_n$. 
    Together with the previous corollary, if $S$ is a holomorphic symplectic surface, then
    \begin{equation*}
        \dCat (S^{[n]}) \simeq \dCat([S^n /\fS_n ]).
    \end{equation*}
\end{theorem}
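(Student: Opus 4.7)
The statement packages two ingredients: (a) Haiman's identification $S^{[n]} \simeq \Hilb^{\fS_n}_\can(S^n)$, which is a deep geometric fact, and (b) the derived equivalence, which follows from the Bridgeland--King--Reid theorem stated just above. The plan is to treat (a) as the substantial input and extract (b) formally.

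For (a), I would start by recalling that for a smooth surface $S$ we already have the Hilbert--Chow morphism $h \colon S^{[n]} \to S^{(n)} = S^n / \fS_n$, which by Fogarty's theorem is a resolution of singularities, and is crepant because $\omega_{S^{[n]}}$ pulls back from $\omega_{S^{(n)}}$ (a standard computation using the symplectic form when $\omega_S \simeq \cO_S$, and more generally via the exceptional divisor of the Hilbert--Chow map for a smooth surface). To produce the comparison with $\Hilb^{\fS_n}_\can(S^n)$, the key construction is the \emph{isospectral Hilbert scheme}
\begin{equation*}
    X_n \defeq \left( S^{[n]} \times_{S^{(n)}} S^n \right)_{\mathrm{red}} \subset S^{[n]} \times S^n,
\end{equation*}
together with its projections $\rho \colon X_n \to S^{[n]}$ and $\sigma \colon X_n \to S^n$. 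I would then invoke Haiman's $n!$-theorem, which asserts that $\rho$ is flat and finite of degree $n!$ (with fibers carrying the regular representation of $\fS_n$). Granting flatness of $\rho$, the universal property of $\Hilb^{\fS_n}(S^n)$ produces a classifying map $\Psi \colon S^{[n]} \to \Hilb^{\fS_n}(S^n)$ sending a length-$n$ subscheme $Z$ to the $\fS_n$-cluster $\rho^{-1}(Z) \subset S^n$. Finally one checks that $\Psi$ is an isomorphism onto the distinguished component $\Hilb^{\fS_n}_\can(S^n)$: the image contains the locus of free orbits (where both sides parametrize $n$ distinct points), $\Psi$ is birational onto a $2n$-dimensional component, and the source $S^{[n]}$ is smooth and irreducible, so $\Psi$ is an isomorphism onto the canonical component. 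The hard part here is the $n!$-theorem itself, whose proof via the polygraph theorem and delicate commutative algebra I would simply cite from Haiman.

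For (b), I would verify the hypotheses of Theorem \ref{BKR} with $X = S^n$, $G = \fS_n$, $Y = S^{[n]}$. First, since $S$ is holomorphic symplectic, $\omega_S \simeq \cO_S$ admits a canonical trivialization, and the induced trivialization of $\omega_{S^n} \simeq \boxtimes \omega_S$ is $\fS_n$-invariant under permutation of factors, so $\omega_{S^n}$ is $\fS_n$-equivariantly trivial (hence locally trivial as a $G$-sheaf). Second, we must check the dimension bound
\begin{equation*}
    \dim \left( S^{[n]} \times_{S^{(n)}} S^{[n]} \right) \leq \dim(S^n) + 1 = 2n+1.
\end{equation*}
This follows from the semismallness of the Hilbert--Chow morphism for smooth surfaces (Briançon, Iarrobino): the stratification of $S^{(n)}$ by partition type has the property that the fiber dimension equals half the codimension, which in particular forces $\dim(S^{[n]} \times_{S^{(n)}} S^{[n]}) \leq \dim S^{[n]} = 2n$. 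With both hypotheses in place, Theorem \ref{BKR} delivers the derived equivalence $\dCat(S^{[n]}) \simeq \dCat([S^n/\fS_n])$, which completes the proof.

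The real obstacle is step (a): all of the derived-categorical content reduces quickly to the BKR machinery, but the identification $S^{[n]} \simeq \Hilb^{\fS_n}_\can(S^n)$ rests on Haiman's flatness theorem for the isospectral Hilbert scheme, which is the substantive mathematical ingredient and is treated here as a black box.
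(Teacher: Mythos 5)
Your proposal is correct and follows the same route as the paper, which simply cites Haiman for the identification $S^{[n]} \simeq \Hilb^{\fS_n}_\can(S^n)$ together with its crepancy and then feeds this into the Bridgeland--King--Reid machinery (via their corollary for symplectic quotients, whose dimension hypothesis you verify directly by semismallness of the Hilbert--Chow morphism --- an equivalent unpacking). Your added detail on the isospectral Hilbert scheme and the equivariant triviality of $\omega_{S^n}$ is accurate, and treating Haiman's flatness theorem as a black box is exactly what the paper does.
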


We state the following easy fact for later use.
\begin{lemma}\label{exterior tensor product}
    Suppose for each $i$ we have an equivalence $\dCat(X_i)^{G_i} \simeq \dCat(Y_i)$ given by Fourier--Mukai transform.
    Then we have an equivalence
    \begin{equation*}
        \dCat(X_1 \times \dots \times X_n)^{G_1 \times \dots \times G_n} \simeq \dCat(Y_1 \times \dots \times Y_n).
    \end{equation*}
    given by Fourier--Mukai transform as well.
\end{lemma}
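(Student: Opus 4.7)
The plan is to construct the desired equivalence as a Fourier--Mukai transform whose kernel is the exterior product of the given kernels, and then to verify it is an equivalence by induction on $n$. Concretely, I would write each equivalence $\Phi_i\colon \dCat(X_i)^{G_i} \simeq \dCat(Y_i)$ via an equivariant kernel $K_i \in \dCat([X_i/G_i] \times Y_i)$. Using the standard identification
\[
[X_1/G_1] \times \cdots \times [X_n/G_n] \simeq \bigl[(X_1 \times \cdots \times X_n)/(G_1 \times \cdots \times G_n)\bigr],
\]
the exterior product $K_1 \boxtimes \cdots \boxtimes K_n$ defines a Fourier--Mukai functor
\[
\Phi \colon \dCat(X_1 \times \cdots \times X_n)^{G_1 \times \cdots \times G_n} \longrightarrow \dCat(Y_1 \times \cdots \times Y_n),
\]
and the task is to show $\Phi$ is an equivalence.

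The base case $n=1$ is the hypothesis. For the inductive step I would apply Lemma~\ref{successive invariants} to the normal subgroup $G_n \subset G_1 \times \cdots \times G_n$ acting only on the last factor to decompose
\[
\dCat(X_1 \times \cdots \times X_n)^{G_1 \times \cdots \times G_n} \simeq \bigl(\dCat(X_1 \times \cdots \times X_n)^{G_n}\bigr)^{G_1 \times \cdots \times G_{n-1}}.
\]
The crux is then the following partial-equivariance claim: if $\Phi_K\colon \dCat(X)^{G} \simeq \dCat(Y)$ is an equivalence given by an equivariant Fourier--Mukai kernel $K$ and $Z$ is any smooth projective variety equipped with the trivial $G$-action, then $K \boxtimes \cO_{\Delta_Z}$ induces an equivalence $\dCat(X \times Z)^G \simeq \dCat(Y \times Z)$. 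Applying this claim with $(X,G,Y) = (X_n, G_n, Y_n)$ and $Z = X_1 \times \cdots \times X_{n-1}$ turns the inner equivariant category into $\dCat(X_1 \times \cdots \times X_{n-1} \times Y_n)$, and the inductive hypothesis (for the remaining $G_1, \ldots, G_{n-1}$-factors, combined with the trivial action on $Y_n$) finishes the argument.

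To establish the partial-equivariance claim I would pass to the stack-theoretic picture using $\dCat(X \times Z)^G \simeq \dCat([X/G] \times Z)$. Under this identification, $\Phi_{K \boxtimes \cO_{\Delta_Z}}$ is the ordinary (non-equivariant) Fourier--Mukai transform with exterior-product kernel on $([X/G] \times Z) \times (Y \times Z)$; this is well known to be an equivalence as soon as each individual factor-transform is, because the Fourier--Mukai convolution of kernels distributes over $\boxtimes$, so one checks fully faithfulness and essential surjectivity by seeing that the composite with the inverse kernel $K^{-1} \boxtimes \cO_{\Delta_Z}$ has convolution kernel $\cO_{\Delta_X} \boxtimes \cO_{\Delta_Z} = \cO_{\Delta_{X \times Z}}$.

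The main obstacle will be the careful bookkeeping of the linearizations involved in these identifications. One must verify that the $G_1 \times \cdots \times G_n$-equivariant structure on $K_1 \boxtimes \cdots \boxtimes K_n$ inherited from the individual $G_i$-linearizations matches the one used to interpret the composite equivalence after iterated invariants in Lemma~\ref{successive invariants}. This is essentially a formal consequence of the product identity $[(X_1 \times X_2)/(G_1 \times G_2)] \simeq [X_1/G_1] \times [X_2/G_2]$, but it is the point where the equivariant setting genuinely differs from the non-equivariant one, and where care is required to ensure that the final composite equivalence is indeed a Fourier--Mukai transform with the predicted kernel.
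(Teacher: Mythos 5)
Your proposal is correct, and the kernel you construct is exactly the one the paper uses: the paper's entire proof is the single sentence ``The exterior tensor product gives the Fourier--Mukai kernel and its inverse.'' The difference is purely in how the verification is organized. The paper implicitly invokes, for all $n$ factors at once, the fact you state in your third paragraph --- convolution of kernels distributes over $\boxtimes$, so $(K_1^{-1}\boxtimes\cdots\boxtimes K_n^{-1})\ast(K_1\boxtimes\cdots\boxtimes K_n)\simeq \cO_{\Delta}$ on the product stack --- which makes the induction on $n$, the appeal to Lemma~\ref{successive invariants}, and the intermediate partial-equivariance claim unnecessary. Your route is sound (and your claim that $K\boxtimes\cO_{\Delta_Z}$ gives an equivalence $\dCat(X\times Z)^G\simeq\dCat(Y\times Z)$ is a special case of the same distributivity), but it introduces exactly the linearization bookkeeping you flag in your last paragraph, namely checking that the intermediate equivalence intertwines the residual $G_1\times\cdots\times G_{n-1}$-actions before passing to invariants; the all-at-once argument on $[(X_1\times\cdots\times X_n)/(G_1\times\cdots\times G_n)]\simeq[X_1/G_1]\times\cdots\times[X_n/G_n]$ sidesteps that entirely. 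In short: same kernel, same key fact, but the paper's direct formulation buys a one-line proof where your induction costs an extra equivariance check.
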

\begin{proof}
    The exterior tensor product gives the Fourier--Mukai kernel and its inverse.
\end{proof}

    



\subsection{Inducing stability conditions} We review the relation between invariant stability conditions and stability conditions on equivariant categories.
For our purpose, we are interested in the following situation.
Let $X$ be a smooth projective variety and $\Lambda$ a free abelian group of finite rank. Let $G$ be a finite group acting on $X$ and $\Lambda$, and let $v\colon \K(X) \to \Lambda$ be a $G$-equivariant homomorphism. This induces a natural $G$-action on $\Stab_\Lambda(X)$, i.e. $g \in G$ acts on $\sigma=(\cP,Z)\in \Stab_\Lambda(X)$ by
\begin{equation*}
    g \cdot (\cP, Z) = (g_* \cP, Z \circ g^*).
\end{equation*}
In particular, one can consider the fixed locus
\begin{equation*}
    \Stab_\Lambda(X)^G\defeq \{ \sigma \in \Stab_\Lambda(X) \mid g \cdot \sigma= \sigma \text{ for all }g\in G \}.
\end{equation*}
This locus is related to $\Stab_\Lambda([X/G])$ as follows. Let $\varpi \colon X \to [X/G]$ be the quotient morphism.
Note that one has a natural functor
    \begin{equation*}
        \varpi^* \colon \dCat([X/G]) \to \dCat(X)
    \end{equation*}
which is just forgetting the linearization in the language of equivariant categories.
Its inverse image provides
\begin{align*}
    (\varpi^*)^{-1}\colon \Stab_\Lambda(X)^G &\to \Stab_\Lambda([X/G]) \\
    (\cP,Z) &\mapsto \left( (\varpi^*)^{-1}\cP, Z\circ \varpi^* \right).
\end{align*}
One can also describe the image of $(\varpi^*)^{-1}$. 
In the language of equivariant categories, any stability condition on $\dCat([X/G])$ that is pulled back from $\Stab_\Lambda(X)^G$ assigns the same phase to objects that differ only by their $G$-linearization.
More precisely, it can be characterized as those satisfying $\cP(\phi)\otimes_G \CC[G] \subset \cP(\phi)$ for all $\phi \in \RR$, where $\CC[G]$ is the regular representation of $G$, cf. {\cite[Proposition 2.2.3]{Pol07}}. 
This correspondence is not only set-theoretic, but also at the level of complex manifolds, by the argument of \cite[Theorem 1.1]{MMS09}.
In summary, we can formally state the results as follows.
\begin{theorem}[{\cite{Pol07,MMS09}}]\label{inducing stability conditions}
    Let $X$ be a smooth projective variety equipped with the action of a finite group $G$. 
    Then there is a one-to-one correspondence
    \begin{equation*}
        \begin{tikzcd}
        \Stab_\Lambda(X)^G \arrow[rr, "(\varpi^*)^{-1}", bend left] &  & {\Stab_\Lambda([X/G])^{[G]}} \arrow[ll, "(\varpi_*)^{-1}", bend left] ,
        \end{tikzcd}
    \end{equation*}
    where
    \begin{equation*}
        \Stab_\Lambda([X/G])^{[G]} \defeq \big\{  \sigma=(\cP,Z) \in \Stab_\Lambda([X/G]) \mid \cP(\phi) \otimes \varpi_*\cO_X \subset \cP(\phi) \text{ for all }\phi\in\RR     \big\}.
    \end{equation*}
    Moreover, both $\Stab_\Lambda(X)^G$ and $\Stab_\Lambda([X/G])^{[G]}$ are unions of connected components in their respective ambient spaces, $\Stab_\Lambda(X)$ and $\Stab_\Lambda([X/G])$.
    Hence, the bijection is actually a biholomorphism.
\end{theorem}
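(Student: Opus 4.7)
The plan is to construct the two maps, verify they are mutually inverse at the set-theoretic level, and then upgrade the bijection to a biholomorphism by identifying both sides as unions of connected components.

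\textbf{Step 1: Constructing $(\varpi^*)^{-1}$.} Given a $G$-invariant stability condition $(\cP,Z) \in \Stab_\Lambda(X)^G$, I would define $\cQ(\phi) \defeq (\varpi^*)^{-1} \cP(\phi)$, i.e.\ the full subcategory of $\dCat([X/G])$ of objects whose underlying object in $\dCat(X)$ lies in $\cP(\phi)$. The shift axiom and $\Hom$-vanishing are immediate from the corresponding properties of $\cP$ since $\varpi^*$ is faithful. For the HN property, given $E \in \dCat([X/G])$, take the HN filtration of $\varpi^* E$ in $\dCat(X)$; by $G$-invariance of $\cP$, this filtration is $G$-equivariant (each factor $A_i \in \cP(\phi_i)$ is uniquely determined as the maximal subobject of the appropriate phase, hence carries a canonical linearization), so it descends to a filtration of $E$ in $\dCat([X/G])$ with the required property. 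The central charge $Z \circ [\varpi^*]$ is well-defined because $\varpi^*$ is exact. Finally, the support property on $X$ transfers directly: for a $\cQ$-semistable object $E$, one estimates $\| v([\varpi^*E])\|$ using the HN decomposition of $\varpi^*E$ (all factors share the same phase up to the image of $v$).

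\textbf{Step 2: The image lies in $\Stab_\Lambda([X/G])^{[G]}$.} For $\sigma = (\cP, Z)$ as above, and $E \in \cQ(\phi)$, one has $\varpi_* \varpi^* E \simeq E \otimes \varpi_* \cO_X$ by projection formula, while $\varpi^*(E \otimes \varpi_* \cO_X) \simeq \bigoplus_{g \in G} g^* \varpi^* E$. Each summand is $\cP$-semistable of phase $\phi$ by $G$-invariance, so $E \otimes \varpi_* \cO_X \in \cQ(\phi)$. Conversely, given $\tau = (\cQ, W) \in \Stab_\Lambda([X/G])^{[G]}$, I would define $\cP(\phi) \defeq (\varpi_*)^{-1} \cQ(\phi)$, namely objects $F \in \dCat(X)$ such that $\varpi_* F$ lies in $\cQ(\phi)$, and set $Z \defeq W \circ [\varpi_*]$ (up to the appropriate normalization by $|G|$). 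The condition $\cQ(\phi) \otimes \varpi_* \cO_X \subset \cQ(\phi)$ is exactly what is needed to guarantee that the HN filtration of $\varpi_* F$ in $\cQ$ descends: writing $\varpi_* F$ as an HN tower, the $G$-action on $F$ induces an action on $\varpi_* F$ whose $G$-equivariant HN factors give back a filtration of $F$ by uniqueness of HN. That the two maps are mutually inverse is then immediate from $\varpi^* \varpi_* = \mathrm{id} \otimes \varpi_* \cO_X$ preserving $\cQ(\phi)$ in the $[G]$-invariant case, and from $\varpi_*\varpi^*$ recovering the $G$-average of a $G$-invariant object.

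\textbf{Step 3: From bijection to biholomorphism.} It remains to check that $\Stab_\Lambda(X)^G$ and $\Stab_\Lambda([X/G])^{[G]}$ are unions of connected components. For the $G$-fixed locus upstairs, I would argue that $G$ acts by holomorphic automorphisms on $\Stab_\Lambda(X)$, so the fixed locus is closed; openness follows because if $\sigma \in \Stab_\Lambda(X)^G$ and $\sigma'$ is sufficiently close, then every $g \cdot \sigma'$ is also close to $\sigma$, and by the deformation property of stability conditions (central charge determines the stability condition in a neighborhood via the forgetful map $\cZ$), both $\sigma'$ and $g \cdot \sigma'$ are uniquely determined by their central charges, which agree provided the deformation takes place inside $\mathrm{Hom}(\Lambda,\CC)^G$. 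For the $[G]$-locus downstairs, the condition $\cP(\phi) \otimes \varpi_*\cO_X \subset \cP(\phi)$ is preserved under small deformations by the same argument, since tensoring with $\varpi_* \cO_X$ is an exact autoequivalence and the slicings deform continuously. Finally, since $(\varpi^*)^{-1}$ is compatible with the forgetful maps to $\Hom(\Lambda, \CC)$ (which are local biholomorphisms on both sides), the bijection is a biholomorphism.

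\textbf{Main obstacle.} The most delicate point is the descent of HN filtrations in both directions; concretely, verifying that a $G$-invariant HN filtration in $\dCat(X)$ descends to an HN filtration in $\dCat([X/G])$ with canonical linearizations requires invoking the uniqueness of HN factors as maximal subobjects of prescribed phase, which is why Krull--Schmidt (Remark 2.5) matters here. The second subtle point is the openness of $\Stab_\Lambda(X)^G$, which is not automatic from the action being holomorphic; it rests on the local biholomorphism $\cZ$ together with the fact that the central charges at $\sigma$ and $g \cdot \sigma$ agree for all $g \in G$, forcing $\sigma' = g \cdot \sigma'$ for all nearby $\sigma'$ whose central charge is $G$-invariant.
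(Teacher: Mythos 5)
The paper offers no proof of this statement: it is assembled from \cite{Pol07} (Proposition 2.2.3) and \cite{MMS09}, with only the remark that the set-theoretic correspondence upgrades to the level of complex manifolds ``by the argument of \cite[Theorem~1.1]{MMS09}''. Your Steps 1--2 reconstruct the standard route of those references, and the forward direction (pulling back a $G$-invariant slicing, descending its HN filtrations via uniqueness of HN factors and the cocycle condition on the induced linearizations) is correct in outline. The inverse direction, however, is mis-stated: a general $F\in\dCat(X)$ carries no $G$-action, so the sentence ``the $G$-action on $F$ induces an action on $\varpi_*F$'' does not parse. The mechanism that actually makes the HN filtration of $\varpi_*F$ descend is that $F$ is a direct summand of $\varpi^*\varpi_*F\simeq\bigoplus_{g\in G}g^*F$ (the counit splits the unit because $|G|$ is invertible), combined with the fact that the $[G]$-condition forces each HN factor of $\varpi_*F$ to remain a module over the algebra object $\varpi_*\cO_X$ and hence to lie in the image of $\varpi_*$ up to direct summands; Krull--Schmidt (Remark~\ref{Krull--Schmidt}) is then what extracts the summand corresponding to $F$. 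You invoke the right hypothesis but not the argument that uses it.

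The step that genuinely fails as written is Step 3. Your openness argument only shows that $\Stab_\Lambda(X)^G$ is open inside $\cZ^{-1}\left(\Hom(\Lambda,\CC)^G\right)$, as you concede with the proviso ``provided the deformation takes place inside $\Hom(\Lambda,\CC)^G$''. If $G$ acts nontrivially on $\Lambda$, every $G$-fixed stability condition has $G$-invariant central charge, so $\cZ$ maps the fixed locus into the proper linear subspace $\Hom(\Lambda,\CC)^G$; since $\cZ$ is a local biholomorphism, the fixed locus then has empty interior and cannot be a union of connected components of $\Stab_\Lambda(X)$. The same issue arises downstairs unless $-\otimes\varpi_*\cO_X$ acts trivially on $\Lambda$. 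The ``union of connected components'' conclusion is therefore only literally available under a numerical-triviality hypothesis on the $\Lambda$-action (which holds in the paper's applications, where the relevant line bundles have trivial Chern character, but is not stated in the theorem); what \cite{MMS09} actually establish, and what your argument proves, is that the fixed locus is a closed submanifold locally modeled on $\Hom(\Lambda,\CC)^G$. You should either add that hypothesis or weaken the conclusion accordingly.
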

If the action $G$ is good enough to make \eqref{eq:Bondal--Orlov} an equivalence, then we can have a more concrete description of ${\Stab_\Lambda([X/G])^{[G]}}$ from the point of view of $Y$. Here we only touch a special case.
\begin{corollary}\label{co-invariance description}
    Suppose the action $G$ is free, so $Y=X/G$ is smooth and \eqref{eq:equivalence naive} is an equivalence. Then $\sigma \in \Stab_\Lambda(Y)$ lifts to $\Stab_\Lambda(X)$ if and only if $\cP(\phi)\otimes \pi_* \cO_X \subset \cP(\phi)$
\end{corollary}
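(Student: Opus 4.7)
The plan is to deduce the corollary by transporting the characterization of $\Stab_\Lambda([X/G])^{[G]}$ from Theorem~\ref{inducing stability conditions} through the canonical equivalence of Example~\ref{BKR free}. In the free-action setting the quotient stack is already represented by the smooth scheme $Y = X/G$, so pullback along $\pi$ identifies $\dCat([X/G])$ with $\dCat(Y)$, and the tensor-closure condition on slicings should translate mechanically into a condition on $\sigma$.

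Concretely, I would proceed in two steps. First, apply Theorem~\ref{inducing stability conditions} to reduce the assertion that $\sigma \in \Stab_\Lambda(Y)$ lifts to $\Stab_\Lambda(X)^G$ to the assertion that the image $\sigma' = (\cP', Z')$ of $\sigma$ under the equivalence $(\pi^*, \lambda_\nat) \colon \dCat(Y) \xrightarrow{\sim} \dCat([X/G])$ of \eqref{eq:equivalence naive} lies in $\Stab_\Lambda([X/G])^{[G]}$; that is, $\cP'(\phi) \otimes \varpi_* \cO_X \subset \cP'(\phi)$ for every $\phi \in \RR$. Second, observe that under the canonical identification $[X/G] \simeq Y$ in the free case the morphism $\varpi$ coincides with $\pi$, so that $\varpi_* \cO_X$ is identified with $\pi_* \cO_X$. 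Combined with the fact that $(\pi^*, \lambda_\nat)$ is symmetric monoidal and sends $\cP(\phi)$ to $\cP'(\phi)$, pulling back the inclusion $\cP'(\phi) \otimes \varpi_* \cO_X \subset \cP'(\phi)$ through the equivalence yields exactly $\cP(\phi) \otimes \pi_* \cO_X \subset \cP(\phi)$, which is the desired condition.

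The only point needing care is verifying the two compatibilities of the equivalence $(\pi^*, \lambda_\nat)$ used above, namely that it is symmetric monoidal with respect to the derived tensor product, and that it matches $\varpi_* \cO_X$ with $\pi_* \cO_X$. Both are standard consequences of $[X/G]$ being represented by $Y$ in the free-action case (so that $\varpi$ literally factors as $X \xrightarrow{\pi} Y \xrightarrow{\sim} [X/G]$), so once these are invoked the corollary reduces to a formal manipulation requiring no new input beyond Theorem~\ref{inducing stability conditions} and Example~\ref{BKR free}.
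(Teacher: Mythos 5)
Your proposal is correct and is exactly the intended argument: the paper states this corollary without proof, treating it as an immediate consequence of Theorem~\ref{inducing stability conditions} once one observes that in the free case $[X/G]\simeq Y$, $\varpi$ is identified with $\pi$, and the monoidal equivalence \eqref{eq:equivalence naive} carries the condition $\cP(\phi)\otimes\varpi_*\cO_X\subset\cP(\phi)$ to $\cP(\phi)\otimes\pi_*\cO_X\subset\cP(\phi)$. Your two compatibility checks (monoidality of $\pi^*$ and the identification $\varpi_*\cO_X\leftrightarrow\pi_*\cO_X$) are precisely the points being elided, and they hold as you say.
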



We can say more about ${\Stab_\Lambda([X/G])^{[G]}}$ if $G$ is a free abelian quotient. 
Let $G^\vee=\Hom(G,\CC^*)$ be the dual group.
Following \cite[Section 4]{Ela14}, there is a natural action of $G^\vee$ on $\cC^G$ by twisting: for any $\chi \in G^\vee$, its action is given by
\begin{equation}\label{eq:residual action}
    \chi.(F,(\lambda_g)) \defeq (F, (\lambda_g)) \otimes \chi \defeq (F, (\lambda_g \cdot \chi(g))).
\end{equation}
In the situation where $G$ is abelian,
\cite[Theorem~4.2]{Ela14} says the category $\cC$ together with its $G$-action can be reconstructed from $\cC^G$ with its $G^\vee$-action: there is an equivalence
\begin{equation*}
    (\cC^G)^{G^\vee}\simeq \cC
\end{equation*}
under which the $G$-action on $\cC$ is identified with the $G \simeq (G^\vee)^\vee$-action.
This suggests the following improvement of Theorem~\ref{inducing stability conditions}.
\begin{proposition}[{\cite{Del23}}]\label{inducing stability conditions abelian}
    Let $G$ be a finite abelian group acting freely on a smooth projective variety $X$. Let $\pi \colon X \to Y \defeq X /G$ be the quotient map. Consider the action of $G^\vee$ on $\dCat(X)^G \simeq \dCat(Y)$ as in \eqref{eq:residual action}. Then there is a one-to-one correspondence between $G$-invariant stability conditions on $\dCat(X)$ and $G^\vee$-invariant stability conditions on $\dCat(Y)$ which preserves geometric stability conditions:
    \begin{equation*}
        \begin{tikzcd}
\Stab(X)^G \arrow[rr, "(\pi^*)^{-1}", bend left] &  & \Stab(Y)^{G^\vee} \arrow[ll, "(\pi_*)^{-1}", bend left] .
\end{tikzcd}
    \end{equation*}
\end{proposition}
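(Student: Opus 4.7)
The plan is to derive this proposition from Theorem~\ref{inducing stability conditions} applied to the free $G$-action on $X$, combined with the identifications $\dCat([X/G]) \simeq \dCat(X)^G \simeq \dCat(Y)$ from Example~\ref{BKR free} and Elagin's reconstruction $\dCat(Y)^{G^\vee} \simeq \dCat(X)$. The key point is to reinterpret the condition $[G]$ appearing in Theorem~\ref{inducing stability conditions} as $G^\vee$-invariance on the $Y$-side.

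\emph{The bijection.} One starts from the biholomorphism $\Stab_\Lambda(X)^G \simeq \Stab_\Lambda([X/G])^{[G]}$ of Theorem~\ref{inducing stability conditions}, where the right-hand side is cut out by the condition $\cP(\phi) \otimes \varpi_*\cO_X \subset \cP(\phi)$ for every $\phi$. Transporting $\varpi_*\cO_X$ through $\dCat([X/G]) \simeq \dCat(Y)$ yields $\pi_*\cO_X$, which for the \'etale Galois cover $\pi$ with group $G$ decomposes canonically as
\[
    \pi_*\cO_X \ \simeq\ \bigoplus_{\chi \in G^\vee} L_\chi ,
\]
so the $[G]$-condition becomes $\cP(\phi) \otimes L_\chi \subset \cP(\phi)$ for all $\phi$ and all $\chi \in G^\vee$. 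The main step is to check that this is exactly $G^\vee$-invariance. Unwinding \eqref{eq:residual action} through $\dCat(Y) \simeq \dCat(X)^G$, the action of $\chi \in G^\vee$ on $\dCat(Y)$ is the autoequivalence $E \mapsto E \otimes L_\chi$: indeed $(\pi^*E, \chi \cdot \lambda_\nat)$ agrees with the image of $E \otimes L_\chi$ under pullback-with-natural-linearization, because $\pi^*L_\chi$ is trivializable but carries precisely the $\chi$-twisted linearization. Central-charge invariance is automatic: since $(\otimes L_\chi)^{|G|} = \id$, the positive scaling factor introduced on $Z$ by $\otimes L_\chi$ satisfies $\lambda^{|G|} = 1$ and $\lambda > 0$, forcing $\lambda = 1$.

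\emph{Preservation of geometric stability.} Both directions rest on $\pi^*$ being exact and conservative, together with the observation that $\pi^*\cO_y = \bigoplus_{x \in \pi^{-1}(y)}\cO_x$ is a sum of pairwise non-isomorphic skyscrapers forming a single free $G$-orbit. If $\sigma \in \Stab(Y)^{G^\vee}$ is geometric with each $\cO_y$ simple in a common heart $\cP(\phi_0)$, then each $\cO_x$ sits in the corresponding heart on $X$; a hypothetical proper subobject $A \hookrightarrow \cO_x$ would, after averaging over the $G$-orbit, yield a $G$-equivariant proper subobject of $\pi^*\cO_y$ that descends to a proper subobject of $\cO_y$, contradicting simplicity. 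Conversely, if each $\cO_x$ is stable for the lift, any subobject of $\pi^*\cO_y$ in the heart is a direct sum of some of the $\cO_{x_i}$, and $G$-equivariance forces the index set to be $G$-invariant (hence empty or full), which descends to simplicity of $\cO_y$. The same-phase property is transported by $t$-exactness of $\pi^*$, or alternatively by Remark~\ref{geometric}.

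The principal technical obstacle is the identification carried out in the second paragraph: the two a priori different $G^\vee$-actions on $\dCat(Y)$---the categorical twisting of \eqref{eq:residual action} on the equivariant side and the tensor-by-$L_\chi$ action on the $Y$-side---must be matched carefully by tracing the natural linearization through the descent equivalence. Once this identification is in place, the rest of the argument is formal.
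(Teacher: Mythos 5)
The paper itself offers no proof of this proposition: it is quoted from \cite{Del23}, with only the motivating remark that Elagin's reconstruction $(\cC^G)^{G^\vee}\simeq\cC$ makes it plausible. Your proposal is a correct self-contained derivation and follows the route one would expect: reduce to Theorem~\ref{inducing stability conditions}, decompose $\pi_*\cO_X\simeq\bigoplus_\chi L_\chi$ for the \'etale Galois cover, identify the twisting action of \eqref{eq:residual action} with $-\otimes L_\chi$ (this matching is exactly what the paper records separately in Corollary~\ref{co-invariance description abelian} and Example~\ref{serre invariance}), and then handle geometricity by hand. The one step that deserves care is the passage from the condition $[G]$ of Theorem~\ref{inducing stability conditions}, which constrains only the slicing, to full $G^\vee$-invariance including the central charge. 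Your argument that the scaling factor $\lambda$ satisfies $\lambda^{|G|}=1$ is fine, but it tacitly assumes that $Z\circ(\otimes L_\chi)=\lambda Z$ for a single constant $\lambda$, i.e.\ it invokes the assertion of Remark~\ref{projection to slicings} that a slicing determines the stability condition up to a positive scalar; you should cite that remark explicitly, or better, bypass it: any $\sigma\in\Stab([X/G])^{[G]}$ has central charge $Z=Z_\tau\circ[\pi^*]$ for some $\tau\in\Stab(X)^G$, and $[\pi^*(E\otimes L_\chi)]=[\pi^*E\otimes\pi^*L_\chi]=[\pi^*E]$ since $\pi^*L_\chi\simeq\cO_X$, so $Z(E\otimes L_\chi)=Z(E)$ with no scaling argument needed. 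The geometric-preservation argument (averaging a destabilizing subobject over the $G$-orbit in one direction; using that a subobject of a sum of pairwise non-isomorphic simples is a partial sum, together with transitivity of $G$ on the fiber, in the other) is correct, and the same-phase statement follows as you say from $t$-exactness of $\pi^*$ applied to $\pi^*\cO_y=\bigoplus_{x\in\pi^{-1}(y)}\cO_x$.
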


\begin{corollary}\label{co-invariance description abelian}
    Let the setting be as in Corollary~\ref{co-invariance description}. Suppose moreover that $G$ is abelian and the action is free. Then checking if $\sigma \in \Stab_\Lambda(Y)$ lifts to $\Stab_\Lambda(X)$ is equivalent to checking $\cP(\phi)\otimes L = \cP(\phi)$ for certain line bundles with $\pi^*(L) \simeq \cO_X $. Moreover, this lift preserves geometric components.
\end{corollary}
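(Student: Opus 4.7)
The plan is to combine Corollary~\ref{co-invariance description} with the standard isotypic decomposition of $\pi_*\cO_X$ that is available in the free abelian case, and to invoke Proposition~\ref{inducing stability conditions abelian} for the statement about geometric components.

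First I would recall that, since $\pi \colon X \to Y = X/G$ is a finite \'etale Galois cover with abelian Galois group $G$, one has a canonical decomposition
\[
\pi_* \cO_X \;\simeq\; \bigoplus_{\chi \in G^\vee} L_\chi,
\]
where $L_\chi$ is the $\chi$-isotypic summand; each $L_\chi$ is a line bundle on $Y$ (rank one follows from $G$ being abelian), and $\pi^* L_\chi \simeq \cO_X$ because pulling back trivializes the character. The $L_\chi$ assemble into a subgroup of $\Pic(Y)$ isomorphic to $G^\vee$, so in particular $L_\chi^{-1} \simeq L_{\chi^{-1}}$ again lies in this finite family.

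Next, I would apply Corollary~\ref{co-invariance description}: a stability condition $\sigma=(\cP,Z)\in \Stab_\Lambda(Y)$ lifts to $\Stab_\Lambda(X)$ if and only if $\cP(\phi)\otimes \pi_*\cO_X \subset \cP(\phi)$ for all $\phi \in \RR$. Plugging in the decomposition above and using additivity of each $\cP(\phi)$ under direct sums, this condition is equivalent to $\cP(\phi)\otimes L_\chi \subset \cP(\phi)$ for every $\chi \in G^\vee$ and every $\phi$. Since each $L_\chi$ is invertible with inverse $L_{\chi^{-1}}$ also in the collection, the inclusion upgrades to an equality $\cP(\phi)\otimes L_\chi = \cP(\phi)$, which is exactly the claimed criterion, with line bundles satisfying $\pi^*L_\chi \simeq \cO_X$.

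For the ``moreover'' clause about geometric components, I would appeal directly to Proposition~\ref{inducing stability conditions abelian}: the bijection $\Stab(X)^G \leftrightarrow \Stab(Y)^{G^\vee}$ it provides is realized by $\pi^*$, and therefore coincides with the lift produced by Corollary~\ref{co-invariance description}; since that bijection preserves geometric stability conditions, so does ours. I do not expect any real obstacle in this sketch — the entire argument is a translation between the slicing condition in Corollary~\ref{co-invariance description} and the representation-theoretic decomposition of $\pi_*\cO_X$, with the only mildly subtle point being the implicit identification of the residual $G^\vee$-twisting on $\dCat(Y)$ with tensoring by the $L_\chi$, which is what makes the appeal to Proposition~\ref{inducing stability conditions abelian} legitimate.
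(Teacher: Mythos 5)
Your proposal is correct and follows essentially the same route as the paper, whose proof is the one-line remark that the claim follows ``from the construction and \eqref{eq:residual action}'': the identification of the residual $G^\vee$-twisting with tensoring by the character line bundles $L_\chi$ (equivalently, the isotypic decomposition $\pi_*\cO_X \simeq \bigoplus_{\chi} L_\chi$ you write down) is precisely the content being invoked there, and the appeal to Proposition~\ref{inducing stability conditions abelian} for preservation of geometric stability conditions matches the paper's intent. Your write-up simply makes explicit the details (closure of $\cP(\phi)$ under direct summands, and the upgrade from inclusion to equality via $L_{\chi^{-1}}$) that the paper leaves implicit.
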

\begin{proof}
    This is from the construction and \eqref{eq:residual action}. 
\end{proof}

One can do the reality check in the following special case, which is also a motivating example of Proposition~\ref{inducing stability conditions abelian}.
\begin{example}[Serre invariance \cite{PPZ23}]\label{serre invariance}
    An important example is $\pi \colon X \to Y$ where $X$ is a K3 surface and $Y$ is an Enriques surface (or more generally, $X$ is a hyper-K\"{a}hler manifold and $Y$ is an Enriques manifold). 
    Then the covering involution $\tau \colon X \to X$ induces a dual action on $\dCat(Y)$, whose generator is tensoring with $\omega_Y$ by construction,
    as one can check this directly from \eqref{eq:residual action}.
    As a consequence, if a stability condition on $Y$ is $- \otimes \omega_Y$-invariant, then it induces a stability condition on $X$ (which is also involution-invariant).
    
\end{example}

\section{Constructions on products}\label{sec:product}
In this section we review the constructions of constant $t$-structures and stability conditions on product spaces.
For later use in Section~\ref{sec:restrict}, we need an adaptation of the work \cite{AP06,Pol07,Liu21} to the local version $\dCat_Y(X)$.
It is probably well-known to experts, but for completeness and rigorousness, we state and prove the local version in the section, following the original proofs.


\begin{notation}
    Throughout this section, we let $X,Y$ be smooth projective varieties with $Y \subset X$, $S$ be a smooth variety, and let $p \colon X \times S  \to X$ and $q \colon X \times S \to S$ be the two projections. 
    We will heavily study `sheaves of $t$-structures on $\dCat_Y(X)$ over some base'. For brevity, we will write $\cD \defeq \dCat_Y(X)$ and $\cD_S \defeq \dCat_{Y \times S}(X \times S)$.
    Moreover, if there is a canonical $t$-structure on $\cD_S$ (for example, if it comes from a `sheaf of $t$-structures' over a larger base, cf. Definition~\ref{S-local} and Proposition~\ref{t-structure on closed subvariety}; particularly if it comes from the construction in Proposition~\ref{constant t-structure}), we will write $\cA_S \subset \cD_S$ for the heart of this $t$-structure.
\end{notation}

\subsection{Local t-structures over base}

Following \cite[Definition~2.1.1]{AP06} and \cite{Pol07}, we introduce the following definition.
\begin{definition}\label{S-local}
    A $t$-structure $\left( \cD_S^{\leq 0} ,\cD_S^{\geq 0} \right)$ on $\cD_S$ is called \emph{$S$-local} if for every open subset $U\subset S$, there exists a $t$-structure $\left( \cD_U^{\leq 0} ,\cD_U^{\geq 0} \right)$ on $\cD_U$, such that the restriction functor $\cD_S \to \cD_U$ is $t$-exact.
\end{definition}

In other words, it is $S$-local if it extends to a collection of compatible $t$-structures for all opens subsets. This collection is called a `sheaf of $t$-structures (on $\cD$) over $S$' especially if each $t$-structure is bounded.

Following \cite[Theorem~2.1.4]{AP06} and \cite[Theorem~2.3.2]{Pol07}, we have the following criterion to check locality.
\begin{proposition}\label{check S-local}
    Let $L$ be an ample line bundle on $S$.
    Then a bounded $t$-structure
    $\left( \cD_S^{\leq 0} ,\cD_S^{\geq 0} \right)$  on $\cD_S$ is $S$-local if and only if tensoring with $ q^*L$ is (left) $t$-exact.
\end{proposition}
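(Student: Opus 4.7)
My plan is to prove both implications by exploiting the fact that for an open subset $U_s \defeq S \setminus V(s)$, where $s \in H^0(S, L^n)$ for some $n \geq 1$, the restriction $j^* \colon \cD_S \to \cD_{U_s}$ along the open immersion $j \colon X \times U_s \hookrightarrow X \times S$ is a categorical localization whose behavior is controlled by tensoring with powers of $q^*L$.

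For the easy direction, suppose the $t$-structure is $S$-local, and cover $S$ by finitely many open subsets $U_i$ over which $L$ trivializes. Tensoring with $q^*L$ restricts to the identity functor on each $\cD_{U_i}$, hence is tautologically $t$-exact there. Since the restrictions $\cD_S \to \cD_{U_i}$ are by hypothesis $t$-exact and jointly conservative on bounded derived categories of coherent sheaves, a short argument using the truncation triangle of $E \otimes q^*L$ (together with the vanishing of $\Hom$ between distinct cohomological degrees over each $U_i$) propagates $t$-exactness from the $U_i$ back to $\cD_S$.

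The substantive direction is the converse, where I must construct, for each open $U \subset S$, a $t$-structure on $\cD_U$ making restriction from $\cD_S$ $t$-exact. By ampleness of $L$, every open is covered by basic opens of the form $U_s$, so it suffices to handle this case. Define
\begin{equation*}
    \cD_{U_s}^{\leq 0} \defeq \{E \in \cD_{U_s} : E \cong j^*E' \text{ for some } E' \in \cD_S^{\leq 0}\}, \qquad \cD_{U_s}^{\geq 0} \defeq \{E \in \cD_{U_s} : E \cong j^*E' \text{ for some } E' \in \cD_S^{\geq 0}\}.
\end{equation*}
Shift invariance is immediate. The $\Hom$-vanishing axiom follows from the colimit formula
\begin{equation*}
    \Hom_{\cD_{U_s}}(j^*E', j^*F') \simeq \varinjlim_m \Hom_{\cD_S}(E', F' \otimes q^*L^{mn}),
\end{equation*}
which expresses $j^*$ as inverting multiplication by $s$ together with compactness of $E'$: the assumed left $t$-exactness of $-\otimes q^*L$ forces $F' \otimes q^*L^{mn} \in \cD_S^{\geq 1}$ whenever $F' \in \cD_S^{\geq 1}$, so every term in the colimit vanishes. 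For the truncation axiom, I lift an arbitrary $E \in \cD_{U_s}$ to some $\tilde{E} \in \cD_S$---possible because any coherent sheaf on $X \times U_s$ with support in $Y \times U_s$ extends to one on $X \times S$ with support in $Y \times S$ via $j_*$ combined with ampleness of a fixed ample line bundle on $X \times S$---then truncate in $\cD_S$ and restrict; $t$-exactness of restriction is then built into the definition.

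The main technical hurdle is rigorously justifying the colimit formula in the support-sensitive setting of $\cD = \dCat_Y(X)$: one must confirm that the identification $j_* j^* F' \simeq \varinjlim_m F' \otimes q^*L^{mn}$ interacts correctly with the bounded coherent/support constraints of the category, and that compactness of $E'$ in $\cD_S$ really allows pulling the colimit out of $\Hom$. Once this local-global principle is in place, the remainder is formal and amounts to the local analogue, in the setting $\dCat_Y(X)$, of \cite[Theorem~2.1.4]{AP06} and \cite[Theorem~2.3.2]{Pol07}.
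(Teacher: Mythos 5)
Your ``only if'' direction and your treatment of a basic open $U_s$ are both sound, and the latter takes a genuinely different route from the paper's: you kill $\Hom_{\cD_{U_s}}(j^*E',j^*F')$ term by term via the colimit formula $\varinjlim_m \Hom_{\cD_S}(E',F'\otimes q^*L^{mn})$. The technical worry you flag is in fact unproblematic here: $X\times S$ is smooth, so $E'$ is perfect and hence compact in $D_{\mathrm{qc}}(X\times S)$; $\dCat_{Y\times S}(X\times S)$ is a full subcategory, so the relevant $\Hom$'s agree with those of $\dCat(X\times S)$; and $j$ is an affine open immersion (the complement of the divisor $V(s)$), so $Rj_*j^*F'\simeq\varinjlim_m F'\otimes q^*L^{mn}$. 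The paper argues differently: it describes $\cD_U$ for an \emph{arbitrary} open $U$ as the Verdier quotient $\cD_S/\cD_T$, represents a morphism by a roof $A\leftarrow A'\to B$ with cone in $\cD_T$, and uses Lemma~\ref{check supp on coho} (support is detected on the cohomology objects of the $t$-structure, which is where left $t$-exactness of $-\otimes q^*L$ enters) to replace $A'$ by $\tau^{\leq 0}A'$ and conclude by semi-orthogonality in $\cD_S$.

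The genuine gap is the reduction ``every open is covered by basic opens of the form $U_s$, so it suffices to handle this case.'' Definition~\ref{S-local} quantifies over \emph{all} open subsets $U\subset S$, and having $t$-structures on the members of a cover of $U$ does not by itself produce one on $\cD_U$: a class in $\Hom_{\cD_U}(j^*E',j^*F')$ can restrict to zero on every $U_{s_i}$ of a cover without being zero (already $\Ext^1(\cO,\cO)$ on a curve is locally trivial), so the $\Hom$-vanishing axiom does not descend from the cover for free, and the truncation triangles must likewise be glued. This is exactly the content of the gluing step in the proof of \cite[Theorem~2.1.4]{AP06}, which proceeds by induction on the number of basic opens needed to cover $U$; you would need to supply that argument, or else run the roof argument above, which handles an arbitrary open $U$ in one stroke. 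With that step added (and the truncation axiom for general $U$ obtained, as in your basic-open case, from essential surjectivity of $\cD_S\to\cD_U$), your proof closes.
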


\begin{lemma}
    Let $f_1,\dots,f_n$ be sections of some line bundle $L$ on $X$ such that $Z$ is the set of common zeroes of $f_1,\dots,f_n$. Let $E \in \dCat_{Y }(X )$.
    Then $E \in \dCat_{Y\cap Z}(X)$ if and only if the morphisms
    \begin{equation*}
        f_{i_1}\dots f_{i_d} \colon E \to E \otimes L^d
    \end{equation*}
    are zero for all sequences $(i_1,\dots, i_d)$ of length $d$ for some $d>0$.
\end{lemma}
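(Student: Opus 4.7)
The plan is to translate both directions into statements about cohomology sheaves. The ``if'' direction is then essentially formal, while the ``only if'' direction requires an induction on cohomological amplitude combined with a naturality argument for multiplication by sections.

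For the ``if'' direction, I would argue by passing to cohomology sheaves: any morphism in $\dCat(X)$ that is zero also induces zero on each $\cH^k$, so vanishing of every length-$d$ product $f_{i_1}\cdots f_{i_d}\colon E\to E\otimes L^d$ implies that every such product annihilates the coherent sheaf $\cH^k(E)$. In particular $f_i^d$ annihilates $\cH^k(E)$, and since $f_i$ is a unit in $\cO_{X,x}$ at any $x\notin V(f_i)$, this forces $\supp\cH^k(E)\subset V(f_1,\dots,f_n)=Z$. Combined with $\supp\cH^k(E)\subset Y$, we obtain $E\in\dCat_{Y\cap Z}(X)$.

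For the ``only if'' direction, I would induct on the cohomological amplitude of $E$. The base case $E=F[k]$ with $F\in\Coh_{Y\cap Z}(X)$ follows from Noetherianity of $X$: there exists $d_0$ with $(f_1,\dots,f_n)^{d_0}\cdot F=0$, so every length-$d_0$ product already vanishes at the level of coherent sheaves. For the inductive step, choose $a$ in the cohomological range of $E$ and form the truncation triangle
$$A\defeq\tau^{\leq a}E \to E \to B\defeq\tau^{>a}E \xdashrightarrow{+1},$$
with $A,B$ both in $\dCat_{Y\cap Z}(X)$ and of strictly smaller amplitude than $E$. By induction there exist $d_A,d_B$ such that every length-$d_A$ (resp.\ length-$d_B$) product annihilates $A$ (resp.\ $B$) in $\dCat(X)$; I claim $d\defeq d_A+d_B$ then suffices for $E$.

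The main obstacle---and the reason it matters that the inductive hypothesis kills \emph{every} product of the given length, not merely one---is this final step. Given any length-$d$ product, split it as $f^{(d_A)}\cdot f^{(d_B)}$, a length-$d_B$ product followed by a length-$d_A$ product. Since multiplication by a section is a natural transformation $\id\Rightarrow(-)\otimes L$, the composite $E\xrightarrow{f^{(d_B)}} E\otimes L^{d_B}\to B\otimes L^{d_B}$ coincides with $E\to B\xrightarrow{f^{(d_B)}} B\otimes L^{d_B}$, which vanishes by the inductive hypothesis on $B$. Hence $f^{(d_B)}$ factors as $E\xrightarrow{g} A\otimes L^{d_B}\hookrightarrow E\otimes L^{d_B}$ for some $g$. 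Naturality applied once more identifies $f^{(d_A)}\circ f^{(d_B)}$ with the composite $E\xrightarrow{g} A\otimes L^{d_B}\xrightarrow{f^{(d_A)}} A\otimes L^d\hookrightarrow E\otimes L^d$, whose middle arrow is zero by the inductive hypothesis on $A$. Thus every length-$d$ product of the $f_i$ vanishes on $E$, closing the induction.
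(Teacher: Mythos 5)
Your proof is correct. The paper itself does not argue this directly: its ``proof'' is a one-line citation to \cite[Lemma~2.1.5]{AP06} together with the observation that $\dCat_Y(X)\cap\dCat_Z(X)=\dCat_{Y\cap Z}(X)$, and your argument --- checking the ``if'' direction on cohomology sheaves, and proving the ``only if'' direction by d\'evissage along the truncation triangle with $d=d_A+d_B$ and the naturality of multiplication by sections --- is essentially a self-contained write-out of the proof of that cited lemma, so no genuinely different route is being taken.
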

\begin{proof}
    This is by \cite[Lemma~2.1.5]{AP06} and $\dCat_Y(X) \cap \dCat_Z(X) = \dCat_{Y \cap Z}(X)$.
\end{proof}
As a consequence, support can be checked via cohomology under certain condition.
\begin{lemma}\label{check supp on coho}
    Let $\left( \cD_S^{\leq 0} ,\cD_S^{\geq 0} \right)$ be a bounded $t$-structure on $\cD_S$ and let $\cH^i$, $i\in \ZZ$ be the associated cohomology functors. 
    Assume that there exists some ample line bundle $L$ on $S$ such that tensoring with $q^*L$ is (left) $t$-exact.
    Then for every closed subset $T\subset S$ one has $E \in \cD_T$ if and only if $\cH^i(E) \in \cD_T$ for all $i$.
\end{lemma}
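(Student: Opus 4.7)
My plan is to exploit $S$-locality of the $t$-structure in order to reduce both sides of the equivalence to a vanishing condition after restriction to the open complement $U \defeq S \setminus T$.

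By Proposition~\ref{check S-local}, the hypothesis that $-\otimes q^{*}L$ is (left) $t$-exact is equivalent to the given $t$-structure on $\cD_S$ being $S$-local. In particular, it induces a bounded $t$-structure on $\cD_U \defeq \dCat_{Y\times U}(X\times U)$ for which the restriction functor $\rho \colon \cD_S \to \cD_U$, coming from pullback along the open immersion $X \times U \hookrightarrow X \times S$, is $t$-exact. Consequently, $\rho$ intertwines cohomology functors: for each $i \in \ZZ$,
\[
    \cH^{i}_{U}(\rho(E)) \simeq \rho(\cH^{i}(E)),
\]
where $\cH^{i}_{U}$ denotes the cohomology functor for the $t$-structure on $\cD_U$.

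Next I would verify that $\ker(\rho) = \cD_T$: an object of $\cD_S$, whose underlying complex is set-theoretically supported on $Y \times S$, restricts to zero on $X \times U$ precisely when its standard cohomology sheaves are supported on $(Y \times S) \cap (X \times T) = Y \times T$. Moreover, boundedness of the induced $t$-structure on $\cD_U$ guarantees that an object $G \in \cD_U$ vanishes if and only if $\cH^{i}_{U}(G)=0$ for every $i$, by a short induction on cohomological amplitude using the truncation triangles. Chaining these ingredients gives
\[
    E \in \cD_T \iff \rho(E)=0 \iff \cH^{i}_{U}(\rho(E))=0 \ \forall\, i \iff \rho(\cH^{i}(E))=0 \ \forall\, i \iff \cH^{i}(E)\in \cD_T \ \forall\, i.
\]

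The argument is essentially formal once $S$-locality is invoked; the only subtle point worth spelling out is that the identification $\ker(\rho)=\cD_T$ relies on the set-theoretic (standard) notion of support from Notation~\ref{notation}, whereas the $\cH^{i}$ in the conclusion refer to the given, possibly tilted, $t$-structure. The $t$-exactness of $\rho$ is precisely what bridges these two layers of cohomology, and I expect this to be the only step that needs careful bookkeeping.
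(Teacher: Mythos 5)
Your argument is circular within the paper's logical structure. You invoke Proposition~\ref{check S-local} to pass from the hypothesis that $-\otimes q^{*}L$ is (left) $t$-exact to $S$-locality, and in particular to the existence of a $t$-exact restriction functor $\rho\colon \cD_S \to \cD_U$ for $U = S\setminus T$. But the direction of Proposition~\ref{check S-local} that you need (from $t$-exactness of tensoring to locality) is exactly the one whose proof in the paper relies on Lemma~\ref{check supp on coho}: to descend the $t$-structure to the localization $\cD_U$ one must show that semi-orthogonality survives localization at morphisms whose cone lies in $\cD_T$, and the key step there is that $\tau^{\geq 1}A' \in \cD_T$, which is deduced from $\cH^i(\cone(f))\in\cD_T$ via this very lemma. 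So at the point where the lemma is proved, you may not assume that an induced $t$-structure on $\cD_U$ exists. Granting that restriction, the remainder of your chain ($\ker\rho=\cD_T$, the intertwining $\cH^i_U\circ\rho\simeq\rho\circ\cH^i$, and detection of vanishing on cohomology by boundedness) is sound, but the crucial input is unavailable.

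The paper instead argues directly from the nilpotence characterization of support (the unnumbered lemma immediately preceding this one, adapted from \cite[Lemma~2.1.5]{AP06}): an object $E$ of $\cD_S$ lies in $\cD_T$ if and only if, for the sections of a power of $q^*L$ cutting out $X\times T$, all the degree-$d$ multiplication maps $E \to E\otimes q^*L^{d}$ vanish for some $d>0$. Since $-\otimes q^{*}L$ is $t$-exact, applying $\tau^{\leq 0}$ and $\tau^{\geq 0}$ to such a zero map yields zero maps $\tau^{\leq 0}E \to \tau^{\leq 0}E\otimes q^*L^{d}$ and $\tau^{\geq 0}E \to \tau^{\geq 0}E\otimes q^*L^{d}$, so the truncations, and hence every $\cH^i(E)$, are again supported on $T$; the converse follows from thickness of $\cD_T$ together with boundedness, exactly as in your ``if'' direction. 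To salvage your route you would need an independent proof that the $t$-structure descends to $\cD_U$, which amounts to reproving the lemma by other means.
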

\begin{proof}
    The ``if'' part is because the subcategory is saturated (thick) and our $t$-structure is bounded. The ``only if '' part follows from the previous lemma. Indeed, assume the vanishing of the morphism of multiplication by $s$
    \begin{equation*}
        E \xrightarrow{s} E \otimes q^*L.
    \end{equation*}
    Taking the truncation functor $\tau^{\geq 0}$ (resp. $\tau^{\leq 0}$) and since tensoring with $q^*L$ is $t$-exact, we know
    \begin{equation*}
        \tau^{\geq 0} E \to \tau^{\geq 0} (E \otimes L^d) = \tau^{\geq 0} E \otimes L^d
    \end{equation*}
    (resp. $\tau^{\leq 0} E \to \tau^{\leq 0} E \otimes L^d$) is zero.
    (In fact, by the proof of \cite[Theorem~2.3.2]{Pol07} only left $t$-exactness is enough.)
\end{proof}

\begin{proof}[Proof of Proposition~\ref{check S-local}]
    This follows from the proof \cite{AP06} and we sketch it for completeness. 
    The `only if' part follows from the same argument as in \cite[Lemma~2.1.2 and Proposition~2.1.3]{AP06}.
    Now we prove the `if' part.
    Let $j \colon U \to S$ be the open immersion of a Zariski open set, and let $T$ be the complement of $U$.
    Note that the restriction functor $\cD_S \to \cD_U$ is essentially surjective. This means $\cD_U$ is the localization of $\cD_S$ with respect to the localizing class consisting of morphisms
    \begin{equation*}
        \{f \colon A' \to A \mid \cone(f) \in  \cD_T \}.
    \end{equation*}
    So to restrict a $t$-structure, it suffices to check all semi-orthogonality is respected under the localization. That is to say, if there is a roof
    \begin{equation}\label{eq:roof}
        \begin{tikzcd}
        A  & A' \arrow[l,"f"'] \arrow[r] &  B
        \end{tikzcd}
    \end{equation}
    with $A \in \cD_U^{\leq 0}$, $B \in \cD_U^{ \geq 1}$, and $C \defeq \cone(f) \in \cD_T$, then it is equivalent to the zero map.
    Note that the long exact sequence associated to $A' \to A \to C \to A'[1]$ implies
    \begin{center}
        $\cH^0 C \xdhrightarrow{} \cH^1 A'$ \quad and \quad $\cH^i C \xrightarrow{\sim} \cH^{i+1} A'$ for $i \geq 1$.
    \end{center}
    By Lemma~\ref{check supp on coho}, whether $A'$ and $C$ are in $\cD_T$ is equivalent to checking on cohomology (of this $t$-structure), so we have 
    \begin{equation*}
        \cone(\tau^{\leq 0} A' \xrightarrow{g} A') =\tau^{\geq 1}A'  \in \cD_T,
    \end{equation*}
    since $\cH^i(A') \in \cD_T$ for any $i \geq 1$.
    As a consequence, \eqref{eq:roof} is equivalent to
    \begin{equation*}
        \begin{tikzcd}
        A  & \tau^{\leq 0} A' \arrow[l,"f\circ g"'] \arrow[r] & B.  
        \end{tikzcd}
    \end{equation*}
    But $\Hom(\tau^{\leq 0}A',B)=0$ which proves the claim.
\end{proof}

A sheaf of $t$-structures over $S$ also restricts to a smooth closed subvariety $T \subset S$.
\begin{proposition}\label{t-structure on closed subvariety}
    Let $T\subset S$ be a closed embedding of smooth projective varieties.
    Then for every $t$-structure on $\cD_S$ local over $S$, there exists a unique $t$-structure on $\cD_T$ local over $T$ such that the pushforward functor $i_* \colon \cD_T \to \cD_S$ is $t$-exact.
\end{proposition}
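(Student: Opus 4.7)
\emph{Uniqueness.} The pushforward $i_* \colon \cD_T \to \cD_S$ is conservative: if $i_*E \simeq 0$, then each $i_* \cH^n(E) = \cH^n(i_*E) = 0$, so $\cH^n(E) = 0$ for every $n$ by faithfulness of $i_*$ on coherent sheaves, and hence $E \simeq 0$ by boundedness. Combined with Remark~\ref{conservatively t-exact}, any $t$-structure on $\cD_T$ making $i_*$ $t$-exact is forced to be
\begin{equation*}
    \cD_T^{\leq 0} \defeq \{E \in \cD_T : i_*E \in \cD_S^{\leq 0}\}, \qquad \cD_T^{\geq 0} \defeq \{E \in \cD_T : i_*E \in \cD_S^{\geq 0}\}.
\end{equation*}

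\emph{Existence.} Take $(\cD_T^{\leq 0}, \cD_T^{\geq 0})$ as defined above. The shift-compatibility axiom is formal. For the $\Hom$-vanishing axiom, given $E \in \cD_T^{\leq 0}$ and $F \in \cD_T^{\geq 1}$, I would use the adjunction $\Hom_{\cD_S}(i_*E, i_*F) = \Hom_{\cD_T}(E, i^! i_*F)$ together with a computation of $i^! i_*F$ via a Koszul-type resolution (since $T \subset S$ is a regular closed embedding), which displays $\Hom_{\cD_T}(E,F)$ as a direct summand of the vanishing group $\Hom_{\cD_S}(i_*E, i_*F) = 0$. For the truncation axiom, given $E \in \cD_T$, form the triangle $\tau_S^{\leq 0}(i_*E) \to i_*E \to \tau_S^{\geq 1}(i_*E)$ in $\cD_S$. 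Since $i_*E$ is set-theoretically supported on $X \times T$, Proposition~\ref{check S-local} together with Lemma~\ref{check supp on coho} (applied with $X\times T \subset X\times S$) shows that both truncations are supported on $X \times T$. To descend them back to $\cD_T$, I would argue by induction on the codimension of $T$ in $S$, reducing to the case where $T$ is a smooth Cartier divisor cut out by a regular section $s$ of a line bundle $L$ on $S$. In that case the Koszul triangle $q^*L^{-1} \xrightarrow{s} \cO_{X\times S} \to i_*\cO_{X\times T}$, combined with the $t$-exactness of $-\otimes q^*L$ on $\cD_S$ (Proposition~\ref{check S-local}), controls the $\cI_T$-adic structure of the truncations and realizes each as the pushforward of an object of $\cD_T$.

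\emph{Locality over $T$.} Pick any ample line bundle $L_S$ on $S$; then $L_T \defeq L_S|_T$ is ample on $T$. By the projection formula, $i_*(- \otimes q_T^*L_T) \simeq i_*(-) \otimes q_S^*L_S$, so the $t$-exactness of $-\otimes q_T^*L_T$ on $\cD_T$ (for the $t$-structure just constructed) follows from the $t$-exactness of $-\otimes q_S^*L_S$ on $\cD_S$ provided by Proposition~\ref{check S-local}. A converse application of Proposition~\ref{check S-local} then yields $T$-locality.

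\emph{Main obstacle.} The key technical subtlety is that $i_* \colon \cD_T \to \cD_S$ is \emph{not} fully faithful on bounded derived categories in general --- for instance, the first-order thickening $\cO_S/\cI_T^2$ is set-theoretically supported on $T$ but is not of the form $i_*F$ for any $F \in \cD_T$. Hence the truncations $\tau_S^{\leq 0}(i_*E)$ and $\tau_S^{\geq 1}(i_*E)$, while supported on $X \times T$ by locality, cannot be formally pulled back via $i_*$. The reduction to the Cartier divisor case together with a Koszul-resolution bookkeeping of the $\cI_T$-adic filtration, made possible by the locality hypothesis on $\cD_S$, is precisely what makes the descent work.
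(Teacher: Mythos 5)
Your uniqueness argument, your verification of $T$-locality via the projection formula $i_*(-\otimes q_T^*L_T)\simeq i_*(-)\otimes q_S^*L_S$ together with Proposition~\ref{check S-local}, and your identification of the main obstacle (that $\dCat_{X\times T}(X\times S)$ is strictly larger than the essential image of $i_*$, so the truncations of $i_*E$ do not formally descend) all match the paper's proof, which defines the candidate $t$-structure by the same formula and then checks the two axioms. However, the decisive step of the generation axiom is asserted rather than proven. You write that the Koszul triangle and the $t$-exactness of $-\otimes q^*L$ ``control the $\cI_T$-adic structure of the truncations and realize each as the pushforward of an object of $\cD_T$'' --- but this realization is exactly the hard content of the proposition. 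Concretely: in the divisor case one does get that multiplication by the defining section $s$ is zero on $\tau^{\leq 0}(i_*E)$ (by naturality of truncation and $t$-exactness of $\otimes q^*L$), but the implication ``$s\cdot\mathrm{id}_A=0$ $\Rightarrow$ $A\simeq i_*A'$'' is not formal; it is the content of the proof of \cite[Theorem~2.5.2]{AP06}, which the paper cites at precisely this point (and which, in the special retraction case, the paper reproves later as Lemma~\ref{supp by vanishing ideal} using the Krull--Schmidt property). Moreover, you never address descending the truncation \emph{morphism} $\tau^{\leq 0}(i_*E)\to i_*E$ to a morphism $\bar\phi$ in $\cD_T$ with $\phi=i_*\bar\phi$; without this you cannot take the cone inside $\cD_T$ and run the induction on cohomological amplitude that produces the full filtration of $E$ by objects of the heart.

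Two further points need care. First, your reduction ``by induction on codimension to the smooth Cartier divisor case'' is only available Zariski-locally on $S$: a smooth $T\subset S$ of higher codimension need not sit globally inside a chain of smooth divisors, so one must shrink $S$, perform the descent locally, and then glue --- and gluing objects of derived categories is exactly the delicate issue that the locality hypothesis and the induction in \cite{AP06} are designed to circumvent. Second, your $\Hom$-vanishing argument needs the counit $i^*i_*E\to E$ (equivalently the summand $F$ of $i^!i_*F$) to split off \emph{globally}; this is automatic for divisors but for higher-codimension smooth embeddings the decomposition $i^*i_*\simeq\bigoplus_k(-)\otimes\wedge^k\cN^\vee[k]$ is an Arinkin--C\u{a}ld\u{a}raru-type statement that can fail. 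The paper avoids this by first using \cite[Lemma~2.1.10]{AP06} to reduce the $\Hom$-vanishing to a computation of $q_*\RcHom(A,B)$ that can be checked over an affine base, where the Koszul splitting does exist. So your outline follows the right strategy, but the two steps where all the work lives --- the local-to-global descent of the truncations and of the truncation morphisms --- are missing.
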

\begin{proof}
    It is unique and bounded if it exists, as it has to take the form
    \begin{equation*}
        \cD_T^{[a,b]} = \{ F \mid i_*F \in \cD_S^{[a,b]} \}
    \end{equation*}
    by Remark~\ref{conservatively t-exact}.
    So we only need to show this indeed defines a $t$-structure on $\cD_T$.
    
    First, we check for $A \in \cD_T^{\leq 0}$, $B \in \cD_T^{\geq 1}$ one has $\Hom(A,B)=0$. 
    Since $A,B \in \dCat(X \times S)$, we can apply \cite[Lemma~2.1.10]{AP06}, which says
    that it suffices to prove that 
    \begin{equation*}
        q_* \RcHom(A,B) \in D(T)^{\geq 1} 
    \end{equation*}
    (with respect to the standard $t$-structure).
    Since the standard $t$-structure is local, we can check it locally, so we may assume that $T$ is affine.
    So it suffices to check $\Hom^i(A,B)$ for $i\leq 0$, which follows from \cite[Lemma~2.5.1]{AP06}.

    It remains to check $\cD_T$ is generated by $\cA_T \defeq \cD^{\leq 0} \cap \cD^{\geq 0}$.
    Let $F \in \cD_T$ be an arbitrary object. By shifting we may assume $i_*F \in \cD_S^{\geq 0}$ but $i_*F \notin \cD_S^{\geq 1}$. Let $\phi \colon \tau^{\leq 0} i_*F \to i_*F$ be the canonical map.
    It suffices to prove there exists an object $F^0 \in \cD_T$ and a morphism $\bar\phi$ such that 
    \begin{equation}\label{eq:truncation}
        \tau^{\leq 0} i_*F \simeq i_* F^0
    \end{equation}
    and $\phi = i_* \bar\phi$. Indeed, then we would consider the cone $F'$ of the morphism $\bar\phi$, and since $i_*F'$ has smaller cohomological length than $i_*F$ with respect to our $t$-structure, we would be able to finish the proof by induction.

    By the proof of \cite[Theorem~2.5.2]{AP06}, we already find such $F^0 \in \dCat(X \times T)$, and by definition $\tau^{\leq 0} i_*F \in \cD_S$, so from \eqref{eq:truncation} we see $i_*F^0 \in \dCat_{Y\times T}(X \times S)$, hence $F^0 \in \cD_T$.
\end{proof}

\subsection{Constant $t$-structures on products}
In this section, we follow \cite{AP06,Bayer-Craw-Zhang:2017} to construct a $S$-local t-structure on $\cD_S$. Recall that a $t$-structure is called noetherian if its heart is.

\begin{proposition}\label{constant t-structure}
    Given a noetherian bounded $t$-structure $(\cD^{\leq 0},\cD^{\geq 0})$ on $\cD$,
    one can construct a noetherian bounded $t$-structure $(\cD_S^{\leq 0},\cD_S^{\geq 0})$ on $\cD_S$ which is local over $S$, for any quasi-projective variety $S$.
    If $S$ is projective, its heart is given by
    \begin{equation*}
        \cA_S= \left\{ E \in \cD_S \mid p_*(E \otimes q^*L^n) \in \cA \text{ for } n \gg 0 \right\},
    \end{equation*}
    where $L$ as any ample line bundle on $S$.
\end{proposition}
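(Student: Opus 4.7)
The plan is to adapt the construction of Abramovich--Polishchuk \cite{AP06} (see also \cite{Bayer-Craw-Zhang:2017}) to the local setting $\cD_S = \dCat_{Y \times S}(X \times S)$. The key observation enabling the adaptation is that for $E \in \cD_S$ the complex $p_*(E \otimes q^* L^n)$ is set-theoretically supported on $Y$ and therefore lies in $\cD$, so the defining condition $p_*(E \otimes q^* L^n) \in \cA$ for $n \gg 0$ makes sense as written.

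First I would treat the case $S$ projective. Define $\cA_S$ by the displayed formula and verify that it is the heart of a bounded noetherian $t$-structure on $\cD_S$ by running the AP argument verbatim. The two axioms are: vanishing of $\Hom^{<0}(A, B)$ for $A, B \in \cA_S$, which by the projection formula and Serre vanishing for the ample line bundle $L$ reduces to the corresponding vanishing in $\cA \subset \cD$; and the existence of HN-type filtrations for every $E \in \cD_S$, produced by the inductive truncation construction of \cite[\S 2]{AP06}. At every step one must confirm that the intermediate objects built from $E$ by tensoring with $q^* L$, applying $p^*$ or $p_*$, and taking cones remain set-theoretically supported on $Y \times S$; this is immediate from the support-preservation of each of these operations.

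For $S$ merely quasi-projective, choose a smooth projective compactification $S \hookrightarrow \bar{S}$ and apply the projective case to obtain a bounded noetherian $t$-structure on $\cD_{\bar{S}}$. By construction $\cA_{\bar{S}}$ is stable under $\otimes\, q^* L$, so the resulting $t$-structure is $\bar{S}$-local by Proposition~\ref{check S-local} and hence restricts to the open subset $S \subset \bar{S}$ to give a bounded $t$-structure on $\cD_S$, which is automatically $S$-local in the sense of Definition~\ref{S-local}. Noetherianity passes from $\cA$ to $\cA_S$ via the functor $p_*(- \otimes q^* L^n)$ for large $n$: an ascending chain in $\cA_S$ pushes forward to a chain in $\cA$, which stabilizes, and a standard adjunction argument (using $p^* \dashv p_*$ together with the defining formula of $\cA_S$) deduces stabilization of the original chain.

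The main obstacle is precisely the verification that AP's filtration construction stays within $\cD_S$ rather than merely within $\dCat(X \times S)$; as indicated above this reduces to the support-preservation of pushforward, pullback, tensoring with pullbacks from $S$, and cones, all of which are routine.
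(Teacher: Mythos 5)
Your overall strategy is the same as the paper's: adapt the Abramovich--Polishchuk construction to $\cD_S$, with the single new verification being that all the functors involved ($p_*$, $p^*$, $-\otimes q^*L$, cones) preserve the support condition, and then pass from projective to quasi-projective $S$ by restricting to an open subset via Proposition~\ref{check S-local}. The paper spells out what you compress into ``run the AP argument verbatim'': it first builds the $t$-structure on $\cD_{\PP^r}$ by gluing along the semiorthogonal decomposition $\cD_{\PP^r}=\langle p^*\cD(-r),\dots,p^*\cD\rangle$, shows the cohomology objects stabilize as the twist grows, and only then restricts to a smooth closed subvariety of $\PP^r$ (this is where your projective compactification enters, and it must be chosen smooth for Proposition~\ref{t-structure on closed subvariety} to apply) and finally to an open subset. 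Your identification of support-preservation as the one genuinely new point is exactly right and matches the paper.

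The one step in your sketch that would fail as written is the noetherianity argument. You propose to push an ascending chain of subobjects forward along $p_*(-\otimes q^*L^n)$ and invoke noetherianity of $\cA$. But the integer $n$ for which $p_*(E_i\otimes q^*L^n)$ lands in $\cA$ depends on $E_i$ (and on the quotients $E/E_i$), and there is no a priori uniform bound on $n$ along an infinite chain; without such a bound the functor you want to apply is not a single exact functor on the whole chain, and the ``standard adjunction argument'' has nothing to act on. This is precisely why the noetherianity statement in \cite[Theorem~2.3.6]{AP06} requires its own delicate argument (an induction involving torsion filtrations over the base), which the paper cites rather than reproves. You should replace your sketch by an appeal to that argument, checking as before that it stays inside the support condition.
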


We will call the heart $\cA_S$ the Abramovich--Polishchuk (AP) heart on $\cD_S$ in the sequel.

\begin{proof}
    We follow the same proof of \cite{AP06}.

    By Proposition~\ref{check S-local}, this $t$-structure is $S$-local from the very construction.
    So we only need to show that this is a bounded $t$-structure.
    \subsubsection*{Step 1}
    Firstly, we claim
    \begin{equation}\label{eq:glued t-structure}
        (\cD_{\PP^r})_0^{[a,b]} \defeq \{ E \in \cD_{\PP^r} \mid p_*E \in \cD^{[a,b]}, \dots, p_*E(r) \in \cD^{[a,b]}  \}
    \end{equation}
    defines a bounded $t$-structure on $\cD_{\PP^r}$.
    Indeed, the standard semiorthogonal decomposition $\dCat(\PP^r) \simeq \langle \cO(-r), \dots ,\cO \rangle$ pulls back to a semiorthogonal decomposition
    \begin{equation*}
        \dCat(X \times \PP^r) = \langle p^* \dCat(X)(-r), \dots, p^* \dCat(X) \rangle
    \end{equation*}
    and restricts to
    \begin{equation*}
        \cD_{\PP^r} = \langle p^* \cD(-r), \dots, p^*\cD \rangle.
    \end{equation*}
    Then \eqref{eq:glued t-structure} is just obtained by gluing from the $t$-structure on every piece $\cD$ along this semiorthogonal decomposition, cf. the proof of \cite[Proposition~2.3.1]{AP06}.
    \subsubsection*{Step 2}
    Via the modified semiorthogonal decomposition $\dCat(\PP^r) = \langle \cO(-n-r) , \dots , \cO(-n)\rangle$, we can define
    \begin{equation*}
        (\cD_{\PP^r})^{[a,b]}_n \defeq \{ E \in \cD_{\PP^r} \mid p_*E(n) \in \cD^{[a,b]}, \dots, p_*E(n+r) \in \cD^{[a,b]}  \}
    \end{equation*}
    and easy to see
    \begin{equation*}
        (\cD_{\PP^r})^{[0,0]}_0 \subset (\cD_{\PP^r})^{[-r,0]}_n
    \end{equation*}
    for $n \geq 0$, following the same proof of \cite[Corollary~3.2.4]{Bayer-Craw-Zhang:2017}.
    As a consequence, we have inclusions
    \begin{equation}\label{eq:inclusions}
        \begin{aligned}
            (\cD_{\PP^r})^{\leq 0}_0 \subset (\cD_{\PP^r})^{\leq 0}_1 \subset (\cD_{\PP^r})^{\leq 0}_2 \subset \dots \subset (\cD_{\PP^r})^{\leq r}_0,\\
            (\cD_{\PP^r})^{\geq 0}_0 \supset (\cD_{\PP^r})^{\geq 0}_1 \supset (\cD_{\PP^r})^{\geq 0}_2 \supset \dots \supset (\cD_{\PP^r})^{\geq r}_0.
        \end{aligned}
    \end{equation}
    In particular, there is a natural map $\cH^k_n(-) \to \cH^i_{n+1}(-)$.
    \subsubsection*{Step 3}
    By the construction, we know
    \begin{equation*}
        \cD_{\PP^r}^{\leq 0} = \bigcup_n (\cD_{\PP^r})^{\leq 0}_n \quad \text{and} \quad \cD_{\PP^r}^{\geq 0} = \bigcap_n (\cD_{\PP^r})^{\geq 0}_n .
    \end{equation*}
    Hence we defined a bounded $t$-structure on $\cD_{\PP^r}$.
    By construction the semiorthogonality is automatic. It remains to show that they generate the whole category.
    It suffices to show that the cohomology will stabilize as we can define the truncation functor as $n$ large, i.e. $\cH^k_n(E) \simeq \cH^i_{n+1}(E)$ for any $E$.
    By boundedness, we assume that $E \in (\cD_{\PP^r})^{[a,b]}_0$ and by \eqref{eq:inclusions} we know that $E \in (\cD_{\PP^r})^{[a-r,b]}_n$. If we can show $\cH^b_n(E) \simeq \cH^b_{n+1}(E)$ for $n \gg 0$, then we can conclude by induction on the length of the interval $[a-r,b]$.
    By the same reduction as in proof of \cite[Proposition~3.4.2]{Bayer-Craw-Zhang:2017}, it suffices to show: for $E \in (\cD_{\PP^r})^{[0,0]}_0$, we have $p_* \left( \cH^0_0(E(n))(i+1)\right) \in \cD^{\geq 0}$ for $0 \leq i \leq r$ and $n \gg 0$.
    But this follows from the same proof of \cite[Lemma~3.4.1]{Bayer-Craw-Zhang:2017}.

    \subsubsection*{Step 4}
    Then we restrict it to a closed subset of $\PP^r$ using Proposition~\ref{t-structure on closed subvariety} and furthermore its open subset using Proposition~\ref{check S-local}.
    \subsubsection*{Step 5}
    Following a similar analysis in the proof of \cite[Theorem 2.3.6]{AP06} we can show $\cA_S$ is noetherian.
\end{proof}

\begin{remark}
    Following the same proof we can see that this is independent of the choice of $L$. 
\end{remark}

\subsection{Open heart property}

In this section, we follow \cite[Section~3]{AP06} to show that every $S$-local $t$-structure on $\cD_S$ enjoys the `open heart property'.

\begin{proposition}[Open heart property]\label{open heart property}
    Let $\left( \cD_S^{\leq 0} ,\cD_S^{\geq 0} \right)$ be a noetherian $t$-structure on $\cD_S$ that is local over $S$.
    Let $E\in \cD_S$ and let $T\subset S$ be a smooth closed subscheme such that $i_T^* E \in \cA_T$. Then there exists a Zariski open neighborhood $U \subset S$ of $T$ such that $E_U \in \cA_U$.
\end{proposition}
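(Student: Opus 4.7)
The plan is to prove the proposition by showing that non-membership of $E$ in the heart is controlled by a closed subset of $S$ that is disjoint from $T$. Since the $t$-structure on $\cD_S$ is noetherian and bounded, $E$ has only finitely many nonzero cohomology objects $\cH^i_S(E) \in \cA_S$. Using $S$-locality (Proposition~\ref{check S-local}), the assertion $E_U \in \cA_U$ is equivalent to $\cH^i_S(E)|_U = 0$ for all $i \neq 0$. Hence the task reduces to finding an open neighborhood $U \supset T$ on which every $\cH^i_S(E)$ with $i \neq 0$ vanishes.

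The key intermediate result I will establish is a support lemma: for any object $F \in \cA_S$, the support $\Supp(F) \subset S$, defined as the complement of the largest Zariski open $V$ with $F|_V = 0$, is a closed subset. The idea is to leverage Lemma~\ref{check supp on coho}: since tensoring with $q^*L$ is $t$-exact for an ample $L$ on $S$, membership of $F$ in $\cD_{W}$ for a closed $W \subset S$ is detected by the vanishing of iterated multiplication-by-section morphisms $F \to F \otimes q^*L^{\otimes d}$ in $\cA_S$. By noetherianness of $\cA_S$, these vanishings are cut out by finitely many closed conditions on $S$, which shows $\Supp(F)$ is closed.

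It then remains to verify that $\Supp\bigl(\cH^i_S(E)\bigr)$ is disjoint from $T$ for each $i \neq 0$. For this I will consider the unit morphism
\begin{equation*}
E \longrightarrow i_{T*}\bigl(i_T^* E\bigr)
\end{equation*}
of the adjunction for the closed embedding $X \times T \hookrightarrow X \times S$. Its target lies in $\cA_S$ by the $t$-exactness of $i_{T*}$ (Proposition~\ref{t-structure on closed subvariety}) together with the hypothesis that $i_T^* E \in \cA_T$. The cone has vanishing derived pullback along $i_T$, hence every cohomology object of this cone has support disjoint from $T$ by the support lemma. A long-exact-sequence comparison, combined with induction on the cohomological length of $E$ with respect to the given $t$-structure, then forces all $\cH^i_S(E)$ with $i \neq 0$ to have support disjoint from $T$. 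Setting
\begin{equation*}
U \defeq S \setminus \bigcup_{i \neq 0} \Supp\bigl(\cH^i_S(E)\bigr)
\end{equation*}
yields the desired open neighborhood of $T$.

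The main obstacle will be making the support lemma precise for a general $S$-local noetherian $t$-structure rather than only the constant one from Proposition~\ref{constant t-structure}. The intended workaround is to bootstrap from Lemma~\ref{check supp on coho} by shrinking to affine opens via $S$-locality, and then invoking noetherianness of $\cA_S$ to guarantee that the locus of non-vanishing is cut out by finitely many section conditions. A secondary subtlety is controlling how the hypothesis $i_T^* E \in \cA_T$ interacts with the cohomology of $E$ across the unit morphism; here the $t$-exactness of $i_{T*}$ and a Nakayama-style propagation argument along the induction on cohomological length should suffice, following the strategy of \cite[Section~3]{AP06}.
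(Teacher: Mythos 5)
There is a genuine gap at the heart of your argument: the claim that the cone of the unit morphism $E \to i_{T*}\bigl(i_T^*E\bigr)$ has vanishing derived pullback along $i_T$ is false. By the standard Koszul/self-intersection formula, $i_T^*\,i_{T*}\,G \simeq \bigoplus_{k\geq 0} \wedge^k \cN^\vee_{T/S}[k]\otimes G$, and the unit exhibits $i_T^*E$ as the $k=0$ summand; hence the cone pulls back to $\bigoplus_{k\geq 1}\wedge^k\cN^\vee_{T/S}[k]\otimes i_T^*E$, which is nonzero whenever $i_T^*E\neq 0$ and $T\subsetneq S$. Independently of this, the cone cannot have support disjoint from $T$: its second vertex $i_{T*}\bigl(i_T^*E\bigr)$ is supported \emph{on} $T$ (take $E\in\cA_S$ already in the heart to see that the cone's cohomology objects are generically supported over $T$). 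So the step producing $U$ as the complement of $\bigcup_{i\neq 0}\Supp\bigl(\cH^i(E)\bigr)$ has no justification, and the whole strategy collapses at this point. The underlying difficulty you are papering over is that $i_T^*$ is only \emph{right} $t$-exact for these $t$-structures, so one cannot pass freely between the cohomology of $E$ over $S$ and the cohomology of $i_T^*E$ over $T$ via a single adjunction triangle.

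For comparison, the paper's proof confronts exactly this difficulty: it first reduces by induction to the case where $T$ is a divisor, then splits off the maximal torsion subobject of each cohomology object supported over $T$ (this is where noetherianness of $\cA_S$ enters, via Lemma~\ref{noether implies}), and kills the torsion part by a Nakayama-type argument with the section $f$ cutting out $T$ (Lemma~\ref{open heart lemma}); the passage from $i_T^*E\in\cA_T$ to statements about $\cH^i(E)$ uses only $\cH^0(i_T^*(-))$ together with right $t$-exactness of $i_T^*$ and left $t$-exactness of $i_T^*[-1]$. Your outer reduction (it suffices to show each $\cH^i(E)$, $i\neq 0$, vanishes near $T$, and the locus where an object of $\cA_S$ is nonzero is closed since $q$ is proper) is fine and close in spirit to the paper's, but the mechanism you propose for proving the disjointness from $T$ does not work and would need to be replaced by something like the torsion-splitting and Nakayama arguments above.
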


The proof relies on the following two lemmas.

\begin{lemma}\label{noether implies}
    Assume that $\cA_S$ is noetherian and $T \subset S$ is a smooth divisor.
    Then for any $E \in \cA_S$, there exists an $S$-torsion-free object $\overline{E} \in \cA_S$ such that $i^* \overline{E} \in \cA_T$
\end{lemma}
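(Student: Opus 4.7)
The plan is to construct $\overline{E}$ as the quotient of $E$ by its maximal $S$-torsion subobject, which exists by noetherianity of $\cA_S$, and then to exploit that $T$ is a Cartier divisor to identify $i^*\overline{E}$ with the cokernel of multiplication by a defining section of $T$.

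Let $L = \cO_S(T)$ and let $s \in H^0(S,L)$ be the canonical section cutting out $T$. Tensoring with $q^*L^{-1}$ is $t$-exact on $\cD_S$ by Proposition~\ref{check S-local}, so multiplication by $s$ gives a morphism $\mu_s \colon E \otimes q^*L^{-1} \to E$ in $\cA_S$. The family of $S$-torsion subobjects of $E$ in $\cA_S$, i.e.\ those subobjects whose underlying complex in $\cD_S$ is supported over a proper closed subscheme of $S$, is directed under sum, so by the noetherian hypothesis it admits a maximum $K \subset E$; set $\overline{E} := E/K \in \cA_S$. By construction $\overline{E}$ is $S$-torsion-free, so in particular no nonzero subobject of $\overline{E}$ is annihilated by any power of $s$, which is equivalent to the statement that $\mu_s \colon \overline{E}\otimes q^*L^{-1} \to \overline{E}$ is a monomorphism in the abelian category $\cA_S$.

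To verify $i^*\overline{E} \in \cA_T$, I would start from the Koszul short exact sequence $0 \to L^{-1} \xrightarrow{s} \cO_S \to i_*\cO_T \to 0$ on $S$. Pulling back by $q$ (which is flat) and tensoring derived with $\overline{E}$, the projection formula yields a distinguished triangle
\[
\overline{E}\otimes q^*L^{-1} \xrightarrow{\mu_s} \overline{E} \to i_*\, i^*\overline{E} \xdashrightarrow{+1}
\]
in $\cD_S$. Since $\mu_s$ is a monomorphism in the heart $\cA_S$, the long exact cohomology sequence with respect to the $t$-structure forces $i_*\, i^*\overline{E}$ to be concentrated in degree zero, where it equals the cokernel of $\mu_s$ computed in $\cA_S$, and hence to lie in $\cA_S$. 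Using the characterization $\cA_T = \{F \in \cD_T : i_*F \in \cA_S\}$ coming from Proposition~\ref{t-structure on closed subvariety}, we conclude $i^*\overline{E} \in \cA_T$, as desired.

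The main obstacle I anticipate is verifying the noetherian step rigorously: one must check that the directed family of $S$-torsion subobjects of $E$ inside $\cA_S$ really does have a maximal element (so that the noetherian hypothesis on $\cA_S$ applies to produce $K$), and that passing to the quotient removes every $s$-power torsion subobject, so that $\mu_s$ becomes a monomorphism in $\cA_S$ rather than merely avoiding some class of torsion. Granted this, the triangle argument in the second step is essentially formal.
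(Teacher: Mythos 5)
Your proof is correct and follows essentially the same route as the paper: quotient by the maximal $S$-torsion subobject (which exists by noetherianity), observe that multiplication by the section cutting out $T$ then becomes a monomorphism in $\cA_S$ since its kernel is torsion supported over $T$, and read off from the resulting triangle that $i_*i^*\overline{E}$ lies in the heart. The only cosmetic difference is that the paper removes only the torsion supported over $T$ rather than all $S$-torsion, and leaves the final Koszul-triangle step implicit.
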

\begin{proof}
    We are following \cite[Proposition~3.1.2 and Corollary 3.1.3]{AP06}.
    Consider the exact sequence in $\cA_S$
    \begin{equation*}
        0 \to F \to E \to \overline{E} \to 0
    \end{equation*}
    where $F$ is the maximal $S$-torsion subobject supported over $T$, which exists due to the noetherian hypothesis. 
    By construction, $\overline{E}$ contains no $S$-torsion subobject supported over $T$.
    In particular, the natural map $\overline{E}(-T) \xrightarrow{f} \overline{E}$ is injective, as its kernel is supported over $T$.
    So we have $i_T^* \overline{E} \in \cA_T$.
\end{proof}

\begin{lemma}\label{open heart lemma}
    Let $S$ be a smooth quasi-projective scheme. Let $E \in \cA_S$, let $T \subset S$ be a smooth subscheme, and assume that $\cH^0(i_T^* E)=0$.
    Then there is an open neighborhood $T \subset U \subset S$ such that $E_U =0$.
\end{lemma}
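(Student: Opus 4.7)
The plan is to induct on the codimension $c = \dim S - \dim T$. The base case $c = 0$ is immediate: since $T$ is open in $S$, the $S$-locality of the $t$-structure makes $i_T^*$ $t$-exact, so $i_T^*E \in \cA_T$ coincides with $\cH^0(i_T^*E)$, which vanishes by hypothesis, and one takes $U = T$.

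The essential case is the smooth divisor case, $c = 1$. I would apply Lemma~\ref{noether implies} to produce a short exact sequence $0 \to F \to E \to \overline{E} \to 0$ in $\cA_S$, where $F$ is the maximal $S$-torsion subobject supported over $T$ and $\overline{E}$ is an $S$-torsion-free quotient with $i_T^*\overline{E} \in \cA_T$. Taking the long exact sequence of $t$-cohomology under $i_T^*$, and using $\cH^{-1}(i_T^*\overline{E}) = 0$ (since $i_T^*\overline{E}\in\cA_T$) together with the hypothesis $\cH^0(i_T^*E) = 0$, one extracts $i_T^*\overline{E} = 0$ and $\cH^0(i_T^*F) = 0$. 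For the torsion part, by Proposition~\ref{t-structure on closed subvariety} write $F = i_{T*}G$ with $G \in \cA_T$; the standard formula $i_T^*i_{T*}G \simeq G \oplus (G \otimes N_{T/S}^\vee)[1]$ for a smooth divisor identifies $\cH^0(i_T^*F) \simeq G$, so $G = 0$ and $F = 0$. For the torsion-free part, $i_T^*\overline{E} = 0$ combined with torsion-freeness forces the canonical inclusion $\overline{E}(-T) \hookrightarrow \overline{E}$ to be an isomorphism; iterating yields $\overline{E} \cong \overline{E}(-nT)$ for every $n$, and a Nakayama-type argument based on the noetherianity of $\cA_S$ produces an open $U \supset T$ on which $\overline{E}$, and hence $E$, vanishes.

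For $c \geq 2$, shrink $S$ so that $T$ is locally contained in a smooth divisor $D \subset S$. Apply Lemma~\ref{noether implies} along $D$ and handle the torsion summand exactly as before, reducing to a representative with $i_D^*E \in \cA_D$. Using the factorization $i_T^* = i_{T \subset D}^* \circ i_D^*$, the hypothesis $\cH^0(i_T^*E) = 0$ is exactly the hypothesis of the lemma applied inside $D$ to $i_D^*E$ and $T \subset D$ of codimension $c - 1$; by induction, $i_D^*E$ vanishes on an open $V \subset D$ containing $T$, and the already-established codimension-one case then lifts this to a Zariski open neighborhood of $V$, hence of $T$, in $S$. The main obstacle is the Nakayama-style step in the torsion-free divisor case: the passage from the infinitesimal divisibility $\overline{E} \cong \overline{E}(-nT)$ for all $n$ to genuine Zariski-open vanishing of $\overline{E}$ near $T$ is where the noetherianity of $\cA_S$ is used in an essential way, and it is the true content of the argument.
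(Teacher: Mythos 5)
Your overall strategy (torsion/torsion-free splitting via Lemma~\ref{noether implies}, then induction on codimension) is the same as the paper's, and your treatment of the torsion-free part in the divisor case is essentially correct: $i_T^*\overline{E}=0$ forces the support of $\overline{E}$ to miss $T$, hence vanishing on a Zariski neighborhood. But there are two genuine gaps. First, in the divisor case you dispose of the torsion part $F$ by writing $F=i_{T*}G$ with $G\in\cA_T$, citing Proposition~\ref{t-structure on closed subvariety}. That proposition only says the $t$-structure restricts to $\cD_T$ with $i_{T*}$ being $t$-exact; it does not say that every object of $\cA_S$ set-theoretically supported over $T$ lies in the essential image of $i_{T*}$. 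The maximal $S$-torsion subobject $F$ can live on an infinitesimal thickening of $T$ (annihilated by $f^k$ for some $k>1$ but not by $f$), in which case your identification $\cH^0(i_T^*F)\simeq G$ has no meaning. The paper instead takes the minimal $k$ with $f^k$ annihilating $F$, uses $i_{T*}\cH^0(i_T^*F)=\cH^0(i_{T*}i_T^*F)=F/fF(-T)$ to deduce $F=fF(-T)$ from $\cH^0(i_T^*F)=0$, hence that $F$ is annihilated by $f^{k-1}$, contradicting minimality; some such descending argument on the order of nilpotence is needed and is the real content of the torsion case.

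Second, in codimension $c\geq 2$ you propose to split off the $D$-torsion along an intermediate smooth divisor $D\supset T$ and ``handle it exactly as before,'' reducing to $i_D^*E\in\cA_D$. But the hypothesis only gives $\cH^0(i_T^*E)=0$, not $\cH^0(i_D^*E)=0$, so the $D$-torsion part cannot be killed globally (it may be nonzero away from $T$), and killing it only near $T$ is essentially an instance of the statement being proved; the reduction is circular as written. The paper avoids the intermediate torsion decomposition entirely: since $i_T^*=i_{T\subset S_1}^*\circ i_{S_1}^*$ and both pullbacks are right $t$-exact, one has $\cH^0(i_T^*E)=\cH^0\bigl(i_{T\subset S_1}^*\cH^0(i_{S_1}^*E)\bigr)$, so the inductive hypothesis applies directly to the object $\cH^0(i_{S_1}^*E)\in\cA_{S_1}$ and the subvariety $T\subset S_1$ of codimension $c-1$, yielding vanishing of $\cH^0(i_{S_1}^*E)$ on an open $U_1\subset S_1$ containing $T$; one then shrinks $S$ so that $U_1$ becomes a divisor in it and invokes the already-proved divisor case. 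Your factorization idea is close to this, but you should route the induction through $\cH^0(i_{S_1}^*E)$ rather than through a torsion/torsion-free decomposition along $D$.
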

\begin{proof}
    We first consider the case where $T$ is a divisor in $S$, defined by $f=0$.

    Since $i^*$ is right $t$-exact, we have a surjection $\cH^0(i_T^*E) \xdhrightarrow{} \cH^0(i_T^*\overline E)$.
    By assumption and Lemma~\ref{noether implies}, we know $i_T^* \overline{E} =0$.
    Consequently, $\overline{E}$ is supported away from $T$, thus it vanishes on some open neighborhood of $T$.

    So we may assume that $E=F$ is supported over $T$. Suppose that $T\subset S$ is defined by $f$ and $E$ is annihilated by $f^k$ for some minimal integer $k\geq 0$.
    Note that
    $i_{T*} i_T^* E = \cone\left(E(-T) \xrightarrow{f}E \right)$, 
    and therefore
    \begin{equation*}
        i_{T*} \cH^0 (i_T^* E) = \cH^0 (i_{T*}i_T^* E) = E / f E(-T).
    \end{equation*}
    By assumption $\cH^0 i_T^* E=0$, we have $E=f E(-T)$ and therefore $E$ is annihilated by $f^{k-1}$, contradicting minimality.

    For the general case, we do induction on the codimension $\dim S - \dim T >1$.
    Assume the result holds for lower codimensions.
    Write
    \begin{equation*}
        T \xhookrightarrow{ \ i_{T \subset S_1} \ } S_1 \xhookrightarrow{ \  i_{S_1} \ } S
    \end{equation*}
    where $S_1 \subset S$ is a smooth divisor. Then
    \begin{equation*}
        \cH^0(i_T^* E) = \cH^0 \left( i_{T \subset S_1}^* \cH^0 (i_{S_1}^* E) \right) .
    \end{equation*}
    By induction we have that there is an open neighborhood $T \subset U_1 \subset S_1$ such that
    \begin{equation*}
        \left( \cH^0(i_{S_1}^* E) \right)_{U_1} =0 .
    \end{equation*}
    By replacing $S$ by an open subset containing $U_1$ as a divisor, we are reduced to the divisor case and get that $E_U =0$, as desired.
\end{proof}

\begin{proof}[Proof of Proposition~\ref{open heart property}]
    By induction on the codimension as in the lemma, it suffices to prove for the case where $T$ is a divisor.
    If we know $\tau^{>0} E =0$ then we are done.
    Indeed, taking the long exact sequence of cohomology of the distinguished triangle 
    $i_T^*(\tau^{<0}E) \to i_T^* E \to i_T^*(\cH^0 E)$ we get
    \begin{equation*}
        \dots \to \cH^{k} \left( i_T^*(\tau^{<0}E) \right) \to \cH^{k} \left( i_T^* E \right) \to \cH^{k} \left( i_T^*(\cH^0 E) \right) \to \dots
    \end{equation*}
    By right exactness of $i_T^*$ and left exactness of $i_T^*[-1]$, we deduce 
    $\cH^{k} \left( i_T^*(\tau^{<0}E) \right) =0$ for all $k$. By Lemma~\ref{open heart lemma} again, one may shrink $U$ to make $\tau^{<0}E_U=0$, hence $E_U= \cH^0 E_U$.

    Now set $M$ to be the largest integer with $\tau^{\geq M} E \neq 0$ and thus $\tau^{\geq M} E = \cH^M(E)[-M]$. It suffices to show $M\leq 0$. 
    Assume by contradiction $M>0$. Since $i_T^*$ is right exact, we get that
    \begin{equation*}
        \cH^0(i_T^*\cH^M E) = \cH^M(i_T^*(\tau^{\geq M} E)) = \cH^M(i_T^*E) =0 .
    \end{equation*}
    So by Lemma~\ref{open heart lemma} we know $\cH^M(E_U)=0$, which contradicts the definition of $M$.
\end{proof}

We say that an algebraic group $G$ acts on $\dCat_Y(X)$ by Fourier--Mukai transforms of $\dCat(X)$ if we are given a homomorphism $G \to \Aut \dCat(X)$, $g \mapsto \Phi_g$ such that every $\Phi_g$ preserves $\dCat_Y(X)$, and an object $K \in \dCat(G \times X \times X)$ of finite Tor-dimension with the support proper over $G \times X$ (with respect to the projection $p_{13}$), such that for every $g \in G(\CC)$ we have $\Phi_g =\Phi_{K|_{g \times X \times X}}$.

\begin{example}
    Let $X$ be smooth projective and let $G$ be a subgroup of $\Aut(X)$ preserving $Y$. Then we have the map
    \begin{equation*}
        \act \colon G \times X \to X
    \end{equation*}
    and denote its graph by $\Gamma_\act$.
    This defines an action of $G$ on $\dCat_Y(X)$ by Fourier--Mukai transforms, by setting $K=\cO_{\Gamma_\act}$, as $\Gamma_\act$ is closed in $G \times X \times X$.
\end{example}

An application is the following theorem
\begin{theorem}\label{invariant action}
    Let $G$ be a connected smooth algebraic group acting on $\dCat_Y(X)$ by Fourier--Mukai transforms of $\dCat(X)$ and let $(\dCat_Y(X)^{\leq 0},\dCat_Y(X)^{\geq 0})$ be a bounded noetherian t-structure on $\dCat_Y(X)$. Then the $t$-structure is invariant under this group action.
    In particular, if we have a continuous family of stability conditions such that there is a dense subset with noetherian heart, then any stability is invariant under the action $G$.
\end{theorem}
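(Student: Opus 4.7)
I will show that $H \defeq \{g \in G : \Phi_g(\cA) = \cA\}$ equals all of $G$. Since $H$ is a subgroup (containing $e$ because $\Phi_e = \id$, closed under composition via $\Phi_{gh} = \Phi_g \Phi_h$, and closed under inverses because each $\Phi_g$ is an autoequivalence), and every open subgroup of a connected algebraic group equals the ambient group, the problem reduces to producing an open neighborhood of $e$ contained in $H$.

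The key input for openness is the open heart property (Proposition~\ref{open heart property}). For a fixed $E \in \cA$, form the relative Fourier--Mukai family
\begin{equation*}
    \widetilde{E} \defeq \widetilde{\Phi}(\pi_X^* E) \in \cD_G,
\end{equation*}
where $\pi_X \colon G \times X \to X$ is the projection and $\widetilde{\Phi}$ is the $G$-relative Fourier--Mukai transform built from the kernel $K$. Because $\pi_X^* E$ is $\cO_G$-flat and $K$ has finite Tor-dimension, derived restriction computes $i_g^* \widetilde{E} = \Phi_g(E)$ with no higher Tor contributions. Equipping $\cD_G$ with the Abramovich--Polishchuk heart $\cA_G$ of Proposition~\ref{constant t-structure}, and observing that the uniqueness part of Proposition~\ref{t-structure on closed subvariety} combined with a direct computation on $i_{e*}$ forces $\cA_{\{e\}}=\cA$, the fiber $i_e^* \widetilde{E} = E$ lies in $\cA_{\{e\}}$. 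Proposition~\ref{open heart property} then yields an open neighborhood $V_E \ni e$ with $\widetilde{E}|_{V_E} \in \cA_{V_E}$.

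The main obstacle is to promote $\widetilde{E}|_{V_E} \in \cA_{V_E}$ into the fiberwise statement $\Phi_g(E) \in \cA$ for every $g \in V_E$. I propose to handle this by induction on $\dim V_E$, cutting along smooth divisors through $g$: using the Koszul-type triangle $F(-D) \to F \to i_{D*} i_D^* F$ inside the relevant AP heart, Lemma~\ref{noether implies} strips off the torsion supported on $D$ so that the first map becomes a monomorphism, after which $t$-exactness of $i_{D*}$ (Proposition~\ref{t-structure on closed subvariety}) propagates membership in the heart to one lower dimension; iterating until one reaches $\{g\}$ gives $\Phi_g(E) \in \cA$. Uniformity of the neighborhood in $E$ is arranged by applying the argument to a classical generator of $\dCat_Y(X)$ and invoking noetherianity. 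For the final assertion about continuous families of stability conditions, the fixed locus of the continuous $G$-action on $\Stab$ is closed, contains by the first part the dense subset consisting of stability conditions with noetherian heart, and therefore exhausts the entire family.
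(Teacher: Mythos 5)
Your overall strategy (reduce to openness of a subgroup at the identity, form the universal family $K*E$ over $G$, apply the open heart property at $e$) coincides with the paper's, which follows Polishchuk. The gap is precisely at the step you flag as the main obstacle: promoting $\widetilde{E}|_{V_E}\in\cA_{V_E}$ to the fiberwise statement $\Phi_g(E)=i_g^*\widetilde{E}\in\cA$ for all $g\in V_E$. This promotion is not available. Restriction to a closed fiber is only \emph{right} $t$-exact (it is left adjoint to the $t$-exact pushforward of Proposition~\ref{t-structure on closed subvariety}), so from $\widetilde{E}|_{V_E}\in\cA_{V_E}$ one only gets $\Phi_g(E)\in\cD^{\leq 0}$. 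Your proposed repair via Lemma~\ref{noether implies} does not close this: that lemma produces a short exact sequence $0\to F\to\widetilde{E}\to\overline{\widetilde{E}}\to 0$ and shows that the \emph{torsion-free quotient} $\overline{\widetilde{E}}$ restricts into the heart of the divisor; it says nothing about $i_D^*\widetilde{E}$ itself. Indeed, if the torsion part $F\neq 0$ then $\cH^{-1}(i_D^*F)=\ker\bigl(F(-D)\xrightarrow{f}F\bigr)\neq 0$, and the long exact sequence gives $\cH^{-1}(i_D^*\widetilde{E})\simeq\cH^{-1}(i_D^*F)\neq 0$, so $i_D^*\widetilde{E}\notin\cA_D$. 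Your induction therefore only ever controls a quotient of the object whose fiber you need, and $V_E\subset H$ is not established. (The uniformity-in-$E$ step is also unjustified as stated: a classical generator need not lie in the heart, and controlling the amplitude of $\Phi_g$ on one object does not yield $t$-exactness on all of $\cA$.)

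The paper avoids the fiberwise heart statement entirely. From $(K*E)|_{U\times X}\in\cA_U$ it extracts only that $\Phi_g$ is right $t$-exact for $g\in U$, and then exploits the group structure: by adjunction, $\Phi_g(\cD^{\leq 0})\subset\cD^{\leq 0}$ forces $\Phi_{g}^{-1}=\Phi_{g^{-1}}$ to be left $t$-exact, so for $g\in U\cap U^{-1}$ the functor $\Phi_g$ is $t$-exact; the set of such $g$ is an open (and closed) subgroup of the connected group $G$, hence all of $G$. If you weaken your fiberwise claim to ``$\Phi_g(E)\in\cD^{\leq 0}$ for $g\in V_E$'' (which is immediate) and add this inversion trick, your argument becomes the paper's. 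Your one-line treatment of the final assertion --- the fixed locus of the $G$-action on the stability manifold is closed and contains the dense subset of stability conditions with noetherian heart --- is reasonable and is in fact more explicit than what the paper provides.
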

\begin{proof}
    The proof is exactly the same as the proof of \cite[Theorem~3.5.1]{Pol07}. We repeat it here for completeness.
    Consider the universal action
    \begin{equation*}
        K * E \defeq p_{13*}(K \otimes p_2^* E ).
    \end{equation*}
    Then $\Phi_g(E) \simeq (K * E) |_{g \times X}$.
    Note that $(K * E)|_{e \times X} \simeq E \in \cA$, so by Proposition~\ref{open heart property} we know that there exists an open neighborhood $U$ of $e$ in $G$ such that $(K * E)|_{U \times X} \in \cA_U$. Since restriction to closed subscheme is right $t$-exact (as it admits the right adjoint push-forward functor which is $t$-exact), we know $(K * E)|_{g \times X} \in \cD^{\leq 0}$ and hence $(K * E)|_{g^{-1} \times X} \in \cD^{\geq 0}$.
    It follows that for $g \in U \cap U^{-1}$ the functor $\Phi_g$ is $t$-exact. Therefore, the set of $g$ such that $\Phi_g$ is $t$-exact is an open (and closed) subgroup in $G$, so it is equal to $G$.
\end{proof}

\subsection{Stability conditions on products with curves}
In this section, we summarize the main result in \cite{Liu21}, which constructs stability conditions on $\dCat(X\times C)$ from any given stability condition on $\dCat(X)$ where $C$ is a smooth projective curve. More precisely, it constructs a family of stability conditions on the tilted AP heart of $\dCat(X \times C)$ from a stability condition on $X$ if the base $C$ is a curve.

We start from $\sigma=(\cA,Z)$  a stability condition on $X$ satisfying support property with respect to $\K(X) \xdhrightarrow{v} \Lambda$. 
For simplicity, we also assume that $\cA$ is noetherian and $Z$ is rational, i.e. takes values in $\QQ \oplus \I \QQ$.
By the proof of \cite[Theorem~3.3]{Liu21}, we know that
\begin{equation}\label{eq:Z-polynomial}
    Z \left(p_*(E \otimes q^* L^n) \right) 
\end{equation}
is a polynomial in $n$ of degree at most $\dim(S)$.
In the case where $S=C$ is a curve, \eqref{eq:Z-polynomial} is linear in $n$, and we write \eqref{eq:Z-polynomial} as
\begin{equation}\label{eq:def abcd}
    \va n+ \vb + \I (\vc n + \vd)
\end{equation}
where $a,b,c,d$ are group homomorphisms from $\K(X \times C)$ to $\RR$.
Note that if we assume $Z$ takes values in $\QQ \oplus \I \QQ$, then $a,b,c,d$ take values in $\QQ$.
It is easy to check that these homomorphisms satisfy certain inequalities described in \cite[Lemma~4.1]{Liu21},
for example, for any $E$ in the AP heart $\cA_C$, we have
\begin{equation}\label{eq:abcd}
    c(E) \geq 0, \text{ and if $c(E)=0$, then $d(E)\geq 0$ and $a(E) \leq 0$}.
\end{equation}
In particular, for any $t \in \RR_{>0}$ and $\beta \in \RR$, 
\begin{equation*}
    Z_C^{t,\beta}(E)=t \va -\vd +\beta \vc  +i t \vc ,
\end{equation*}
defines a weak stability function on $\cA_C$, whose slope function
\begin{equation*}
    \nu_{t, \beta}(E)= 
    \left\{
    \begin{array}{ll}
        \frac{-ta+d}{tc}-\beta & \text{if $c\neq 0$,} \\
         +\infty & \text{otherwise}
    \end{array}
    \right .
\end{equation*}
gives rise to a torsion pair of $\cA_C$
\begin{align*}
    \cT^{t,\beta} &\defeq \{ E \in \cA_C \mid  \nu_{t,\beta}^-(E) >0\}, \\
    \cF^{t,\beta} &\defeq \{ E \in \cA_C \mid  \nu_{t,\beta}^+(E) \leq 0\}.
\end{align*}
We construct the tilted heart as the extension-closure
\begin{equation*}
    \cA_C^{t,\beta} \defeq \langle \cT^{t,\beta}, \cF^{t,\beta}[1] \rangle.
\end{equation*}
The main theorem of \cite{Liu21} is the following:
\begin{theorem}\label{stabilities on product} 
    Let
    \begin{equation}\label{eq:central charges on product}
        Z_C^{s,t,\beta}(E)= s \vc +\vb - \beta \va + \I (-t\va +\vd - \beta \vc). 
    \end{equation}
    Then for $s,t \in \RR_{>0}$ and $\beta \in \RR$, $\sigma^{s,t,\beta}\defeq \left( \cA_C^{t,\beta} , Z_C^{s,t,\beta}\right) $ is a stability condition on $\dCat(X \times C)$ satisfying support property with respect to 
    $(v_1,v_2)\colon \K(X \times C) \to \Lambda \oplus  \Lambda / \ker Z$, where
    \begin{align*}
        v_1 (E) &= v(p_*(E \otimes q^*L^n)) - v(p_*(E\otimes L^{n-1})), \\
        v_2 (E) &= v(p_*(E \otimes q^*L^n)) - n \cdot v_1(E)
    \end{align*}
    for $n\gg 0$.
\end{theorem}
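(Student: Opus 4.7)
The plan is to verify Bridgeland's axioms in four stages -- heart, stability function, HN property, and support property -- using $\nu_{t,\beta}$ as an auxiliary weak stability function on the AP heart $\cA_C$ and then tilting. First I would upgrade $\nu_{t,\beta}$ to a weak stability condition on $\cA_C$. By Proposition~\ref{constant t-structure} the AP heart is noetherian, and the inequalities in \eqref{eq:abcd} guarantee that $Z_C^{t,\beta}$ has non-negative imaginary part on $\cA_C$ with strict rigidity on the kernel locus $c=0$. The HN property then follows from the standard noetherian-plus-boundedness criterion of \cite{Bri07}, where boundedness of slope $\nu_{t,\beta}$ on subobjects is extracted from the rationality of $Z$ and the discrete structure provided by the support property of $\sigma$ on $X$.

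Next I would tilt: the torsion pair $(\cT^{t,\beta}, \cF^{t,\beta})$ produces the HRS heart $\cA_C^{t,\beta}$, which is automatically the heart of a bounded $t$-structure by Happel--Reiten--Smal\o. To check that $Z_C^{s,t,\beta}$ is a genuine stability function on $\cA_C^{t,\beta}$, one computes $\mathrm{Im}\,Z_C^{s,t,\beta}(E)=-ta+d-\beta c$ and observes that on $\cT^{t,\beta}$ this expression is positive (up to the appropriate proportionality with $\nu_{t,\beta}$), while on $\cF^{t,\beta}[1]$ the sign flips correctly. On the kernel, where both $c(E)=0$ and the imaginary part vanishes, the real part $sc+b-\beta a$ reduces to an expression that is strictly negative by positivity of $s$ combined with the case-distinction in \eqref{eq:abcd}. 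HN filtrations for $\sigma^{s,t,\beta}$ then follow from the HN property of the weak stability function in Stage 1 together with the standard tilting argument of \cite{Bri08} (the ``seesaw'' chain conditions).

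The main obstacle is the support property. I would deduce it by transferring the support property of $\sigma$ on $X$ to the product. For any $\sigma^{s,t,\beta}$-semistable $E\in \cA_C^{t,\beta}$, decompose it through its two cohomology objects in $\cA_C$ and further through the $\nu_{t,\beta}$-HN factors $F_j$ of those. For each $F_j$, the polynomial $Z(p_*(F_j\otimes q^*L^n))=a(F_j)n+b(F_j)+i(c(F_j)n+d(F_j))$ has its leading and sub-leading coefficients extracted precisely by $v_1(F_j)$ and $v_2(F_j)$; applying the support property of $\sigma$ at two consecutive values of $n\gg 0$ bounds both $\|v_1(F_j)\|$ and $\|v_2(F_j)\|$ by a constant multiple of $|Z_C^{s,t,\beta}(F_j)|$. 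The delicate point is to sum these bounds across HN factors and cohomology degrees without losing control: one uses the semistability of $E$ to rule out cancellations between the contributions from $\cT^{t,\beta}$ and $\cF^{t,\beta}[1]$, together with the finite-dimensionality of the image of $(v_1,v_2)$ to convert bounds on individual $\nu_{t,\beta}$-semistable pieces into a uniform bound for $E$. Establishing this final cross-factor estimate, with a constant $C$ independent of $E$, is the technical heart of the argument.
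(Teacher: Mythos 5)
First, a framing point: the paper does not prove Theorem~\ref{stabilities on product} at all --- it is stated as a quotation of the main theorem of \cite{Liu21}, so there is no internal argument to compare yours against. Your outline does reconstruct the broad strategy of \cite{Liu21}: build the weak stability function on the noetherian AP heart $\cA_C$, tilt at the torsion pair $(\cT^{t,\beta},\cF^{t,\beta})$, and transfer the support property of $\sigma$ through the coefficients of the polynomial $Z(p_*(E\otimes q^*L^n))$. The skeleton is right.

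Two points, however, keep this from being a proof. First, the positivity check is not correct as written. On the locus $c(E)=0$ the term $s\,c(E)$ vanishes, so ``positivity of $s$'' contributes nothing there; and the inequalities quoted in \eqref{eq:abcd} ($c\ge 0$; if $c=0$ then $d\ge 0$ and $a\le 0$) do not by themselves force $b-\beta a<0$. One genuinely needs the fuller list of inequalities in \cite[Lemma~4.1]{Liu21} --- e.g.\ that $a=c=d=0$ and $E\neq 0$ force $b<0$ because $p_*(E\otimes q^*L^n)$ is then a nonzero object of the heart on $X$ with vanishing imaginary part of $Z$ --- and, for objects of $\cF^{t,\beta}[1]$ with vanishing imaginary part, a separate argument in which the term $s\,c>0$ really is the point. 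Note also that, with the formulas as printed, $\mathrm{Im}\,Z_C^{s,t,\beta}=-ta+d-\beta c$ is not proportional to $tc\cdot\nu_{t,\beta}=-ta+d-\beta tc$ unless $\beta=0$ or $t=1$, so the sign transfer you describe as ``appropriate proportionality'' needs care. Second, and more seriously, the support property with respect to $(v_1,v_2)$ is precisely the content that cannot be deferred: bounding $\|v_1(F_j)\|+\|v_2(F_j)\|$ by $|Z_C^{s,t,\beta}(F_j)|$ for each $\nu_{t,\beta}$-HN factor does not yield a bound for $E$, since $|Z_C^{s,t,\beta}(E)|$ is not comparable to $\sum_j|Z_C^{s,t,\beta}(F_j)|$ without a quadratic-form or cone-decomposition argument; you explicitly leave this ``cross-factor estimate'' open, and it is exactly the nontrivial step. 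As it stands your text is an accurate plan of attack rather than a proof; for the purposes of this paper the statement should simply be cited from \cite{Liu21}, as the text does.
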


\begin{remark}\label{local stability conditions}
    Following the same proof, one can construct a family of stability conditions on $\cD_C$ from a stability condition on $\cD$.
\end{remark}


By applying Theorem~\ref{stabilities on product} inductively, one can construct a family of stability conditions on the product of curves which exhibit certain symmetries.
This following computation appears in \cite{Haiden-Sung:2024} and will also be in \cite{LMPSZ}. We include it for the convenience of the reader.

\begin{proposition}\label{special stability via curve product}
    Let $C_i$ be a smooth projective curve with genus $g_i \geq 1$, and let $\cO_{C_i}(1)$ be an ample line bundle of degree $1$ on $C_i$.
    Then for any $k \geq 1$, there is a continuous family of stability conditions $\sigma_k^{t,b}$ on $C_1 \times \dots \times C_k$ parametrized by $b \in \RR$ and $w \in \RR_{>0}$, such that
    \begin{itemize}
        \item the central charge takes the form
        \begin{equation*}
            Z_k^{w,b}(-)=-\int_{C_1 \times \dots \times C_k} e^{-(b+\I w)H_k} \cdot \ch (-) ,
        \end{equation*}
        where $H_k \defeq c_1 \left(\cO_{C_1}(1)\boxtimes \ldots \boxtimes \cO_{C_k}(1) \right)$;
        \item all skyscraper sheaves are stable of phase $1$.
    \end{itemize}
\end{proposition}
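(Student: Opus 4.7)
The plan is to induct on $k$, applying Liu's construction (Theorem~\ref{stabilities on product}) at each inductive step and choosing the parameters $(s, t, \beta)$ so that the resulting central charge matches the prescribed exponential form.

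\textbf{Base case $k = 1$.} On the curve $C_1$ one has $(H_1)^2 = 0$, so direct expansion gives $Z_1^{w, b}(E) = -\deg(E) + (b + \I w)\rk(E)$. For $w > 0$ this is a stability function on $\Coh(C_1)$ (torsion sheaves have $Z \in \RR_{<0}$; torsion-free sheaves have positive imaginary part), and the resulting Bridgeland stability condition has skyscraper sheaves as simple torsion objects with central charge $-1$, hence stable of phase $1$.

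\textbf{Inductive step.} Assume $\sigma_{k-1}^{w, b}$ is constructed on $X \defeq C_1 \times \dots \times C_{k-1}$, and set $C \defeq C_k$, $L \defeq \cO_C(1)$, $H_C \defeq c_1(L)$. Applying Theorem~\ref{stabilities on product} to $\sigma_{k-1}^{w, b}$ yields a three-parameter family $\sigma^{s, t, \beta}$ on $\dCat(X \times C)$ with central charge \eqref{eq:central charges on product}. The crux is to identify $(s, t, \beta)$ so that $Z_C^{s, t, \beta} = Z_k^{w, b}$. This is a Grothendieck--Riemann--Roch calculation. Decomposing $\ch(E) = p^*\tilde\eta_0 + p^*\tilde\eta_1 \cdot q^*H_C$ via Künneth and using $\td(C) = 1 + (1 - g_C)H_C$ together with $(q^*H_C)^2 = 0$, one obtains
\begin{equation*}
\ch\bigl(p_*(E \otimes q^*L^n)\bigr) = \tilde\eta_1 + (n + 1 - g_C)\tilde\eta_0.
\end{equation*}
This expresses $\va, \vb, \vc, \vd$ as $\QQ$-linear combinations of the integrals $P \defeq \int_X e^{-(b+\I w)H_{k-1}}\tilde\eta_0$ and $Q \defeq \int_X e^{-(b+\I w)H_{k-1}}\tilde\eta_1$. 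A parallel expansion of $Z_k^{w, b}(E)$ using $H_k = p^*H_{k-1} + q^*H_C$ yields $Z_k^{w, b}(E) = -Q + (b + \I w)P$. Substituting back and matching against \eqref{eq:central charges on product}, one reads off $s = w$, $t = w$, $\beta = 1 + b - g_C$. Setting $\sigma_k^{w, b} \defeq \sigma^{w, w, 1+b-g_C}$ gives the desired stability condition, with continuity in $(w, b)$ inherited from Liu's construction and the inductive hypothesis.

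\textbf{Skyscraper stability.} For any $x = (x_X, x_C) \in X \times C$, $p_*(\cO_x \otimes q^*L^n) \cong \cO_{x_X}$ is independent of $n$, so $\va = \vc = 0$, $\vb = -1$, $\vd = 0$, and $Z_k^{w, b}(\cO_x) = -1$ has phase $1$. Since $c = 0$ and $d = 0$ for $\cO_x$, one has $\nu_{t,\beta}(\cO_x) = +\infty$, so $\cO_x$ lies in the torsion part $\cT^{t,\beta} \subset \cA_C^{t,\beta}$. Stability then reduces to simplicity of $\cO_x$ in $\cA_C^{t,\beta}$: any short exact sequence $0 \to A \to \cO_x \to B \to 0$ in the AP heart $\cA_C$ pushes forward under $p_*(-\otimes q^*L^n)$ (for $n \gg 0$) to a short exact sequence in $\cA$ with middle term the inductively simple object $\cO_{x_X}$, forcing $A$ or $B$ to have vanishing pushforwards for all large $n$ and hence to vanish; simplicity passes to the tilted heart since $\cO_x$ lies entirely in its torsion part.

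The main obstacle is the Grothendieck--Riemann--Roch bookkeeping that pins down $(s, t, \beta)$; the remaining items (well-definedness of $\sigma_k^{w,b}$ as a stability condition and skyscraper stability) are essentially formal consequences of Liu's theorem and the inductive hypothesis.
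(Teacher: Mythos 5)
The inductive construction of the central charge is essentially the paper's own argument: induct on the number of factors, apply Liu's Theorem~\ref{stabilities on product} to the projection onto the last curve, compute $a,b,c,d$ by Grothendieck--Riemann--Roch, and match parameters by taking $s=t=w$ and $\beta$ an affine function of $b$ and $g_C$. (Your value $\beta = 1+b-g_C$ is the negative of the paper's $\beta = g_{k+1}-1-b$, but this is only because the paper's proof silently flips the sign of $\beta$ relative to the formula \eqref{eq:central charges on product}; both pin down the same stability condition.) This part is fine.

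The skyscraper-stability step, however, has a genuine gap. Your argument reduces stability of $\cO_x$ to simplicity of $\cO_x$ in the tilted heart $\cA_C^{t,\beta}$, and that reduction rests on two claims that fail. First, you invoke "the inductively simple object $\cO_{x_X}$", but the induction hypothesis only gives that $\cO_{x_X}$ is \emph{stable of phase $1$} for $\sigma_{k-1}^{w,b}$, whose heart is itself a tilted AP heart; a stable object of maximal phase need not be simple in the heart, and skyscraper sheaves are typically \emph{not} simple in tilted hearts. Second, the assertion that "simplicity passes to the tilted heart since $\cO_x$ lies in its torsion part" is false in general: by Lemma~\ref{sub in tilted heart}, a subobject of $E\in\cT^{t,\beta}$ in $\cA_C^{t,\beta}$ is any map $E_{\sub}\to E$ with $E_{\sub}\in\cT^{t,\beta}$ and kernel in $\cF^{t,\beta}$, so tilting creates new monomorphisms. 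The standard example already occurs on a surface: if $\cO_X\in\cT$ and $I_x\in\cF$, then $0\to\cO_X\to\cO_x\to I_x[1]\to 0$ is exact in the tilted heart, so $\cO_X$ is a proper subobject of $\cO_x$ there. Ruling out such subobjects of phase $1$ is exactly the content of skyscraper stability and cannot be dismissed as formal. The paper does not attempt a direct argument at this point; it appeals to \cite[Theorem~1.1]{FLZ22}, which guarantees that on a product of curves of genus at least one (a variety with finite Albanese morphism) all skyscraper sheaves are stable for any numerical stability condition, and then reads off the phase from the central charge. You either need to import that theorem as the paper does, or supply a genuinely different and substantially more involved direct argument.
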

\begin{proof}
    We prove this by induction on the number of curves.
    For the initial case $k=1$, we have
    \begin{equation*}
        Z_1^{w,b}= -\int_{C_1} e^{-(b+\I w)H} \cdot \ch = (- \frac{\ch_1}{H} +b \ch_0 )+ \I w \ch_0.
    \end{equation*}    
    This is the central charge of the stability condition $\sigma_1^{w,b}=\left( \Coh(C_1), -\deg + b \rk + \I w \rk \right)$, and
    it is clear that all skyscraper sheaves are $\sigma_1^{w,b}$-stable of phase $1$.
    
    Assume the statement holds true for $k$ and we want to show for $k+1$. For simplicity we denote by $X_k \defeq C_1 \times \dots \times C_k$. We are in the situation
    \begin{equation*}
        \begin{tikzcd}
        X_k \times C_{k+1} \arrow[d, "p"'] \arrow[r, "q"] & C_{k+1} \\
        X_k                                            &        
        \end{tikzcd}
    \end{equation*}
    and we denote $\cL_{k+1}\defeq q^* \cO_{C_{k+1}}(1)$ and $L_{k+1} \defeq c_1(\cL_{k+1})$ for its numerical class. 
    By the induction hypothesis, we already know the central charge on $X_k$ takes the form
    \begin{equation*}
        Z_k^{w,b}=-\int_{X_k} e^{-(b+\I w)H_k} \cdot \ch .
    \end{equation*}
    Then
    \begin{align*}
        Z_k^{w,b}(p_*(- \otimes \cL_{k+1}^n)) &=-\int_{X_k} e^{-(b+\I w)H_k} \cdot \ch \left(p_* (- \otimes \cL_{k+1}^n) \right) \\
        &=-\int_{X_k} e^{-(b+\I w)H_k} \cdot p_*\left(\ch(- \otimes \cL_{k+1}^n) \cdot \td({X_k} \times C_{k+1}) \right) \cdot \td({X_k})^{-1} \quad \\ 
        &= -\int_{X_k} p_* \left( p^* \left(e^{-(b+\I w)H_k} \td({X_k})^{-1} \right) \cdot \ch (- \otimes \cL_{k+1}) \cdot \td({X_k}\times C_{k+1}) \right) \\
        &= -\int_{{X_k}\times C_{k+1}}  p^* \left(e^{-(b+\I w)H_k} \td({X_k})^{-1} \right) \cdot \ch (- \otimes \cL_{k+1}) \cdot \td({X_k}\times C_{k+1}) \\
        &= -\int_{{X_k}\times C_{k+1}}  p^* e^{-(b+\I w)H_k} \cdot \ch (-) \cdot (1+nL_{k+1}) \cdot \td(p) \\
        &= \left( - \int_{{X_k} \times C_{k+1}} p^* e^{-(b+\I w)H_k} \cdot L_{k+1} \cdot \ch (-) \right )  n \\ 
        &\phantom{=} + \left ( - \int_{{X_k} \times C_{k+1}} p^* e^{-(b+\I w)H_k} \cdot \left(1+ (1-g_{k+1})L_{k+1} \right) \cdot \ch (-) \right) \\
        & \eqdef ( a(-)+\I c(-))n + ( b(-) +\I d(-) ) .
    \end{align*}
    Here we use the same notation $a$, $b$, $c$, and $d$ as in \eqref{eq:abcd}.
    Then by Theorem~\ref{stabilities on product}, we can construct a family of stability condition $(\sigma_k^{w,b})^{s,t,\beta}$ on $X_{k+1} = X_k \times C_{k+1}$, for any $s,t \in \RR_{>0}$ and $\beta \in \RR$.
    In particular, if we take $s=t=w$, the central charge \eqref{eq:central charges on product} takes the form
    \begin{align*}
        {(Z_k^{w,b})}_C^{w,w,\beta} (-) &= w c(-) +b(-) + \beta a(-) + \I(- w a(-) +d(-) + \beta c(-)) \\
        &= (\beta-\I w)\left(a(-) +\I c(-))+ (b(-)+\I d(-) \right) \\
        &= (\beta-\I w)\left ( -\int_{X_{k+1}} p^* e^{-(b+\I w)H_k}  \cdot L_{k+1} \cdot \ch (-) \right) \\
        & \phantom{=} + \left ( - \int_{X_{k+1}} p^* e^{-(b+\I w)H_k}  \cdot \left(1+ (1-g_{k+1})L_{k+1} \right) \cdot \ch (-) \right) \\
        &= - \int_{X_{k+1}} \left( (1-g_{k+1}+\beta-\I w) p^* e^{-(b+\I w)H_k}  \cdot  L_{k+1}
        + p^* e^{-(b+\I w)H_k} \right) \cdot \ch (-)  .
    \end{align*}
    Expanding this, we get
    \begin{equation*}
         -\sum_{i=0}^{k+1} \left( (1-g_{k+1}+\beta -\I w) \frac{(-b-\I w)^{i-1}p^* H_k^{i-1} \cdot L_{k+1}}{(i-1)!} +\frac{(-b-\I w)^i p^* H_k^i}{i!}\right) \ch_{k+1-i}(-).
    \end{equation*}
    Note that 
    \begin{equation*}
        \frac{p^*H_k^{i-1} \cdot L_{k+1}}{(i-1)!}+ \frac{p^* H_k^i}{i!}= \frac{H^i_{k+1}}{i!} .
    \end{equation*}
    So we may set $\beta$ such that $1-g_{k+1}+\beta =-b$, then we have
    \begin{equation*}
        Z_{k+1}^{w, b}\defeq {(Z_k^{w,b})}_C^{w,w,\beta} =-\sum_{i=0}^{k+1} \frac{(-b-\I w)^i H^i_{k+1}}{i!}\ch_{k+1-i} = -\int_{X_{k+1}} e^{-(b+\I w)H_{k+1}} \cdot \ch .
    \end{equation*}
    Moreover, since all $g_i \geq 1$, by \cite[Theorem~1.1]{FLZ22} we know all skyscraper sheaves are $\sigma_{k+1}^{w,b}$-stable; and from the expression, we know that they are of phase $1$; see Example~\ref{extending is aligned} in Section~\ref{section:push stable}.
\end{proof}

\section{Restricting stability conditions}\label{sec:restrict}
In this section, we study the problem of restricting a stability condition on $\dCat(X)$ to a stability condition on $\dCat(Y)$, where $Y$ is a closed smooth subvariety of $X$.
As a first step, we restrict a stability condition on $\dCat(X)$ to $\D_Y(X)$ in Section~\ref{section:restrict set}.
To further restrict to $\dCat(Y)$, we need some intermediate steps which we study in Section~\ref{section:push stable} and \ref{section:complex supp}.
Finally we use these tools to achieve our goal, namely Theorem~\ref{restriction}, in Section~\ref{section:restrict scheme}.

All of the restrictions make use of the following observation.

\begin{lemma}\label{formal restriction}
    Let $\Phi \colon \cC \to \cD$ be a faithful triangulated functor between triangulated categories. 
    Let $\sigma$ be a (pre-)stability condition on $\cD$. 
    If for any $E \in \cC$, the HN filtration of $\Phi(E)$ is induced by some filtration of $E$ in $\cC$ along $\Phi$, then $\sigma=(\cP,Z)$ restricts to a (pre-)stability condition $\Phi^{-1}\sigma$ on $\cC$, by setting
    \begin{equation*}
        \Phi^{-1}\sigma \defeq (\Phi^{-1} \cP , Z\circ [\Phi]),
    \end{equation*}
    where
    \begin{equation*}
        \Phi^{-1} \cP(\phi) \defeq \{E \in \cD \mid \Phi(E) \in \cP(\phi)\}.
    \end{equation*}
    Moreover, if $\sigma$ satisfies the support property with respect to $\K(\cD) \xdhrightarrow{v} \Lambda$,
    then $\Phi^{-1} \sigma$ satisfies the support property with respect to
    $\K(\cC) \xdhrightarrow{} \Lambda_\cC$
    as the coimage component of the composition of maps $\K(\cC) \xrightarrow{[\Phi]} \K(\cD) \xdhrightarrow{v} \Lambda$.
\end{lemma}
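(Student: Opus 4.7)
The plan is to verify directly that $\Phi^{-1}\sigma = (\Phi^{-1}\cP, Z \circ [\Phi])$ satisfies the (pre-)stability condition axioms on $\cC$, and then that the support property transfers along the coimage factorization. The argument is a chain of essentially formal verifications resting on three inputs: the triangulated nature of $\Phi$, its faithfulness, and the hypothesis that HN filtrations in $\cD$ lift to $\cC$ along $\Phi$.

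First I would check that $\Phi^{-1}\cP$ is a slicing. The shift compatibility $\Phi^{-1}\cP(\phi+1) = \Phi^{-1}\cP(\phi)[1]$ is immediate from $\Phi$ being triangulated. For the Hom vanishing, given $E_i \in \Phi^{-1}\cP(\phi_i)$ with $\phi_1 > \phi_2$, any morphism $f \colon E_1 \to E_2$ maps under $\Phi$ to a morphism $\Phi(E_1) \to \Phi(E_2)$ between objects of $\cP(\phi_1)$ and $\cP(\phi_2)$ in $\cD$; this is zero by the slicing axiom in $\cD$, hence $f = 0$ by faithfulness of $\Phi$. The HN filtration axiom is supplied directly by the hypothesis: a filtration of nonzero $E \in \cC$ mapping to the HN filtration of $\Phi(E)$ has graded pieces $A_i$ with $\Phi(A_i) \in \cP(\phi_i)$, so $A_i \in \Phi^{-1}\cP(\phi_i)$ with the same strictly decreasing phases $\phi_1 > \dots > \phi_n$ (the $A_i$ are nonzero because their images are, again using faithfulness).

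Next I would verify compatibility of $Z \circ [\Phi]$: it is a group homomorphism $\K(\cC) \to \CC$, and for any nonzero $E \in \Phi^{-1}\cP(\phi)$ the image $\Phi(E)$ lies in $\cP(\phi)$, so $(Z \circ [\Phi])(E) = Z([\Phi(E)]) \in \RR_{>0} \cdot e^{i\pi\phi}$. For the support property, I would factor $\K(\cC) \xrightarrow{[\Phi]} \K(\cD) \xdhrightarrow{v} \Lambda$ through its coimage as $\K(\cC) \xdhrightarrow{v_\cC} \Lambda_\cC \hookrightarrow \Lambda$ and endow $\Lambda_\cC \otimes \RR$ with the norm restricted from a chosen norm on $\Lambda \otimes \RR$. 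A $\Phi^{-1}\sigma$-semistable $E \in \cC$ of phase $\phi$ then has $\Phi(E) \in \cP(\phi)$ semistable in $\cD$, so
\[
\|v_\cC([E])\| \;=\; \|v([\Phi(E)])\| \;\leq\; C \, |Z([\Phi(E)])| \;=\; C \, |(Z \circ [\Phi])(E)|,
\]
with $C$ the constant provided by the support property of $\sigma$, completing the verification.

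In substance this lemma is a formal reorganization: all the real mathematical content is packed into the hypothesis that HN filtrations lift along $\Phi$, whose verification in specific geometric settings is the business of the subsequent sections. The only small bookkeeping point I would keep track of is that the induced filtration in $\cC$ genuinely qualifies as an HN filtration (nonzero graded pieces, strictly decreasing phases), but both properties descend transparently from the corresponding features of the HN filtration of $\Phi(E)$ in $\cD$ once we know $\Phi$ is faithful.
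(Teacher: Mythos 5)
Your proof is correct and fills in exactly the verifications the paper leaves implicit: the paper's own justification is a one-line sketch (semi-orthogonality from faithfulness, HN property from the hypothesis, with a pointer to \cite[Section~2.2]{MMS09}), and your expanded argument — including the coimage factorization and restricted norm for the support property — is the intended route.
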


The proof is straightforward: semi-orthogonality is respected because $\Phi$ is faithful, and the HN property follows automatically by our assumption; see also \cite[Section~2.2]{MMS09}. 
Note that by Remark~\ref{conservatively t-exact}, this is the only way to make $\Phi$ a $t$-exact functor with respect to all the $t$-structures $(\cP(> \phi), \cP(\leq \phi+1) )$.
\begin{definition}\label{compatibility}
    For any triangulated functor between triangulated categories $\Phi \colon \cC \to \cD$, if $\tau \in \Stab(\cC)$ and $\sigma \in \Stab(\cD)$ are related in the way of Lemma~\ref{formal restriction}, we say that
    $\tau$ is the restriction of $\sigma$, or $\tau$ and $\sigma$ are compatible (along $\Phi$).
\end{definition}

\subsection{Set-theoretic restriction}\label{section:restrict set}
We collect in this section some observations and results from \cite{LMPSZ}.
\begin{proposition}\label{filtration reduction}
    Let $X$ be a closed subvariety of a smooth projective variety $W$ and let $q\colon W \to A$ be a morphism to an abelian variety $A$. Let $z$ be a point in $A$ and let $Y$ be a closed subvariety of $X$ which is a (union of) connected component(s) of $q^{-1}(z) \cap X$.
    Then for any stability condition $\sigma$ on $\dCat_X(W)$ and any object $F$ in $\dCat_Y(W)$, its HN factors lie in $\dCat_Y(W)$ as well.
\end{proposition}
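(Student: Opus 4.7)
The plan is to combine an $\hat A$-invariance of $\sigma$ (where $\hat A := \Pic^0(A)$) with the block decomposition of categories of complexes supported on disjoint closed subsets, iteratively peeling off pieces of the support.

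First, I would establish that $\sigma$ is invariant under the action of $\hat A$ on $\dCat_X(W)$ by the autoequivalences $\Phi_\alpha \colon F \mapsto F \otimes q^*\cL_\alpha$, for $\cL_\alpha$ the line bundle on $A$ corresponding to $\alpha$. Since $c_1(\cL_\alpha) = 0$, the Chern character and hence the central charge $Z$ are preserved, so $\{\Phi_\alpha \cdot \sigma\}_{\alpha \in \hat A}$ is a continuous path in $\Stab_\Lambda(\dCat_X(W))$ with constant image under the forgetful map to $\Hom(\Lambda, \CC)$. Bridgeland's local-homeomorphism theorem \cite[Theorem~7.1]{Bri07} together with the connectedness of $\hat A$ then forces $\Phi_\alpha \cdot \sigma = \sigma$ for every $\alpha \in \hat A$; equivalently, $\cP(\phi) \otimes q^*\cL_\alpha = \cP(\phi)$ for every phase $\phi$.

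Next, I would convert this invariance into a geometric support statement on HN factors. Since $Y \subset q^{-1}(z)$, the pull-back $q^*\cL_\alpha$ trivializes on the support of any $F \in \dCat_Y(W)$ (non-canonically, because $\cL_\alpha|_z$ is a $1$-dimensional vector space), producing an isomorphism $F \otimes q^*\cL_\alpha \cong F$. Combined with the $\hat A$-invariance of $\sigma$ and the uniqueness of HN factors at each phase, this forces every HN factor $A_i$ of $F$ to satisfy $A_i \otimes q^*\cL_\alpha \cong A_i$ for all $\alpha \in \hat A$. Pushing the equation forward along $q$ via the projection formula yields $q_*A_i \otimes \cL_\alpha \cong q_*A_i$ on $A$, and applying Mukai's Fourier--Mukai transform with Poincar\'e kernel turns this tensor-invariance into translation-invariance on $\hat A$. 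Mukai's classification of such objects forces $q_*A_i$ to be supported on a finite set of points, whence each $A_i$ is supported on a finite union of fibers of $q$ contained in $X$.

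Finally, I would conclude by applying a direct-sum decomposition of categories twice. For disjoint closed subsets $Y_1, Y_2 \subset W$, the vanishing of $\RcHom$ between objects of disjoint support gives a block decomposition $\dCat_{Y_1 \sqcup Y_2}(W) \simeq \dCat_{Y_1}(W) \oplus \dCat_{Y_2}(W)$, which descends to the heart of $\sigma$. Applied to the finitely many fibers supporting the HN factors, the HN filtration $\{E_i\}$ of $F$ splits block-wise in the heart; since $F$ has zero component away from the fiber over $z$, the chain of monomorphisms in each off-fiber block ends at $0$ and hence is identically zero, forcing every $A_i$ to lie in $\dCat_{q^{-1}(z) \cap X}(W)$. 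A second application of the same decomposition, now for the disjoint closed subsets $Y$ and $(q^{-1}(z) \cap X) \setminus Y$, refines this to $A_i \in \dCat_Y(W)$.

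The main obstacle lies in the middle step: bridging the abstract $\hat A$-invariance of $\sigma$ with a concrete support constraint on HN factors. The critical inputs are (a) the projection formula along the proper morphism $q$ and (b) Mukai's classification of $\Pic^0$-invariant objects on an abelian variety as (derived) sums of skyscrapers. Threading these through the non-canonical isomorphism $F \otimes q^*\cL_\alpha \cong F$ is where the subtlety of the argument concentrates.
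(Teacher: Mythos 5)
The paper does not actually contain a proof of Proposition~\ref{filtration reduction}: it is deferred wholesale to the forthcoming reference \cite{LMPSZ}, so your argument can only be judged on its own terms. Your overall architecture --- establish invariance of $\sigma$ under $-\otimes q^*\cL_\alpha$ for $\alpha\in\Pic^0(A)$, deduce via uniqueness of HN filtrations that each factor $A_i$ satisfies $A_i\otimes q^*\cL_\alpha\cong A_i$, conclude that $q(\supp A_i)$ is a finite set, and then kill the unwanted blocks using the orthogonal decomposition $\dCat_{Y_1\sqcup Y_2}(W)\simeq \dCat_{Y_1}(W)\oplus\dCat_{Y_2}(W)$ applied twice --- is the natural strategy, and the first and last steps are essentially sound. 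Two caveats on the first step: the continuity of the orbit map $\Pic^0(A)\to\Stab_\Lambda(\dCat_X(W))$ is precisely the nontrivial content there (it is what Polishchuk's open-heart argument, reproduced in this paper as Theorem~\ref{invariant action}, supplies for noetherian hearts, after which one needs a density argument), and the claim that $Z$ is preserved because $\ch$ is preserved tacitly assumes $Z$ factors through the numerical Grothendieck group, whereas the proposition allows an arbitrary $\Lambda$.

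The genuine gap is the inference ``$q_*A_i$ is supported on a finite set of points, \emph{whence} each $A_i$ is supported on a finite union of fibers of $q$.'' Derived pushforward does not detect support: there is no inclusion $\supp(A_i)\subseteq q^{-1}(\supp Rq_*A_i)$, because $Rq_*$ can annihilate nonzero objects (already $Rq_*\cO(-1)=0$ for a $\PP^1$ contracted by $q$), so $Rq_*A_i$ being punctual --- or even zero --- tells you nothing about where $A_i$ lives. Two standard repairs: (a) twist by a $q$-ample line bundle first, i.e.\ use $A_i\otimes\cO_W(nH)\otimes q^*\cL_\alpha\cong A_i\otimes\cO_W(nH)$ together with the fact that $\supp Rq_*\bigl(A_i\otimes\cO_W(nH)\bigr)=q(\supp A_i)$ for $n\gg 0$ by relative Serre vanishing, and only then invoke Mukai's classification; or (b) avoid pushing forward altogether: pass to cohomology sheaves $G=\cH^j(A_i)$, which inherit $G\otimes q^*\cL_\alpha\cong G$, and on each irreducible component $V$ of $\supp G$ compare determinants of the generically rank-$r_V>0$ part to get $q^*\cL_{r_V\alpha}|_V\cong\cO_V$ for all $\alpha$; divisibility of $\Pic^0(A)$ then gives $q^*\cL_\beta|_V\cong\cO_V$ for all $\beta$, which forces $q(V)$ to be a point. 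With either repair, the remainder of your argument (the two-stage block decomposition, using that direct summands of semistable objects are semistable of the same phase and that the HN filtration of $0$ is trivial) goes through.
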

\begin{proof}
    See in the forthcoming paper \cite{LMPSZ}.
\end{proof}

This proposition allows us to do the set-theoretic restriction, due to Lemma~\ref{formal restriction}.

\begin{corollary}\label{pre-restriction}
    Let the notation and hypotheses be as in Proposition~\ref{filtration reduction}. 
    Then any stability condition on $\dCat_X(W)$ restricts to a stability condition on $\dCat_Y(W)$ in the way of Lemma~\ref{formal restriction}.
\end{corollary}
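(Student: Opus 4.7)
The plan is to deduce this as a direct application of the formal restriction mechanism of Lemma~\ref{formal restriction}, using Proposition~\ref{filtration reduction} as the key input. Concretely, I would take $\Phi$ to be the inclusion functor
\[
\Phi \colon \dCat_Y(W) \hookrightarrow \dCat_X(W),
\]
which is well-defined because $Y \subset X$, and which is fully faithful (in particular faithful) since $\dCat_Y(W)$ is defined as the full subcategory of $\dCat(W)$ whose objects have cohomology set-theoretically supported on $Y$, cf.\ Notation~\ref{notation}.

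The central point to verify is the HN-filtration hypothesis of Lemma~\ref{formal restriction}. Let $\sigma = (\cP,Z) \in \Stab(\dCat_X(W))$ and let $F \in \dCat_Y(W)$. Viewing $F$ as an object of $\dCat_X(W)$ via $\Phi$, we obtain a HN filtration
\[
0 = E_0 \to E_1 \to \cdots \to E_n = F
\]
with factors $A_i \in \cP(\phi_i)$ and $\phi_1 > \cdots > \phi_n$. By Proposition~\ref{filtration reduction}, each $A_i$ in fact lies in $\dCat_Y(W)$; by the two-out-of-three property for the support condition applied inductively to the triangles $E_{i-1} \to E_i \to A_i \xdashrightarrow{+1}$, each $E_i$ also lies in $\dCat_Y(W)$. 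Hence the HN filtration is already realized inside $\dCat_Y(W)$ and maps to the HN filtration of $\Phi(F)$ under $\Phi$, which is exactly the compatibility demanded by Lemma~\ref{formal restriction}.

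Applying Lemma~\ref{formal restriction} therefore produces a pre-stability condition $\Phi^{-1}\sigma = (\Phi^{-1}\cP,\, Z\circ [\Phi])$ on $\dCat_Y(W)$, where $\Phi^{-1}\cP(\phi)$ consists of objects of $\dCat_Y(W)$ whose image in $\dCat_X(W)$ lies in $\cP(\phi)$. For the support property, if $\sigma$ has support property with respect to $v\colon \K(\dCat_X(W)) \xdhrightarrow{} \Lambda$, then Lemma~\ref{formal restriction} directly provides the support property for $\Phi^{-1}\sigma$ with respect to the coimage of the composition $\K(\dCat_Y(W)) \xrightarrow{[\Phi]} \K(\dCat_X(W)) \xdhrightarrow{v} \Lambda$. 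No step is really an obstacle here; the content is entirely concentrated in Proposition~\ref{filtration reduction}, and the corollary is essentially a formal packaging of that result through Lemma~\ref{formal restriction}.
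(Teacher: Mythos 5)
Your proposal is correct and follows essentially the same route as the paper: view $\dCat_Y(W)$ as a full subcategory of $\dCat_X(W)$, invoke Proposition~\ref{filtration reduction} to see that the HN factors (and hence, by thickness, the whole filtration) stay in $\dCat_Y(W)$, and conclude via Lemma~\ref{formal restriction}. The paper's proof is just a terser version of the same argument.
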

\begin{proof}
    Note that $\dCat_Y(W)$ is a full subcategory of $\dCat_X(W)$. By Lemma~\ref{formal restriction}, it suffices to prove that every HN factor of an object in $\dCat_Y(W)$ still lies in $\dCat_Y(W)$, and this is precisely Proposition~\ref{filtration reduction}.
\end{proof}

We denote the restriction map of $\sigma \in \Stab_\Lambda(X)_W$ to $\Stab_{\Lambda_Y}(Y)_W$ by $-|_{Y|W}$.
By studying the properties of this restriction map, one also obtains the following theorem from \cite{LMPSZ}.



\begin{theorem}\label{pre-restriction to end}
    Let $X \defeq C_1 \times \dots \times C_n$ be a product of curves with genera at least one. Then all numerical stability conditions on $\dCat(X)$ are geometric. Moreover, for any $\K(X) \xdhrightarrow{v} \Lambda$ that factors through $\K(X) \xdhrightarrow{} \K_\num(X)$, the forgetful map
    \begin{align*}
        (\phi_\bullet(\cO_z),\cZ) \colon \Stab_\Lambda(X) &\to \RR \times \Hom(\Lambda,\CC) , \\
        \sigma = (\cP , Z) &\mapsto (\phi_{\cP}(\cO_z),Z)
    \end{align*}
    is injective. Here $\cO_z$ is any skyscraper sheaf, and the map is independent of the choice of point $z\in X$.
\end{theorem}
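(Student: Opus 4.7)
The plan is to argue by induction on $n$, reducing via the restriction techniques of the paper (Proposition~\ref{filtration reduction}, Corollary~\ref{pre-restriction}, Proposition~\ref{restriction}) to the case of a single curve. For the base case $n=1$, the statement is classical: by Bridgeland (elliptic case) and Macr\`{i} (higher genus), for a smooth projective curve $C$ of genus at least one every numerical stability condition on $\dCat(C)$ is geometric, and $\Stab_\Lambda(C)$ is a single free $\widetilde{\GL}_2^+(\RR)$-orbit on which $(\phi(\cO_z),Z)$ is the very parametrization, hence injective.

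For the inductive step I would write $X = X' \times C$ with $X' = C_1 \times \cdots \times C_{n-1}$ and $C = C_n$, and fix $\sigma \in \Stab_\Lambda(X)$ and $z=(z',w) \in X$. The composition $X \xrightarrow{p} C \hookrightarrow \mathrm{Jac}(C)$ is finite because $g(C)\geq 1$, and the fiber $F_w \cong X'$ is a connected component of its fiber. Proposition~\ref{filtration reduction} together with Corollary~\ref{pre-restriction} restricts $\sigma$ to $\dCat_{F_w}(X)$, and Proposition~\ref{restriction} descends this further to a numerical stability condition $\sigma|_{X'}$ on $\dCat(X')$. By induction, $\sigma|_{X'}$ is geometric, so $\cO_{z'}$ is $\sigma|_{X'}$-stable. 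Compatibility (Definition~\ref{compatibility}) then identifies the HN filtration of $\cO_z = i_*\cO_{z'}$ with that of $\cO_{z'}$, so $\cO_z$ is $\sigma$-semistable of the same phase. To upgrade semistability to stability I would apply Proposition~\ref{filtration reduction} once more with the singleton $Y=\{z\}$ (valid because the full Albanese map $X \to \prod_i \mathrm{Jac}(C_i)$ is finite) to conclude that any sub-semistable $B \hookrightarrow \cO_z$ in the heart of $\sigma$ with the same phase already lies in $\dCat_{\{z\}}(X)\subset\dCat_{F_w}(X)$, and then, in the spirit of Proposition~\ref{restriction} applied to sub-objects, argue that $B$ descends to a sub-semistable of $\cO_{z'}$ in $\dCat(X')$, which is forced to be $0$ or $\cO_{z'}$ by the stability of $\cO_{z'}$.

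Once every skyscraper is $\sigma$-stable, Remark~\ref{geometric} (using numericality) places them all at the same phase, so $\sigma$ is geometric and the map $(\phi_\bullet(\cO_z),\cZ)$ is manifestly independent of $z$. For injectivity, suppose $\sigma,\sigma' \in \Stab_\Lambda(X)$ share $(\phi_0, Z)$. Both are geometric by the first part, so both hearts $\cP((\phi_0-1, \phi_0])$ contain all skyscrapers stable of the common phase $\phi_0$. The heart of a geometric stability condition is pinned down by $(Z, \phi_0)$ via the standard tilting reconstruction of geometric hearts on smooth projective varieties (equivalently, by restricting to each $\dCat_{\{z\}}(X)$, whose numerical Grothendieck group is $\ZZ[\cO_z]$ and on which the stability condition is therefore determined by $Z(\cO_z)$ and $\phi_0$); so the hearts coincide, and together with $Z=Z'$ this forces $\sigma = \sigma'$.

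The main obstacle is the stability-upgrade step: showing that a sub-semistable $B \hookrightarrow \cO_z$ of the same phase, a priori only known to lie in $\dCat_{\{z\}}(X)$, actually descends along $i_*\colon\dCat(X')\to\dCat(X)$. The essential image of $i_*$ is strictly smaller than $\dCat_{\{z\}}(X)$ in general -- non-reduced thickenings such as $\cO_X/\frm_z^k$ sit in $\dCat_{\{z\}}(X)$ but are not annihilated by the ideal of $F_w$, so they do not come from $X'$ -- so this step needs more than set-theoretic support. I expect it to require a version of the scheme-theoretic restriction of Section~\ref{sec:restrict} adapted to sub-objects rather than HN factors, combined with a careful use of the numerical hypothesis, and presumably this is the technical core of the forthcoming \cite{LMPSZ}.
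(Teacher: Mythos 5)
There is nothing in the paper to compare against: Theorem~\ref{pre-restriction to end} is imported verbatim from the forthcoming reference \cite{LMPSZ} and no proof is given here, so your proposal has to be judged on its own. Your skeleton (induction on the number of factors, base case by Bridgeland/Macr\`{i}, inductive step by restricting along a fiber of the projection to $C_n$ using Theorem~\ref{thm:main}) is consistent with the paper's toolkit and does correctly yield that $\cO_z$ is $\sigma$-\emph{semistable}: the restriction $\sigma|_{X'}$ exists, is numerical (one checks $[i_*F]=0$ in $\K_\num(X)$ iff $[F]=0$ in $\K_\num(X_0)$ by pairing against $i^*G$), and $\cP_0(\phi)=(i_*)^{-1}\cP(\phi)$ places $\cO_z=i_*\cO_{z'}$ in a single slice. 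But the two decisive steps are genuinely incomplete. For the stability upgrade, Proposition~\ref{filtration reduction} as stated only controls \emph{HN factors} of objects of $\dCat_Y(W)$; a subobject $B\hookrightarrow\cO_z$ of the same phase is a Jordan--H\"older datum, not an HN factor (the HN filtration of a semistable object is trivial), so even the claim $B\in\dCat_{\{z\}}(X)$ requires an unstated JH analogue. And, as you yourself observe, $\dCat_{\{z\}}(X)$ is strictly larger than the essential image of $i_*$ from the fiber, so descending $B$ to a subobject of $\cO_{z'}$ is exactly the hard point; the scheme-theoretic support machinery of Section~\ref{section:complex supp} is calibrated for HN filtrations of pushed-forward objects (via the $\GL$-invariance of Theorem~\ref{invariant action}), and a subobject of $\cO_z$ carries no such invariance a priori. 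This step is the content deferred to \cite{LMPSZ} and cannot be waved through.

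The injectivity argument is the second gap, and here I think you underestimate the difficulty. The ``standard tilting reconstruction of geometric hearts'' that pins the heart down from $(Z,\phi_0)$ is a theorem for curves and surfaces; in dimension $\ge 3$ no such classification of geometric hearts is available, and indeed the whole point of this circle of papers is that higher-dimensional hearts are poorly understood. Your parenthetical alternative does not repair this: knowing the restriction of $\sigma$ to every $\dCat_{\{z\}}(X)$ determines the phases of objects supported at points, but an object of the heart of $\dCat(X)$ is not built from point-supported objects, so two stability conditions could a priori agree on all $\dCat_{\{z\}}(X)$ and on $Z$ while having different slicings on non-torsion objects. Note also that this injectivity is not a decoration: it is the statement the paper actually leans on in Sections~5.1--5.3 (invariance of central charges plus invariance of the skyscraper phase implies invariance of the stability condition), so any proof of Theorem~\ref{pre-restriction to end} must supply a genuine argument here --- presumably exploiting the product-of-curves structure (e.g.\ stability of all line bundles and a d\'evissage against them), not a general fact about geometric stability conditions.
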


A special case of Corollary~\ref{pre-restriction} where $W=X$ and $q=\alb_X$ allows us to restrict stability conditions on $\dCat(X)$ to $\dCat_{X_0}(X)$ 
if $X_0$ is an Albanese fiber. More generally, we have the following corollary.

\begin{corollary}\label{pre-restriction'}
    Let $f \colon X \to Y$ be a morphism between smooth projective varieties such that the Albanese morphism of $Y$ is finite. 
    Let $X_0$ be a fiber of $f$.
    Then any stability condition on $\dCat(X)$ restricts to a stability condition on $\dCat_{X_0}(X)$ in the way of Lemma~\ref{formal restriction} along the natural closed immersion $X_0 \hookrightarrow X$.
\end{corollary}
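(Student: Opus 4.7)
The plan is to deduce Corollary~\ref{pre-restriction'} immediately from Corollary~\ref{pre-restriction} by making an appropriate choice of auxiliary data; the Albanese-finiteness hypothesis on $Y$ is precisely what makes the choice work. Concretely, I would apply Corollary~\ref{pre-restriction} with ambient variety $W \defeq X$, the role of ``$X$'' there played by our $X$ itself, and the role of its ``$Y$'' played by $X_0$. The required morphism to an abelian variety is $q \defeq \alb_Y \circ f \colon X \to \Alb(Y)$, and the required point is $z \defeq \alb_Y(y_0) \in \Alb(Y)$, where $y_0 \in Y$ is the point with $X_0 = f^{-1}(y_0)$.

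The one nontrivial point to verify is that $X_0$ is a union of connected components of the set-theoretic fiber $q^{-1}(z) \cap X = q^{-1}(z)$. This is where the Albanese-finiteness hypothesis enters: since $\alb_Y$ is finite, the preimage $\alb_Y^{-1}(z) = \{y_0, y_1, \dots, y_k\}$ is a finite set of closed points of $Y$, and hence
\begin{equation*}
    q^{-1}(z)_{\mathrm{red}} = f^{-1}\bigl(\alb_Y^{-1}(z)\bigr) = \bigsqcup_{i=0}^{k} f^{-1}(y_i)
\end{equation*}
as topological spaces, the union being disjoint since the $y_i$ are distinct closed points of $Y$. Thus $X_0 = f^{-1}(y_0)$ is clopen in $q^{-1}(z)_{\mathrm{red}}$, hence a (possibly disconnected) union of connected components of $q^{-1}(z)$.

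Granted this, Corollary~\ref{pre-restriction} furnishes the desired restriction of any stability condition on $\dCat(X) = \dCat_X(X)$ to one on $\dCat_{X_0}(X)$, in the sense of Lemma~\ref{formal restriction} applied to the fully faithful inclusion $\dCat_{X_0}(X) \hookrightarrow \dCat(X)$. No substantial obstacle remains; the argument is purely topological once $q$ is chosen, and the Albanese-finiteness of $Y$ is used solely to ensure that $q^{-1}(z)$ decomposes as a finite disjoint union of $f$-fibers, so that $X_0$ sits as a union of components rather than as a proper dense piece of some larger connected subset.
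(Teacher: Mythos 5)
Your proposal is correct and follows exactly the paper's own (one-line) proof: apply Corollary~\ref{pre-restriction} with $W=X$ and $q=\alb_Y\circ f$. Your additional verification that the Albanese-finiteness of $Y$ makes $X_0$ a union of connected components of $q^{-1}(z)$ is a correct elaboration of the detail the paper leaves implicit.
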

\begin{proof}
    Apply Corollary~\ref{pre-restriction} for $W=X$ and $q= \alb_Y \circ f$, where $\alb_Y \colon Y \to \Alb(Y)$ is the Albanese morphism of $Y$.
\end{proof}

A natural question is, if $X_0$ is smooth, when can we further restrict a stability condition on $\dCat_{X_0}(X)$ to a stability condition on $\dCat(X_0)$? We will achieve a sufficient condition in Section~\ref{section:restrict scheme}.

\subsection{Pushforward of stable objects}\label{section:push stable}

Let $C$ be a smooth projective curve.
Consider the situation where $i\colon X \to X \times C$ is the closed immersion of a fiber of the projection $X \times C \to X$, and suppose that we are given a (rational) stability condition $\sigma \in \Stab(X)$. Recall that by Theorem~\ref{stabilities on product}, there always exists a family of stability conditions on $X \times C$.
Let $F\in \dCat(X)$. We want to relate the stability of $F$ to the stability of $i_* F$ with respect to some stability conditions in this family.
It is natural to guess they coincide at some `large volume limit'.
(For example, one evidence is that this is true for $i$ being the closed immersion of curves on a smooth projective surface, cf. \cite{Mac14}.)
We will show that at least for $C \neq \PP^1$, this holds true for any stability condition on $X \times C$ that comes from Liu's construction in Theorem~\ref{stabilities on product}.

Firstly, we introduce a notion of `numerical compatibility' between stability conditions.

\begin{definition}\label{alignment}
    Let $\Phi \colon \cC \to \cD$ be a faithful triangulated functor. For $\tau \in \Stab(\cC)$ and $\sigma \in \Stab(\cD)$, we say that $\tau$ and $\sigma$ are \emph{numerically compatible} (along $\Phi$) if $Z_{\sigma} \left( \Phi (F) \right)= Z_{\tau}(F)$ for any $F \in \cC$.
\end{definition}

Recall the compatibility defined in Definition~\ref{compatibility}, which requires in addition that the functor respects (semi)stability. More precisely, it is easy to describe their difference in the following equivalent ways. 
\begin{lemma}\label{aligned compatibility}
    Let $\Phi \colon \cC \to \cD$ be a faithful triangulated functor between triangulated categories,
    and suppose $\tau$ and $\sigma$ are numerically compatible (along $\Phi$) stability conditions on $\cC$ and $\cD$, respectively. Then the following conditions are equivalent for the functor $\Phi$:
    \begin{enumerate}
        \item it respects (and reflects) stable objects;
        \item it respects (and reflects) Jordan--H\"older filtrations;
        \item it respects (and reflects) semistable objects, i.e. they are compatible as in Definition \ref{compatibility};
        \item it respects (and reflects) Harder--Narasimhan filtrations.
    \end{enumerate}
\end{lemma}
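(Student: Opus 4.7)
The plan is to exploit that all four conditions ultimately refer to the same structural data---the canonical decomposition of an object into $\tau$- or $\sigma$-semistable constituents of strictly decreasing phases (the HN filtration), further refined into stable constituents of a single phase (the JH filtration)---together with the fact that numerical compatibility $Z_{\sigma}(\Phi(-)) = Z_{\tau}(-)$ forces corresponding factors in $\cC$ and $\cD$ to live in matching phases. I would organize the proof around the two structural equivalences (iii) $\Leftrightarrow$ (iv) and (i) $\Leftrightarrow$ (ii), and then link them by proving (i) $\Leftrightarrow$ (iii).

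First I would establish (iii) $\Leftrightarrow$ (iv). For the implication (iv) $\Rightarrow$ (iii), semistability is characterized by the HN filtration having length one, so respecting and reflecting HN immediately transfers this property across $\Phi$. For the converse, take the HN filtration $0 = F_0 \to F_1 \to \cdots \to F_n = F$ with semistable factors $A_i$ of phases $\phi_1 > \cdots > \phi_n$, apply $\Phi$, and use (iii) together with numerical compatibility to see that each $\Phi A_i$ is $\sigma$-semistable of phase $\phi_i$. By uniqueness of HN filtrations this must be the HN filtration of $\Phi F$, and the reflect clause is then automatic, since the HN of $\Phi F$ coincides by construction with the image of the HN of $F$.

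Next, (i) $\Leftrightarrow$ (ii) is proved analogously, replacing HN by JH and working inside a single abelian heart $\cP_{\tau}(\phi)$: an object is stable precisely when its JH filtration has length one, and $\Phi$ sends the JH filtration of a semistable object to a filtration whose factors are stable (by (i) respect) and of the same phase (by numerical compatibility), hence to the JH filtration of the image. To link the two pairs, the easy direction (i) $\Rightarrow$ (iii) follows because a $\tau$-semistable $F$ is an iterated extension of $\tau$-stables of a common phase $\phi$, and applying $\Phi$ preserves the iterated-extension structure and, by (i), the stability of each factor, making $\Phi F$ $\sigma$-semistable; the reflect direction is handled by running HN filtrations on both sides and matching phases via numerical compatibility.

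The main obstacle is the converse (iii) $\Rightarrow$ (i), specifically the respect part: a general faithful exact functor between abelian categories need not send simple objects to simple objects, and a priori a subobject of $\Phi F$ in $\cP_{\sigma}(\phi)$ need not lift to $\cC$. The resolution uses the combined force of ``respect and reflect'' in (iii): granted (iii), the restriction $\Phi \colon \cP_{\tau}(\phi) \to \cP_{\sigma}(\phi)$ is an exact faithful functor between abelian categories, which already reflects simplicity automatically; for the respect direction one argues that a hypothetical proper nontrivial subobject of $\Phi F$ in $\cP_{\sigma}(\phi)$, combined with the reflect part of (iii) applied to the two semistable ends of the associated short exact sequence, would force a corresponding nontrivial short exact sequence in $\cP_{\tau}(\phi)$ with $F$ in the middle, contradicting the simplicity of the stable object $F$. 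This subobject-lifting step is the heart of the argument and is what makes the lemma nontrivial.
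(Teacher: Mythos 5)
Most of your proposal is sound: the reflect clauses (obtained by pushing a destabilizing sequence forward along $\Phi$ and using numerical compatibility to keep the factors nonzero with unchanged slopes) and the implications (i) $\Rightarrow$ (ii) $\Rightarrow$ (iii) $\Leftrightarrow$ (iv) are handled correctly, and in considerably more detail than the paper's own proof, which records only the reflect argument and then asserts that the four equivalences ``follow directly from the definition.'' The only direction the paper actually uses downstream is (i) $\Rightarrow$ (iv) (in Corollary \ref{pushforward of filtration}), and that direction is fully covered by your argument.

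The genuine gap is exactly where you locate it, in the ``respect'' half of (iii) $\Rightarrow$ (i), and your proposed resolution does not close it. You want to take a proper nonzero subobject $E \subset \Phi(F)$ in $\cP_{\sigma}(\phi)$ and apply the reflect part of (iii) to the two ends of $0 \to E \to \Phi(F) \to \Phi(F)/E \to 0$; but the reflect part of (iii) is only a statement about objects of the form $\Phi(G)$, and nothing forces $E$ or $\Phi(F)/E$ to lie in the essential image of $\Phi$, so no short exact sequence in $\cP_{\tau}(\phi)$ is produced. Moreover, the implication is false under the stated hypotheses, so the gap cannot be repaired without strengthening them: take $\cC = \dCat(\spec \CC)$ with the stability condition whose heart is vector spaces in degree $0$ and whose central charge is $2i\dim$, take $\cD = \dCat\left(\spec(\CC \times \CC)\right) \simeq \cC \times \cC$ with heart the pairs of vector spaces in degree $0$ and central charge $Z_{\sigma}(V,W) = i(\dim V + \dim W)$, and let $\Phi = p^*$ be the diagonal functor for $p \colon \spec(\CC \times \CC) \to \spec \CC$. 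This $\Phi$ is faithful, triangulated and numerically compatible, and it respects and reflects semistable objects and HN filtrations, yet it sends the stable object $\CC$ to the strictly semistable object $(\CC,\CC)$; so (iii) and (iv) hold while (i) and (ii) fail. The paper's proof does not address this implication either. The statement should be read as asserting (i) $\Rightarrow$ (ii) $\Rightarrow$ (iii) $\Leftrightarrow$ (iv) together with the automatic reflect clauses, which is all that is needed; in the applications condition (i) is established directly (Theorem \ref{pushforward of stable obj}) rather than deduced from (iii).
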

\begin{proof}
    Note that in any case, the reflect part is automatic, since if $F$ is not (semi)stable, any destabilizing sequence $F_\sub \to F \to F_\quot$ induces a destabilizing sequence $\Phi(F_\sub) \to \Phi(F) \to \Phi(F_\quot)$ of $\Phi(F)$. The numerically compatible assumption ensures they are nonzero and their slopes remain unchanged. 
    The equivalence of (i),(ii),(iii), and (iv) follows directly from the definition.
\end{proof}

Examples of numerically compatible stability conditions include Liu's construction of stability conditions on the tilted AP heart:
\begin{example}\label{extending is aligned}
    Any stability condition from the construction of Theorem \ref{stabilities on product} is numerically compatible with the original stability condition on $X$, along the natural pushforward $i_*\colon \dCat(X) \to \dCat(X \times C)$. 
    Indeed, for any $F \in \dCat(X)$, we have
    \begin{equation*}
        Z^{s,t,\beta}(i_*F)=b(i_* F) + \sqrt{-1} d (i_* F)=Z(p_* i_* F)=Z(F),
    \end{equation*}
    where $p \colon X \times C \to X$ is the projection.
\end{example}

The following theorem reveals that $\sigma$ and $\sigma^{s,t,\beta}$ are actually compatible.
\begin{theorem}\label{pushforward of stable obj}
    Let $X$ be a smooth projective variety, $C$ an elliptic curve, and $i \colon X \to X \times C$ the inclusion of any fiber of the natural projection $q \colon X \times C \to C$.
    Let $\sigma \in \Stab(X)$ and suppose that $F \in \dCat(X)$ is $\sigma$-stable.
    Then for any $\sigma^{s,t,\beta} \in \Stab(X \times C)$ constructed in Theorem~\ref{stabilities on product}, $i_*F$ is $\sigma^{s,t,\beta}$-stable.
\end{theorem}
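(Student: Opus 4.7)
The plan is to directly verify stability by combining the set-theoretic restriction of Section~\ref{section:restrict set} with an analysis of subobjects in the tilted Abramovich--Polishchuk heart, eventually descending them to $\dCat(X)$.

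First I would verify the phase computation. Since $p \circ i = \id_X$ and $i^*q^*L = \cO_X$, the projection formula gives $p_*(i_*F \otimes q^*L^n) = F$ for every $n$, so the polynomial $Z_\sigma(p_*(i_*F \otimes q^*L^n))$ is the constant $Z_\sigma(F)$. Thus in the notation of \eqref{eq:def abcd}, $a(i_*F) = c(i_*F) = 0$ and $b(i_*F) + \I d(i_*F) = Z_\sigma(F)$. Since $c(i_*F)=0$ forces $\nu_{t,\beta}(i_*F) = +\infty$, we have $i_*F \in \cT^{t,\beta} \subset \cA_C^{t,\beta}$, and a direct substitution into \eqref{eq:central charges on product} gives $Z^{s,t,\beta}_C(i_*F) = Z_\sigma(F)$. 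This is the numerical compatibility of Example~\ref{extending is aligned}.

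By Lemma~\ref{aligned compatibility}, it is enough to prove that $i_*$ respects semistability and Jordan--H\"older factors. Suppose for contradiction there is a destabilizing subobject $A \hookrightarrow i_*F$ in $\cA_C^{t,\beta}$ with $\phi(A) > \phi(i_*F) = \phi_\sigma(F)$. Apply Corollary~\ref{pre-restriction'} to $q\colon X \times C \to C$, whose Albanese is the identity on the elliptic curve $C$ (hence finite): all HN factors of $i_*F$ lie in $\dCat_{X \times \{c\}}(X \times C)$, so we may take $A \in \dCat_{X \times \{c\}}(X \times C)$. A Grothendieck--Riemann--Roch computation, exploiting $\td(C)=1$ and the fact that for any cycle $\alpha$ supported on $X \times \{c\}$ the intersection $\alpha \cdot q^*c_1(L)$ vanishes, shows that every object set-theoretically supported on a fiber of $q$ satisfies $a=c=0$. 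Combined with the sign constraints \eqref{eq:abcd} on $\cT^{t,\beta}$ and $\cF^{t,\beta}$, this forces $A \in \cT^{t,\beta} \cap \cA_C$, so $A$ actually lies in the AP heart.

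The crux is now to show that such $A$ descends, i.e.\ $A \simeq i_*G$ for some $G \in \cA$. Here the elliptic structure of $C$ enters: the ideal $\cI_X = q^*\cO_C(-c)$ is a line bundle, and the multiplication-by-defining-equation map $A \otimes q^*\cO_C(-c) \to A$ fits into a triangle whose third term is $i_*(Li^*A)$. Filtering $A$ by powers $\cI_X^j \cdot A$ produces successive subquotients of the form $i_*G_j$ in $\cA_C$. I expect this filtration to collapse to a single step (so $A=i_*G$) using a combination of: (i) the numerical vanishing $v(G_j) = 0$ for $j \geq 1$ forced by $a(A)=c(A)=0$, (ii) the noetherianity of $\cA_C^{t,\beta}$ from Proposition~\ref{constant t-structure}, and (iii) the obstruction that a nonzero nilpotent thickening of $A$ would require a nonzero class in $\Ext^1$ incompatible with being a \emph{subobject} (not merely a morphism target) of $i_*F$ in the tilted heart. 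Once $A=i_*G$, full faithfulness of $i_*$ yields an injection $G \hookrightarrow F$ in $\cA$, and numerical compatibility gives $\phi_\sigma(G) = \phi^{s,t,\beta}(A) > \phi_\sigma(F)$, contradicting $\sigma$-stability of $F$. Strict stability is then deduced by applying the same argument to Jordan--H\"older factors, using the simplicity of $F$.

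The main obstacle will be the descent in the crux step. The statement is genuinely false for $C = \PP^1$, so any argument must use that $C$ has a nontrivial translation action and trivial canonical bundle; concretely, these enter via the invariance of $\cA_C^{t,\beta}$ under $\otimes q^*M$ for $M \in \Pic^0(C)$ (a consequence of Theorem~\ref{invariant action}), which permits comparing nilpotent extensions across different fibers and ruling them out for a genuine subobject of $i_*F$.
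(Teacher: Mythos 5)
Your opening two steps match the paper's: the numerical compatibility $Z^{s,t,\beta}(i_*F)=Z_\sigma(F)$ is exactly Example~\ref{extending is aligned}, and using the Albanese map of the elliptic curve $C$ to force any maximal destabilizing subobject $A\hookrightarrow i_*F$ to be set-theoretically supported on the fiber (Proposition~\ref{filtration reduction}/Corollary~\ref{pre-restriction'}), and then to lie in $\cT^{t,\beta}\subset \cA_C$, is precisely how the paper reduces to its Proposition~\ref{destabilizing reduction}. The problem is your ``crux'' step. You try to upgrade set-theoretic support to scheme-theoretic support, i.e.\ to produce $G\in\cA$ with $A\simeq i_*G$, by filtering $A$ by powers of the ideal sheaf and arguing the filtration collapses via noetherianity, a numerical vanishing, and an unspecified $\Ext^1$ obstruction. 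None of this is carried out --- you say yourself that you ``expect'' the collapse and that this is ``the main obstacle'' --- and the proposed mechanisms are not obviously workable: nothing in the subobject condition in the tilted heart rules out a nilpotent thickening a priori, and the $\Pic^0(C)$-invariance you invoke acts trivially on numerical invariants so it cannot by itself detect scheme structure. This is a genuine gap, not a routine verification. Worse, the descent statement you want is essentially Lemma~\ref{supp by invariance}/Proposition~\ref{restriction} later in the paper, whose proof \emph{uses} Corollary~\ref{pushforward of filtration}, i.e.\ the theorem you are trying to prove; so developing the descent first risks circularity unless you find an independent argument.

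The paper's point is that no descent is needed. Once you know the destabilizing sequence $E_{\sub}\to i_*F\to E_{\quot}$ is a short exact sequence in the untilted AP heart $\cA_C$ with all terms $C$-torsion (Lemma~\ref{sub in tilted heart} plus the $c=0$ argument you already have), you simply apply $p_*$: since $C$-torsion objects are invariant under $\otimes\, q^*L$, the defining property $p_*(-\otimes q^*L^n)\in\cA$ of the AP heart says directly that $p_*E_{\sub}\to F\to p_*E_{\quot}$ is a short exact sequence in $\cA$. Then Grothendieck--Riemann--Roch (Lemma~\ref{reduced reduction}) gives $[i_*p_*E_{\sub}]=[E_{\sub}]$ in $\K_\QQ(X\times C)$ for any object set-theoretically supported on the fiber, hence $Z_\sigma(p_*E_{\sub})=Z^{s,t,\beta}(i_*p_*E_{\sub})=Z^{s,t,\beta}(E_{\sub})$, and $p_*E_{\sub}$ destabilizes $F$ --- a contradiction with $\sigma$-stability of $F$ without ever knowing that $E_{\sub}$ itself is a pushforward. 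I would rewrite your crux step along these lines.
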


The proof of this theorem is given at the end of this section. From this theorem, we immediately obtain the following corollary.

\begin{corollary}\label{pushforward of filtration}
    In the same situation as the previous theorem, if
    \begin{equation*}
        0 = F_0 \to  F_1 \to \dots \to  F_n = F
    \end{equation*}
    is the HN filtration of $F \in \dCat(X)$ with respect to some $\sigma \in \Stab(X)$, then
    \begin{equation*}
        0 = i_*F_0 \to i_* F_1 \to \dots \to  i_* F_n = i_* F
    \end{equation*}
    is the HN filtration of $i_* F \in \dCat(X \times C)$ with respect to any $\sigma^{s,t,\beta} \in \Stab(X \times C)$ constructed in Theorem~\ref{stabilities on product}.
\end{corollary}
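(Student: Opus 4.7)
The plan is to derive this essentially formally from Theorem~\ref{pushforward of stable obj} together with the structural equivalences recorded in Lemma~\ref{aligned compatibility}. First, I would apply $i_*$ to the given HN filtration of $F$. Since $i$ is a closed immersion, $i_* \colon \dCat(X) \to \dCat(X\times C)$ is exact and faithful, and so the image
\begin{equation*}
0 = i_* F_0 \to i_* F_1 \to \dots \to i_* F_n = i_* F
\end{equation*}
is a chain of distinguished triangles whose cones are the pushforwards $i_* A_k$ of the $\sigma$-semistable HN factors $A_k$ of $F$.

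Next, I would verify that Lemma~\ref{aligned compatibility} applies to $\Phi = i_*$, with $\tau = \sigma$ on $\cC = \dCat(X)$ and $\sigma^{s,t,\beta}$ on $\cD = \dCat(X\times C)$. Faithfulness of $i_*$ is immediate from it being a closed immersion. Numerical compatibility in the sense of Definition~\ref{alignment}, namely $Z^{s,t,\beta}(i_* G) = Z_{\sigma}(G)$ for every $G \in \dCat(X)$, is precisely Example~\ref{extending is aligned}. Theorem~\ref{pushforward of stable obj} states that $i_*$ sends $\sigma$-stable objects to $\sigma^{s,t,\beta}$-stable ones; the ``reflecting'' direction is automatic, as pointed out in the proof of Lemma~\ref{aligned compatibility}. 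Hence condition (i) of Lemma~\ref{aligned compatibility} holds for $i_*$.

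Finally, the equivalence (i) $\Leftrightarrow$ (iv) in Lemma~\ref{aligned compatibility} upgrades this to the statement that $i_*$ respects (and reflects) Harder--Narasimhan filtrations, which is exactly the desired conclusion: the chain $0 = i_*F_0 \to \dots \to i_*F_n = i_*F$ is the HN filtration of $i_*F$ with respect to $\sigma^{s,t,\beta}$, with semistable factors $i_*A_k$ whose phases are matched with those of the $A_k$ via the compatible central charges and are therefore strictly decreasing. All nontrivial content has already been encapsulated in Theorem~\ref{pushforward of stable obj} and Lemma~\ref{aligned compatibility}, so I do not anticipate any further obstacle; the only point to double-check is that the ``reflecting'' half of hypothesis (i) really is free in our setting, which follows since any destabilizing sequence of $F$ in $\dCat(X)$ pushes forward to a destabilizing sequence of $i_*F$ in $\dCat(X\times C)$ with unchanged central-charge data.
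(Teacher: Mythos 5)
Your proposal is correct and follows exactly the paper's own argument: the paper deduces the corollary directly from Lemma~\ref{aligned compatibility}, Example~\ref{extending is aligned}, and Theorem~\ref{pushforward of stable obj}, just as you do. Your spelled-out verification of the hypotheses (faithfulness of $i_*$, numerical compatibility, and the automatic ``reflecting'' direction) is simply a more detailed rendering of the same one-line proof.
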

\begin{proof}
    This directly follows from Lemma \ref{aligned compatibility}, Example~\ref{extending is aligned}, and Theorem \ref{pushforward of stable obj}.
\end{proof}


It is natural to ask whether this is true for general $C$. We will come back in Corollary~\ref{pushforward of stable obj g>1} to show that this holds true for any $C \not\simeq \PP^1$.




Before the proof of the theorem, we do the following preparation.
First, we state the following easy fact describing the subobjects in a tilted heart.

\begin{lemma}\label{sub in tilted heart}
    Let $\cA= \langle \cF,\cT \rangle$ be a heart of a triangulated category $\cD$ where $(\cT,\cF)$ is a torsion pair, and let $\cA^\sharp = \langle \cT, \cF[1]\rangle$ be the tilted heart. 
    For any $E \in \cT$,
    to give a subobject $E_{\sub} \hookrightarrow E$ in the tilted heart $\cA^\sharp$ is equivalent of giving a object $E_{\sub} \in \cT$ with a map $f\colon E_{\sub} \to E$ whose kernel (as a map in the abelian category $\cA$) satisfies $\ker f \in \cF$.
\end{lemma}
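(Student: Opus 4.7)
The plan is to prove both implications by passing through the $\cA$-cohomology of the distinguished triangle defined by the map in question, using the basic fact that an object $X\in\cD$ lies in $\cA^\sharp$ if and only if $H^{-1}_\cA(X)\in\cF$, $H^0_\cA(X)\in\cT$, and $H^i_\cA(X)=0$ for all other $i$.

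For the forward direction, suppose $E_{\sub}\hookrightarrow E$ is a subobject in $\cA^\sharp$ with quotient $Q$, giving a distinguished triangle
\[
E_{\sub}\to E\to Q\xdashrightarrow{+1}
\]
in $\cD$. Since $E\in\cT\subset\cA$, its $\cA$-cohomology is concentrated in degree $0$. The long exact sequence in $\cA$-cohomology, together with the vanishing ranges for $E_{\sub},Q\in\cA^\sharp$, forces $H^{-1}_\cA(E_{\sub})=0$, so $E_{\sub}\in\cT$, and identifies $H^{-1}_\cA(Q)$ with the kernel of the induced map $f\colon E_{\sub}\to E$ in $\cA$; this kernel lies in $\cF$ because $Q\in\cA^\sharp$.

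For the reverse direction, given $f\colon E_{\sub}\to E$ with $E_{\sub}\in\cT$ and $\ker f\in\cF$, set $Q\defeq\cone(f)$. To conclude it suffices to check $Q\in\cA^\sharp$, since then the same triangle will realize $E_{\sub}\hookrightarrow E\twoheadrightarrow Q$ as a short exact sequence in $\cA^\sharp$. The long exact sequence again yields $H^{-1}_\cA(Q)=\ker f\in\cF$ by hypothesis, and $H^0_\cA(Q)=\coker f$, which is a quotient of $E\in\cT$ hence lies in $\cT$ (as the torsion class is closed under quotients). All other $\cA$-cohomologies vanish, so $Q\in\cA^\sharp$ as required.

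There is no real obstacle: the statement is a standard consequence of the cohomological description of objects in a tilted heart. The only point that needs care is to keep straight which category (the original $\cA$ or the tilted $\cA^\sharp$) the words ``kernel'', ``cokernel'', and ``subobject'' refer to in each step, and to use that $(\cT,\cF)$ being a torsion pair gives closure of $\cT$ under quotients.
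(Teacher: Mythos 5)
Your proof is correct and is essentially the standard argument: the paper itself gives no proof, deferring to \cite[Proposition 2.4]{Bay18}, whose argument is precisely this long-exact-sequence computation with the Happel--Reiten--Smal\o{} cohomological characterization of the tilted heart. Both directions check out, including the identification $H^{-1}_{\cA}(Q)=\ker f$ and the use of closure of $\cT$ under quotients.
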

\begin{proof}
    The proof is straightforward, see, for example, \cite[Proposition 2.4]{Bay18}.
\end{proof}

Then, we observe the following fact which makes it possible to reduce the `scheme-theoretic support' of a destabilizing object with its numerics unchanged.

\begin{lemma}\label{reduced reduction}
    Let $X$ and $S$ be two smooth projective varieties.
    Let $i \colon X \to X \times S$ be the closed immersion of a fiber and $p \colon X \times S \to X$ the projection. Then for any $E \in \dCat_X(X\times S)$, one has $[i_* p_* E ]=[E]$ in $\K_{\QQ}(X\times S)$ (hence also in $\K_{\num} (X \times S)$).
\end{lemma}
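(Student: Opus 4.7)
The plan is a short d\'{e}vissage along powers of the ideal sheaf of the fiber $i(X) = X \times \{s_0\} \subset X \times S$, combined with the elementary identity $p \circ i = \id_X$.

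First, I would reduce to the case where $E$ is a single coherent sheaf. Since $E \in \dCat_X(X \times S)$ is a bounded complex, we have $[E] = \sum_k (-1)^k [\cH^k(E)]$ in $\K(X \times S)$, and each $\cH^k(E)$ is coherent and set-theoretically supported on $X \times \{s_0\}$. The restriction of $p$ to any infinitesimal thickening of the fiber is a finite morphism (in fact an isomorphism onto $X$ on $X \times \{s_0\}$ itself), so the higher derived pushforwards $R^{>0} p_{*}$ vanish on $\Coh_{X}(X \times S)$; in other words, $p_*$ is exact on this subcategory. Since $i_*$ is always exact, one obtains $[i_* p_* E] = \sum_k (-1)^k [i_* p_* \cH^k(E)]$, and the problem reduces to proving $[i_* p_* F] = [F]$ for a single coherent sheaf $F$ supported on $X \times \{s_0\}$.

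For such $F$, let $I \subset \cO_{X \times S}$ denote the ideal sheaf of $X \times \{s_0\}$. Coherence of $F$ together with its support condition gives some $n \geq 1$ with $I^n F = 0$, so the finite filtration $F \supset I F \supset \cdots \supset I^n F = 0$ has associated graded pieces $\mathrm{gr}^k F \defeq I^k F / I^{k+1} F$ annihilated by $I$, hence lying in the essential image of $i_*$. By additivity of K-theory classes along this filtration one gets $[F] = \sum_k [\mathrm{gr}^k F]$ and $[i_* p_* F] = \sum_k [i_* p_* \mathrm{gr}^k F]$, so it is enough to verify the equality on sheaves of the form $G = i_* H$ with $H \in \Coh(X)$. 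For such $G$, the identity $p \circ i = \id_X$ together with the vanishing of higher $R^j p_{*}$ gives $p_* G = H$, so $i_* p_* G = i_* H = G$ and the equality $[i_* p_* G] = [G]$ becomes tautological.

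I do not foresee any genuine obstacle. The only technical point that requires care is the exactness of $R p_*$ on the subcategory $\Coh_X(X \times S)$, which is immediate from the observation that every infinitesimal neighborhood of the fiber is finite over $X$ via $p$. As a minor bonus, the argument actually yields the stated equality already in the integral Grothendieck group $\K(X \times S)$, so the conclusion in $\K_\QQ(X \times S)$ and in $\K_\num(X \times S)$ follows by reduction of coefficients without ever having to introduce a denominator.
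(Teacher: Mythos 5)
Your proof is correct, and it takes a genuinely different route from the paper. The paper's argument is a two-line application of Grothendieck--Riemann--Roch for the composite $i\circ p$: since the virtual relative tangent bundle is trivial on the support and $(i\circ p)_*$ acts as the identity on cycles supported on the fiber, one gets $\ch(i_*p_*E)=\ch(E)$ in $\CH_\QQ$, and then concludes via the isomorphism $\ch\colon \K_\QQ \xrightarrow{\sim} \CH_\QQ$ --- which is exactly why the paper only states the equality with rational coefficients. Your d\'{e}vissage (reduce to cohomology sheaves using exactness of $p_*$ on $\Coh_X(X\times S)$, then filter by powers of the ideal sheaf to reduce to sheaves of the form $i_*H$, where the claim is tautological) avoids Riemann--Roch entirely and is more elementary; each step you flag as needing care (exactness of $Rp_*$ on the subcategory, via finiteness of $p$ on infinitesimal neighborhoods of the fiber; additivity along the $I$-adic filtration) does hold. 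As a bonus your argument proves the equality already in the integral Grothendieck group $\K(X\times S)$, which is strictly stronger than the paper's statement, though the application (Proposition~\ref{destabilizing reduction}) only needs the consequence for central charges, for which $\K_\num$ suffices.
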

\begin{proof}
    By the Grothendieck--Riemann--Roch formula, one has
    \begin{equation*}
        \ch ((i\circ p)_* E ) = (i\circ p)_*(\ch (E) \td (T_{i\circ p}))
    \end{equation*}
    Note that $\td(T_{i\circ p})=1$ and $(i\circ p)_*$ acts as the identity on any cycles supported on $X$, so $\ch(i_*p_*E) = \ch(E)$ in $\CH_\QQ$. Since $\ch \colon \K_\QQ \xrightarrow{\sim} \CH_\QQ$, one has $[i_* p_* E ]=[E]$ in $\K_\QQ$ as well.
\end{proof}

Now we can deal with the $C$-torsion case.
Recall that $E\in \dCat(X\times C)$ is called \emph{$C$-torsion} 
if its support is contained in $X \times Z$ for some proper closed subset $Z \subset C$.
\begin{proposition}\label{destabilizing reduction}
    Let $i \colon X \to X \times C $ be the closed immersion of a fiber of the projection $X \times C \to C$ where $C$ is a smooth projective curve. If $F$ is $\sigma$-stable for some $\sigma \in \Stab(X)$, then $i_* F$ admits no $C$-torsion destabilizing sequence for any stability condition constructed in Theorem~\ref{stabilities on product}.
\end{proposition}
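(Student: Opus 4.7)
The plan is to reduce any $C$-torsion destabilizing sequence for $i_*F$ in $\cA_C^{t,\beta}$ to a destabilizing sequence for $F$ in $\cA$, contradicting the $\sigma$-stability of $F$. The key structural input is that on the $C$-torsion part of $\cA_C$, the functor $p_*$ is both $t$-exact and intertwines the central charges $Z^{s,t,\beta}$ and $Z_\sigma$.

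I would begin by recording several preliminary observations about a $C$-torsion object $E$ supported on $X\times Z$ for $Z\subset C$ finite. Since $L|_Z$ is necessarily free of rank one as an $\cO_Z$-module, we have $E\otimes q^*L^n\simeq E$, so $p_*(E\otimes q^*L^n)\simeq p_*E$ is independent of $n$. This immediately yields $a(E)=c(E)=0$; hence $Z^{s,t,\beta}(E)=b(E)+\I d(E)=Z_\sigma(p_*E)$, and by the definition of $\cA_C$, any $C$-torsion $E\in\cA_C$ satisfies $p_*E\in\cA$. Moreover, since $p|_{X\times Z}$ is finite, $p_*$ is $t$-exact on the $C$-torsion part and thus preserves short exact sequences in $\cA_C$. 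The same computation, via the projection formula, shows $p_*(i_*F\otimes q^*L^n)\simeq F$, so $i_*F\in\cA_C$ with $a(i_*F)=c(i_*F)=0$; its only $\nu_{t,\beta}$-slope is $+\infty$, placing $i_*F\in\cT^{t,\beta}\subset\cA_C\cap\cA_C^{t,\beta}$.

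Now suppose for contradiction that $0\to E_\sub\to i_*F\to E_\quot\to 0$ is a destabilizing short exact sequence in $\cA_C^{t,\beta}$ with $E_\sub$ being $C$-torsion. Since $i_*F\in\cT^{t,\beta}$, Lemma~\ref{sub in tilted heart} presents $E_\sub$ as an object of $\cT^{t,\beta}$ together with a map $f\colon E_\sub\to i_*F$ in $\cA_C$ whose $\cA_C$-kernel lies in $\cF^{t,\beta}$. The cone of $f$ is $C$-torsion, so by Lemma~\ref{check supp on coho} its AP-cohomologies $\ker f$ and $\coker f$ are $C$-torsion as well. Using that $c\geq 0$ on $\cA_C$ and is additive, any $C$-torsion object in $\cA_C$ has $c=0$ throughout all its subobjects and is thus $\nu_{t,\beta}$-semistable of slope $+\infty$; hence $\ker f\in\cT^{t,\beta}\cap\cF^{t,\beta}=0$, and $f$ is injective in $\cA_C$ with $C$-torsion cokernel naturally identified with $E_\quot$.

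To finish, I apply $p_*$ to the short exact sequence in $\cA_C$ to obtain, by its $t$-exactness on $C$-torsion, a short exact sequence $0\to p_*E_\sub\to F\to p_*E_\quot\to 0$ in $\cA$. Neither outer term vanishes since its central charge coincides with the nonzero $Z^{s,t,\beta}(E_\sub)$ or $Z^{s,t,\beta}(E_\quot)$, so this is a proper short exact sequence in $\cA$. The identifications $Z^{s,t,\beta}(E_\sub)=Z_\sigma(p_*E_\sub)$ and $Z^{s,t,\beta}(i_*F)=Z_\sigma(F)$ transport the destabilizing phase inequality verbatim from $\sigma^{s,t,\beta}$ to $\sigma$, contradicting the $\sigma$-stability of $F$. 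The main technical step is the third paragraph: recognizing that the tilt $(\cT^{t,\beta},\cF^{t,\beta})$ does not alter the $C$-torsion part, so the destabilization can actually be read inside the untilted AP heart, after which the $p_*$ pushdown is straightforward.
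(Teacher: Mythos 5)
Your proof is correct and follows essentially the same route as the paper: use Lemma~\ref{sub in tilted heart} together with $c(\ker f)=0$ to show the destabilizing sequence already lives in the untilted AP heart $\cA_C$, then push down by $p_*$ and compare central charges to destabilize $F$ in $\cA$. The only cosmetic difference is that you obtain $Z^{s,t,\beta}(E)=Z(p_*E)$ for $C$-torsion $E$ directly from the definition of $a,b,c,d$, whereas the paper applies $i_*$ and invokes Lemma~\ref{reduced reduction}.
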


\begin{proof}
    Up to a shift, we may assume $F \in \dCat(X)^{[0,0]} \eqdef \cA$.
    Then by definition we know $i_* F \in \cA^{t,\beta}_C$ since it is in $\cT^{t,\beta}$.
    Assume
    \begin{equation}\label{eq:destabilizing sequence}
        E_\sub \xrightarrow{f} i_* F \to E_\quot
    \end{equation}
    is a destabilizing sequence in $\cA^{t,\beta}_C$ with $E_\sub$ $C$-torsion. In particular, we have $c(E_\sub)=0$ where $c(-)$ is defined in \eqref{eq:def abcd}.
    By Lemma \ref{sub in tilted heart}, we know $E_\sub \in \cT^{t,\beta}$ and $\ker(f) \in \cF^{t,\beta}$.
    Thus by \eqref{eq:abcd}, we know that $c(\ker f)=0$. 
    If $\ker f \neq 0$, it will contradict with $\ker f \in \cF^{t,\beta}$. So it forces $\ker f =0$, i.e. $E_\sub$ is a subobject of $i_*F$ in $\cA_C$.
    Consequently, \eqref{eq:destabilizing sequence}
    is in fact a short exact sequence in $\cA_C$. 
    Note that every term is $C$-torsion, hence invariant under tensoring with $q^*L$.
    Therefore, applying $p_*$ where $p$ is the projection $p \colon X \times C \to X$, we have that
    \begin{equation}
        p_*E_\sub \to F \to p_*E_\quot
    \end{equation}
    is a short exact sequence in $\cA$, by the definition of AP heart. Applying $i_*$, one has
    \begin{equation}
        i_* p_*E_\sub \to i_* F \to i_* p_*E_\quot.
    \end{equation}
    Note that $E_\sub$ has to support on $X$, so by Lemma \ref{reduced reduction} we know $Z(i_* p_*E_{\sub})= Z( E_{\sub})$, hence $p_* E_{\sub}$ destabilizes $F$ in $\cA$, which is a contradiction with the stability of $E$.
\end{proof}

\begin{proof}[Proof of Theorem \ref{pushforward of stable obj}]
    We know that $i_*F$ is in the heart because $i_*F \in \cT^{t,\beta}$.
    Now consider the first HN factor $E_{\sub} \hookrightarrow i_* F$ and hence a sequence
    \begin{equation*}
        E_\sub \to i_* F \to E_\quot
    \end{equation*}
    By Proposition \ref{filtration reduction}, we have $E_{\sub}$ and hence also $E_{\quot}$ are supported on $X$, so we reduced to the case of Proposition \ref{destabilizing reduction}.
\end{proof}

\subsection{Scheme-theoretic support}\label{section:complex supp}
Let $E \in \dCat(X)$. We want a notion of scheme-theoretic support for $E$, in the sense that $E$ lies in the essential image of the pushforward from this support.

For any $L \in \Coh(X)$, consider the adjunction
\begin{equation*}
    \Hom(L \otimes E , L \otimes E) \simeq \Hom( L, \RcHom(E, L \otimes E )).
\end{equation*}
The identity on the left-hand side provides a canonical map
\begin{equation*}
    L \to \RcHom(E, L \otimes E).
\end{equation*}
Since $L$ is a sheaf, this gives a canonical map
\begin{equation}\label{eq:action map}
    \rho \colon \Gamma (L) \to \Hom(E, L \otimes E).
\end{equation}
by considering maps from $\cO_X$.
It is a standard fact that every subscheme $Y \subset X$ is defined by some sections of a sufficiently ample line bundle.
So it is reasonable to make the following definition.
\begin{definition}
    We define the \emph{scheme-theoretic support} of $E$ to be
    \begin{equation*}
    \Supp(E) \defeq V \left( \bigcup_{L\in \Pic(X)}\left\{ t \in \Gamma (L) \mid t \colon E \to E \otimes L \text{ is zero} \right\} \right).
    \end{equation*}
\end{definition}






This is a joint generalization of the scheme-theoretic support of a coherent sheaf and the set-theoretic support $\supp(E)$ of a complex $E$ in the derived category.

We will not use this definition in general but only focus on one special case of testing whether the scheme-theoretic support of $E$ lies in $X_0$ where $i \colon X_0 \to X_0 \times \PP^n \eqdef X$ is the natural closed immersion of a fiber for a point in $\PP^n$.
In this case, we have the natural projection
\begin{equation*}
    p \colon X_0 \times \PP^n \to X_0
\end{equation*}
which is the left inverse of $i$.
Besides, since $X_0 = V(t_1 ,\dots ,t_n)$ where $t_k$ is a global section of $\cO_X(H)$ and $H\defeq q^* \cO_{\PP^n}(1)$ with $q\colon X_0 \times \PP^n \to \PP^n$, the condition $\Supp(E) \subset X_0$ is equivalent to
\begin{equation}\label{eq:vanishing ideal}
    \rho(t_k)=0
\end{equation}
for every $k$.
We also note that once we know $E$ is (set-theoretically) supported on $X_0$, one can switch our setting to $X_0 \subset X_0 \times C^n$ interchangeably, where $C$ is any smooth curve. This is due to the equivalence
\begin{equation*}
    \dCat_{X_0}(X_0 \times \PP^n) \simeq \dCat_{X_0}(X_0 \times C^n) ;
\end{equation*}
see, for example, \cite[Corollary 2.9]{Orl11}.



The following lemma justifies the definition in our special case.

\begin{lemma}\label{supp by vanishing ideal}
    In the above situation, suppose $\Supp(E) \subset X_0$, i.e. \eqref{eq:vanishing ideal} holds for every $k$. Then there exists some $F \in \dCat(X_0)$ such that $E \simeq i_* F$
\end{lemma}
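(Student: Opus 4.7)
Plan. The natural candidate for $F$ is $p_* E \in \dCat(X_0)$, where $p \colon X_0 \times \PP^n \to X_0$ is the projection, which satisfies $p \circ i = \id_{X_0}$. My strategy is to compute $i_* Li^* E$ in two different ways and conclude $E \simeq i_* p_* E$ via a Krull--Schmidt cancellation.

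The first computation proceeds via the Koszul resolution
\[
i_* \cO_{X_0} \simeq K^\bullet := \bigl[\cO_X(-nH) \to \cdots \to \cO_X(-H)^{\oplus n} \to \cO_X\bigr],
\]
whose differentials encode the regular sequence $t_1, \dots, t_n$ cutting out $X_0$. The projection formula gives $i_* Li^* E \simeq K^\bullet \otimes^L E$. Decomposing $K^\bullet \simeq K_1^\bullet \otimes \cdots \otimes K_n^\bullet$ with $K_j^\bullet = [\cO_X(-H) \xrightarrow{t_j} \cO_X]$, I tensor with $E$ one factor at a time; each step is a mapping cone of $\rho(t_j)$ applied to the previous term, and $\rho(t_j) = 0$ (from \eqref{eq:vanishing ideal}, which is preserved under tensoring with line bundles by naturality). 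Hence every cone splits, yielding
\[
i_* Li^* E \simeq \bigoplus_{p=0}^{n} E(-pH)[p]^{\oplus \binom{n}{p}} \simeq \bigoplus_{p=0}^{n} E[p]^{\oplus \binom{n}{p}},
\]
where the second isomorphism uses trivializations $q^* \cO_{\PP^n}(-p)|_{X_0} \simeq \cO_{X_0}$ available since $E$ is supported on the fiber $X_0$. The second computation applies $p_*$ (with $p_* i_* = \id$) to this splitting to obtain $Li^* E \simeq \bigoplus_{p=0}^n F[p]^{\oplus \binom{n}{p}}$ with $F := p_* E$, and then applies $i_*$ again to yield $i_* Li^* E \simeq \bigoplus_{p=0}^n i_* F[p]^{\oplus \binom{n}{p}}$. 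Comparing the two expressions,
\[
\bigoplus_{p=0}^{n} E[p]^{\oplus \binom{n}{p}} \simeq \bigoplus_{p=0}^{n} i_* F[p]^{\oplus \binom{n}{p}}.
\]

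The final step is to invoke Krull--Schmidt in $\dCat(X)$ (valid by Remark~\ref{Krull--Schmidt}) to extract $E \simeq i_* F$. The main obstacle will be ensuring this cancellation is clean: the indecomposable summands must be matched across different shifts. The crucial input is that for any nonzero $A \in \dCat(X)$ and any $k \neq 0$ one has $A \not\simeq A[k]$; otherwise $\cH^i A \simeq \cH^{i-k} A$ for all $i$, contradicting the boundedness of $A$. Consequently the indecomposable constituents of $\{E[p]\}_{p=0}^n$ (respectively $\{i_* F[p]\}_{p=0}^n$) lie at pairwise distinct shifts in the ambient Krull--Schmidt decomposition, and matching the multiplicities at $p = 0$ (both equal to $\binom{n}{0} = 1$) forces $E \simeq i_* F$, as desired.
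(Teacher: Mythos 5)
Your proof is correct and follows essentially the same route as the paper: the paper factors $i$ through a chain of divisor inclusions in $X_0 \times C^n$ and splits the triangles $E \xrightarrow{t_k} E \to i_{k*}i_k^*E$ one at a time, which is exactly your Koszul-factor-by-factor splitting, and it likewise concludes by applying $i_*p_*$ and comparing via Krull--Schmidt. The only caveat is that your justification of the final cancellation is slightly imprecise (if $E$ has indecomposable summands $A$ and $A[-1]$, then $E$ and $E[1]$ share a constituent, so the constituents need not lie at pairwise distinct shifts), but the cancellation does hold because no nonzero bounded complex is isomorphic to a nonzero shift of itself and only finitely many shifts of each indecomposable occur --- a point the paper also leaves implicit.
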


\begin{proof}
    It is easy to check that $E$ is (set-theoretically) supported on $X_0$, so we may assume $i \colon X_0 \to X_0 \times C^n$, where $C$ is any smooth projective curve.
    Write $i$ as $i_1 \circ i_2 \circ \dots \circ i_n$, where $i_k \colon X_0 \times C^{k-1} \to X_0 \times C^k$ is the natural closed immersion.
    Note that we have the triangle
    \begin{equation*}
    	E \xrightarrow{t_k} E \to i_{k*} i_k^* E
    \end{equation*}
    for $E \in \dCat(X_0 \times C^k)$, so the condition \eqref{eq:vanishing ideal} implies that
    \begin{equation*}
        i_{k*}i_k^*E \simeq E \oplus E[1].
    \end{equation*}
    for every $k$.
    Apply this iteratively, one obtains an isomorphism
    \begin{equation}\label{eq:dec}
        i_*i^*E \simeq \bigoplus_k \binom{n}{k} E[k].
    \end{equation}
    Apply $i_*p_*$ to it, we have
    \begin{equation}\label{eq:dec'}
        i_*i^*E \simeq i_*p_*i_*i^*E \simeq \bigoplus_k \binom{n}{k} i_*p_*E[k].
    \end{equation}
    By Remark~\ref{Krull--Schmidt}, the bounded derived category of coherent sheaves on a smooth projective variety is Krull--Schmidt. 
    Therefor, any object has a unique way, up to isomorphism, to write as direct sum of indecomposable objects. As a consequence, by comparing \eqref{eq:dec} with \eqref{eq:dec'}, we have
    \begin{equation*}
        E \simeq i_*p_* E ,
    \end{equation*}
    so we can conclude by setting $F=p_*E$.
\end{proof}

    \begin{remark}
        See another more elementary approach in \cite[Section~2.4]{AP06}.
    \end{remark}


The following lemma relates the `vanishing criterion' with the `extendable criterion' (see the remark below).

\begin{lemma}\label{supp by extendability}
    Let $E\in \dCat(X_0 \times \PP^n)$, and let $\delta_l ~ (l = 0 , \dots , n)$ be the morphism defined by
    \begin{align*}
        \delta_l \colon  X_0 \times \PP^n &\to X_0 \times \PP^{n+1} \\
        (x, a_1,\dots, a_n) &\mapsto \begin{cases} (x, a_1,\dots, a_n, 0),  &\text{if $l=0$};\\ (x, a_1,\dots, a_n, a_l), &\text{if $l \neq 0$}.\end{cases}
    \end{align*}
    Assume that there is an isomorphism
    \begin{equation}\label{eq:extendable on degree 1}
        \varphi \colon \delta_{0 *}E \xrightarrow{\sim} \delta_{l *} E
    \end{equation}
    for any $l$. Then there exists $F\in \dCat(X_0)$ such that $E \simeq i_* F$.
\end{lemma}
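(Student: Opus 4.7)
The plan is to reduce to Lemma~\ref{supp by vanishing ideal}. I first identify the fiber $X_0 \subset X_0 \times \PP^n$ with $X_0 \times \{[1:0:\cdots:0]\}$, which is the unique point of $\PP^n$ through which every $\delta_l$ factors; it is cut out by the sections $a_1,\ldots,a_n \in H^0(\PP^n,\cO_{\PP^n}(1))$, pulled back via the second projection $q \colon X_0 \times \PP^n \to \PP^n$. Thus, by Lemma~\ref{supp by vanishing ideal}, it suffices to show that for each $l=1,\ldots,n$ the multiplication morphism
\begin{equation*}
    a_l \cdot (-) \colon E \to E \otimes q^* \cO_{\PP^n}(1)
\end{equation*}
vanishes.

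To extract this vanishing I would transport multiplication by a well-chosen section of $\cO_{\PP^{n+1}}(1)$ across $\varphi$. Write $b_0,\ldots,b_{n+1}$ for the homogeneous coordinates on $\PP^{n+1}$; then the image of $\delta_0$ is the hyperplane $\{b_{n+1}=0\}$, while for $l \neq 0$ the image of $\delta_l$ is the hyperplane $\{b_{n+1}=b_l\}$. Combined with the fully faithfulness of pushforward along a closed immersion and the projection formula, the identity $\delta_0^* b_{n+1} = 0$ says that multiplication by $b_{n+1}$ is the zero morphism $\delta_{0*}E \to \delta_{0*}E \otimes \cO(1)$, and the identity $\delta_l^*(b_{n+1}-b_l)=0$ says that multiplications by $b_{n+1}$ and by $b_l$ coincide as morphisms $\delta_{l*}E \to \delta_{l*}E \otimes \cO(1)$.

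The key point is that multiplication by a global section is a natural transformation of the identity functor to its tensor with the line bundle, so it commutes with any morphism, in particular with $\varphi$. Therefore, multiplication by $b_{n+1}$ on $\delta_{l*}E$ is identified via $\varphi$ with multiplication by $b_{n+1}$ on $\delta_{0*}E$, which is zero; combined with the previous paragraph this forces multiplication by $b_l$ on $\delta_{l*}E$ to vanish. Descending along $\delta_l$ (once more by fully faithfulness of $\delta_{l*}$ and the projection formula) converts this into the desired vanishing of $a_l \cdot (-)$ on $E$, completing the reduction.

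The only delicate point I foresee is the verification that the derived-category isomorphism $\varphi$ really does intertwine the natural multiplication-by-section morphisms, i.e.\ formalizing the naturality cleanly in $\dCat(X_0 \times \PP^{n+1})$. Once this is done, the rest is a short diagram chase concluded by a direct appeal to Lemma~\ref{supp by vanishing ideal}.
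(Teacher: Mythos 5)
Your proposal is correct and follows essentially the same route as the paper: both reduce to Lemma~\ref{supp by vanishing ideal} by showing the multiplication maps $t_k\colon E\to E\otimes q^*\cO(1)$ vanish, using that multiplication by a global section is natural (hence commutes with $\varphi$), that it vanishes on $\delta_{0*}E$ for the new coordinate, and that it is identified with multiplication by $t_k$ on $\delta_{l*}E$, before descending via the direct-summand inclusion $\Hom(E,E(H))\hookrightarrow\Hom(\delta_{l*}E,\delta_{l*}E(H))$. The naturality point you flag as delicate is exactly what the paper's diagram with the action maps $\rho_k$ formalizes, and it holds by bifunctoriality of the tensor product.
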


\begin{proof}
    Due to Lemma~\ref{supp by vanishing ideal}, it suffices to check \eqref{eq:vanishing ideal} holds for every $k=1,\dots, n$.
    From the construction, the action map \eqref{eq:action map} for $\delta_{k*} E $, say $\rho_k$, factors through the action map for $E$, i.e.
    \begin{equation*}
        \begin{tikzcd}
        \Gamma(\cO_{X_0\times\PP^{n+1}}(H)) \arrow[d] \arrow[r,"\rho_k"] & {\Hom(\delta_{k*}E,\delta_{k*}E(H))}           \\
        \Gamma(\cO_{X_0 \times \PP^n}(H)) \arrow[r,"\rho"]                        & {\Hom(E,E(H))} \arrow[u, hook]
        \end{tikzcd}
    \end{equation*}
    where the map $\Hom(E,E(H)) \hookrightarrow \Hom(\delta_{k*}E,\delta_{k*}E(H))$ is injective since it is an inclusion of a direct summand of 
    \begin{equation*}
    	\Hom(\delta_{k*} E,\delta_{k*}E(H)) \simeq \Hom(\delta_k^* \delta_{k*}E,E(H)) \simeq \Hom(E \oplus E[1],E(H)) . 
    \end{equation*}
    Note that the map $\Gamma(\cO_{X \times \PP^1}(H)) \to \Gamma(\cO_X(H))$ is given by 
    \begin{align*}
    	\Gamma(\cO_{X_0 \times \PP^{n+1}}(H))\simeq \CC[t_1, \dots ,t_{n+1} ] &\to \CC [t_1, \dots, t_n] \simeq \Gamma(\cO_{X_0 \times \PP^n}(H)) \\
	t_{n+1} &\mapsto \begin{cases} 0, &\text{if $k = 0$}; \\ t_k, &\text{if $k \neq 0$}. \end{cases}
    \end{align*}    
    From the construction, via $\delta_0$ (or any $\delta_l$ with $l \neq k$) one has
    \begin{equation*}
        \rho_0(t_{n+1})=\rho(0)=0\in \Hom(E,E(H))\subset \Hom(\delta_{0*}E,\delta_{0*}E(H))
    \end{equation*}
    while via $\delta_k$ one has
    \begin{equation*}
        \rho_k(t_{n+1})=\rho(t_k)\in \Hom(E,E(H))\subset \Hom(\delta_{k*}E,\delta_{k*}E(H))
    \end{equation*}    
    From the assumption, it forces $\rho(t_k)=0$ for any $k=1,\dots , n$, so we are reduced to Lemma~\ref{supp by vanishing ideal}.
\end{proof}

\begin{remark}
    Following \cite{LO22}, one may multiply by $C$ repeatedly and define a cosimplicial scheme $X^\bullet \defeq X_0 \times C^ \bullet$, and its derived category of cosimplicial sheaves is equivalent to the derived category of sheaves from $X_0$, see \cite[Theorem~2.6 and Example~2.13]{LO22}.
    Also note that the cosimplicial condition at higher degrees is generated by the condition at degree one in a way of \cite[Section~2.16]{LO22}, and the latter is precisely \eqref{eq:extendable on degree 1}.
    However, this only defines an object in the cosimplicial derived category of sheaves, instead of the derived category of cosimplicial sheaves. They are related if certain negative Ext groups vanish (cf. \cite[Theorem~1.5 and Theorem~1.7]{Olsson:2024}), which is not in general true in our cases.
    It would be interesting to know how to approach in this way.
\end{remark}

And we finally achieved the `rotation-invariant criterion'.
\begin{lemma}\label{supp by invariance}
    For any $n \geq 1$, let $i \colon X_0 \to X_0 \times \PP^n$ and $\delta_0 \colon X_0 \times \PP^n \to X_0 \times \PP^{n+1}$ be the natural closed immersions as above. 
    Suppose $E \in \dCat(X_0 \times \PP^n)$ such that $\delta_{0*}E \in \dCat_{X_0}(X_0 \times \PP^{n+1})$ is invariant under the natural $\GL_{n+1}$-action (i.e. automorphisms of $\PP^{n+1}$ that fixes the point where the image of $\delta_0 \circ i$ stands).
    Then there exists some $F \in \dCat(X_0)$ such that $E \simeq i_* F$.
\end{lemma}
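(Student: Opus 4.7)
The plan is to reduce the lemma directly to Lemma~\ref{supp by extendability}. That lemma produces the desired $F$ from isomorphisms $\delta_{0*}E \simeq \delta_{l*}E$ for every $l = 1, \dots, n$, so my job is to manufacture such isomorphisms from the assumed $\GL_{n+1}$-invariance of $\delta_{0*}E$. The key observation is that $\delta_0$ and $\delta_l$ differ only by an elementary shear in $\PP^{n+1}$ that fixes the marked point, and this shear lies in $\GL_{n+1}$.

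Concretely, I would choose homogeneous coordinates $[t_0 : t_1 : \cdots : t_{n+1}]$ on $\PP^{n+1}$ so that the image of $\delta_0 \circ i$ is $X_0 \times \{[1 : 0 : \cdots : 0]\}$ and so that the natural $\GL_{n+1}$ acts linearly on the last $n+1$ coordinates, fixing $t_0$. In these coordinates $\delta_0([a_0{:}\cdots{:}a_n]) = [a_0{:}\cdots{:}a_n{:}0]$ and $\delta_l([a_0{:}\cdots{:}a_n]) = [a_0{:}\cdots{:}a_n{:}a_l]$, and the elementary shear
\[
g_l \colon (t_1, \dots, t_n, t_{n+1}) \mapsto (t_1, \dots, t_n, t_{n+1} + t_l)
\]
is an element of $\GL_{n+1}$ satisfying $(\id_{X_0} \times g_l) \circ \delta_0 = \delta_l$ and fixing the marked point $[1:0:\cdots:0]$.

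Consequently, by functoriality of pushforward,
\[
\delta_{l*} E = \bigl((\id_{X_0} \times g_l) \circ \delta_0\bigr)_* E = (\id_{X_0} \times g_l)_* \delta_{0*} E.
\]
The assumed $\GL_{n+1}$-invariance of $\delta_{0*}E$ gives an isomorphism $(\id_{X_0} \times g_l)_* \delta_{0*}E \simeq \delta_{0*}E$, and combining these yields $\delta_{l*}E \simeq \delta_{0*}E$ for every $l$. An application of Lemma~\ref{supp by extendability} then produces the desired $F \in \dCat(X_0)$ with $E \simeq i_* F$.

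The only subtle point is the exact interpretation of ``$\GL_{n+1}$-invariance'': strictly speaking, it should mean that each $g$ in the group induces an isomorphism $g_* \delta_{0*}E \simeq \delta_{0*}E$. Since Lemma~\ref{supp by extendability} only asks for the mere existence of an isomorphism $\delta_{0*}E \simeq \delta_{l*}E$, and not for any particular equivariant structure or compatibility among these isomorphisms, the minimal set-theoretic interpretation of invariance already suffices and no higher coherence data enters the argument.
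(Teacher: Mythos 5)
Your proposal is correct and follows essentially the same route as the paper: both arguments produce the isomorphisms $\delta_{0*}E \simeq \delta_{l*}E$ by applying the invariance hypothesis to the elementary shear in $\GL_{n+1}$ satisfying $(\id_{X_0}\times g_l)\circ\delta_0 = \delta_l$, and then invoke Lemma~\ref{supp by extendability}. Your closing remark that only the bare existence of the isomorphisms (and no coherence data) is needed is a fair and accurate clarification of what the paper implicitly uses.
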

\begin{proof}
    By assumption, for any $g \in \GL_{n+1}$ there is an isomorphism
    \begin{equation*}
        \varphi_g \colon \delta_{0*}E \xrightarrow{\sim} g_*\delta_{0*}E.
    \end{equation*}
    Take $g$ to be the shear transform
    \begin{equation*}
        \begin{pmatrix}
            1 &   &   & \\
              & \ddots &  & 1\\
              &   &  1 &\\
              &   &   & 1 \\
        \end{pmatrix}
    \end{equation*}
    which is the identity matrix plus the elementary matrix with $1$ at the $(l,n+1)$-entry,
    then $g_*\delta_{0*} = \delta_{l*}$. So we are reduced to Lemma~\ref{supp by extendability}.
\end{proof}

\subsection{Scheme-theoretic restriction}\label{section:restrict scheme}
We have discussed when an object lies in the essential image of the pushforward along a closed immersion, and now we want to study the morphisms as well.
Nevertheless, the following proposition suggests that, if we only concern the morphisms in an HN filtration, we only need to control the objects.
\begin{lemma}\label{supp of morphisms}
    Let $X$ be a smooth projective variety and $Y$ be a smooth subvariety of $X$ such that the closed immersion $i \colon Y \hookrightarrow X$ admits a retraction.
    \begin{enumerate}
        \item For any $F_1 , F_2 \in \dCat(Y)$  the natural map
        \begin{equation}\label{eq:morphism}
            \Hom(F_1,F_2) \to \Hom(i_*F_1,i_* F_2)
        \end{equation}
        is injective.
        \item If moreover $F_1$ and $F_2$ are in the heart of some bounded $t$-structure on $\dCat(Y)$.
    Then, the natural map \eqref{eq:morphism} is an isomorphism.
    \end{enumerate}
\end{lemma}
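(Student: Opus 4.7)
My plan is to use the retraction $r\colon X\to Y$ throughout. For both parts, the starting observation is that $r \circ i = \id_Y$ gives, on derived pushforward, $r_* i_* = \id$ as endofunctors of $\dCat(Y)$ (both morphisms are proper between smooth projective varieties). For part (i), this identity descends to Hom groups, producing a left inverse $r_*\colon \Hom_X(i_*F_1,i_*F_2) \to \Hom_Y(F_1,F_2)$ to the natural map $i_*$; hence $i_*$ is injective.

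For part (ii) I would pass to the adjunction $Li^* \dashv i_*$ and analyze $Li^*i_*F_1$. The retraction yields a canonical section of the counit $\varepsilon\colon Li^*i_*F_1 \to F_1$: applying $Li^*$ to the natural morphism $r^*F_1 \to i_*i^*r^*F_1 = i_*F_1$ (obtained from the unit of $i^*\dashv i_*$ together with $i^*r^* = \id$) gives a map $F_1 \to Li^*i_*F_1$, and the triangle identity for $Li^* \dashv i_*$ forces this to be a section of $\varepsilon$. Consequently $Li^*i_*F_1 \simeq F_1 \oplus K_{F_1}$ in $\dCat(Y)$, and
\[
\Hom_X(i_*F_1,i_*F_2) \;\cong\; \Hom_Y(Li^*i_*F_1,F_2) \;=\; \Hom_Y(F_1,F_2) \oplus \Hom_Y(K_{F_1}, F_2),
\]
so the proof of (ii) reduces to the vanishing $\Hom_Y(K_{F_1},F_2) = 0$.

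To conclude, I would invoke the Koszul-type decomposition $Li^*i_*\cO_Y \simeq \bigoplus_{k\ge 0} \wedge^k N^*_{Y/X}[k]$, whose formality is secured here by the splitting of the conormal sequence provided by $r$. This identifies $K_{F_1} \simeq \bigoplus_{k\ge 1} F_1 \otimes \wedge^k N^*_{Y/X}[k]$. In the setting where this lemma is actually applied, namely inclusions of isotrivial fibers of $f$, the normal bundle $N_{Y/X}$ is trivial, so each summand is a direct sum of copies of $F_1[k]$ with $k\ge 1$, and the vanishing $\Hom_Y(F_1[k], F_2) = \Ext^{-k}_Y(F_1, F_2) = 0$ is immediate from the defining semi-orthogonality of a bounded-$t$-structure heart.

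The step I expect to be most delicate is the final vanishing in full generality: when $N_{Y/X}$ is non-trivial, tensoring with $\wedge^k N^*_{Y/X}$ need not preserve an arbitrary heart, so additional input beyond the retraction would be required. The paper's isotriviality hypothesis neatly removes this difficulty by reducing $N_{Y/X}$ to a trivial bundle.
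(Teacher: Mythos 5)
Your proof is correct and, for part (ii), follows essentially the same route as the paper: both rest on the decomposition $Li^*i_*F_1\simeq\bigoplus_k \wedge^k\cN^\vee_{Y/X}[k]\otimes F_1$ (with formality guaranteed by the splitting of the conormal sequence coming from the retraction), the adjunction $\Hom_X(i_*F_1,i_*F_2)\simeq\Hom_Y(Li^*i_*F_1,F_2)$, and the vanishing of negative Ext groups between objects of a heart. Three points of comparison are worth recording. First, for part (i) you use $r_*i_*\simeq\id$ to produce a left inverse on Hom groups, whereas the paper extracts injectivity from the long exact sequence attached to the split triangle $\bigoplus_{k\ge1}(\cdots)\to i^*i_*F_1\to F_1$; your version is more elementary and does not need the Koszul decomposition at all for (i). Second, the paper simply asserts that the counit $\epsilon_{F_1}$ ``is the natural projection'' onto the degree-zero summand; for an arbitrary complex $F_1$ this is not automatic (negative self-Exts of $F_1$ need not vanish, e.g.\ for $F_1=A\oplus A[k]$), and your construction of an explicit section of $\epsilon_{F_1}$ from the retraction via the triangle identity is the cleaner justification that $F_1$ splits off compatibly with the counit; cancellation in the Krull--Schmidt category $\dCat(Y)$ (Remark~\ref{Krull--Schmidt}) then identifies the complement with $\bigoplus_{k\ge1}\wedge^k\cN^\vee_{Y/X}[k]\otimes F_1$. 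Third, you correctly observe that the final vanishing needs $\wedge^k\cN^\vee_{Y/X}\otimes F_1$ to lie in the heart, which is only clear when the normal bundle is trivial; the paper's proof carries exactly the same restriction, hidden in the unjustified step $\wedge^k\cN_{Y/X}[k]\otimes F\simeq F^{\oplus\binom{n}{k}}[k]$. Since the lemma is only invoked for fibers of a product family (after Lemma~\ref{trivialization}), where $\cN_{Y/X}\simeq\cO_Y^{\oplus n}$, this is harmless in the application, but your caveat is a genuine observation about the lemma as stated.
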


\begin{proof}
    Since $i$ admits a retraction, $i_*$ is faithful on the level of objects, so
    \begin{equation*}
        i^*i_* F \simeq \bigoplus_{k=0}^n \wedge^k \cN_{Y/X}[k]\otimes F \simeq \bigoplus_{k=0}^n F^{\oplus \binom{n}{k}}[k] ,
    \end{equation*}
    where $n$ is the codimension of $Y$ in $X$; see, for example, \cite[Proposition~11.1]{Huybrechts:2006}.
    Note that the counit at $F$, $\epsilon_F \in \Hom(i^*i_*F,F)$, is just the natural projection
    \begin{equation*}
        \begin{tikzcd} [row sep = tiny]
        	{i^*i_*F \simeq  F \oplus \left( \bigoplus_{k=1}^n F^{\oplus \binom{n}{k}}[k] \right)} & F , \\
        	{(e, \dots)} & e .
        	\arrow[from=1-1, to=1-2]
        	\arrow[maps to, from=2-1, to=2-2]
        \end{tikzcd}
    \end{equation*}
    This provides a (split) distinguished triangle
    \begin{equation*}
        \bigoplus_{k=1}^{n} F^{\oplus \binom{n}{k}} [k] \to i^*i_* F \xrightarrow{\epsilon_F} F .
    \end{equation*}
    Let $F=F_1$, then the morphisms from this triangle to $F_2$ yield a long exact sequence
    \begin{equation*}
        0 \to \Hom(F_1,F_2) \to \Hom(i^* i_* F_1,F_2) \to  \Hom(\bigoplus_{k=1}^{n} F_1^{\oplus \binom{n}{k}} [k],F_2) \to \dots
    \end{equation*}
    where the morphism $\Hom(F_1,F_2) \to \Hom(i^* i_* F_1,F_2)$ is equivalent to \eqref{eq:morphism} by adjunction. If $F_1$ and $F_2$ are in the same heart, then the last term vanishes as well.
\end{proof}


Now we state the main theorem of this section.

\begin{theorem} \label{thm:main}
    Let $f \colon X \to Y$ be a morphism between smooth projective varieties such that the Albanese morphism of $Y$ is finite. 
    Let $X_0$ be a fiber of $f$ such that $f$ is isotrivial near the fiber $X_0$, and let $i\colon X_0 \to X$ be the closed immersion.
    Then, there is a natural restriction 
    \begin{equation*}
        \Stab_{\Lambda}(X) \to \Stab_{\Lambda_0}(X_0), \qquad (\cP ,Z) \mapsto (\cP_0, Z_0),
    \end{equation*}
    in the way of Lemma~\ref{formal restriction} along $i_*$. Namely,
    \begin{equation*}
        \cP_0(\phi) = (i_*)^{-1} \cP(\phi) = \{F \in \dCat(X_0) \mid i_* F \in \cP(\phi)\}, \qquad Z_0 =Z \circ [i_*] ,
    \end{equation*}
    and it satisfies the support property with respect to $\K( X_0 ) \xdhrightarrow{} \Lambda_0,$
    where $\Lambda_0$ is the image of the composition of maps $\K(X_0) \xrightarrow{[i_*]} \K(X) \xdhrightarrow{v} \Lambda$.
    In particular, the restriction respects geometric stability conditions.
\end{theorem}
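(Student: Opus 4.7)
The plan is to apply Lemma~\ref{formal restriction} to the faithful triangulated functor $i_* \colon \dCat(X_0) \to \dCat(X)$. By Corollary~\ref{pre-restriction'} applied to $f$, the stability condition $\sigma=(\cP,Z)$ first restricts to $\dCat_{X_0}(X)$, and for every $F \in \dCat(X_0)$ the $\sigma$-HN factors $A_k$ of $i_*F$ already lie in $\dCat_{X_0}(X)$. It then suffices to show that each $A_k$ lies in the essential image of $i_*$, and that the extension data assembling the HN filtration of $i_*F$ lifts term-by-term along $i_*$.

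The main step is to verify the scheme-theoretic support of each $A_k$ on $X_0$. I will use the isotriviality of $f$ near $X_0$ to pass to a local model: after shrinking $Y$ to an étale or formal neighborhood $U$ of $y:=f(X_0)$ on which $f$ trivializes as the projection $X_0 \times U \to U$ (with $X_0$ the fiber over a point $v_0 \in U$), the category $\dCat_{X_0}(X)$ is identified with $\dCat_{X_0}(X_0 \times U)$. Embedding $U$ into $\PP^n$ with $v_0$ as a closed point and invoking \cite[Corollary~2.9]{Orl11}, I may regard each $A_k$ as an object of $\dCat_{X_0}(X_0 \times \PP^n)$. The group $\GL_{n+1}$ acts on $\PP^n$ fixing $v_0$, hence acts on $X_0 \times \PP^n$ fixing $X_0$ set-theoretically and preserving $\dCat_{X_0}(X_0 \times \PP^n)$. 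By Theorem~\ref{invariant action}, the noetherian heart of the restricted $t$-structure on this category is $\GL_{n+1}$-invariant; running the argument phase-by-phase shows each HN factor $A_k$ is $\GL_{n+1}$-invariant. Lemma~\ref{supp by invariance} then produces $B_k \in \dCat(X_0)$ with $A_k \simeq i_*B_k$, and by construction $B_k \in \cP_0(\phi_k)$.

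With the HN factors identified in the essential image of $i_*$, I will reconstruct the full HN filtration in $\dCat(X_0)$ inductively. Setting $F_1 := B_1$ and assuming $E_{k-1} \simeq i_*F_{k-1}$, the triangle $E_{k-1} \to E_k \to A_k$ yields a connecting morphism $i_*B_k \to i_*F_{k-1}[1]$ in $\dCat(X)$. The isotriviality ensures $i$ locally admits a retraction, so Lemma~\ref{supp of morphisms} is available. Combining this with the semi-orthogonality of distinct HN factors (which kills the relevant negative $\Ext$ groups $\Ext^{-j}(B_k, F_{k-1}[1])$ for $j \geq 1$), the morphism lifts to some $B_k \to F_{k-1}[1]$ in $\dCat(X_0)$, whose cone $F_k$ satisfies $i_*F_k \simeq E_k$. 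Finally, Lemma~\ref{formal restriction} produces the desired $(\cP_0,Z_0) \in \Stab_{\Lambda_0}(X_0)$ together with the support property for $\K(X_0) \xdhrightarrow{} \Lambda_0$. Geometricity is preserved since $i_*\cO_x \simeq \cO_x$ for $x \in X_0$, so skyscrapers on $X_0$ remain stable of the same phase.

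The hardest part of this program will be the invariance argument of the middle paragraph: carefully setting up the reduction to $X_0 \times \PP^n$ via isotriviality (handling the étale/formal nature of the trivialization and Orlov's insensitivity result), and verifying that the hypotheses of Theorem~\ref{invariant action} apply to the $t$-structure on $\dCat_{X_0}(X_0 \times \PP^n)$ inherited from $\sigma$. A secondary subtlety arises when lifting extension classes between HN factors, since Lemma~\ref{supp of morphisms}(ii) only gives full faithfulness on a single heart, whereas HN factors sit in different slicing pieces; the semi-orthogonality relations must be leveraged to compensate.
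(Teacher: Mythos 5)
Your skeleton matches the paper's: restrict to $\dCat_{X_0}(X)$ via Corollary~\ref{pre-restriction'}, identify the HN pieces as pushforwards from $X_0$ by a group-invariance argument feeding into Lemma~\ref{supp by invariance}, then lift the filtration maps via Lemma~\ref{supp of morphisms}. But the middle step --- which you correctly single out as the hardest --- has a genuine gap. You establish invariance of each $A_k$, as an object of $\dCat_{X_0}(X_0\times\PP^n)$, under the stabilizer of $v_0$ in $\Aut(\PP^n)$. That is not the hypothesis of Lemma~\ref{supp by invariance}: the lemma requires $\delta_{0*}A_k$ to be invariant under automorphisms of $\PP^{n+1}$, one dimension up, fixing the distinguished point; its proof uses shear transformations of $\PP^{n+1}$ that move the hyperplane $X_0\times\PP^n$ and thereby compare $\delta_{0*}$ with $\delta_{l*}$ (Lemma~\ref{supp by extendability}). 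Invariance under the stabilizer of $v_0$ inside $\Aut(\PP^n)$ is genuinely insufficient to force scheme-theoretic support on $X_0$: the structure sheaf of $X_0\times Z$, where $Z=\spec\bigl(\cO_{\PP^n,v_0}/\frm_{v_0}^2\bigr)$ is the full first-order neighborhood of $v_0$, is invariant under that stabilizer yet is not in the essential image of $i_*$. So the invariance you obtain, even granting the phase-by-phase argument, does not let you invoke Lemma~\ref{supp by invariance}.

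The paper closes exactly this hole by first embedding the situation into a family over a base of one dimension higher: it takes an elliptic curve $C$, pushes the entire HN filtration forward along $\delta_0\colon X\to X\times C$, and proves (Theorem~\ref{pushforward of stable obj} and Corollary~\ref{pushforward of filtration}) that the result is again an HN filtration for one of Liu's stability conditions $\sigma''$ on $X\times C$. Only then does it pass to $\dCat_{X_0}(X\times C)\simeq\dCat_{X_0}(X_0\times\PP^{n+1})$, apply Theorem~\ref{invariant action} to make $\sigma''$ invariant under the connected group of automorphisms of $\PP^{n+1}$ fixing the point, deduce invariance of the filtration pieces $\delta_{0*}E_k$ from uniqueness of HN filtrations, and conclude via Lemma~\ref{supp by invariance}. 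Your proposal contains no analogue of this pushforward-of-HN-filtrations step, and without it there is no invariant stability condition on the larger space for which $\delta_{0*}A_k$ is known to be an HN piece of an invariant object. The secondary subtlety you flag about lifting the connecting maps (negative $\Hom$'s between pieces in different slices) is real, but the paper's own write-up handles it no more carefully than you propose; the essential missing ingredient is the dimension-raising step and the accompanying Theorem~\ref{pushforward of stable obj}.
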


By Corollary~\ref{pre-restriction'}, it remains to restrict a stability condition on $\dCat_{X_0}(X)$ to $\dCat(X_0)$, which is the following proposition.

\begin{proposition}\label{restriction}
    Let $\sigma'$ be a stability condition on $\dCat(X)$ where $X_0=q^{-1}(0)$ is some smooth fiber of a morphism $q\colon X \to S$, and let $i \colon X_0 \to X$ be the closed immersion.
    Assume that $q$ is isotrivial near $X_0$, and for any $F \in 
    \dCat(X_0)$, all the HN factors of $i_*F$ with respect to $\sigma'$ are set-theoretically supported on $X_0$.
    Then, the stability condition $\sigma'$ restricts to a stability condition $\sigma$ on $\dCat(X_0)$, in the way of Lemma~\ref{formal restriction} along $i_*$.
    In particular, it respects geometric stability conditions.
\end{proposition}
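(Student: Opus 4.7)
The plan is to verify the hypothesis of the formal restriction principle (Lemma~\ref{formal restriction}) applied to $i_*\colon \dCat(X_0)\to\dCat(X)$: namely, that the HN filtration of $i_*F$ with respect to $\sigma'$ is induced by a filtration of $F$ inside $\dCat(X_0)$. By the set-theoretic restriction (Corollary~\ref{pre-restriction'}) together with the standing assumption, each HN factor $E_j$ of $i_*F$ already lies in $\dCat_{X_0}(X)$; what remains is to promote this set-theoretic statement to a scheme-theoretic one, namely that $E_j\simeq i_*G_j$ for some $G_j\in\dCat(X_0)$, and that the connecting morphisms of the filtration descend to $\dCat(X_0)$.

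For the first point I would invoke the invariance criterion of Lemma~\ref{supp by invariance}. The isotriviality of $q$ near $X_0$ yields an étale neighborhood $U\to S$ of $0$ together with an isomorphism $X\times_S U\simeq X_0\times U$; since $\dCat_{X_0}(X)$ depends only on an open neighborhood of $X_0$ in $X$, I may pass to such a local model and then compactify the base to obtain a setup $X_0\hookrightarrow X_0\times \PP^n$ that preserves the set-theoretic supports. The stability condition $\sigma'$ transports to a stability condition on $\dCat_{X_0}(X_0\times\PP^n)$ whose heart is bounded and noetherian, and the group $\GL_{n+1}$ acts on $X_0\times\PP^n$ through its action on the second factor, fixing $X_0$ as a closed subscheme. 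Theorem~\ref{invariant action} then guarantees that this bounded noetherian $t$-structure is invariant under the $\GL_{n+1}$-action, so each HN factor $E_j$ (and hence $\delta_{0*}E_j$ in $\dCat_{X_0}(X_0\times\PP^{n+1})$) is $\GL_{n+1}$-invariant, and Lemma~\ref{supp by invariance} delivers $E_j\simeq i_*G_j$ for some $G_j\in\dCat(X_0)$.

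For the second point I would apply Lemma~\ref{supp of morphisms}: the closed immersion $i$ admits a retraction in the local model (the projection onto $X_0$), so for objects sitting in a common heart the natural map $\Hom(G_j,G_{j+1})\to \Hom(i_*G_j,i_*G_{j+1})$ is a bijection. Consequently, the distinguished triangles of the HN filtration of $i_*F$ lift uniquely to distinguished triangles in $\dCat(X_0)$, yielding a filtration of $F$ whose $i_*$-image is the HN filtration of $i_*F$. Lemma~\ref{formal restriction} then produces the restricted stability condition $\sigma$ on $\dCat(X_0)$ with central charge $Z\circ [i_*]$ and support property against $\Lambda_0$; preservation of geometric stability conditions is immediate, since skyscraper sheaves on $X_0$ push forward to skyscraper sheaves on $X$ of the same phase.

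The main obstacle I anticipate is the reduction from the étale-local trivialization near $X_0$ to the global projective model $X_0\subset X_0\times\PP^n$ required by Lemma~\ref{supp by invariance}. Isotriviality provides only an étale-local product structure, so one must verify carefully that the HN factors, being set-theoretically supported on $X_0$, are genuinely governed by this étale neighborhood, and that the resulting $\GL_{n+1}$-invariance on the chosen projective compactification does apply to them; this bookkeeping is where most of the technical care goes, and it may be cleaner to prove a variant of Lemma~\ref{supp by invariance} directly over $X_0\times T$ for a pointed smooth base $(T,0)$ rather than only for $(\PP^n,\mathrm{pt})$.
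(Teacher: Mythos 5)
Your overall architecture (trivialize via Lemma~\ref{trivialization}, establish scheme-theoretic support of the HN factors via Lemma~\ref{supp by invariance}, lift the morphisms via Lemma~\ref{supp of morphisms}, conclude by Lemma~\ref{formal restriction}) matches the paper, but there is a genuine gap at the parenthetical ``(and hence $\delta_{0*}E_j$ in $\dCat_{X_0}(X_0\times\PP^{n+1})$ is $\GL_{n+1}$-invariant)''. The group in Lemma~\ref{supp by invariance} is the group of automorphisms of $\PP^{n+1}$ fixing the image point of $\delta_0\circ i$, and the elements actually used there are the shear transformations $g$ with $g_*\delta_{0*}=\delta_{l*}$; these do \emph{not} preserve the hyperplane $X_0\times\PP^{n}\subset X_0\times\PP^{n+1}$, so they are not induced by any automorphism of $X_0\times\PP^n$. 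Consequently, invariance of $E_j$ under $\Aut(\PP^n)$ (which is all that your application of Theorem~\ref{invariant action} on $X_0\times\PP^n$ can give) says nothing about invariance of $\delta_{0*}E_j$ under the shears, which is precisely what Lemma~\ref{supp by invariance} needs. To get that invariance one must realize $\delta_{0*}E_j$ \emph{canonically} inside $\dCat_{X_0}(X_0\times\PP^{n+1})$, as an HN factor with respect to a stability condition on the larger category that is itself $\GL_{n+1}$-invariant.

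This is why the paper's proof contains a step your proposal omits entirely: it first embeds the situation into a family whose base is one dimension higher, taking $\delta_0\colon X\to X\times C$ for $C$ an elliptic curve, and invokes Corollary~\ref{pushforward of filtration} (resting on Theorem~\ref{pushforward of stable obj}, a substantive result of Section~\ref{section:push stable}) to conclude that $0=\delta_{0*}E_0\to\cdots\to\delta_{0*}E_n=\delta_{0*}i_*F$ is again an HN filtration for some $\sigma''\in\Stab(X\times C)$. Only then, after restricting $\sigma''$ to $\dCat_{X_0}(X\times C)\simeq\dCat_{X_0}(X_0\times\PP^{n+1})$ and applying Theorem~\ref{invariant action} to the $\GL_{n+1}$-action on \emph{that} category, does one get that the $\delta_{0*}E_j$, being HN factors of the invariant object $\delta_{0*}i_*F$ for an invariant stability condition, are invariant under the full stabilizer including the shears. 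Your closing remark correctly flags the \'etale-local bookkeeping (which the paper handles painlessly via formal completions and \cite[Corollary 2.9]{Orl11} in Lemma~\ref{trivialization}), but the real technical content you are missing is this ``raise the base dimension by one'' maneuver and the compatibility of HN filtrations with pushforward along a fiber of $X\times C\to C$ that makes it work.
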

Here, by isotrivial we mean that there exists a finite (\'{e}tale) morphism such that the base-change of the morphism is a projection.
This assumption allows us to reduce it to the trivial bundle case.
\begin{lemma}\label{trivialization}
    If $q\colon X \to S$ is isotrivial and $X_0=q^{-1}(0)$ is some fiber, then $\dCat_{X_0}(X) \simeq \dCat_{X_0}(X_0 \times C^n)$, where $C$ is any smooth projective curve and $n=\dim(S)$, and $X_0\subset X_0\times C^n$ is a fiber of the natural projection $X_0 \times C^n \to C^n$.
\end{lemma}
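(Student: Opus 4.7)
The plan is to recognize $\dCat_{X_0}(-)$ as an étale-local invariant of the ambient smooth variety along $X_0$, and to exhibit both $X$ and $X_0 \times C^n$ as étale-locally the same near $X_0$.

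First I would reduce to a Zariski neighborhood of $X_0$ inside $X$. For any Zariski open $V \subset X$ containing $X_0$, extension by zero gives an equivalence $\Coh_{X_0}(V) \xrightarrow{\sim} \Coh_{X_0}(X)$, because any coherent sheaf set-theoretically supported on $X_0$ is scheme-theoretically supported on some infinitesimal thickening of $X_0$, which is a closed subscheme of both $V$ and $X$. Combined with the identification $\dCat_{X_0}(-) \simeq \dCat^\rmb(\Coh_{X_0}(-))$ from \cite[Lemma~2.1]{Orl11}, this lifts to the derived level, so I may shrink $X$ to a Zariski neighborhood of $X_0$ over which the isotriviality is realized.

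Next, I would invoke the isotriviality: after shrinking, there is a finite étale cover $\psi \colon T' \to T \defeq q(X)$ together with an isomorphism $X \times_T T' \simeq X_0 \times T'$ over $T'$. Choose $0' \in T'$ mapping to $0 \in T$, and further shrink $T'$ to a Zariski open neighborhood of $0'$ meeting $\psi^{-1}(0)$ only at $0'$. The induced map $\pi \colon X_0 \times T' \to X$ is then étale and restricts to an isomorphism $X_0 \times \{0'\} \xrightarrow{\sim} X_0$. Symmetrically, fixing any point $c \in C^n$, the product $X_0 \times C^n$ contains $X_0 \times \{c\} \cong X_0$ as a fiber, and étale-locally around this fiber $C^n$ and $T'$ agree, since both are smooth of dimension $n$.

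The key input is the following étale invariance: if $f \colon Y \to W$ is an étale morphism of smooth varieties whose restriction to a closed subscheme $Z \subset Y$ is an isomorphism onto its image $Z \subset W$, then $f^*$ induces an equivalence $\dCat_Z(W) \simeq \dCat_Z(Y)$. Granting this and applying it twice yields
\[
\dCat_{X_0}(X) \simeq \dCat_{X_0}(X_0 \times T') \simeq \dCat_{X_0}(X_0 \times C^n),
\]
as required. The main obstacle is verifying this étale invariance, which is essentially \cite[Corollary~2.9]{Orl11}; it can also be checked directly by observing that every object of $\Coh_Z$ is killed by some power of the ideal sheaf of $Z$, so both categories in question reduce to a colimit of $\Coh$ over infinitesimal thickenings of $Z$, and such thickenings are preserved by étale morphisms inducing an isomorphism on $Z$.
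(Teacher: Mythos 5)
Your argument is correct and is in substance the paper's own proof: both reduce the claim to the fact that $\dCat_{X_0}(-)$ depends only on the formal (equivalently, \'etale) neighborhood of $X_0$, which is \cite[Corollary~2.9]{Orl11}, and both produce the required identification of neighborhoods from the \'etale trivialization supplied by isotriviality (the paper phrases this directly as $\widehat{X}_{X_0}\simeq \widehat{X'}_{X'_t}\simeq \widehat{(X_0\times S)}_{X_0}$ and applies Orlov's result once). The only point to tighten is your general statement of \'etale invariance: $f^*$ preserves the support condition only if you also arrange $f^{-1}(f(Z))=Z$ (which you do for $\pi$ by shrinking $T'$, and which is likewise needed for the implicit comparison of $T'$ with $C^n$, a comparison that really goes through a common \'etale map to $\AA^n$ and hence two further applications of the invariance); your fallback justification via infinitesimal thickenings is the correct one.
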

\begin{proof}
    The isotrivial assumption implies that there exists a finite \'{e}tale morphism $f \colon S' \to S$  such that the base change $X' \simeq X_0 \times S'$ as $S'$-schemes.
    Let $t \in f^{-1}(0)$, then 
    \begin{equation*}
        \widehat{X}_{X_0} \simeq \widehat{X'}_{X'_t} \simeq \widehat{(X_0 \times S)}_{X_0} ,
    \end{equation*}
    hence
    \begin{equation*}
        \dCat_{X_0}(X) \simeq \dCat_{X_0}(X_0\times C^n)
    \end{equation*}
    by \cite[Corollary 2.9]{Orl11}.
\end{proof}

Now we can prove Theorem~\ref{thm:main} and Proposition~\ref{restriction}.
\begin{proof}[Proof of Proposition~\ref{restriction}, hence Theorem~\ref{thm:main}]
    By Lemma~\ref{trivialization}, one may assume $X = X_0 \times C^n$ where $C$ is any smooth projective curve.
    
    Let $\sigma'=(\cP',Z')$ be the given stability condition on $X_0 \times C^n$. One can easily define the candidate
    \begin{equation*}
        \cP(\phi)=\{E \in \dCat(X_0) \mid i_*(F) \in \cP'(\phi)\} \quad \text{and} \quad
        Z=Z'\circ i_* 
    \end{equation*}
    in the manner of Lemma~\ref{formal restriction},
    To show this is a stability condition, it suffices to show that the HN filtration exists. For any nonzero $F \in \dCat(X_0)$, $i_* F$ is nonzero, so there exists an HN filtration in $\dCat(X_0\times C^n)$
    \begin{equation}\label{eq:filtration}
        0 =E_0 \to  E_1 \to \dots \to  E_n = i_*F
    \end{equation}
    with respect to this given $\sigma'$.
    Once we proved this filtration is pushforward from a filtration on $\dCat(X_0)$, we can conclude by Lemma~\ref{formal restriction};
    in fact we have $\K(X_0) \simeq \K(\dCat_{X_0}(X))$, so we can actually use the same lattice for the support property.

    To this end, we first embed it into a family with base $1$-dimensional higher. Let $C$ be an elliptic curve and $\delta_0\colon X \to X\times C$ be the closed immersion of any fiber of the projection $X \times C \to C$. Then by Corollary \ref{pushforward of filtration} there exists some $\sigma''\in \Stab(X\times C)$ such that 
    \begin{equation*}
        0 = \delta_{0*} E_0 \to \delta_{0*}  E_1 \to \dots \to  \delta_{0*} E_n = \delta_{0*}i_*F
    \end{equation*}
    is the HN filtration with respect to $\sigma''$.
    By Corollary~\ref{pre-restriction}, one can view this as on $\dCat_{X_0}(X \times C)$ and by Lemma~\ref{trivialization} we know 
    $\dCat_{X_0}(X \times C) \simeq \dCat_{X_0}({X_0} \times \PP^{n+1})$.
    Then we have the $\GL_{n+1}$-action on $\dCat_{X_0}({X_0} \times \PP^{n+1})$ and hence on $\Stab({X_0})_{{X_0} \times \PP^{n+1}}$, which has to be trivial by Theorem~\ref{invariant action}. Finally, by Lemma~\ref{supp by invariance} the invariance condition ensures $\delta_{0*} E_k \simeq (\delta_{0} \circ i)_* F_k$ and hence $E_k \simeq i_* F_k$ for some $F_k \in \dCat(X_0)$ for every $k$, and by Lemma~\ref{supp of morphisms} we know that there exists (a unique) morphism $f_k \colon F_k \to F_{k+1}$ such that
    \begin{equation}\label{eq:compatible with pushforward}
        \begin{tikzcd} 
        E_k \arrow[r] \arrow[d, phantom, "\vsimeq"] & E_{k+1} \arrow[d, phantom, "\vsimeq"] \\
        i_* F_k \arrow[r,"i_*f_k"]                     & i_* F_{k+1}                    
        \end{tikzcd}
    \end{equation}
    for every $k$.
    These $F_k$ and $f_k$ build the filtration
    \begin{equation*}
        0 =F_0 \xrightarrow{f_0}  F_1 \xrightarrow{f_1} \dots \to  F_n = F,
    \end{equation*}
    and applying the pushforward $i_*$, it gives \eqref{eq:filtration} by \eqref{eq:compatible with pushforward}, as desired.
\end{proof}

\begin{remark}
    In fact, by Remark~\ref{local stability conditions} we can prove the following:
    Let $\sigma'$ be a stability condition on $\dCat_{X_0}(X)$ where $X_0=q^{-1}(0)$ is a some smooth fiber of $q\colon X \to S$. If moreover $q$ is isotrivial near $X_0$, the stability condition $\sigma'$ restricts to a stability condition $\sigma$ of $\dCat(X_0)$ in the way of Lemma~\ref{formal restriction} along $i_*$.
\end{remark}

A direct corollary is the following, which can be viewed as some converse of Liu's Theorem~\ref{stabilities on product}.
\begin{corollary}
    Assume $C \not\simeq \PP^1$ is any smooth projective curve. For any stability condition $\sigma^{s,t,\beta}$ on $X \times C$ induced from a stability condition $\sigma$ on $X$ via Theorem~\ref{stabilities on product}, we can restrict it back to $X$ in the way of Lemma~\ref{formal restriction} along $i_*$, where $i \colon X \to X \times C$. Moreover, the restriction gives the original $\sigma$, i.e. $\sigma^{s,t,\beta}|_{X} = \sigma$.
\end{corollary}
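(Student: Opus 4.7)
The plan is to apply Theorem~\ref{thm:main} to the projection $q \colon X \times C \to C$ to obtain a restricted stability condition $\tau \defeq \sigma^{s,t,\beta}|_X$, and then identify $\tau$ with $\sigma$ by matching their central charges and hearts. For the existence of $\tau$, observe that $C \not\simeq \PP^1$ forces $g(C) \geq 1$, so the Albanese map of $C$ is a closed immersion (hence finite), and $q$ is tautologically isotrivial. Theorem~\ref{thm:main} thus produces $\tau \in \Stab(X)$ whose central charge $Z_\tau = Z^{s,t,\beta} \circ [i_*]$ equals $Z_\sigma$ by Example~\ref{extending is aligned}.

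Let $\cA$ denote the heart of $\sigma$ and $\cA^\tau$ the heart of $\tau$; by construction, $\cA^\tau = \{F \in \dCat(X) \mid i_*F \in \cA_C^{t,\beta}\}$. I would next show $\cA \subset \cA^\tau$. For $F \in \cA$, the projection formula combined with $i^*q^*L \simeq \cO_X$ (since $q \circ i$ is constant) gives $p_*(i_*F \otimes q^*L^n) = F \in \cA$ for all $n$, so $i_*F$ lies in the AP heart $\cA_C$ with numerical invariants $a(i_*F) = c(i_*F) = 0$ and $(b + \I d)(i_*F) = Z_\sigma(F)$. By additivity and the sign constraint $c(\cdot) \geq 0$ on $\cA_C$ recorded in \eqref{eq:abcd}, every $\cA_C$-subobject and quotient of $i_*F$ also has $c = 0$, so its $\nu_{t,\beta}$-slope equals $+\infty$. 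Consequently $i_*F \in \cT^{t,\beta} \subset \cA_C^{t,\beta}$, i.e.\ $F \in \cA^\tau$.

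Finally, two bounded hearts on $\dCat(X)$ with $\cA \subset \cA^\tau$ must agree, by a standard cohomological argument: for any $A \in \cA^\tau$, each $\cH^i_\cA(A) \in \cA \subset \cA^\tau$, and combined with $A \in \cA^\tau$ the boundedness of the $\tau$-t-structure forces $\cH^i_\cA(A) = 0$ for $i \neq 0$, so $A \in \cA$. Once $\cA = \cA^\tau$ and $Z_\sigma = Z_\tau$, the slicings automatically coincide (the phase of a heart-semistable object is determined by $\arg Z$), yielding $\tau = \sigma$. The main technical content lies in the heart-inclusion step: the $C$-torsion of $i_*F$ forces $c(i_*F) = 0$ on $\cA_C$, and combined with the sign constraint this pushes every $\cA_C$-subquotient of $i_*F$ into $\cT^{t,\beta}$.
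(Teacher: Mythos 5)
Your proof is correct, but its second half takes a genuinely different route from the paper's. The existence step is identical: both apply Theorem~\ref{thm:main} to the projection $X \times C \to C$, using $g(C)\ge 1$ to make the Albanese of $C$ a closed immersion (hence finite) and the tautological isotriviality of a product, and both match central charges via Example~\ref{extending is aligned}. For the identification $\sigma^{s,t,\beta}|_{X}=\sigma$, the paper argues through Harder--Narasimhan filtrations: by Corollary~\ref{pushforward of filtration} the pushforward of the $\sigma$-HN filtration of $F$ is the $\sigma^{s,t,\beta}$-HN filtration of $i_*F$, which by the construction of the restriction is also the pushforward of the $\sigma^{s,t,\beta}|_{X}$-HN filtration, so uniqueness of HN filtrations and faithfulness of $i_*$ force the two slicings to agree. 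You instead compare hearts directly: the computation $p_*(i_*F\otimes q^*L^n)=F$ places $i_*F$ in the AP heart with $a=c=0$, the positivity $c\ge 0$ from \eqref{eq:abcd} then pushes $i_*F$ into $\cT^{t,\beta}$ (the same observation that opens the proof of Proposition~\ref{destabilizing reduction}), and the nested-bounded-hearts lemma plus equality of central charges finishes. Your route is more elementary and self-contained: it does not invoke Theorem~\ref{pushforward of stable obj} or Corollary~\ref{pushforward of filtration}, which the paper states only for elliptic curves, so your argument applies uniformly for all $g(C)\ge 1$ with no extra remark; the paper's route is shorter given the machinery of Section~\ref{section:push stable} and makes Corollary~\ref{pushforward of stable obj g>1} an immediate by-product. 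One small imprecision: in the nested-hearts step, what kills $\cH^i_{\cA}(A)$ for $i\ne 0$ is the semi-orthogonality $\Hom(\cA^\tau[k],\cA^\tau)=0$ for $k<0$ applied to the (finitely many, by boundedness of the $\cA$-structure) $\cA$-cohomology objects, rather than ``the boundedness of the $\tau$-$t$-structure'' as such; the conclusion is the standard one and is correct.
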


\begin{proof}
    Since $g(C) \geq 1$, $\alb \colon C \to \Alb(C)$ is a closed immersion, hence $X$ is a fiber of the composition 
    \begin{equation*}
        X \times C \longrightarrow C \xrightarrow{\alb_C} \Alb(C) .
    \end{equation*}
    By Theorem~\ref{thm:main}, we can define such restriction.
    By construction, the pushforward $i_*$ of the HN filtration of any $F \in \dCat(X)$ with respect to $\sigma^{s,t,\beta}|_{X}$ is precisely the HN filtration of $i_*F$ with respect to $\sigma^{s,t,\beta}$, hence $\sigma^{s,t,\beta}|_{X} = \sigma$.
\end{proof}

This also strengthens the result of Theorem~\ref{pushforward of stable obj}.
\begin{corollary}\label{pushforward of stable obj g>1}
    We adopt the setting of Theorem~\ref{pushforward of stable obj} except that we only require $g(C) \geq 1$. Then $i_*F$ is $\sigma^{s,t,\beta}$-stable.
\end{corollary}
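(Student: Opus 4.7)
The plan is a direct consequence of the preceding corollary combined with Lemma~\ref{aligned compatibility}. Since $g(C) \geq 1$ gives $C \not\simeq \PP^1$, the preceding corollary applies and yields the identity $\sigma^{s,t,\beta}|_X = \sigma$ in $\Stab(X)$. By the very definition of the restriction map (Lemma~\ref{formal restriction} and Definition~\ref{compatibility}), this identity asserts that $\sigma$ and $\sigma^{s,t,\beta}$ are compatible along the pushforward $i_* \colon \dCat(X) \to \dCat(X \times C)$: namely, $i_*$ sends the HN filtration of any $G \in \dCat(X)$ with respect to $\sigma$ to the HN filtration of $i_*G$ with respect to $\sigma^{s,t,\beta}$.

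To upgrade from semistability to stability, I would invoke Example~\ref{extending is aligned}, which gives $Z^{s,t,\beta}(i_*G) = Z_\sigma(G)$ for every $G \in \dCat(X)$; hence $\sigma$ and $\sigma^{s,t,\beta}$ are also numerically compatible along $i_*$ in the sense of Definition~\ref{alignment}. Now Lemma~\ref{aligned compatibility} applies: its equivalence (iii) $\Leftrightarrow$ (i) tells us that, under numerical compatibility, preservation of HN filtrations is equivalent to preservation of stable objects. Applied to the $\sigma$-stable object $F$, this immediately yields that $i_*F$ is $\sigma^{s,t,\beta}$-stable, as desired.

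There is essentially no remaining obstacle once the preceding corollary is in hand. All the delicate work is already concentrated in Theorem~\ref{thm:main} (via Proposition~\ref{restriction}), which handles arbitrary positive-genus curves by reducing to the elliptic case (Theorem~\ref{pushforward of stable obj}) through the $\GL_{n+1}$-invariance argument and the open heart property. In that sense, the present corollary is a formal, one-step extraction from the machinery already developed.
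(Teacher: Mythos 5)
Your proof is correct and is exactly the derivation the paper intends (the paper states this corollary without proof, immediately after the corollary establishing $\sigma^{s,t,\beta}|_{X}=\sigma$): you use that identity to get compatibility along $i_*$, then the numerical compatibility of Example~\ref{extending is aligned} together with Lemma~\ref{aligned compatibility} to upgrade from preservation of semistability to preservation of stability.
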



\section{Applications: Construct stability conditions on some HK or CY manifolds}\label{sec:example}

In this section, we provide examples of constructions of stability conditions on some hyper-K\"{a}hler or strict Calabi--Yau manifolds, by using all the techniques we developed in previous sections.


\subsection{Stability conditions on certain hyper-K\"{a}hler manifolds of generalized Kummer type}
Let $C_i~(i=1,2)$ be two elliptic curves, and let $S$ be an abelian surface which is isogenous to $C_1 \times C_2$.
Consider the generalized Kummer variety $K_n(S)$ associated to the abelian surface $S$, i.e.,
\begin{equation*} 
    K_n(S)= \alpha_n^{-1}(0)  ,
\end{equation*}
where $\alpha_n \colon  S^{[n]} \to S$ is the Albanese map for the Hilbert scheme $S^{[n]}$.

We now prove that there exist stability conditions on $K_n(S)$.
\begin{proof}[Proof of Theorem~\ref{thm:Kummer}]
    We may assume $S = (C_1 \times C_2) /G$, where $G$ is a finite group acting freely on $C_1 \times C_2$.
    As in Example~\ref{BKR free}, we have an equivalence
    \begin{equation} \label{eq:BKR Kummer}
        \dCat(C_1 \times C_2)^G \simeq \dCat(S).
    \end{equation}
    Let $G^n \rtimes \fS_n \subset \Aut((C_1 \times C_2)^n)$ be the natural group acting on $(C_1 \times C_2)^n$.
    Then we have
    \begin{align*}
        \dCat \left((C_1 \times C_2)^n \right)^{G^n \rtimes \fS_n}
        &\simeq \left(\dCat \left((C_1 \times C_2)^n \right)^G \right)^{\fS_n} \quad \text{(Lemma \ref{successive invariants})} \\
        &\simeq \dCat( S^n)^{\fS_n}  \quad \text{(Lemma \ref{exterior tensor product} and 
        \eqref{eq:BKR Kummer})} \\
        &\simeq \dCat( S^{[n]})  \quad \text{(Theorem~\ref{BKR Haiman}).}
    \end{align*}
    By Proposition~\ref{special stability via curve product} and Theorem~\ref{pre-restriction to end}, there exist $(G^n \rtimes \fS_n)$-invariant stability conditions on $(C_1 \times C_2)^n$.
    And by Theorem~\ref{inducing stability conditions}, they induce stability conditions on $\dCat \left((C_1 \times C_2)^n \right)^{G^n \rtimes \fS_n} \simeq \dCat( S^{[n]})$.
    Note that $K_n(S)$ is a fiber of $\alpha_n \colon S^{[n]} \to S$ which is isotrivial since there is a fiber product diagram
    \begin{equation*}
        \begin{tikzcd} [column sep = small]
        	{K_n(S) \times S} & {S^{[n]}} \\
        	S & S
        	\arrow[from=1-1, to=1-2]
        	\arrow[from=1-1, to=2-1]
        	\arrow["{\alpha_n}", from=1-2, to=2-2]
        	\arrow["n"', from=2-1, to=2-2]
        \end{tikzcd}
    \end{equation*}
    where the isomorphism is given by
    \begin{align*}
        S^{[n]}  \times_S S &\xrightarrow{\sim} K_n(S) \times S , \\
        (Z,a) &\mapsto (Z-a,a).
    \end{align*}
    By Theorem~\ref{thm:main}, we obtain stability conditions on $K_n(S)$.
\end{proof}

\subsection{Stability conditions on certain Calabi--Yau manifolds of even dimension}
Let $C_i~(i=1,2)$ be two elliptic curves.
Consider the actions
\begin{equation}\label{eq:actions}
    \begin{aligned}
            -\id \colon C_1 \times C_2 &\to C_1 \times C_2 \\
            (x,y) &\mapsto (-x,-y) \\
            \text{and} \quad
            \tau \colon C_1 \times C_2 &\to C_1 \times C_2 \\
            (x,y) &\mapsto (-x+t_1,y+t_2)
    \end{aligned}
\end{equation}
where $t_i$ is any fixed 2-torsion point on $C_i$.
We denote the resolution of $(C_1 \times C_1) / \langle -\id \rangle$ by $S'$, which is well-known as a Kummer (K3) surface,
and the resolution of $(C_1 \times C_2) / \langle -\id ,\tau \rangle$ by $S$.
One easily checks that $\tau$ descends to $(C_1 \times C_1) / \langle -\id \rangle$ and lifts to a free involution on $S'$ which we will still denote by $\tau$. Therefore, $S = S' / \langle \tau \rangle$ is an Enriques surface.

Take the Hilbert scheme of $n$ points for $n \geq 2$. According to \cite[Theorem 3.1]{OS11}, the universal cover $f \colon \widetilde{S^{[n]}} \to {S^{[n]}}$ is $2:1$ and $\widetilde{S^{[n]}}$ is a strict Calabi--Yau manifold.

Now we prove that there exist stability conditions on this $\widetilde{S^{[n]}}$, thus provide examples of stability conditions on certain strict Calabi--Yau manifolds in arbitrary even dimension.

\begin{proof}[Proof of Theorem~\ref{thm:CY even}]
    By Lemma~\ref{successive invariants} and Theorem~\ref{BKR}, we have equivalences
    \begin{equation} \label{eq:BkR enriques}
        \dCat(C_1 \times C_2)^{\ZZ/2 \times \ZZ/2} \simeq \dCat(S')^{\langle \tau \rangle} \simeq \dCat(S)
    \end{equation}
    and by construction they are Fourier--Mukai transforms.
    Here $\ZZ/2 \times \ZZ/2 \simeq \langle -\id, \tau \rangle$ acts on $C_1 \times C_2$ as described in \eqref{eq:actions}.
    Let $(\ZZ/2 \times \ZZ/2)^n \rtimes \fS_n \subset \Aut \left( (C_1 \times C_2)^n \right)$ be the natural group acting on $(C_1 \times C_2)^n$.
    Then we have
    \begin{align*}
        \dCat \left((C_1 \times C_2)^n \right)^{(\ZZ/2 \times \ZZ/2)^n \rtimes \fS_n}
        &\simeq \left(\dCat \left((C_1 \times C_2)^n \right)^{(\ZZ/2 \times \ZZ/2)^n}\right)^{\fS_n} \quad \text{(Lemma \ref{successive invariants})} \\
        &\simeq \dCat( S^n)^{\fS_n}  \quad \text{(Lemma \ref{exterior tensor product} and \eqref{eq:BkR enriques})} \\
        &\simeq \dCat( S^{[n]})  \quad \text{(Theorem~\ref{BKR Haiman}).}
    \end{align*}
    By Proposition~\ref{special stability via curve product} and Theorem~\ref{pre-restriction to end}, there exist $((\ZZ/2 \times \ZZ/2)^n \rtimes \fS_n)$-invariant stability conditions on $(C_1 \times C_2)^n$; and by Theorem \ref{inducing stability conditions}, they induce stability conditions on $\dCat((C_1 \times C_2)^n)^{(\ZZ/2 \times \ZZ/2)^n \rtimes \fS_n} \simeq \dCat(S^{[n]})$.

    Now we want to lift these stability conditions to $\widetilde{S^{[n]}}$. 
    By Proposition \ref{inducing stability conditions abelian}, it is equivalent to check whether these stability conditions are invariant by the dual action of the covering involution of
    $f \colon \widetilde{S^{[n]}}\to S^{[n]}$,
    which is just tensoring with $\omega_{S^{[n]}}$, as in Example \ref{serre invariance}.
    We claim it is indeed true. As in Example~\ref{serre invariance} (or by Corollary~\ref{co-invariance description abelian} in general) we know $f^*(\omega_{S^{[n]}})\simeq \cO_{\widetilde{S^{[n]}}}$.
    Since $f^*$ is injective on cohomology, we know $\ch(\omega_{S^{[n]}})=1 \in \coho^*(S^{[n]},\QQ)$. Note that the equivalence $\dCat(S^{[n]}) \simeq \dCat \left((C_1 \times C_2)^n \right)^{(\ZZ/2 \times \ZZ/2)^n \rtimes \fS_n} \eqdef \dCat(\cX)^{\cG}$ is given by a Fourier--Mukai transform as in \eqref{eq:diagram BKR}. We borrow the notation there and write the morphisms as
    \begin{equation}\label{eq:diagram equivalence}
        \begin{tikzcd}
                             & \cZ \arrow[ld, "p"'] \arrow[rd, "q"]                      &                               \\
        {S^{[n]}} \arrow[rd] &                                                           & \cX \arrow[ld] \\
                             & \cX/{\cG} .&                              
        \end{tikzcd}
    \end{equation}
    Passing to cohomology, note that the higher degree terms of $\td(q)$ will be killed by $q_*$ since
    \begin{equation*}
        1=\ch(\cO_\cX)=\ch(q_* \cO_\cZ) = q_* (\ch(\cO_\cZ) . \td(q))=q_* \td(q),
    \end{equation*}
    hence
    \begin{equation*}
        \ch(q_*p^* \omega_{S^{[n]}})=q_*\left(\ch(p^* \omega_{S^{[n]}}).\td(q)\right)= q_*(\td(q))=1.
    \end{equation*}
    Here we use the same notation for functors on cohomology.
    This checks the central charges are invariant under the autoequivalence induced by tensoring with $\omega_{S^{[n]}}$. And by Theorem~\ref{pre-restriction to end}, we know that the stability conditions are indeed invariant under tensoring with $\omega_{S^{[n]}}$, as claimed.
    Therefore, we can lift these stability conditions to $\widetilde{S^{[n]}}$.
\end{proof}

\subsection{Stability conditions on certain Calabi--Yau manifolds of odd dimension}

Let $S$ be a bielliptic surface, i.e. $S \simeq (C_1 \times C_2)/G$ where $C_i~(i=1,2)$ are elliptic curves and $G$ is a finite group acting by translation on $C_1$ and by automorphism (of elliptic curves) on $C_2$. A classification of such surfaces has been worked out by Bagnera--de Franchis, see \cite[List VI.20]{Bea96}.

Now take the Hilbert scheme of $n$ points for $n \geq 2$. We know that the fundamental group of  $S^{[n]}$ is a finitely generated abelian group of rank two, so any of its (\'{e}tale) coverings is quotient by a free action of an abelian group.
Among these coverings, we can find a finite covering $f \colon \widetilde{S^{[n]}} \to S^{[n]}$ such that $\widetilde{S^{[n]}}\simeq X \times C_2$ and $X$ is a strict Calabi--Yau manifold, thanks to \cite[Theorem~3.5]{OS11}.

We now prove that there exist stability conditions on this $X$, thus provide examples of stability conditions on certain strict Calabi--Yau manifolds in arbitrary odd dimension.
\begin{proof}[Proof of Theorem~\ref{thm:CY odd}]
    Note that the action of $G$ on $C_1 \times C_2$ is always free, so as in Example~\ref{BKR free} we have a Fourier--Mukai equivalence
    \begin{equation}\label{eq:BKR elliptic}
        \dCat(C_1 \times C_2)^G \simeq \dCat(S).
    \end{equation}
    Let $G^n \rtimes \fS_n \subset \Aut ( (C_1 \times C_2)^n )$ be the natural group acting on $(C_1 \times C_2)^n$.
    Then we have
    \begin{align*}
        \dCat \left((C_1 \times C_2)^n \right)^{G^n \rtimes \fS_n}
        &\simeq \left(\dCat \left((C_1 \times C_2)^n \right)^{G^n} \right)^{\fS_n} \quad \text{(Lemma \ref{successive invariants})} \\
        &\simeq \dCat( S^n)^{\fS_n}  \quad \text{(Lemma \ref{exterior tensor product} and 
        \eqref{eq:BKR elliptic})} \\
        &\simeq \dCat( S^{[n]})  \quad \text{(Theorem~\ref{BKR Haiman}).}
    \end{align*}
    By Proposition~\ref{special stability via curve product} and Theorem~\ref{pre-restriction to end}, there exist $(G^n \rtimes \fS_n)$-invariant stability conditions on $(C_1 \times C_2)^n$; and by Theorem \ref{inducing stability conditions}, they induce stability conditions on $\dCat((C_1 \times C_2)^n)^{G \rtimes \fS_n} \simeq \dCat(S^{[n]})$

    Now we want to lift these stability conditions to $\widetilde{S^{[n]}}$. 
    By Proposition \ref{inducing stability conditions abelian}, it is equivalent to check whether these stability conditions are invariant by the dual action of the deck transformations of
    $f \colon \widetilde{S^{[n]}}\to S^{[n]}$,
    which by Corollary~\ref{co-invariance description abelian} is just tensoring with certain line bundles $L$ satisfying $f^*L \simeq \cO_{\widetilde{S^{[n]}}}$, hence $\ch(L)=1 \in \coho^*(S^{[n]},\QQ)$. 
    Now we are precisely in the same situation as in proof of Theorem~\ref{thm:CY even}.
    The equivalence $\dCat(S^{[n]}) \simeq \dCat \left((C_1 \times C_2)^n \right)^{G \rtimes \fS_n} \eqdef \dCat(\cX)^{\cG}$ is given by a Fourier--Mukai transform as in \eqref{eq:diagram equivalence},
    and the same computation there suggests $\ch(q_* p^* L)=1$.
    Therefore, the central charges are invariant under the autoequivalence induced by tensoring with $L$, and by Theorem~\ref{pre-restriction to end} we know that the stability conditions themselves are invariant as well.
    So we obtain stability conditions on $\widetilde{S^{[n]}}\simeq X \times C_2$.

    Finally, by Theorem~\ref{thm:main} again, we obtain stability conditions on $X$.
\end{proof}

\subsection{Stability conditions on Cynk--Hulek Calabi--Yau manifolds arising from $\ZZ/2$-actions}\label{section:CH}


Let $C_1, \dots , C_n$ be elliptic curves, and 
\begin{equation*}
    G_n \defeq \{ (a_1, \dots, a_n) \mid \sum a_i =0 \} \subset (\ZZ/2)^n
\end{equation*}
act on $C_1 \times \dots \times C_n$ via factor-wise involution.
A Cynk--Hulek Calabi--Yau manifold arising from $\ZZ/2$-actions is constructed as a crepant resolution of $(C_1 \times \dots \times C_n) /{G_n}$.
Explicitly, for any $n \geq 2$ we define
\begin{equation*}
    X_n \defeq \Hilb^{G_n^+}_\can(X_{n-1} \times C_n).
\end{equation*}
where the $\ZZ/2 \simeq G_n^+$-action on $X_{n-1} \times C_n$ is described in the following.
The natural action $(\id_{X_{n-2}} , -\id)$ on $X_{n-2} \times C_{n-1}$ gives rise to an action on $X_{n-1} = \Hilb_\can^{G_{n-1}^+}(X_{n-2} \times C_{n-1})$ which we shall denote by $(\id_{X_{n-2}} , -\id)^{[2]}$.
Then the $\ZZ/2 \simeq G_n^+$-action on $X_{n-1} \times C_n$ is generated by $\left(  (\id_{X_{n-2}} , -\id)^{[2]} , -\id \right)$.
To distinguish between these two actions, we will call the action generated by $(\id_{X_{n-2}} , -\id)^{[2]}$ the \emph{odd action} and denote it by $G_{n-1}^-$, and the action generated by $\left(  (\id_{X_{n-2}} , -\id)^{[2]} , -\id \right)$ the \emph{even action} and denote it by $G_n^+$.
We denote the quotient map by
\begin{equation*}
    \tau_n \colon X_n = \Hilb^{G_n^+}_\can(X_{n-1} \times C_n) \to (X_{n-1} \times C_n) / {G_n^+} \eqdef \overline{X}_n .
\end{equation*}
We adopt the convention
\begin{equation*}
    X_0= \emptyset, \quad X_1 = C_1, \quad \text{and} \quad  G_1^+=0
\end{equation*}
for $n=1$.

\begin{proposition}\label{equivalence Z/2}
    There is a Fourier--Mukai equivalence
    \begin{equation*}
        \dCat(C_1 \times \dots \times C_n) ^{G_n} \simeq \dCat(X_n)
    \end{equation*}
    for any $n\geq 1$.
\end{proposition}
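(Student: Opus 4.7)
The proof is by induction on $n$. The base case $n=1$ is trivial, since $G_1 = \{0\}$ and $X_1 = C_1$, so the identity functor is a (trivial) Fourier--Mukai equivalence. For the inductive step, assuming the equivalence holds at level $n-1$, identify $H \subset G_n$ as the subgroup of tuples $(a_1,\dots,a_{n-1},0)$ with $\sum_{i<n} a_i = 0$; this is isomorphic to $G_{n-1}$ and has index $2$ in $G_n$, with coset representative $\sigma = (0,\dots,0,1,1)$. Applying Lemma~\ref{successive invariants} to the short exact sequence $1 \to H \to G_n \to G_n/H \to 1$ gives
\[
\dCat(C_1 \times \dots \times C_n)^{G_n} \simeq \left(\dCat(C_1 \times \dots \times C_n)^H\right)^{G_n/H}.
\]
Since $H$ acts trivially on the last factor $C_n$, Lemma~\ref{exterior tensor product} combined with the inductive hypothesis produces a Fourier--Mukai equivalence
\[
\dCat(C_1 \times \dots \times C_n)^H \simeq \dCat(X_{n-1} \times C_n).
\]

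The next step is to identify the residual $G_n/H \simeq \ZZ/2$-action on $\dCat(X_{n-1} \times C_n)$ with the $G_n^+$-action from the definition. Geometrically, $\sigma$ acts on $C_1 \times \dots \times C_n$ by $(\id,\dots,\id,-\id_{C_{n-1}},-\id_{C_n})$, so the $-\id_{C_n}$ factor transports verbatim to the $C_n$-component of $X_{n-1} \times C_n$, while the action $(\id,\dots,\id,-\id_{C_{n-1}})$ on $C_1 \times \dots \times C_{n-1}$ should correspond under the inductive Fourier--Mukai equivalence to the odd action $G_{n-1}^- = (\id_{X_{n-2}}, -\id)^{[2]}$ on $X_{n-1}$. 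Granting this, the residual action equals $(G_{n-1}^-, -\id_{C_n})$, which by definition is $G_n^+$.

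With this identification in hand, apply Theorem~\ref{BKR} to the $G_n^+$-action on the smooth quasi-projective Calabi--Yau variety $X_{n-1} \times C_n$: triviality of $\omega_{X_{n-1} \times C_n}$ (from the inductive CY property) provides the required local linearization, and the dimension bound $\dim(X_n \times_{\overline{X}_n} X_n) \leq n+1$ is exactly what the Cynk--Hulek construction of $X_n = \Hilb^{G_n^+}_{\can}(X_{n-1} \times C_n)$ furnishes as a crepant resolution of $\overline{X}_n$. The theorem then yields a Fourier--Mukai equivalence
\[
\dCat(X_{n-1} \times C_n)^{G_n^+} \simeq \dCat(X_n),
\]
which composed with the chain above completes the induction.

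The main obstacle is the identification of the residual $\ZZ/2$-action in the middle step: the equivalence from the inductive hypothesis a priori transports the $(\ZZ/2)^{n-1}/G_{n-1}$-action to \emph{some} autoequivalence of $\dCat(X_{n-1})$, and one must verify that this coincides with the geometric $G_{n-1}^-$-action. To address this, I would strengthen the inductive hypothesis to assert that the Fourier--Mukai equivalence $\dCat(C_1 \times \dots \times C_n)^{G_n} \simeq \dCat(X_n)$ is equivariant for the residual $(\ZZ/2)^n/G_n$-action on the left and the $G_n^-$-action on the right (where $G_n^-$ is the natural lift to $X_n$ of the $(G_{n-1}^-, \id_{C_n})$-action on $X_{n-1} \times C_n$ commuting with $G_n^+$). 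This equivariance then propagates along both Lemma~\ref{exterior tensor product} and the BKR step via naturality of the Fourier--Mukai kernels involved.
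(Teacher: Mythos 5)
Your overall inductive architecture matches the paper's: decompose $G_n$ via the index-two subgroup $H \simeq G_{n-1}$ acting on the first $n-1$ factors, apply Lemma~\ref{successive invariants} and Lemma~\ref{exterior tensor product} together with the inductive hypothesis to reduce to $\dCat(X_{n-1}\times C_n)^{G_n^+}$, and then invoke Theorem~\ref{BKR}. Your concern about identifying the residual $\ZZ/2$-action with the geometric $G_n^+$-action is legitimate, and your proposed fix (carrying an equivariance statement through the induction) is compatible with how the paper sets things up, since the odd and even actions are defined directly on the Hilbert schemes and the BKR kernel is preserved by automorphisms commuting with the group.

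However, there is a genuine gap at the BKR step. You write that the dimension bound $\dim\left(X_n\times_{\overline{X}_n}X_n\right)\le n+1$ is ``exactly what the Cynk--Hulek construction furnishes as a crepant resolution.'' This is backwards: $X_n$ is \emph{defined} as $\Hilb^{G_n^+}_{\can}(X_{n-1}\times C_n)$, and both the crepancy of $\tau_n$ and the derived equivalence are \emph{conclusions} of Theorem~\ref{BKR}, available only after one verifies (a) that $\omega_{X_{n-1}\times C_n}$ is locally trivial as a $G_n^+$-sheaf (triviality of the canonical bundle alone does not give this; one needs the $G_n^+$-action on its fibers at fixed points to be trivial) and (b) the fiber-product dimension bound. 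Neither is automatic from the construction. The paper verifies them by strengthening the induction hypothesis to record that the fixed locus $X_n^{G_n^-}$ of the odd action is a \emph{divisor} with tangent action $\diag(1,\dots,1,-1)$; this yields that the fixed locus of $G_n^+$ on $X_{n-1}\times C_n$ has codimension $2$ with tangent action $\diag(1,\dots,1,-1,-1)\in\SL(n,\CC)$, which gives (a), and a further analysis of the $G_n^+$-clusters supported at fixed points shows the nontrivial fibers of $\tau_n$ are $1$-dimensional, which gives (b). Propagating the divisoriality of $X_{n+1}^{G_{n+1}^-}$ to the next step then requires its own case analysis (free orbits versus invariant double points). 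Without this second, geometric half of the induction your argument does not close; you should add it to your strengthened induction hypothesis alongside the equivariance statement.
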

\begin{proof}[Proof of Proposition~\ref{equivalence Z/2}]
    We claim that
    \begin{equation*}
        \text{$\tau_n$ is a crepant resolution} \quad \text{and} \quad \dCat(X_{n-1} \times C_n)^{G_n^+} \simeq \dCat(X_n) .
    \end{equation*}
    Once we know this, we have 
    \begin{equation}\label{eq:successive equivalences 2}
        \dCat(X_{k-1} \times C_k \times C_{k+1} \times \dots \times C_n)^{G_k^+} \simeq \dCat(X_k \times C_{k+1} \times \dots \times C_n)
    \end{equation}
    for any $1 \leq k \leq n$.
    Note by Lemma~\ref{successive invariants} we have
    \begin{equation}\label{eq:successive invariants 2}
            \dCat(C_1 \times \dots \times C_n) ^{G_n} \simeq \left( \dCat(C_1 \times \dots \times C_n) ^{G_{n-1}} \right)^{G_n^+},
    \end{equation}
    so we can apply \eqref{eq:successive invariants 2} and \eqref{eq:successive equivalences 2} recursively to conclude $\dCat(C_1 \times \dots \times C_n)^{G_n} \simeq \dCat(X_n)$.

    Now we prove the claim by induction. 

    \renewcommand{\theIH}{(IH)}  
    \begin{IH}\label{IH2} The following conditions hold for all $n \geq 1$.
        \begin{enumerate}
            \item The morphism $\tau_n\colon X_{n} \to \overline X_{n}$ is a crepant resolution and
            $\dCat(X_{n-1} \times C_n)^{G_n^+} \simeq \dCat(X_n)$. 
            \item The fixed locus $X_n^{G_n^-}$ is a divisor. 
        \end{enumerate}       
    \end{IH}
    We note that the second hypothesis implies that for any $Z_n$ in $X_n^{G_n^-}$, the induced $ (\id_{X_{n-1}} , -\id )^{[2]}$-action on the tangent space is 
            \begin{equation*}
                \diag ( \overbrace{1,\dots,1}^{n-1}, -1).
            \end{equation*}
            
    It is clear for $n=1$.
    Assume we already know \ref{IH2} is true for $n=k$.
    Since $X_k^{G_k^-}$ is a divisor.
    the fixed locus $(X_{k} \times C_{k+1})^{G_{k+1}^+}$ by the even action $G_{k+1}^+$ is of codimension $2$,
    and the induced action on the tangent space $T_z$ of any fixed point $z$ is
    \begin{equation}\label{eq:action on tangent space}
        \diag(1, \dots ,1,-1,-1) \in \SL({k+1}, \CC)
    \end{equation}
    which implies that $\omega_{X_{k} \times C_{k+1}}$ is locally trivial as a $G_{k+1}^+$-sheaf.
    
    Now consider the nontrivial fiber of $\tau_{k+1}$.
    For any fixed point $z$ in $(X_{k} \times C_{k+1})^{G_{k+1}^+}$, the fiber $\tau_{k+1}^{-1}(\bar z)$ consists of the $G_{k+1}^+$-clusters supported on $z$, where $\bar z$ denotes the image of $z$ in the quotient $(X_k \times C_{k+1})/G_{k+1}^+$.
    Note that a double point $Z_{k+1}$ supported on $z$ corresponds to a direction $[w_1: \dots : w_{k+1}] \in \PP^{k}$ in the tangent space $T_z$. By \eqref{eq:action on tangent space}, the $G_{k+1}^+$-invariance forces
    \begin{equation*}
        [w_1 : \dots : w_{k+1}] = [w_1 : \dots : w_{k-1} : - w_{k} : - w_{k+1}].
    \end{equation*}
    Therefore, either $w_{k} = w_{k+1} =0$ or $w_1= \dots =w_{k-1}=0$.
    For the first case, the action on $Z_{k+1}$ is trivial, hence $\cO_{Z_{k+1}}$ cannot be isomorphic to the regular representation.
    So it has to be the second case, where the fiber $\tau_{k+1}^{-1}(\bar z)$ 
    \begin{equation}\label{eq:fiber}
        [0: \dots :0: w_k :w_{k+1}]
    \end{equation}
    is $1$-dimensional,
    hence $\dim \left( X_{k+1} \times_{\overline{X}_{k+1}} X_{k+1} \right) \leq k-1 + 2 \times 1 \leq k+2$.
    We have checked all the conditions in Theorem~\ref{BKR}, and by this theorem we know that the first part of \ref{IH2} is true for $n=k+1$.    

    Now consider $Z_{k+1}$ that is $G_{k+1}^-$-invariant.
    By definition, the subscheme $Z_{k+1} \subset X_{k} \times C_{k+1}$ is invariant under both $(\id_{X_{k}}, -\id)^{[2]}$ and $\left( {(\id_{X_{k-1}}, -\id )}^{[2]} ,-\id \right)^{[2]}$.
    So $Z_{k+1}$ is either a free orbit or a double point supported on $X_{k}^{G_k^-} \times C_{k+1}^{\langle -\id \rangle}$.
    \begin{itemize}
        \item Suppose that the subscheme $Z_{k+1}$ is a free orbit.
        Then it must be contained in $X_{k} \times C_{k+1}^{ \langle -\id \rangle}$ or $ X_{k}^{ G_k^-} \times C_{k+1}$, otherwise the orbit will contain more than $2$ points.
        It is easy to see that they form a (non-closed) subset of $X_{k+1}$ of codimension $1$.
        \item Suppose that the subscheme $Z_{k+1}$ is a double point (set-theoretically) supported on some point $z$ on $X_{k}^{G_k^-} \times C_{k+1}^{\langle -\id \rangle}$.
        We may assume that the subscheme $Z_{k+1}$ corresponds to the direction \eqref{eq:fiber} as the discussion before, and by the extra $G_{k+1}^-$-invariance, we have $[w_k : w_{k+1}] = [w_k : - w_{k+1}]$. So it has to be 
        \begin{equation*}
            [0: \dots : 0 : 1 :0] \quad \text{or} \quad [0: \dots : 0 : 0 :1],
        \end{equation*}
        which are precisely the limits of free orbits discussed in the previous case.
    \end{itemize}
    Therefore, we have checked the second part of \ref{IH2} for $n=k+1$.
\end{proof}

Now we prove that there exist stability conditions on $X_n$.
\begin{proof}[Proof of Theorem~\ref{thm:CH}, the $\ZZ/2$ case]
    According to Proposition~\ref{special stability via curve product} and Theorem~\ref{pre-restriction to end}, we know there exist $G_n$-invariant stability conditions on $\dCat(C_1 \times \dots \times C_n)$. 
    By Theorem~\ref{inducing stability conditions}, they induce stability conditions on $\dCat(C_1 \times \dots \times C_n)^{G_n}$.
\end{proof}

\subsection{Stability conditions on Cynk--Hulek Calabi--Yau manifolds arising from $\ZZ/3$-actions} \label{section:CH Z/3}
Let $C_1,\dots,C_n$ be elliptic curves with an order $3$ automorphism $\zeta$, and
\begin{equation*}
    G_n \defeq \{ (a_1 , \dots, a_n) \mid \sum a_i=0 \} \subset (\ZZ/3)^n
\end{equation*}
act on $C_1 \times \dots \times C_n$ via factor-wise order $3$ automorphism.
A Cynk--Hulek Calabi--Yau manifold arising from $\ZZ/3$-actions is constructed as a crepant resolution of $(C_1 \times \dots \times C_n)/G_n$. Explicitly, for any $n\geq 2$ we define
\begin{equation*}
    X_n \defeq \Hilb_\can^{G^+_n}(X_{n-1} \times C_n)
\end{equation*}
where the $\ZZ/3 \simeq G_n^+$-action on $X_{n-1} \times C_n$ is described inductively in the following.
The natural action $(\id_{X_{n-2}}, \zeta)$ on $X_{n-2} \times C_{n-1}$ gives rise to an action on $X_{n-1}= \Hilb_\can^{G_{n-1}^+}(X_{n-2} \times C_{n-1})$ which we shall denote by $(\id_{X_{n-2}}, \zeta)^{[3]}$. 
Then the $\ZZ/3 \simeq G_n^+$-action on $X_{n-1} \times C_n$ is generated by $\left( (\id_{X_{n-2}}, \zeta)^{[3]},\zeta^2 \right)$.
To distinguish between these two actions, we will call the action generated by $(\id_{X_{n-2}}, \zeta)^{[3]}$ the \emph{odd action} and denote it by $G_{n-1}^-$, and the action generated by $\left( (\id_{X_{n-2}}, \zeta)^{[3]},\zeta^2 \right)$ the \emph{even action} and denote it by $G_n^+$.
We denote the quotient map by
\begin{equation*}
    \tau_n \colon X_n = \Hilb^{G_n^+}_\can(X_{n-1} \times C_n) \to (X_{n-1} \times C_n) / {G_n^+} \eqdef \overline{X}_n .
\end{equation*}
We adopt the convention
\begin{equation*}
    X_0= \emptyset, \quad X_1 = C_1, \quad \text{and} \quad  G_1^+=0
\end{equation*}
for $n=1$.

\begin{proposition}\label{equivalence Z/3}
    There is a Fourier--Mukai equivalence
    \begin{equation*}
        \dCat(C_1 \times \dots \times C_n) ^{G_n} \simeq \dCat(X_n)
    \end{equation*}
    for any $n\geq 1$.
\end{proposition}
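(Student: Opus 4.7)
The plan is to mirror the inductive strategy of Proposition~\ref{equivalence Z/2}, replacing \ref{IH2} with its $\ZZ/3$ analogue. The induction hypothesis at level $n$ reads: (i) $\tau_n$ is a crepant resolution and the Fourier--Mukai transform of Theorem~\ref{BKR} gives an equivalence $\dCat(X_{n-1}\times C_n)^{G_n^+}\simeq \dCat(X_n)$; (ii) the fixed locus $X_n^{G_n^-}$ is a divisor and the induced action of a generator of $G_n^-$ on the tangent space at any fixed point is conjugate to $\diag(1,\dots,1,\zeta)$. Granting (i) and (ii), the same recursive application of Lemma~\ref{successive invariants} and Lemma~\ref{exterior tensor product} as in the $\ZZ/2$ case will produce the equivalence $\dCat(C_1\times\dots\times C_n)^{G_n}\simeq \dCat(X_n)$.

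The base case $n=1$ is immediate: $X_1=C_1$, $G_1^+$ is trivial, and $G_1^-=\langle\zeta\rangle$ has three fixed points, each with tangent weight $\zeta$. For the inductive step, assume the hypothesis at level $k$ and consider a $G_{k+1}^+$-fixed point $z=(z_1,z_2)\in X_k \times C_{k+1}$; necessarily $z_1\in X_k^{G_k^-}$ and $z_2\in C_{k+1}^{\langle\zeta\rangle}$, and by (ii) the induced tangent action is
\begin{equation*}
    \diag\bigl(\underbrace{1,\dots,1}_{k-1},\,\zeta,\,\zeta^2\bigr)\in\SL(k+1,\CC),
\end{equation*}
so $\omega_{X_k\times C_{k+1}}$ is locally trivial as a $G_{k+1}^+$-sheaf. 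Formally-locally at $z$ the quotient is $\CC^{k-1}$ times the $A_2$ surface singularity $\CC^2/(\ZZ/3)$, whose minimal crepant resolution has exceptional fiber a chain of two $\PP^1$'s, so $\tau_{k+1}^{-1}(\bar z)$ has dimension $1$. Combined with the codimension-$2$ fixed locus this gives $\dim(X_{k+1}\times_{\overline{X}_{k+1}}X_{k+1})\leq (k-1)+2 = k+1 \leq k+2$, which satisfies the hypothesis of Theorem~\ref{BKR} and proves (i) at level $k+1$.

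For (ii), the $G_{k+1}^-$-action on $X_{k+1}$ is the lift of $(\id_{X_k},\zeta)$ on $X_k\times C_{k+1}$, and a $G_{k+1}^+$-cluster is $G_{k+1}^-$-invariant iff it is invariant under $\langle G_{k+1}^+,G_{k+1}^-\rangle\cong(\ZZ/3)^2$. Following the case analysis of the $\ZZ/2$ proof, one separates free $(\ZZ/3)^2$-orbits (which must lie in $X_k\times C_{k+1}^{\langle\zeta\rangle}$ and sweep out a divisor) from clusters supported at a common $G_{k+1}^+$-fixed point (which correspond to $G_{k+1}^-$-invariant points on the exceptional chain of two $\PP^1$'s, arising as limits of the free-orbit family). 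Identifying the tangent space at a free-orbit point $[Z]$ with $T_p(X_k\times C_{k+1})$ via a chosen representative and composing the pushforward $(dg_-)_p$ with the $G_{k+1}^+$-identification of the three tangent spaces in the orbit yields the eigenvalues $\diag(1,\dots,1,\zeta)$ on $T_{[Z]}X_{k+1}$, completing the induction.

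The main obstacle, compared to the $\ZZ/2$ case, is in step (ii): length-$3$ $\ZZ/3$-clusters are combinatorially richer than length-$2$ $\ZZ/2$-clusters, and the exceptional fiber of $\tau_{k+1}$ is now a chain of two $\PP^1$'s rather than a single one. Carefully isolating the $G_{k+1}^-$-invariant loci on both components of this chain, and checking that together with the free-orbit stratum they assemble into a divisor with the required tangent representation, demands more delicate $(\ZZ/3)^2$-weight bookkeeping than the $\ZZ/2$ proof.
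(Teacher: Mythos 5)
There is a genuine gap, and it lies exactly where you locate the ``main obstacle'': your induction hypothesis (ii) is too weak, and in fact false for $n\geq 2$. The paper's induction hypothesis does not assert that $X_n^{G_n^-}$ is a divisor with tangent weight $\diag(1,\dots,1,\zeta)$; it asserts a decomposition $X_n^{G_n^-}=B_1(X_n)\sqcup B_2(X_n)$ where $B_1$ is a divisor with weights $\diag(1,\dots,1,\zeta)$ but $B_2$ is a \emph{codimension-$2$} stratum with weights $\diag(1,\dots,1,\zeta^2,\zeta^2)$. The second stratum is unavoidable: already in your own local model, the exceptional fiber over a point of $B_1(X_k)\times C_{k+1}^{\langle\zeta\rangle}$ is a chain of two $\PP^1$'s, and the $G_{k+1}^-$-invariant point at which the two components meet (the planar cluster $\spec\CC[x_1,x_2]/(x_1,x_2)^2$) has tangent weights $\diag(1,\dots,1,\zeta^2,\zeta^2)$, not $\diag(1,\dots,1,\zeta)$. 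So $B_2(X_{k+1})\neq\emptyset$ for all $k\geq 1$, and your hypothesis (ii) fails at the first inductive step.

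This is not a cosmetic omission, because the $B_2$ stratum propagates and controls the hardest estimate. At level $k+1$ the $G_{k+1}^+$-fixed locus contains $B_2(X_k)\times C_{k+1}^{\langle\zeta\rangle}$, of codimension $3$, where the tangent weights are $\diag(1,\dots,1,\zeta^2,\zeta^2,\zeta^2)$; the $G_{k+1}^+$-clusters over such a point form a full $\PP^2$ of ``linear'' clusters $\spec\CC[x]/x^3$. Your dimension count $\dim(X_{k+1}\times_{\overline X_{k+1}}X_{k+1})\leq (k-1)+2=k+1$ only accounts for the codimension-$2$ locus with $1$-dimensional fibers; the correct computation is $\max\{k+1,\ (k-1)+2\cdot 1,\ (k-2)+2\cdot 2\}=k+2$, which meets the bound of Theorem~\ref{BKR} with equality rather than with room to spare. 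Finally, closing the induction requires classifying the $G_{k+1}^-$-invariant clusters over both strata (the paper's Corollary~\ref{Z/3-cluster} and the case analysis of $P_z$, $Q_z$, $R_z$), and showing that the invariant loci reassemble into a new divisor $B_1(X_{k+1})$ \emph{together with} a new codimension-$2$ piece $B_2(X_{k+1})$ coming from the planar clusters $P_z^0$ and the linear clusters $R_z^\infty$. Your proof cannot be completed as written without strengthening the induction hypothesis to carry this second stratum and its weights.
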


Before the proof, we study $\ZZ/3$-clusters which are not free orbits. By abuse of notation, we use $\zeta$ to denote a cubic root of $1$ as well.
\begin{lemma}

    Let $o$ be a fixed point of some $\ZZ/3$-action with the induced action on the cotangent space being
    \begin{equation*}
        T^\vee_{o,1} \oplus T^\vee_{o,\zeta} \oplus T^\vee_{o,\zeta^2}.
    \end{equation*}
    Suppose $Z$ is a $\ZZ/3$-cluster supported on $o$.
    Then $Z$ is in one of the following forms:
    \begin{itemize}
        \item \emph{(Linear)} $Z= \spec \CC[x]/x^3$, where $x \in \left(T^\vee_{o,\zeta} \cup T^\vee_{o,\zeta^2} \right) \backslash \{ 0 \}$;
        \item \emph{(Planar)} $Z= \spec \CC[x_1,x_2]/(x_1,x_2)^2$, where $x_i \in T^\vee_{o,\zeta^i} \backslash \{0\}$;
        \item \emph{(Curvilinear)} $Z =\spec \CC[x_1,x_2] /(x_1^3, x_1^2 - u x_2) $ or $\spec \CC[x_1,x_2] /(x_2^3, x_2^2 - v x_1)$, where $u,v \in \CC^\times$ and $x_i \in T^\vee_{o,\zeta^i} \backslash \{0\}$.
    \end{itemize}
\end{lemma}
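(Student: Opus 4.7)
The plan is to exploit that $\cO_Z$, as a $\ZZ/3$-module, is the regular representation $\CC \oplus \CC_\zeta \oplus \CC_{\zeta^2}$. Since constants account for the $1$-isotypic piece, the maximal ideal $\frm_Z \subset \cO_Z$ is two-dimensional with equivariant splitting $\frm_Z = \CC_\zeta \oplus \CC_{\zeta^2}$. In particular the embedding dimension $e := \dim_\CC \frm_Z/\frm_Z^2$ equals $1$ or $2$, and I would split the argument on this invariant.

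In the case $e = 2$, dimension counting forces $\frm_Z^2 = 0$. Using the Reynolds averaging operator (available because $|\ZZ/3|$ is invertible in $\CC$), I would lift the two eigenvectors spanning $\frm_Z/\frm_Z^2$ to pure-character elements $\tilde x_1 \in \frm_o$ of character $\zeta$ and $\tilde x_2 \in \frm_o$ of character $\zeta^2$. The induced surjection $\CC[\tilde x_1, \tilde x_2] \twoheadrightarrow \cO_Z$ factors through $\CC[\tilde x_1, \tilde x_2]/(\tilde x_1, \tilde x_2)^2$ (since $\frm_Z^2 = 0$); both rings have length $3$, so this yields the Planar form.

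In the case $e = 1$, the ring $\cO_Z$ is cyclic, so $\cO_Z \simeq \CC[t]/t^3$ with $t$ generating $\frm_Z$; averaging lets us take $t$ of pure character $\chi \in \{\zeta, \zeta^2\}$. By symmetry assume $\chi = \zeta$, so $1, t, t^2$ realize characters $1, \zeta, \zeta^2$. Lift $t$ to $\tilde x_1 \in \frm_o$ of character $\zeta$ and pick any eigenvector $\tilde x_2 \in \frm_o$ of character $\zeta^2$. Its image in $\cO_Z$ must land in the $\zeta^2$-isotypic piece $\CC \cdot t^2$, giving a relation $\tilde x_2 \equiv c\,\tilde x_1^2 \pmod{I_Z}$ for some $c \in \CC$. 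If $c = 0$ for every such choice of $\tilde x_2$, then every weight-$\zeta^2$ eigenvector of $\frm_o$ lies in $I_Z$; combined with the weight-$1$ eigenvectors (which lie in $I_Z$ since $\frm_Z$ has no weight-$1$ part) and the weight-$\zeta$ directions absorbed into $\tilde x_1$, one obtains $\cO_Z \simeq \CC[\tilde x_1]/\tilde x_1^3$, the Linear case with $x = \tilde x_1 \in T^\vee_{o,\zeta}$. Otherwise one chooses $\tilde x_2$ with $c \ne 0$; setting $u = 1/c$ gives the relations $\tilde x_1^3 \in I_Z$ and $\tilde x_1^2 - u\,\tilde x_2 \in I_Z$, producing the Curvilinear form $\CC[\tilde x_1, \tilde x_2]/(\tilde x_1^3, \tilde x_1^2 - u\,\tilde x_2)$. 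The symmetric case $\chi = \zeta^2$ produces the other Curvilinear form.

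The step I expect to spend most care on is verifying that the remaining eigenvectors of $\frm_o$ (coming from higher-dimensional weight spaces when $\dim X > 2$) contribute nothing new to $I_Z$ beyond the listed relations. This should be automatic: any eigenvector of character $\mu$ maps into the one-dimensional $\mu$-isotypic component of $\cO_Z$, hence is determined up to a scalar by the images of $\tilde x_1$ (and, where relevant, $\tilde x_2$); subtracting the appropriate multiple lands it in $I_Z$, so every such extra generator can be absorbed without altering the isomorphism class of $\cO_Z$, leaving exactly the three forms claimed.
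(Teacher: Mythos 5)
Your proof is correct, and it runs on the same engine as the paper's: since $\cO_Z$ is the regular representation, $\frm_Z \simeq \CC_\zeta \oplus \CC_{\zeta^2}$, and any product of eigenfunctions whose weights multiply to $1$ must vanish in $\frm_Z$. What differs is the case decomposition. The paper's primary dichotomy is whether some weight-$\zeta^2$ linear form survives in $\cO_Z$: if not, the cluster is linear; if so, it writes down all the forced relations $\bar x^3 = \bar y^3 = \bar x \bar y = 0$, $\bar x^2 = u\bar y$, $\bar y^2 = v\bar x$ at once and reads off planar ($u=v=0$) versus curvilinear ($u\neq 0$ or $v \neq 0$). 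Your dichotomy is the embedding dimension $e = \dim \frm_Z/\frm_Z^2 \in \{1,2\}$, which instead isolates the planar case (as $\frm_Z^2 = 0$) and groups linear with curvilinear; this buys you the abstract isomorphism type of $\cO_Z$ ($\CC[t]/t^3$ versus $\CC[x_1,x_2]/(x_1,x_2)^2$) before any embedding data enters, and your closing paragraph makes explicit the reduction to a two-eigendirection plane that the paper compresses into ``we may assume $Z$ is a closed subscheme of $\spec\CC[x,y]$.'' Both arguments close each case by the same length-$3$ comparison. Two small points worth making explicit in your $e=1$ branch: $t^3=0$ because $t^3$ has weight $1$ while $\frm_Z$ has no weight-$1$ component (this is exactly the paper's argument for $\bar x^3=0$), and the ``absorption'' of any further weight-$\zeta^2$ coordinate $w$ in the curvilinear case should be done by subtracting the right multiple of $\tilde x_2$ (not of $\tilde x_1^2$), so that it is a linear change of eigen-coordinates and the eigenspace labelling in the statement is preserved.
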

\begin{proof}
    It is easy to check all these subschemes listed above are indeed $\ZZ/3$-clusters, and it suffices to prove that there are no more.
    Consider $\cO_Z$. It contains $\overline 1$, so there is no other eigenvector of $1$.
    We may assume $\cO_Z$ contains some nonzero $\overline x$ such that $x \in T^\vee_{o,\zeta} \cup T^\vee_{o,\zeta^2}$. Without loss of generality, we may assume $x \in T^\vee_{o,\zeta}$.
    Note that it forces that $\cO_Z$ contains no nonzero $\overline x'$ in the same eigenspace.    

    Suppose that $\cO_Z$ contains no $\overline{y} \in T^\vee_{o,\zeta^2}$.
    Then we may assume $Z$ is a closed subscheme of $\spec\CC[x]$.
    Note that $\overline x^3$ has the same eigenvalue with $\overline 1$, so it forces $\overline x^3 =0$.
    Then $x^3$ already cuts a length $3$ subscheme of $\CC[x]$, so it forces
    $Z= \spec \CC[x]/x^3$, i.e. $Z$ is linear.

    Now suppose that $\cO_Z$ contains some nonzero $\overline{y} \in T^\vee_{o,\zeta^2}$, so we may assume $Z$ is a closed subscheme of $\spec \CC[x,y]$.
    Since $\overline{x}^3,\overline{y}^3$, and $\overline{x}\overline{y}$ has the same eigenvalue with $\overline 1$, we know $\overline{x}^3 = \overline{y}^3 =\overline{x}\overline{y} =0$.
    So $\overline x^2 - u \overline y =0 $ and $\overline y^2 - v \overline x=0$ for some $u,v \in \CC$. If $u \neq 0$, then $(x^3,x^2-u y)$ already cut a length $3$ subscheme, i.e. $\cO_Z$ is curvilinear.
    The same for $v \neq 0$.
    So it remains the case $u=0$ and $v=0$.
    Then $(x^2,xy,y^2)$ cut a length $3$ subscheme,
    and we are in the planar case.
\end{proof}


\begin{corollary}\label{Z/3-cluster}
    Let $o$ be a fixed point of some $\ZZ/3$-action and let $g$ be a generator.
    \begin{enumerate}
        \item Suppose the induced action of $g$ on the cotangent space $T^\vee_o$ is 
        \begin{equation*}
            \diag ( \overbrace{1,\dots,1}^{n-2}, \zeta , \zeta^2)
        \end{equation*}
        and let $x_1,x_2$ be a choice of eigenvectors, say $x_i \in T^\vee_{o,\zeta^i}$.
        Then $Z\in P \cup Q$, where 
        \begin{align*}
            P \defeq & \{ \spec \CC[x_1,x_2] /(u_0 x_1^2-u_1 x_2, x_1 x_2, x_2^2) \mid [u_0 :u_1] \in \PP^1 \} \\
            = & \{ \spec \CC[x_1,x_2] /(x_1^3, \frac{u_0}{u_1} x_1^2 - x_2) \mid \frac{u_0}{u_1} \in \CC \} \cup \{ \spec \CC[x_1,x_2] /(x_1, x_2)^2 \} 
        \end{align*}
        and
        \begin{align*}
            Q \defeq & \{ \spec \CC[x_1,x_2] /(x_1^2, x_1 x_2, v_0 x_2^2-v_1 x_1) \mid [v_0 :v_1] \in \PP^1 \} \\
            = & \{ \spec \CC[x_1,x_2] /(x_2^3, \frac{v_0}{v_1} x_2^2 - x_1) \mid \frac{v_0}{v_1} \in \CC \} \cup \{ \spec \CC[x_1,x_2] /(x_1, x_2)^2 \}.
        \end{align*}

        \item Suppose the induced action of $g$ on the cotangent space $T^\vee_o$ is
        \begin{equation*}
            \diag ( \overbrace{1,\dots,1}^{n-3}, \zeta^2 , \zeta^2 , \zeta^2 )
        \end{equation*}
        Then $Z \in R$, where
        \begin{equation*}
            R \defeq \{ \spec \CC[x]/x^3 \mid  x \in T^\vee_{o,\zeta^2}   \} \simeq \PP^2.
        \end{equation*}
    \end{enumerate}
\end{corollary}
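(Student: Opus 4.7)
I would deduce both parts from the previous lemma via a preliminary reduction to a two- or three-variable local model, followed by matching the resulting classification with the stated families $P$, $Q$, and $R$.

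The key reduction is as follows. Since $Z$ is a $\ZZ/3$-cluster, $\cO_Z$ is the regular representation of $\ZZ/3$, so each of its three eigenspaces is one-dimensional, and the trivial eigenspace is already spanned by $\overline{1}$. Hence any $y \in T^\vee_o$ of weight $1$ must vanish in $\cO_Z$, so $I_Z$ contains every such $y$. This reduces the classification to the case where the ambient local model has cotangent space $T^\vee_{o,\zeta} \oplus T^\vee_{o,\zeta^2}$ in part (i) and $T^\vee_{o,\zeta^2}$ in part (ii).

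For part (i), the reduced ambient space is $\spec \CC[x_1,x_2]$ with $g$ acting by $\zeta$ on $x_1$ and by $\zeta^2$ on $x_2$. The previous lemma lists exactly five types of $\ZZ/3$-clusters in this setting: the linear ones along $x_1$ or $x_2$, the planar cluster $(x_1,x_2)^2$, and the two one-parameter curvilinear families. It then remains to check that the $\PP^1$-family $P$ parametrizes the linear cluster along $x_1$, the planar cluster, and the first curvilinear family, and symmetrically that $Q$ parametrizes the linear cluster along $x_2$, the planar cluster, and the second curvilinear family. This is a direct comparison of ideals using the two equivalent descriptions stated in the corollary: on the open chart $u_1 \neq 0$, setting $\alpha = u_0/u_1$, one reads off the curvilinear ideal $(x_1^3, \alpha x_1^2 - x_2)$, whose degeneration at $\alpha = 0$ is the linear cluster along $x_1$, and the complementary point $[1:0]$ gives the planar cluster.

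For part (ii), the reduced ambient space is $\spec \CC[x_1,x_2,x_3]$ with all coordinates of weight $\zeta^2$ and trivial $\zeta$-eigenspace. The previous lemma then immediately rules out the planar and both curvilinear cases (each requires a nonzero $\zeta$-eigenvector), and also any linear cluster along a $\zeta$-eigenvector. Only the linear clusters $\spec \CC[x]/x^3$ with $x \in T^\vee_{o,\zeta^2} \setminus \{0\}$ survive, and two such $x$'s define the same cluster iff they are proportional, yielding the parameter space $\PP(T^\vee_{o,\zeta^2}) \simeq \PP^2 = R$. The only genuine bookkeeping is the identification of points in $P$ and $Q$ with the clusters listed in the previous lemma; this is a routine comparison of ideals on the two standard affine charts of $\PP^1$, and I anticipate no conceptual difficulty beyond this minor calculation.
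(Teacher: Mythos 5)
Your proposal is correct and matches the paper's intent: the corollary is stated without a separate proof as an immediate consequence of the preceding lemma, and your argument --- killing the weight-$1$ cotangent directions in $\cO_Z$ using that $\cO_Z$ is the regular representation, then matching the lemma's linear/planar/curvilinear types against the charts of $P$, $Q$, and $R$ --- is exactly the intended deduction (the weight-$1$ reduction is the same observation already made at the start of the lemma's proof). The only cosmetic caveat is that the first presentation of $P$ (resp.\ $Q$) degenerates to the non-finite ideal $(x_2)$ (resp.\ $(x_1)$) at $[u_0:u_1]=[0:1]$, so the identification of the linear cluster should be read off from the second presentation, as you in fact do.
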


\begin{proof}[Proof of Proposition~\ref{equivalence Z/3}]
    We claim that
    \begin{equation*}
        \text{$\tau_n$ is a crepant resolution} \quad \text{and} \quad \dCat(X_{n-1} \times C_n)^{G_n^+} \simeq \dCat(X_n) .
    \end{equation*}
    Once we know this, we have 
    \begin{equation}\label{eq:successive equivalences}
        \dCat(X_{k-1} \times C_k \times C_{k+1} \times \dots \times C_n)^{G_k^+} \simeq \dCat(X_k \times C_{k+1} \times \dots \times C_n)
    \end{equation}
    for any $1 \leq k \leq n$.
    Note by Lemma~\ref{successive invariants} we have
    \begin{equation}\label{eq:successive invariants}
            \dCat(C_1 \times \dots \times C_n) ^{G_n} \simeq \left( \dCat(C_1 \times \dots \times C_n) ^{G_{n-1}} \right)^{G_n^+},
    \end{equation}
    so we can apply \eqref{eq:successive invariants} and \eqref{eq:successive equivalences} recursively to conclude $\dCat(C_1 \times \dots \times C_n)^{G_n} \simeq \dCat(X_n)$.

    Now we prove the claim by induction. More precisely, we do induction on the following statement. Recall that
    \begin{equation*}
        G_{n}^-  \text{ acts on $X_n$ via} \langle (\id_{X_{n-1}} , \zeta )^{[3]}\rangle\quad \text{and} \quad G_{n}^+  \text{ acts on $X_{n-1} \times C_n$ via } \left\langle \left( (\id_{X_{n-2}}, \zeta)^{[3]},\zeta^2 \right) \right\rangle.
    \end{equation*}

    \renewcommand{\theIH}{(IH)}  
    \begin{IH}\label{IH} The following conditions hold for $n\geq 1$.
        \begin{enumerate}
            \item The morphism $\tau_n\colon X_{n} \to \overline X_{n}$ is a crepant resolution and
            $\dCat(X_{n-1} \times C_n)^{G_n^+} \simeq \dCat(X_n)$. 
            \item The fixed locus has a decomposition
            \begin{equation*}
                X_{n}^{G_n^-}= B_1(X_n) \sqcup B_2(X_n), 
            \end{equation*}
            where $B_1(X_n)$ is a divisor, such that for any $Z_n\in B_1(X_n)$, the induced $ (\id_{X_{n-1}} , \zeta )^{[3]}$-action on the tangent space is 
            \begin{equation*}
                \diag ( \overbrace{1,\dots,1}^{n-1}, \zeta)
            \end{equation*}
            and $B_2(X_n)$ is of codimension $2$,
            such that for any $Z_n \in B_2$, the induced $ (\id_{X_{n-1}} , \zeta )^{[3]}$-action on the tangent space is 
            \begin{equation*}
                \diag ( \overbrace{1,\dots,1}^{n-2}, \zeta^2 , \zeta^2).
            \end{equation*}
            For $n=1$, this is understood as $B_2(X_1)= \emptyset$.
        \end{enumerate}       
    \end{IH}
    \addtocounter{IH}{-1}  

    It is clear for $n=1$.
    Assume we already know \ref{IH} is true for $n=k$.
    By \ref{IH} for $n=k$, we know the fixed locus the odd action $G_k^-$ is $B_1(X_k) \sqcup B_2(X_k)$ and we know the induced action on the tangent space of any fixed point.
    Consequently, the fixed locus by the even action $G_{k+1}^+$ is 
    \begin{equation*}
        B_1(X_k) \times C_{k+1}^{\langle \zeta \rangle} \quad \sqcup \quad B_2(X_k) \times C_{k+1}^{\langle \zeta \rangle}
    \end{equation*}
    with the induced action on tangent space of any fixed point generated by
    \begin{equation*}
        \diag (\overbrace{1, \dots, 1}^{k-1}, \zeta, \zeta^2 ) \quad \text{or} \quad \diag  ( \overbrace{1, \dots , 1}^{k-2} , \zeta^2 , \zeta^2, \zeta^2)
    \end{equation*}
    respectively.
    Both of them lie in $\SL(k+1,\CC)$, hence $\omega_{X_{k} \times C_{k+1}}$ is locally trivial as a $G_{k+1}^+$-sheaf, and by Corollary~\ref{Z/3-cluster}, we know the fiber over the first type is $1$-dimensional and over the second type is $2$-dimensional, thus
    \begin{equation*}
        \dim \left( X_{k+1} \times_{\overline{X}_{k+1}} X_{k+1} \right) = \max \{k+1,k-1+2 \times 1, k-2+ 2 \times 2\}  = k+2.
    \end{equation*}
    Therefore, we can apply Theorem~\ref{BKR}, and we obtain that 
    \begin{equation*}
        \text{$\tau_{k+1}$ is a crepant resolution} \quad \text{and} \quad \dCat(X_{k} \times C_{k+1})^{G_{k+1}^+} \simeq \dCat(X_{k+1}).
    \end{equation*}
    So we proved the first part.
    
    Now consider $Z_{k+1} \in X_{k+1}$ that is invariant under the odd action $G_{k+1}^-=\langle (\id_{X_{k}}, \zeta)^{[3]} \rangle$.
    By definition, $Z_{k+1} \subset X_k \times C_{k+1}$ is invariant under both $(\id_{X_{k}}, \zeta)$ and $\left( (\id_{X_{k-1}},\zeta)^{[3]},\zeta^2 \right)$. So $Z_{k+1}$ is either a free orbit or a triple point supported on one point in $X_{k}^{G_k^-} \times C_{k+1}^{ \langle \zeta \rangle}$. 
    \begin{itemize}
        \item Suppose that the subscheme $Z_{k+1}$ is a free orbit.
        Then it must be contained in $X_{k} \times C_{k+1}^{ \langle \zeta \rangle}$ or $ X_{k}^{ G_k^-} \times C_{k+1}$, otherwise the orbit will contain more than $3$ points.
        By \ref{IH} for $n=k$, we write
        \begin{equation*}
            X_{k}^{ G_k^- } = B_1(X_{k})\cup B_2(X_{k}).
        \end{equation*}
        We know $Z_{k+1}$ is either an element of 
        \begin{equation*}
             B_1^\circ(X_{k+1}) \defeq \{ Z_{k+1} \text{ is a free $G_{k+1}^+$-orbit} \mid \text{$Z_{k+1} \subset X_{k} \times C_{k+1}^{ \langle \zeta \rangle}$ or $Z_{k+1} \subset B_1(X_k) \times C_{k+1}$} \}
        \end{equation*}
        or an element of
        \begin{equation*}
            B_2^\circ (X_{k+1}) \defeq \{ Z_{k+1} \text{ is a free $G_{k+1}^+$-orbit} \mid \text{$Z_{k+1} \subset B_2(X_k) \times C_{k+1}$} \}
        \end{equation*}
        For any $Z_{k+1} \in B_1^\circ(X_{k+1})$, it is easy to see that a generator of the odd action $G_{k+1}^-$ induces the action on $T_{Z_{k+1}}X_{k+1}$ as
        \begin{equation*}
            \diag (\overbrace{1, \dots, 1}^{k}, \zeta),
        \end{equation*}
        so we know 
        \begin{equation}\label{eq:B1 o}
            B_1^\circ (X_{k+1}) \subset B_1(X_{k+1}).
        \end{equation}
        
        Similarly, for any $Z_{k+1} \in B_2^\circ(X_{k+1})$, it is easy to see that a generator of the odd action $G_{k+1}^-$ induces the action on $T_{Z_{k+1}}X_{k+1}$ as
        \begin{equation*}
            \diag (\overbrace{1,\dots, 1}^{k-1}, \zeta^2,\zeta^2),
        \end{equation*}
        so we know 
        \begin{equation}\label{eq:B2 o}
            B_2^\circ (X_{k+1}) \subset B_2(X_{k+1}).
        \end{equation}

        \item Suppose that the subscheme $Z_{k+1}$ is a triple point (set-theoretically) supported on some point in $X_{k}^{G_k^-} \times C_{k+1}^{ \langle \zeta \rangle}$.
        Set $\{ z \} \defeq \supp (Z_{k+1})$.
        Again by \ref{IH} for $n=k$, we know
        \begin{equation*}
            z \in B_1(X_k) \times C_{k+1}^{\langle \zeta \rangle} \sqcup B_2(X_k) \times C_{k+1}^{\langle \zeta \rangle}
        \end{equation*}
        and for any such $z$, the induced $G_{k+1}^+$-action on the tangent space of any fixed point is generated by
        \begin{equation*}
            \diag (\overbrace{1, \dots, 1}^{k-1}, \zeta, \zeta^2 ) \quad \text{or} \quad \diag  ( \overbrace{1, \dots , 1}^{k-2} , \zeta^2 , \zeta^2, \zeta^2)
        \end{equation*}
        respectively.
        Note that the induced action of $(\id_{X_{k}}, \zeta)$ on the tangent space takes the form
        \begin{equation*}
            \diag( \overbrace{1 ,\dots , 1}^{k} , \zeta)
        \end{equation*}
        in the same coordinate system.
        \begin{itemize}
            \item Suppose $z \in B_1(X_k) \times C_{k+1}^{\langle \zeta \rangle}$, and let $x_i \in T^\vee_z(X_k \times C_{k+1})$ be an eigenvector of $\zeta^i$. Then by Corollary~\ref{Z/3-cluster}, we know
            $Z_{k+1}\in P_z \cup Q_z$, where 
            \begin{align*}
                P_z \defeq & \{ \spec \CC[x_1,x_2] /(u_0 x_1^2-u_1 x_2, x_1 x_2, x_2^2) \mid [u_0 :u_1] \in \PP^1 \} \\
                = & \{ \spec \CC[x_1,x_2] /(x_1^3, \frac{u_0}{u_1} x_1^2 - x_2) \mid \frac{u_0}{u_1} \in \CC \} \cup \{ \spec \CC[x_1,x_2] /(x_1, x_2)^2 \} 
            \end{align*}
            and
            \begin{align*}
                Q_z \defeq & \{ \spec \CC[x_1,x_2] /(x_1^2, x_1 x_2, v_0 x_2^2-v_1 x_1) \mid [v_0 :v_1] \in \PP^1 \} \\
                = & \{ \spec \CC[x_1,x_2] /(x_2^3, \frac{v_0}{v_1} x_2^2 - x_1) \mid \frac{v_0}{v_1} \in \CC \} \cup \{ \spec \CC[x_1,x_2] /(x_1, x_2)^2 \}.
            \end{align*}
            Among them, the $G_{k+1}^-$-invariant part, which is by definition just invariant under the assignment $x_2 \mapsto \zeta x_2$, is 
            \begin{equation*}
                (P_z \cup Q_z)^{G_{k+1}^-}= \{ P_z^\infty, Q_z^\infty ,P_z^0 =Q_z^0\},
            \end{equation*}
            where
            \begin{equation*}
                P_z^\infty \defeq \spec \CC[x_1,x_2]/(x_1^3,x_2) , \quad Q_z^\infty \defeq \spec \CC[x_1,x_2]/(x_2^3,x_1) ,
            \end{equation*}
            and
            \begin{equation*}
                P_z^0=Q_z^0 \defeq \spec \CC[x_1,x_2]/(x,y)^2 \in P_z \cap Q_z .
            \end{equation*}
            We claim the following statements which describes the tangent action.
            \begin{claim}
                The induced $(\id_{X_k} ,\zeta)^{[3]}$-action on $T_{P_z^\infty}(X_{k+1})$ is $\diag(1,\dots,1,\zeta)$.
            \end{claim}
            Indeed, the $(\id_{X_k} ,\zeta)^{[3]}$-action takes the form 
            \begin{equation*}
                x_2 \mapsto - \zeta x_2 = \zeta^2 x_2
            \end{equation*}
            on $T^\vee_{P_z^\infty}(X_{k+1})$ since it is contravariant. So the action on the ideals takes the form
            \begin{equation*}
                (x_1^3,\varepsilon x_1^2 -x_2) \mapsto (x_1^3,\varepsilon x_1^2 - \zeta^2 x_2) = (x_1^3,\varepsilon \zeta x_1^2 - x_2) ,
            \end{equation*}
            as claimed.

            \begin{claim}
                The induced $(\id_{X_k} ,\zeta)^{[3]}$-action on $T_{Q_z^\infty}(X_{k+1})$ is $\diag(1,\dots,1,\zeta)$.
            \end{claim}
            This is similar, just note that
            \begin{equation*}
                (x_2^3,\varepsilon x_2^2 -x_1) \mapsto (x_2^3,\varepsilon \zeta x_2^2 - x_1).
            \end{equation*}
            Therefore, we know
            \begin{equation}\label{eq:B1'}
                 B_1'(X_{k+1}) \defeq \cup_{z \in B_1(X_k) \times C_{k+1}^{\langle \zeta \rangle} } \{ P_z^\infty , Q_z^\infty \} \subset B_1(X_{k+1}) .
            \end{equation}

            Similarly, we have
            \begin{claim}
                The induced $(\id_{X_k} ,\zeta)^{[3]}$-action on $T_{P_z^0}(X_{k+1})$ is $\diag(1,\dots,1,\zeta^2,\zeta^2)$.
            \end{claim}
            For this, just note that
            \begin{equation*}
                (x_1^3, x_1^2 - \varepsilon x_2) \mapsto (x_1^3, x_1^2 -  \varepsilon \zeta^2 x_2)
            \end{equation*}
            and
            \begin{equation*}
                (x_2^3, x_2^2 - \varepsilon x_1) \mapsto (x_2^3, \zeta x_2^2 -  \varepsilon  x_1) = (x_2^3, x_2^2 -  \varepsilon \zeta^2 x_1).
            \end{equation*}
            So we know
            \begin{equation}\label{eq:B2 ess}
                 B_2^\dagger(X_{k+1}) \defeq \cup_{z \in B_1(X_k) \times C_{k+1}^{\langle \zeta \rangle} } \{ P_z^0 \} \subset B_2(X_{k+1}) .
            \end{equation}

            \item Suppose $z \in B_2(X_k) \times C_{k+1}^{\langle \zeta \rangle}$, and let $x_1,x_2,x_3$ be eigenvectors of the last $3$ components.
            Again by Corollary~\ref{Z/3-cluster}, we know $Z_{k+1} \in R_z$, where 
            \begin{equation*}
                R_z \defeq \{ \spec \CC[x]/x^3 \mid x = w_1 x_1 + w_2 x_2 + w_3 x_3  , [w_1 : w_2 :w_3] \in \PP^2 \}.
            \end{equation*}
            And among them, the $G_{k+1}^-$-invariant part is
            \begin{equation*}
                R_z^{G_{k+1}^-} =\{ R_z^{0,w} \}_{w \in \PP^1} \cup \{R_z^\infty\}
            \end{equation*}
            where
            \begin{equation*}
                R_z^{0,w} \defeq \spec \CC[x]/x^3 \quad \text{with} \quad x= w_1 x_1 + w_2 x_2 , \quad [w_1 :w_2] \in \PP^1,
            \end{equation*}
            and
            \begin{equation*}
                R_z^\infty \defeq \spec \CC[x_3]/x_3^3.
            \end{equation*} 
            
            We claim that
            \begin{claim}
                The induced $(\id_{X_k} ,\zeta)^{[3]}$-action on $T_{R_z^{0,w}}(X_{k+1})$ is $\diag(1,\dots,1,\zeta)$.
            \end{claim}
            Indeed, the $(\id_{X_k} ,\zeta)^{[3]}$-action on the tangent space takes the form
            \begin{equation*}
                [w_1 : w_2 : \varepsilon] \mapsto [w_1 : w_2 : \varepsilon \zeta].
            \end{equation*}
            
            So we have
            \begin{equation} \label{eq:B1''}
                B_1'' (X_{k+1}) \defeq \cup_{z \in B_2(X_k) \times C_{k+1}^{\langle \zeta \rangle} } \{ R_z^{0,w} \} \subset B_1(X_{k+1}) .
            \end{equation}
            Similarly, we claim that
            \begin{claim}
                The induced $(\id_{X_k} ,\zeta)^{[3]}$-action on $T_{R_z^\infty}(X_{k+1})$ is $\diag(1,\dots,1,\zeta^2 ,\zeta^2)$.
            \end{claim}
            Indeed, the $(\id_{X_k} ,\zeta)^{[3]}$-action on the tangent space takes the form
            \begin{equation*}
                [ \varepsilon_1 : \varepsilon_2 : 1] \mapsto [ \varepsilon_1 : \varepsilon_2 : \zeta] = [ \varepsilon_1 \zeta^2 : \varepsilon_2 \zeta^2 : 1]
            \end{equation*}
            
            So we have
            \begin{equation} \label{eq:B2'}
                B_2' (X_{k+1}) \defeq \cup_{z \in B_2(X_k) \times C_{k+1}^{\langle \zeta \rangle}}\{  R_z^\infty \} \subset B_2(X_{k+1}).
            \end{equation}
        \end{itemize}
    \end{itemize}
    So far we have discussed all possible situations.
    By \eqref{eq:B1 o}, \eqref{eq:B1'}, and \eqref{eq:B1''}, we know
    \begin{equation*}
        B_1(X_{k+1})=B_1^\circ(X_{k+1}) \cup B_1'(X_{k+1}) \cup B_1''(X_{k+1}) ,
    \end{equation*}
    and by \eqref{eq:B2 o}, \eqref{eq:B2'}, and \eqref{eq:B2 ess}, we know
    \begin{equation*}
        B_2(X_{k+1})=B_2^\circ(X_{k+1}) \cup B_2'(X_{k+1}) \sqcup B_2^\dagger(X_{k+1}) .
    \end{equation*}
    Finally, note that $\dim(B_1^\circ(X_{k+1}))=k$ and 
    \begin{equation*}
        \overline{B_1^\circ(X_{k+1}) } \supset B_1'(X_{k+1}) \cup B_1''(X_{k+1}),
    \end{equation*}
    we know $B_1(X_{k+1})$ is indeed a divisor.
    Similarly, since
    $\dim(B_2^\circ(X_{k+1}))=k-1$
    and
    \begin{equation*}
        \overline{B_2^\circ(X_{k+1}) } \supset B_2'(X_{k+1}),
    \end{equation*}
    we know $B_2(X_{k+1})$ is of codimension $2$.
    Therefore, we have proved \ref{IH} for n=k+1.
\end{proof}

\begin{remark}
    In the paper \cite{Cynk-Hulek:2007} they also construct Calabi--Yau manifolds from elliptic curves with $\ZZ/4$-actions. 
    However, the argument does not apply to this case since the dimension condition of Theorem~\ref{BKR} does not hold.
\end{remark}

\begin{proof}[Proof of Theorem~\ref{thm:CH}, the $\ZZ/3$ case]
    Again, by Proposition~\ref{special stability via curve product} and Theorem~\ref{pre-restriction to end}, we know there exist $G_n$-invariant stability conditions on $\dCat(C_1 \times \dots \times C_n)$. 
    By Theorem~\ref{inducing stability conditions}, they induce stability conditions on $\dCat(C_1 \times \dots \times C_n)^{G_n}$.
\end{proof}

\newpage

\appendix

\bibliographystyle{alpha}        
\bibliography{main}

\bigskip 
{Email address: \texttt{y.cheng@imperial.ac.uk}}

\end{document}